\def\XXint#1#2#3{{\setbox0=\hbox{$#1{#2#3}{\int}$ }
\vcenter{\hbox{$#2#3$ }}\kern-.6\wd0}}
\newtheorem{theorem}{Theorem}[section]
\newtheorem{lemma}[theorem]{Lemma}
\newtheorem{proposition}[theorem]{Proposition}
\newtheorem{corollary}[theorem]{Corollary}
\theoremstyle{definition}
\newtheorem{definition}[theorem]{Definition}
\theoremstyle{remark}
\newtheorem{remark}[theorem]{Remark}
\numberwithin{equation}{section}
\renewcommand{\L}{{\mathcal L}}
\newcommand{\X}{\mathcal X}
\newcommand{\Q}{\mathcal Q}
\begin{document}

\title {
Several functional capacities and Carleson type embeddings of   fractional Sobolev sapces on stratified Lie groups}

  \author{ Zhiyong Wang \ \ \  Pengtao Li \ \ \   Yu Liu\footnote{Corresponding author}} \date{}
    \maketitle
%\address{School of Mathematics and Physics, University of Science and Technology Beijing, Beijing 100083, China}
%\email{zywang199703@163.com}

%\address{School of Mathematics and Statistics, Qingdao University, Qingdao, 266071, China}
%\email{ptli@qdu.edu.cn}\thanks{P.T. Li was supported by the National
%Natural Science Foundation of China  (No.\,12071272)
%and Shandong Natural Science Foundation of China (No.\,ZR2020MA004).
%}

%\address{School of Mathematics and Physics, University of Science
%and Technology Beijing, Beijing 100083, China}
%\email{liuyu75@pku.org.cn}
%\thanks{Y. Liu was supported by  the Beijing Natural
%Science Foundation of China (No.\,1232023) and the National Natural
%Science Foundation of China (No.\,12271042).}

\maketitle

{  {\bf Abstract:} In this paper, we focus on the functional and
geometrical aspects of the fractional Sobolev capacity, the Besov
capacity and the Riesz capacity on stratified lie groups,
respectively. Firstly, we provide a new Carleson characterization of
the extension of fractional Sobolev spaces to
$L^{q}(\X\times\mathbb{R}_{+},\mu)$ with $q\in\mathbb{R}_{+}$ using
the fractional heat semigroup and the Caffarelli-Silvestre type
extension on stratified Lie groups $\X$. Secondly,  a
characterization of $\nu$ on $\X$ which ensures the continuity of
the fractional Sobolev space  belonging to $L^{q}(\X,\nu)$ is also
obtained via taking $t\rightarrow 0$. Finally, with the help of
inequalities related to the Besov capacity and its properties, we
also obtain a characterization of $\nu$ on $\X$ which ensures  the
continuity of the Besov type space belonging to $L^{q}(\X,\nu)$.

\vspace{0.2cm} {\bf Keywords:} {fractional Sobolev space, Besov
capacity, fractional Sobolev capacity, stratified Lie group.}

{\bf 2020 AMS Mathematics Subject Classification:} {  31B15, 43A80,
26A33.}}

\tableofcontents

\section{Introduction}

Possessing a robust theoretical underpinning in physics, the concept
of capacities has been widely utilized across investigations in
analysis and partial differential equations. Specifically, the
capacities related to function spaces were introduced to
characterize the embedding or trace inequalities. For example, the
Besov-type capacity $cap(\cdot; \dot{\Lambda}^{p,q}_{\alpha})$ was
used to
 establish the embedding of homogeneous Besov spaces into the Lorentz spaces with respect to nonnegative Borel
 measures (see \cite{AX,  Ma, W1999,X2}). The embedding of Sobolev spaces via heat equations and  the $p$-variational capacity was attributed to
 Xiao \cite{X1, X2}. From then on, Zhai in \cite{zh-2} provided the Carleson characterization of embedding for fractional Sobolev spaces
 by factional capacities. The Carleson type characterization of embedding for  Lebesgue spaces was established by Chang and Xiao in \cite{cha}
 using $L^{p}$-capacities. Li and Zhai \cite{li-4} obtained the Carleson type characterization of the extension of fractional Sobolev spaces and
 Lebesgue spaces to $L^{q}(\mathbb{R}^{n+1}_{+},\mu)$ via space-time fractional equations.

 In recent decades, Sobolev classes on metric measure spaces have attracted a lot of attention. To
  overcome the disadvantages of the classical derivatives on metric measure spaces, the theory of
  weak upper gradients was developed to replace the theory of weak or distributional derivatives
  in the construction of Sobolev classes of functions, see \cite{He, Sh1,Sh2}. As generalizations of the
   Carleson type embeddings of function spaces via dissipative operators on the Euclidean space, the
   counterparts for the cases of metric measure spaces have been investigated by many researchers. On the metric
   measure space, Liu et al. \cite{liu}  established the Carleson type characterization of the extension of
    Newton-Sobolev spaces  $N^{1,p}(\mathbb M)$ to $L^{q}(\mathbb M\times(0,\infty),\mu)$, where $\mathbb{M}$
    satisfies some additional assumptions.  In \cite{hua}, Huang et al. obtained the Carleson characterization of the
    extension of Lebesgue spaces to $L^{q}(\mathbb M\times(0,\infty),\mu)$ via the fractional dissipative
    equations. For further information on this topic, we refer the reader to \cite{A2, cha,Co, DKX} and the references therein.

In the setting of $\mathbb R^{n}$, it is well-known that the fractional Sobolev spaces $W^{p,s}(\mathbb R^{n})$ have the following equivalent norms:
$$\|f\|_{W^{p,s}(\mathbb R^{n})}:=\|f\|_{L^{p}(\mathbb R^{n})}+\|(-\Delta)^{s/2}f\|_{L^{p}(\mathbb R^{n})}\sim\|f\|_{L^{p}(\mathbb R^{n})}+\sum^{n}_{i=1}\|\partial_{x_{i}}^{s}f\|_{L^{p}(\mathbb R^{n})},$$
where the symbols $(-\Delta)^{s/2}$ and $\partial_{x_{i}}^{s}$ denote the fractional Laplacian and the fractional derivatives
 defined via the Fourier transform, respectively. However, it should be mentioned that  Newton-Sobolev spaces  $N^{1,p}(\mathbb M)$ defined
  via the weak upper gradients are essentially a class of $1$-order Sobolev spaces. To establish the Carleson type embeddings
   of fractional Sobolev spaces on metric measure spaces, it is reasonable to assume that the target to be investigated should
   own a differential structure and fractional Laplacians defined via functional calculus.

Based on the above consideration, we focus on the stratified Lie group denoted by $\X$, which is  a typical metric space. Stratified Lie groups,
often referred to as Carnot groups, make prominent appearances across various areas of mathematics such as harmonic analysis,
 several complex variables, geometry, and topology, as well as within the scope of quantum physics. The simplest examples of stratified
 Lie groups are  Euclidean spaces $\mathbb{R}^{n}$,  Heisenberg groups $\mathbb{H}_{n}$ and  Heisenberg-type groups  introduced by Kaplan \cite{K80}.
  Additionally, stratified Lie groups have gained a lot of attention in recent years due to their rich structures and the potential for extending
  objects and results of classical analysis, which have been studied in contexts such as potential theory \cite{V90,F18}, subelliptic
  differential equations \cite{F75,F82,M23} and analysis \cite{Z23,C15,D09,S79}.

In \cite{miao},  in order to study  the Cauchy problem for a class
of nonlinear fractional power dissipative equations,  Miao, Yuan and
Zhang investigated the regularity estimate of the fractional heat
kernel related to the Laplacian $(-\Delta)$, which can be seen as
the fundamental solution to the following  homogeneous linear
fractional power dissipative equation:
\begin{equation*}\label{heat-1}
  \begin{cases}
 \partial_{t}u(x,t)+(-\Delta)^{\alpha}u(x,t)=0\ \ \  \forall\  (x,t)\in \mathbb{R}^{n}\times\mathbb R_{+};\\
  u(x,0)=f(x)\ \ \ \ \ \ \ \ \ \  \ \ \ \ \ \ \ \ \ \ \ \ \forall\ x\in\mathbb
  R^{n},
  \end{cases}
\end{equation*} where $\alpha\in(0,1)$ and $\mathbb{R}_{+}:=(0,\infty)$.
The solution to (\ref{heat-1}) can be written as
$$u(x,t):=e^{-t(-\Delta)^{\alpha}}f(x):=\int_{\mathbb R^{n}}e^{-t(-\Delta)^{\alpha}}(x,y)f(y)dy.$$
By an invariant derivative technique and the Fourier analysis method, Miao, Yuan and Zhang obtained the following
  estimate of the fractional heat kernel $e^{-t(-\Delta)^{\alpha}}(\cdot,\cdot)$:
\begin{equation*}\label{heat-2}
   e^{-t(-\Delta)^{\alpha}}(x,y)\sim\frac{t}{(t^{1/2\alpha}+|x-y|)^{n+2\alpha}}
\end{equation*}
holds for all $(t,x,y)\in\mathbb{R}_{+}\times\mathbb
R^{n}\times\mathbb R^{n},$ where the lower bound estimate can be
seen from \cite[Theorem 1]{CK}. Let $\L$ denote the
the corresponding sub-Laplacian on $\X$, i.e.,
\begin{equation*}\label{K}
\L:=-\sum_{j=1}^{n_{1}}\mathbf{X}_{j}^{2},
\end{equation*}
where $\mathbf{X}_{1},\ldots,\mathbf{X}_{n_{1}}$ are smooth first order differential operators invariant with respect to the
 left translations on $\X$ and homogeneous
of degree one with respect to the dilation group
$\{\delta_{r}\}_{r>0}$ of $\X$.  It is well known that $\L$ is a
hypoelliptic operator and $-\L$ generates a heat semigroup of
self-adjoint linear operators $e^{-t\L}$ on $L^{2}(\X)$. On
stratified Lie groups, the study of the fractional heat kernel
related to $\L$ has been ongoing for a period; see \cite{F18,M23}
and the references therein. The Poisson kernel on stratified Lie
groups was introduced and studied in \cite{F82}, while the
investigation of the Caffarelli-Silvestre type extension has only
emerged more recently. In \cite{F15}, Ferrari and Franchi studied  a
Harnack-type estimate for the fractional powers of sub-Laplacian on
Carnot groups. The proof of the main result of \cite{F15} follows
the ``abstract" formulation of a technique recently introduced by
Caffarelli and Silvestre in \cite{C07}. Later,  such a method to
deduce Harnack's inequalities was further developed by  Stinga and
Torrea \cite{ST-2010} and Stinga and Zhang \cite{SZ}. In the setting
of Carnot groups, owing to the group convolution, the authors in
\cite{F15} can represent the fractional powers of sub-Laplacian  via
the  spectral resolution.

For $(s,\alpha,\sigma)\in(0,1)\times(0,1)\times(0,1)$, denote by
\begin{equation}\label{eq1.1}\mathbf{s}=s\alpha\ \ \ \ \   \mathrm{and}\ \ \ \ \
\ \widetilde{\mathbf{s}}=s\sigma\end{equation} throughout this
paper.  Let $H_{\alpha,t}:=\{e^{-t\L^{\alpha}}\}_{t>0}$  denote the
fractional heat semigroup and   let $P_{\sigma,t}$ denote the
Caffarelli-Silvestre type extension. In Section \ref{sec-2}, we recall the definitions and some notations of stratified Lie groups $\X$, and
 collect some properties of the heat semigroup $\{e^{-t\L}\}_{t>0}$ and its kernel estimates. Section
  \ref{sec-2-3} is devoted to the study of the fractional heat semigroup and the Caffarelli-Silvestre type extension. Precisely,
   avoiding the use of the Fourier transform, we investigate the kernel estimates
   of $\{H_{\alpha,t}\}_{t>0}$ and $\{P_{\sigma,t}\}_{t>0}$ with the help of the subordinate
    formulas (\ref{eq-heat-1}) and (\ref{Poisson}), respectively; see Propositions \ref{pro-frac} \&\ \ref{pro-1}.
    In addition, we also discuss the spatial continuity of $H_{\alpha,t}u(g)$ with $u\in L^{p}(\X)$ and fixed $t>0$, which will play a key role
    in the proof of our main theorems. In Section \ref{sec-2.3}, we mainly study the fractional power of the sub-Laplace
    operators and the Riesz potential operators via the fractional heat semigroup  and the Caffarelli-Silvestre type extension, and
    give the definition of fractional Sobolev spaces using the fractional power of the sub-Laplace operators as a gradient.

In Section \ref{sec-3},   denote by  $\L^{\Theta}$ the fractional
power of $\L$,  we introduce the homogeneous fractional Sobolev
space $\mathcal{\dot{W}}^{2\Theta,p}(\X)$, $p\in[1,\infty)$,   as
$$\mathcal{\dot{W}}^{2\Theta,p}(\X):= \Big\{u\in L^{p}(\X):\ \mathcal{L}^{\Theta}u\in L^{p}(\X)\Big\}$$
endowed with the norm
$\|u\|_{\mathcal{\dot{W}}^{2\Theta,p}(\X)}=\|\L^{\Theta}u\|_{L^{p}(\X)},$
where $\Theta=\mathbf{s}$ if the operator $\mathcal{T}_t$ is
$H_{\alpha,t^{2\alpha}}$ or $\Theta=\widetilde{\mathbf{s}}$ if the
operator $\mathcal{T}_t$ is $P_{\sigma,t}$.
 The fractional Sobolev capacity of a compact
 set $\mathcal{K}\subset\X$, denoted by $Cap_{\mathcal{\dot{W}}^{2\Theta,p}}(\mathcal{K})$, is defined as
$$Cap_{\mathcal{\dot{W}}^{2\Theta,p}}(\mathcal{K}):=\inf\Big\{\|u\|_{\mathcal{\dot{W}}^{2\Theta,p}(\X)}^{p}:\ u\in C_{c}^{\infty}(\X),
\ u\geq 1_{\mathcal{K}}\Big\},$$
where $1_{\mathcal{K}}$ denotes the characteristic function of $\mathcal{K}$.

In Section  \ref{sec-3.1}, we provide several basic properties of
the fractional Sobolev type spaces. An interesting fact  is that
$C^{\infty}_{c}(\X)$ is dense in
$\mathcal{\dot{W}}^{2\mathbf{s},p}(\X)$, which is not true in
general metric measure spaces, see Proposition \ref{dense-1}. Unlike
the general metric measure spaces,  the  fractional gradients and
fractional divergences, weak $s$-gradients  can be well-defined on
stratified Lie groups, respectively. These differential operators
enable us to introduce  distributional fractional Sobolev spaces
$\mathbb{W}^{\Theta,p}(\X)$, see (\ref{ffs}). Based on the idea of
 \cite{Z23}, we obtain the equivalence of $\mathbb{W}^{\Theta,p}(\X)$ and $\mathcal{W}^{\Theta,p}(\X)$, see Proposition \ref{w-w}.  Section \ref{sec-3.2} mainly investigates the properties of the fractional Sobolev capacity $Cap_{\mathcal{\dot{W}}^{2\Theta,p}}(\cdot)$
and the Riesz capacity $Cap_{\Theta,p}(\cdot)$. By the aid of Proposition \ref{dense-1}, we can prove
 $Cap_{\mathcal{\dot{W}}^{2\Theta,p}}(\cdot)$ and $Cap_{\Theta,p}(\cdot)$ are equivalent, see Proposition \ref{relation}. Moreover,
as an application of  the equivalence obtained in Proposition \ref{relation},  two strong-type
capacitary inequalities related to
$Cap_{\mathcal{\dot{W}}^{2\Theta,p}}(\cdot)$ can be established; see Propositions
\ref{strong} \&\ \ref{strong-1}.

With the above preparation, we next state the main results of this
paper with the proofs given in Section \ref{sec-4}. Denote by $T(O)$
the tent based on an open subset $O$ of $\X$, i.e.,
$$T(O)=\Big\{(g,t)\in\X\times\mathbb{R}_{+}:\ B(g,r)\subseteq O\Big\}$$
with $B(x,r)$ being the open ball centered at $g\in\X$ with radius $r>0$. For $t\in(0,\infty)$, the $(p,\Theta)$-fractional
 capacity minimizing function associated with both $\mathcal{\dot{W}}^{2\Theta,p}(\X)$ and a nonnegative measure $\mu$ on
 $\X\times\mathbb{R}_{+}$, denoted by $c_{p}(\mu;t)$, is defined as
\begin{equation}\label{1.4}c_{p}(\mu;t):=\inf\Big\{Cap_{\mathcal{\dot{W}}^{2\Theta,p}}(O):\
\text{a bounded open set}\ O\subset\X,\ \mu(T(O))\geq t\Big\}.
\end{equation}
 Motivated by \cite{X1,zh-2,liu},  we characterize the measure
$\mu$ on $\X\times\mathbb{R}_{+}$ so that
$u\mapsto \mathcal{T}_tu$
is bounded from fractional Sobolev spaces
$\mathcal{\dot{W}}^{2\Theta,p}(\X)$ into the Lebesgue spaces
$L^{q}(\X\times\mathbb{R}_{+},\mu)$ on $\X\times\mathbb{R}_{+}$. Precisely,
\begin{theorem}\label{thm1}
Let $(s,\alpha,\sigma)\in(0,1)\times(0,1)\times(0,1)$,
$p\in(1,\min\{\mathcal{Q}/2\Theta,\Q/(1-2\Theta)\})$  and $\mu$ be a
nonnegative measure on $\X\times\mathbb{R}_{+}$.

$\mathrm{(a)}$ The following two statements are equivalent.

\item{\rm (i)} For $u\in \mathcal{\dot{W}}^{2\Theta,p}(\X)$,
$$\Big(\int_{\X\times\mathbb{R}_{+}}\big|\mathcal{T}_tu(g)\big|^{q}d\mu(g,t)\Big)^{1/q}\lesssim \|\nabla_{\X,w}^{2\Theta}u\|_{L^{p}(\X;H\X)}.$$

\item{\rm (ii)} The measure $\mu$ satisfies
\begin{equation*}
\infty>\left\{\begin{aligned}
&I_{p,q}(\mu):=\int^{\infty}_{0}\Big(\frac{t^{p/q}}{c_{p}(\mu;t)}\Big)^{q/(p-q)}\frac{dt}{t},\ &\ 0<q<p;\\
&\sup_{t>0}\frac{t^{1/q}}{(c_{p}(\mu;t))^{1/p}},\ &\ p\leq q<\infty.
\end{aligned}\right.
\end{equation*}

$\mathrm{(b)}$ When $q\in[p,\infty)$,  it further holds
$$(i)\Longleftrightarrow(ii)\Longleftrightarrow(iii)\Longleftrightarrow(iv),$$
where
\item{\rm (iii)}
$$\sup_{\lambda>0}\lambda(\mu(\{(g,t)\in\X\times\mathbb{R}_{+}:|\mathcal{T}_tu(g)|>\lambda\}))^{1/q}\lesssim \|\nabla_{\X,w}^{2\Theta}u\|_{L^{p}(\X;H\X)}\quad \forall\ u\in \mathcal{\dot{W}}^{2\Theta,p}(\X).$$

\item{\rm (iv)} For any bounded open set $O\subseteq \X$,
$$(\mu(T(O)))^{p/q}\lesssim Cap_{\mathcal{\dot{W}}^{2\Theta,p}}(O).$$
\end{theorem}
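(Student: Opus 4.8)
The plan is to follow the Maz'ya--Adams capacitary scheme, transplanted to $\X$ through the kernel estimates of Section \ref{sec-2-3} and the capacitary inequalities of Section \ref{sec-3.2}. As a preliminary reduction, by the density Proposition \ref{dense-1} it suffices to verify every inequality for $u\in C^{\infty}_{c}(\X)$, and by Proposition \ref{w-w} I may replace $\|\nabla_{\X,w}^{2\Theta}u\|_{L^{p}(\X;H\X)}$ throughout by the equivalent norm $\|u\|_{\mathcal{\dot{W}}^{2\Theta,p}(\X)}=\|\L^{\Theta}u\|_{L^{p}(\X)}$. The one geometric input I will extract from Propositions \ref{pro-frac} \& \ref{pro-1} is that $\mathcal{T}_t$ behaves as an approximate identity at the spatial scale $r=r(t)$ entering the tent: first, if $u\geq 1$ on a ball $B(g,r(t))\subseteq O$ then $\mathcal{T}_tu(g)\gtrsim 1$; and second, $|\mathcal{T}_tu(g)|$ is dominated by a non-tangential maximal average $\mathcal{N}u(g')$ of $|u|$ taken over $g'\in B(g,r(t))$.

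For part (b) (the range $p\le q<\infty$) I would close the cycle $(i)\Rightarrow(iii)\Rightarrow(iv)\Rightarrow(ii)\Rightarrow(i)$. Here $(i)\Rightarrow(iii)$ is Chebyshev's inequality. For $(iii)\Rightarrow(iv)$, given a bounded open $O$ I pick a near-optimal admissible $u$ (with $u\geq 1$ on $O$) satisfying $\|u\|_{\mathcal{\dot{W}}^{2\Theta,p}}^{p}\le 2\,Cap_{\mathcal{\dot{W}}^{2\Theta,p}}(O)$; the averaging lower bound forces $T(O)\subseteq\{(g,t):|\mathcal{T}_tu(g)|>c\}$, so $(iii)$ with $\lambda=c$ yields $(\mu(T(O)))^{p/q}\lesssim Cap_{\mathcal{\dot{W}}^{2\Theta,p}}(O)$. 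The step $(iv)\Rightarrow(ii)$ is immediate from \eqref{1.4}: any competitor $O$ with $\mu(T(O))\ge t$ satisfies $t^{p/q}\le(\mu(T(O)))^{p/q}\lesssim Cap_{\mathcal{\dot{W}}^{2\Theta,p}}(O)$, whence $t^{1/q}\lesssim c_{p}(\mu;t)^{1/p}$ after taking the infimum.

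The core is $(ii)\Rightarrow(i)$. Decomposing dyadically,
$$\int_{\X\times\mathbb{R}_{+}}|\mathcal{T}_tu|^{q}\,d\mu\approx\sum_{j\in\mathbb{Z}}2^{jq}\,\mu(E_{2^{j}}),\qquad E_{\lambda}:=\{(g,t):|\mathcal{T}_tu(g)|>\lambda\},$$
the maximal domination gives $E_{2^{j}}\subseteq T(O_{j})$ with $O_{j}:=\{\mathcal{N}u>c2^{j}\}$ open. Putting $t=\mu(T(O_{j}))$ in \eqref{1.4} and invoking the supremum bound $(ii)$ yields $\mu(E_{2^{j}})\le\mu(T(O_{j}))\lesssim Cap_{\mathcal{\dot{W}}^{2\Theta,p}}(O_{j})^{q/p}$; since $q/p\ge1$, the elementary inequality $\sum_{j}a_{j}^{q/p}\le(\sum_{j}a_{j})^{q/p}$ applied to $a_{j}=2^{jp}Cap_{\mathcal{\dot{W}}^{2\Theta,p}}(O_{j})$ reduces the whole sum to the strong capacitary inequality of Propositions \ref{strong} \& \ref{strong-1}, giving $\int|\mathcal{T}_tu|^{q}\,d\mu\lesssim\|u\|_{\mathcal{\dot{W}}^{2\Theta,p}}^{q}$. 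This also settles part (a) for $q\ge p$. For $0<q<p$ one keeps the same containment, writes $2^{jq}\mu(E_{2^{j}})=(2^{jp}Cap(O_{j}))^{q/p}\cdot(\tau_{j}\,Cap(O_{j})^{-q/p})$ with $\tau_{j}=\mu(T(O_{j}))$, and applies H\"older with exponents $p/q$ and $p/(p-q)$: the first factor is handled by the strong capacitary inequality, while the second, after using $Cap(O_{j})\ge c_{p}(\mu;\tau_{j})$ and a standard comparison of the resulting sum with the defining integral, is controlled by $I_{p,q}(\mu)^{(p-q)/p}$.

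The remaining and, I expect, most delicate point is the necessity $(i)\Rightarrow(ii)$ when $0<q<p$: here the soft capacitary estimates no longer suffice, and one must construct from a prescribed admissible family of tents a single test function---a suitably weighted superposition of near-extremal capacitary potentials of the sets $O_{j}$---whose embedding norm bounds a discretization of $I_{p,q}(\mu)$ from below. Producing such a function with the right coefficients, and controlling the overlap of the potentials through the equivalence $Cap_{\mathcal{\dot{W}}^{2\Theta,p}}\approx Cap_{\Theta,p}$ of Proposition \ref{relation}, is where the main work will lie.
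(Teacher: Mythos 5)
Your architecture coincides with the paper's: the paper proves part (b) as a four-implication cycle (Theorem \ref{extension-2}) using Lemma \ref{lem-4.3-1} for the tent containment $E_{\lambda}\subseteq T(O_{\lambda})$ and the lower bound $\inf_{T(O)}\mathcal{T}_t(|u|)\gtrsim 1$, closes the sufficiency with the strong capacitary inequalities of Propositions \ref{strong} \& \ref{strong-1}, and proves the sufficiency for $0<q<p$ (Theorem \ref{extension-1}) by exactly your H\"older split with exponents $p/q$ and $p/(p-q)$ together with the monotonicity of $t\mapsto c_{p}(\mu;t)$. Two small technical points you elide but should not: the level sets $O_{j}=\{\mathcal{N}u>c2^{j}\}$ need not be bounded, while $c_{p}(\mu;\cdot)$ and condition (iv) only quantify over bounded open sets, so the paper first works in $T(B(g_{0},r))$, uses $T(U_{1}\cup U_{2})=T(U_{1})\cap T(U_{2})$ to get $\mu(E_{\lambda}\cap T(B_{r}))\leq\mu(T(O_{\lambda}\cap B_{r}))$, and lets $r\to\infty$ at the end; and the openness of $O_{\lambda}$ rests on the lower semicontinuity of $g\mapsto\sup_{t}\sup_{d(g,g')<t}|\mathcal{T}_tu(g')|$, which is why Proposition \ref{continuous} and Remark \ref{333}(ii) are proved.

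The genuine gap is the necessity $(i)\Rightarrow(ii)$ for $0<q<p$, which you explicitly defer. This is not a routine verification and the theorem is not proved without it. The paper's execution (Theorem \ref{extension-1}, $(i)\Rightarrow(ii)$) is: for each $j$ choose a bounded open $U_{j}$ with $\mu(T(U_{j}))\geq 2^{j}$ and $Cap_{\mathcal{\dot{W}}^{2\Theta,p}}(U_{j})\leq 2c_{p}(\mu;2^{j})$; via Proposition \ref{relation} pick $f_{j}\geq 0$ with $\mathscr{I}_{2\Theta}f_{j}\geq 1$ on $U_{j}$ and $\|f_{j}\|_{L^{p}}^{p}\leq 4c_{p}(\mu;2^{j})$; form the pointwise \emph{supremum} $f_{i,k}:=\sup_{i\leq j\leq k}\bigl(2^{j}/c_{p}(\mu;2^{j})\bigr)^{1/(p-q)}f_{j}$ — the supremum (not a sum) is essential so that $\|f_{i,k}\|_{L^{p}}^{p}\lesssim\sum_{j}\bigl(2^{j}/c_{p}(\mu;2^{j})\bigr)^{p/(p-q)}c_{p}(\mu;2^{j})$ without any overlap loss; then use Lemma \ref{lem-4.3-1}(iv) to get $|\mathcal{T}_t(\mathscr{I}_{2\Theta}f_{i,k})|\gtrsim\bigl(2^{j}/c_{p}(\mu;2^{j})\bigr)^{1/(p-q)}$ on $T(U_{j})$, and convert $\mu(T(U_{j}))\geq 2^{j}$ into a lower bound for the $L^{q}(\mu)$-norm through the nonincreasing rearrangement (layer-cake) identity. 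The resulting two-sided estimate $\sum_{j=i}^{k}2^{jp/(p-q)}c_{p}(\mu;2^{j})^{-q/(p-q)}\lesssim C_{p,q}(\mu)^{q}\|f_{i,k}\|_{L^{p}}^{q}\lesssim C_{p,q}(\mu)^{q}\bigl(\sum_{j=i}^{k}2^{jp/(p-q)}c_{p}(\mu;2^{j})^{-q/(p-q)}\bigr)^{q/p}$ is self-improving and yields a bound uniform in $i,k$, hence $I_{p,q}(\mu)<\infty$. Your sketch names the right ingredients but supplies neither the exponent $1/(p-q)$ in the weights, nor the supremum device that controls the overlap, nor the rearrangement argument; until these are written out, part (a) in the range $0<q<p$ remains unproved.
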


As the limiting case $t\rightarrow 0$ of Theorem \ref{thm1}, a
characterization of the measure $\mu$ on $\X$ ensuring the
continuity of the inclusion
$\mathcal{\dot{W}}^{2\Theta,p}(\X)\subset L^{q}(\X,\mu)$ is also
obtained as follows.

\begin{theorem}\label{thm2}
Let $(s,\alpha,\sigma)\in(0,1)\times(0,1)\times(0,1)$,
$p\in(1,\min\{\mathcal{Q}/2\Theta,\Q/(1-2\Theta)\})$. Given a
nonnegative measure $\nu$ on $\X$, let
$$h_{p}(t):=\inf\Big\{Cap_{\mathcal{\dot{W}}^{2\Theta,p}}(E):\ E\subset \X, \nu(E)\geq t\Big\}\ \ \forall\ t\in \mathbb{R}_{+}.$$
The following statements are equivalent:
\item{\rm (i)} For any $u\in \mathcal{\dot{W}}^{2s,p}(\X)$,
$$\Big(\int_{\X}|u(g)|^{q}d\nu(g)\Big)^{1/q}\lesssim \|\nabla_{\X,w}^{2\Theta}u\|_{L^{p}(\X;H\X)}.$$

\item{\rm (ii)} The measure $\nu$ satisfies
\begin{equation*}
\infty>                                                                                                                          \left\{\begin{aligned}
&\|h_{p}\|:=\Big(\int^{\infty}_{0}\frac{d\xi^{{p}/{(p-q)}}}{(h_{p}(\xi))^{{q}/{(p-q)}}}\Big)^{(p-q)/(pq)},\ & 0<q<p;\\
&\sup_{E\subset\X}\frac{(\nu(E))^{1/q}}{(Cap_{\mathcal{\dot{W}}^{2\Theta,p}}(E))^{1/p}},\
& p\leq q<\infty.
\end{aligned}\right.
\end{equation*}

\end{theorem}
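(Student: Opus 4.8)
The plan is to treat Theorem \ref{thm2} as the $t\to 0$ endpoint of Theorem \ref{thm1}, proving the equivalence directly through the capacitary machinery assembled in Sections \ref{sec-3.1}--\ref{sec-3.2}. Throughout I would use Proposition \ref{w-w} to replace $\|\nabla_{\X,w}^{2\Theta}u\|_{L^{p}(\X;H\X)}$ by the equivalent quantity $\|u\|_{\mathcal{\dot{W}}^{2\Theta,p}(\X)}=\|\L^{\Theta}u\|_{L^{p}(\X)}$, and Proposition \ref{dense-1} to reduce every inequality over arbitrary $u\in\mathcal{\dot{W}}^{2\Theta,p}(\X)$ to the dense subclass $C_{c}^{\infty}(\X)$. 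I would split the argument according to the two ranges of $q$, since the endpoint structure differs.

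First, the easy implication (i)$\Rightarrow$(ii) in both ranges. Given a set $E\subset\X$ and any admissible $u\in C_{c}^{\infty}(\X)$ with $u\geq 1_{E}$, monotonicity gives $(\nu(E))^{1/q}\leq\big(\int_{\X}|u|^{q}d\nu\big)^{1/q}\lesssim\|u\|_{\mathcal{\dot{W}}^{2\Theta,p}(\X)}$; taking the infimum over all such $u$ yields $(\nu(E))^{1/q}\lesssim (Cap_{\mathcal{\dot{W}}^{2\Theta,p}}(E))^{1/p}$, which is exactly the stated condition for $p\le q<\infty$ and, after summation over a dyadic family of sets, also the integral condition defining $\|h_{p}\|$ for $0<q<p$. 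For (ii)$\Rightarrow$(i) with $p\le q<\infty$ I would decompose a fixed $u\in C_{c}^{\infty}(\X)$ along the dyadic level sets $A_{k}=\{g:|u(g)|>2^{k}\}$, so that $\int_{\X}|u|^{q}d\nu\lesssim\sum_{k}2^{kq}\nu(A_{k})$. Condition (ii) gives $\nu(A_{k})\lesssim (Cap_{\mathcal{\dot{W}}^{2\Theta,p}}(A_{k}))^{q/p}$, and since $q/p\ge 1$ the elementary inequality $\sum_{k}a_{k}^{q/p}\le\big(\sum_{k}a_{k}\big)^{q/p}$ with $a_{k}=2^{kp}Cap_{\mathcal{\dot{W}}^{2\Theta,p}}(A_{k})$ collapses the sum to $\big(\sum_{k}2^{kp}Cap_{\mathcal{\dot{W}}^{2\Theta,p}}(A_{k})\big)^{q/p}$, which the strong-type capacitary inequality of Proposition \ref{strong} (or \ref{strong-1}) bounds by $\|u\|_{\mathcal{\dot{W}}^{2\Theta,p}(\X)}^{q}$, giving (i).

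For the range $0<q<p$ the same level-set decomposition is combined with H\"older's inequality in the conjugate exponents $p/q$ and $p/(p-q)$:
\begin{equation*}
\begin{aligned}
\sum_{k}2^{kq}\nu(A_{k})
&=\sum_{k}\big(2^{kp}Cap_{\mathcal{\dot{W}}^{2\Theta,p}}(A_{k})\big)^{q/p}\frac{\nu(A_{k})}{(Cap_{\mathcal{\dot{W}}^{2\Theta,p}}(A_{k}))^{q/p}}\\
&\leq\Big(\sum_{k}2^{kp}Cap_{\mathcal{\dot{W}}^{2\Theta,p}}(A_{k})\Big)^{q/p}\Big(\sum_{k}\frac{\nu(A_{k})^{p/(p-q)}}{(Cap_{\mathcal{\dot{W}}^{2\Theta,p}}(A_{k}))^{q/(p-q)}}\Big)^{(p-q)/p}.
\end{aligned}
\end{equation*}
The first factor is controlled by the strong capacitary inequality, while for the second I would invoke $Cap_{\mathcal{\dot{W}}^{2\Theta,p}}(A_{k})\geq h_{p}(\nu(A_{k}))$ together with the monotonicity of $h_{p}$ to compare the dyadic sum $\sum_{k}\nu(A_{k})^{p/(p-q)}/h_{p}(\nu(A_{k}))^{q/(p-q)}$ with the integral defining $\|h_{p}\|$, using that $k\mapsto\nu(A_{k})$ is nonincreasing.

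The main obstacle is the converse (i)$\Rightarrow$(ii) for $0<q<p$, where the bare testing argument is too crude to produce the integral condition. Here I would discretize the measure values: choose sets $E_{j}$ with $\nu(E_{j})$ comparable to $2^{j}$ and capacity within a constant of $h_{p}(2^{j})$, take near-extremal admissible functions $u_{j}\geq 1_{E_{j}}$ with $\|u_{j}\|_{\mathcal{\dot{W}}^{2\Theta,p}(\X)}^{p}\approx Cap_{\mathcal{\dot{W}}^{2\Theta,p}}(E_{j})$, and assemble a single competitor $u=\sum_{j}a_{j}u_{j}$ with coefficients $a_{j}$ to be optimized. Applying (i) to $u$ bounds a weighted sum of the $\nu(E_{j})$ by a power of $\sum_{j}a_{j}^{p}Cap_{\mathcal{\dot{W}}^{2\Theta,p}}(E_{j})$, and the standard Maz'ya choice $a_{j}^{\,p-q}\approx\nu(E_{j})/Cap_{\mathcal{\dot{W}}^{2\Theta,p}}(E_{j})$ extracts precisely the finiteness of $\|h_{p}\|$ via the dual characterization of the weighted discrete embedding $\ell^{p}\hookrightarrow\ell^{q}$. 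The delicate point is controlling $\|u\|_{\mathcal{\dot{W}}^{2\Theta,p}(\X)}$ for the superposition $\sum_{j}a_{j}u_{j}$ by $\big(\sum_{j}a_{j}^{p}Cap_{\mathcal{\dot{W}}^{2\Theta,p}}(E_{j})\big)^{1/p}$ rather than by the lossy triangle inequality; I expect to handle this exactly as in the analogous implication of Theorem \ref{thm1}, where the tent $T(O)$ and the operator $\mathcal{T}_{t}$ are replaced here by the genuine limit $\mathcal{T}_{t}u\to u$ as $t\to 0$ furnished by the spatial continuity established in Section \ref{sec-2-3}.
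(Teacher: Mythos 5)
Your overall architecture coincides with the paper's (Theorems \ref{extension-3} and \ref{extension-4}): three of the four implications --- (i)$\Rightarrow$(ii) for $p\le q$, and (ii)$\Rightarrow$(i) in both ranges via level sets, H\"older in the exponents $p/q$ and $p/(p-q)$, the inequality $h_{p}(\nu(E_{2^{i}}))\le Cap_{\mathcal{\dot{W}}^{2\Theta,p}}(E_{2^{i}})$, and the strong capacitary inequality of Proposition \ref{strong-1} --- are carried out exactly as you describe. The problem is the implication you yourself single out as the main obstacle, (i)$\Rightarrow$(ii) for $0<q<p$. There your construction is a \emph{sum} $u=\sum_{j}a_{j}u_{j}$ of near-extremal Sobolev competitors, and you concede that the triangle inequality only gives $\|u\|_{\mathcal{\dot{W}}^{2\Theta,p}(\X)}\le\sum_{j}a_{j}\|u_{j}\|_{\mathcal{\dot{W}}^{2\Theta,p}(\X)}$, which is an $\ell^{1}$ bound where an $\ell^{p}$ bound $\big(\sum_{j}a_{j}^{p}Cap_{\mathcal{\dot{W}}^{2\Theta,p}}(E_{j})\big)^{1/p}$ is required. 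Deferring this to ``the analogous implication of Theorem \ref{thm1}'' does not close the gap, because the norm $\|\L^{\Theta}\cdot\|_{L^{p}(\X)}$ is not a lattice norm: neither the sum nor the pointwise maximum of the $u_{j}$ admits the needed superadditivity at the level of the Sobolev functions themselves.

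The device that actually makes this work --- both here and in Theorem \ref{extension-1} --- is to leave the Sobolev space entirely and work with the Riesz capacity. By Proposition \ref{relation}, $Cap_{\mathcal{\dot{W}}^{2\Theta,p}}(E_{j})\sim\inf\{\|f\|_{L^{p}(\X)}^{p}:0\le f,\ \mathscr{I}_{2\Theta}f\ge 1\ \text{on}\ E_{j}\}$, so one chooses nonnegative densities $f_{j}\in L^{p}(\X)$ with $\mathscr{I}_{2\Theta}f_{j}\ge 1$ on $E_{j}$ and $\|f_{j}\|_{L^{p}(\X)}^{p}\le 4h_{p}(2^{j})$, and forms the \emph{pointwise supremum} $f_{i,k}:=\sup_{i\le j\le k}\big(2^{j}/h_{p}(2^{j})\big)^{1/(p-q)}f_{j}$. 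Because the functions are nonnegative, $\|f_{i,k}\|_{L^{p}(\X)}^{p}\le\sum_{j=i}^{k}\big(2^{j}/h_{p}(2^{j})\big)^{p/(p-q)}\|f_{j}\|_{L^{p}(\X)}^{p}$, which is precisely the $\ell^{p}$-type control you need; and because the Riesz kernel is nonnegative, $\mathscr{I}_{2\Theta}f_{i,k}\ge\big(2^{j}/h_{p}(2^{j})\big)^{1/(p-q)}$ on $E_{j}$ for each $j$. Testing (i) against $\mathscr{I}_{2\Theta}f_{i,k}$ and computing $\int_{\X}|\mathscr{I}_{2\Theta}f_{i,k}|^{q}d\nu$ by the layer-cake formula then yields the lower bound $\sum_{j=i}^{k}2^{jp/(p-q)}h_{p}(2^{j})^{-q/(p-q)}$, and comparing the two sides gives the finiteness of $\|h_{p}\|$. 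Without this substitution of ``supremum of $L^{p}$ densities'' for ``sum of Sobolev functions,'' your Maz'ya-type coefficient choice cannot be exploited, so as written the argument for this implication does not go through.
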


Section \ref{sec-5} mainly investigate the functional and
geometrical aspects of the Besov capacity. In order to do this, we
first introduce  the Besov type space, which is defined by the
operator semigroup as follows:
$${B_{p,q,\mathcal{H}}^{{\alpha},\beta}(\X)} :=\Big\{u\in L^{p}(\X): N_{p,q,\mathcal{H}}^{{\alpha},\beta}(u):=\bigg(\int_{0}^{\infty}
                       \Big(\int_{\X}H_{\alpha,t}(|u-u(g)|^{p})(g)dg\Big)^{q/p}\frac{dt}{t^{{\beta q}/{2}+1}}
                    \bigg)^{1/q}<\infty \Big\}$$
endowed with the following norm
$$\|u\|_{B_{p,q,\mathcal{H}}^{{\alpha},\beta}(\X)}:=\|u\|_{L^{p}(\X)}+N_{p,q,\mathcal{H}}^{{\alpha},\beta}(u),$$
where $(p,q,\beta)\in[1,\infty)\times[1,\infty)\times(0,\infty)$. If $H_{\alpha,t}$ is replaced by $P_{\sigma,t}$ with $ \sigma\in
(0,1)$, we denote by $B_{p,q,\mathcal{P}}^{ {\sigma},\beta}(\X)$ and
$N_{p,q,\mathcal{P}}^{ {\sigma},\beta}$ the corresponding Besov
spaces and semi-norms, respectively.
 %and
%\begin{equation}\label{eq1.2}\vartheta=\left\{\begin{aligned}
%&\alpha,\quad \mathcal{T}_{t}=H_{\alpha,t};\\
%&\sigma,\quad \mathcal{T}_{t}=P_{\sigma,t}.
%\end{aligned}\right.\end{equation}
Then in Section \ref{sec-5-1}, we investigate the dense subspaces
and the min-max property of
$B_{p,q,\mathcal{H}}^{{\alpha},\beta}(\X)$, respectively (see Proposition
\ref{desity} and Lemma \ref{max}). Moreover, we also
study the relationship between the fractional Sobolev space
$\mathcal{\dot{W}}^{2\Theta,p}(\X)$ and the Besov space
(see Corollary \ref{BW}).

Another  Besov space is defined by the difference as follows:
$$B_{p,q}^{\beta}(\X):=\Big\{u\in L^{p}(\X):
N_{p,q}^{\beta}(u)<\infty\Big\},$$ where
$$N_{p,q}^{\beta}(u):=\Big(\int_{0}^{\infty}\Big(\int_{\X}\int_{B(g,r)}\frac{|u(g)-u(g')|^{p}}{r^{2\beta
p+\Q}}dg'dg\Big)^{q/p}\frac{dr}{r}\Big)^{1/q}.$$
The norm of $u\in B_{p,q}^{\beta}(\X)$ is defined as
$$\|u\|_{B_{p,q}^{\beta}(\X)}:=\|u\|_{L^{p}(\X)}+N_{p,q}^{\beta}(u).$$ In Proposition
\ref{com-2}, we prove  that $B_{p,q,
\mathcal{H}}^{\alpha,2\beta}(\X)$ is identical to
$B_{p,q}^{\alpha\beta}(\X)$ under the condition that
$\beta\in(0,1/p)$.

In Section \ref{sec-6.1}, we introduce  two classes of Besov
capacities: for an arbitrary set $E\subset\X$, one is defined as
$$\textrm{Cap}_{B_{p,p,\mathcal{H}}^{\alpha,\beta}}(E):=\inf\Big\{\|u\|^{p}_{B_{p,p,\mathcal{H}}^{\alpha,\beta}(\X)}:
u\in B_{p,p,\mathcal{H}}^{\alpha,\beta}(\X)~\mbox{and}~
E\subset\{g\in\X:u(g)\geq1\}^{o}\Big\} $$ and the other is defined
as
$$\textrm{Cap}_{B_{p,p,\mathcal{H}}^{\alpha,\beta}}^{*}(E):=\inf\Big\{\|u\|^{p}_{B_{p,p,\mathcal{H}}^{\alpha,\beta}(\X)}: u\in C_{c}^{\infty}(\X)~\mbox{and}~ E\subset\{g\in\X:u(g)\geq1\}^{o}\Big\}.$$
Similarly to the fractional Sobolev capacity
$Cap_{\mathcal{\dot{W}}^{2\Theta,p}}(\cdot)$, we first investigate
the measure-theoretic properties of the Besov capacity
$\textrm{Cap}_{B_{p,p,\mathcal{H}}^{\alpha,\beta}}(\cdot)$ (see
Lemma \ref{CP}). Afterwards we give the equivalence of these two
classes of Besov capacities (see Lemma \ref{EquiC}). By the aid of
the strong-type capacitary inequality, we obtain

\begin{theorem}\label{thm3}
Let $\alpha\in(0,1)$ and  $p\in(1,2\alpha)$. Given a nonnegative
measure $\nu$ on $\X$, let
$$h_{p,\nu}(t):=\inf\Big\{\textrm{Cap}_{B_{p,p,\mathcal{H}}^{\alpha,\beta}}(E):\ E\subset \X, \nu(E)\geq t\Big\}\ \ \forall\ t\in \mathbb{R}_{+}.$$
  Assume that $\beta\in(0,1/\alpha)$. For the Besov capacity $\textrm{Cap}_{B_{p,p,\mathcal{H}}^{\alpha,\beta}}(\cdot)$,
the following statements are equivalent:
\item{\rm (i)}
$$\Big(\int_{\X}|u(g)|^{q}d\nu(g)\Big)^{1/q}\lesssim \|u\|_{B_{p,p,\mathcal{H}}^{\alpha,\beta}(\X)}\ \
\ \ \forall\ u\in B_{p,p,\mathcal{H}}^{\alpha,\beta}(\X).$$

\item{\rm (ii)} The measure $\nu$ satisfies
\begin{equation*}
\infty>                                                                                                                          \left\{\begin{aligned}
&\|\nu\|_{p,q}:=\Big(\int^{\infty}_{0}\Big(\frac{t^{p/q}}{h_{p,\nu}(t)}\Big)^{q/(p-q)}\frac{dt}{t}\Big)^{(p-q)/p},\ & 0<q<p;\\
&\sup_{E\subset\X}\frac{(\nu(E))^{1/q}}{(\textrm{Cap}_{B_{p,p,\mathcal{H}}^{\alpha,\beta}}(E))^{1/p}},\
&  p\leq q<\infty.
\end{aligned}\right.
\end{equation*}

%\item{\rm (b)} Assume that $\beta\in(0,2)$. For the Besov capacity $\textrm{Cap}_{B_{p,p}^{\mathcal{L}^{\sigma},\beta}}(\cdot)$,
%the following statements are equivalent:
%\item{\rm (i)}
%$$\Big(\int_{\X}\Big|u(g)\Big|^{q}d\mu(g)\Big)^{1/q}\lesssim \|u\|_{B_{p,p}^{\L^{\sigma},\beta}(\X)}\ \ \ \ \forall\ u\in B_{p,p}^{\L^{\sigma},\beta}(\X).$$
%
%\item{\rm (ii)} The measure $\mu$ satisfies
%\begin{equation*}
%\infty>                                                                                                                          \left\{\begin{aligned}
%&\|\mu\|_{p,q}:=\Big(\int^{\infty}_{0}\Big(\frac{t^{p/q}}{h_{p,\mu}(t)}\Big)^{q/(p-q)}\frac{dt}{t}\Big)^{(p-q)/p},\ & 0<q<p;\\
%&\sup_{E\subset\X}\frac{(\mu(E))^{1/q}}{(\textrm{Cap}_{B_{p,p}^{\mathcal{L}^{\sigma},\beta}}(E))^{1/p}},\ & p\leq q<\infty.
%\end{aligned}\right.
%\end{equation*}
\end{theorem}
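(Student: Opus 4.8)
The plan is to follow the now-standard capacitary-strong-type machinery (as in Maz'ya, Xiao, and Zhai) adapted to the Besov capacity $\textrm{Cap}_{B_{p,p,\mathcal{H}}^{\alpha,\beta}}$ on $\X$. Let me think through the structure carefully.

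The theorem asserts equivalence of a trace inequality (i) with an integral/supremum condition (ii) on the measure $\nu$, split into the cases $0<q<p$ and $p\le q<\infty$. The function $h_{p,\nu}$ is the capacitary minimizing function for $\nu$.

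Let me plan the proof.

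First, the direction $(i)\Rightarrow(ii)$: given the trace inequality holds, I want to extract the capacitary bound. For any set $E$ with $\nu(E)\ge t$, take an admissible competitor $u$ in the definition of the capacity, so $u\ge 1$ on a neighborhood of $E$. Then the left side of (i) is at least $(\nu(E))^{1/q}\ge t^{1/q}$, while the right side is $\|u\|_{B_{p,p,\mathcal{H}}^{\alpha,\beta}(\X)}$. Taking the infimum over admissible $u$ gives $t^{1/q}\lesssim (\textrm{Cap}_{B_{p,p,\mathcal{H}}^{\alpha,\beta}}(E))^{1/p}$, i.e. $t^{p/q}\lesssim h_{p,\nu}(t)$. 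In the case $p\le q$ this is exactly the supremum bound in (ii). For $0<q<p$, I must integrate: the challenge is to control $\|\nu\|_{p,q}$ by the constant in (i), which requires a more refined argument than a single-set test — I will need the full strength of a capacitary distribution estimate.

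The harder and more substantial direction is $(ii)\Rightarrow(i)$. The key ingredient is the strong-type capacitary inequality for the Besov capacity. The plan is as follows. For $u\in B_{p,p,\mathcal{H}}^{\alpha,\beta}(\X)$, decompose via level sets $E_\lambda=\{g:|u(g)|\ge\lambda\}$ (or rather their interiors, to match the capacity definition). Write the trace integral as a layer-cake:
\begin{equation*}
\int_{\X}|u(g)|^{q}\,d\nu(g)=q\int_0^\infty \lambda^{q-1}\nu(E_\lambda)\,d\lambda.
\end{equation*}
Now estimate $\nu(E_\lambda)$ using the minimizing function: by definition $\nu(E_\lambda)\le t$ forces $h_{p,\nu}(t)\le \textrm{Cap}_{B_{p,p,\mathcal{H}}^{\alpha,\beta}}(E_\lambda)$, so inverting gives a bound on $\nu(E_\lambda)$ in terms of the capacity of the level set. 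Combine this with the condition (ii) and Hölder's inequality (for $0<q<p$) or a direct supremum substitution (for $p\le q$), converting the $d\nu$-integral into a $d\lambda$-integral weighted by capacities $\textrm{Cap}_{B_{p,p,\mathcal{H}}^{\alpha,\beta}}(E_\lambda)$. Finally, invoke the strong-type capacitary inequality
\begin{equation*}
\int_0^\infty \lambda^{p-1}\textrm{Cap}_{B_{p,p,\mathcal{H}}^{\alpha,\beta}}(\{|u|\ge\lambda\})\,d\lambda\lesssim \|u\|^p_{B_{p,p,\mathcal{H}}^{\alpha,\beta}(\X)}
\end{equation*}
to absorb the capacity-weighted integral into the Besov norm, yielding (i). This strong-type inequality plays the same role here that Propositions 3.x played for the fractional Sobolev capacity, and its analogue for the Besov capacity is where the real analytic content (using Lemma \ref{CP} and Lemma \ref{EquiC}) is consumed.

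The main obstacle I anticipate is establishing and correctly applying the strong-type capacitary inequality for $\textrm{Cap}_{B_{p,p,\mathcal{H}}^{\alpha,\beta}}$, together with reconciling the open-set (interior) structure in the capacity definition with the arbitrary level sets $E_\lambda$. Unlike the Sobolev case where density of $C_c^\infty$ is clean, for the Besov capacity I must use the equivalence of the two capacities $\textrm{Cap}_{B_{p,p,\mathcal{H}}^{\alpha,\beta}}$ and $\textrm{Cap}^*_{B_{p,p,\mathcal{H}}^{\alpha,\beta}}$ (Lemma \ref{EquiC}) to pass between general admissible functions and smooth compactly supported competitors, and to handle the outer-regularity needed so that the minimizing function $h_{p,\nu}$ behaves well along the dyadic decomposition $\lambda\in[2^k,2^{k+1})$. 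Packaging the layer-cake sum over dyadic levels and matching exponents in the Hölder step (the exponent pair $p/(p-q)$ and its conjugate in the $0<q<p$ regime) is routine but delicate; I expect that to be the place where the precise form of $\|\nu\|_{p,q}$ in (ii) emerges.
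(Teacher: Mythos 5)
Your plan for the direction (ii)$\Rightarrow$(i) is essentially the paper's: a layer-cake decomposition over dyadic levels, H\"older's inequality with exponents $p/q$ and $p/(p-q)$ in the regime $0<q<p$ (or the elementary inequality $(\sum_j|a_j|)^{\kappa}\le\sum_j|a_j|^{\kappa}$, $\kappa\in(0,1]$, for $p\le q$), followed by the strong-type capacitary inequality
\begin{equation*}
\int_0^\infty \textrm{Cap}_{B_{p,p,\mathcal{H}}^{\alpha,\beta}}(\{g\in\X:|u(g)|>t\})\,dt^{p}\lesssim\|u\|^{p}_{B_{p,p,\mathcal{H}}^{\alpha,\beta}(\X)}.
\end{equation*}
You correctly identify this inequality as the key ingredient, but you do not indicate how it is obtained: the paper proves it (Theorem \ref{CI}) by the Maz'ya-type truncation $F_j(u)=2^j\phi(2^{-j}u)$ together with $\sum_j|F_j(u(g))-F_j(u(g'))|^p\lesssim|u(g)-u(g')|^p$, not from Lemmas \ref{CP} and \ref{EquiC} as you suggest. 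The restriction to lower semi-continuous $u$ (so the level sets are open and compatible with the interior condition in the definition of the capacity) is exactly how the paper resolves the reconciliation issue you raise.

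The genuine gap is in (i)$\Rightarrow$(ii) for $0<q<p$. The single-set test you describe only yields $t^{p/q}\lesssim h_{p,\nu}(t)$, i.e.\ pointwise boundedness of the integrand $\bigl(t^{p/q}/h_{p,\nu}(t)\bigr)^{q/(p-q)}$, and a bounded integrand integrated against $dt/t$ over $(0,\infty)$ gives nothing; you acknowledge that a ``more refined argument'' is needed but do not supply it, and a ``capacitary distribution estimate'' is not the right tool. What is actually required (Theorem \ref{trace1}) is an extremal construction: for each $j\in\mathbb{Z}$ choose a compact $\mathcal{K}_j$ with $\nu(\mathcal{K}_j)\ge 2^j$ and $\textrm{Cap}_{B_{p,p,\mathcal{H}}^{\alpha,\beta}}(\mathcal{K}_j)$ comparable to $h_{p,\nu}(2^j)$, take via Lemma \ref{EquiC} a smooth $u_j\ge1$ on $\mathcal{K}_j$ with comparable norm, and form
\begin{equation*}
u_{i,k}:=\max_{i\le j\le k}\Bigl(\frac{2^{j}}{h_{p,\nu}(2^{j})}\Bigr)^{1/(p-q)}u_{j}.
\end{equation*}
The min-max property of the Besov norm (Lemma \ref{max}) gives the upper bound $\|u_{i,k}\|^p_{B_{p,p,\mathcal{H}}^{\alpha,\beta}(\X)}\lesssim\sum_{i\le j\le k}2^{jp/(p-q)}h_{p,\nu}(2^j)^{-q/(p-q)}$, while applying the trace inequality to $u_{i,k}$ and computing the nonincreasing rearrangement with respect to $\nu$ gives the same sum as a lower bound for $\|u_{i,k}\|^q_{B_{p,p,\mathcal{H}}^{\alpha,\beta}(\X)}$; comparing the exponents $p$ and $q$ and letting $i\to-\infty$, $k\to\infty$ yields $\|\nu\|_{p,q}<\infty$. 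Without this construction --- and in particular without Lemma \ref{max}, which is what makes the Besov norm of a pointwise maximum controllable --- the $0<q<p$ half of the theorem does not follow from your outline.
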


\begin{remark}
 \item{(i)} Our approach in this article combines semigroup theory as well as the nonlocal calculus for $\L$ to
  overcome the difficulties caused by the lack of the Fourier transform. Moreover, we point out that, even on the setting $\mathbb{R}^{n}$,
  our main results obtained in Theorems \ref{thm1} \&\ \ref{thm2} are new and generalize the former results obtained by Xiao
   \cite[Theorems 1.1, 1.2]{X1}. The result of Theorem \ref{thm3} is also  new.

\item{(ii)} Consider the following heat equation:
\begin{equation}\label{eq-0.1}
\left\{\begin{aligned}
 &(\partial_{t}+\L^{\alpha})f(g,t)=0,&\  (g,t)\in \X\times\mathbb{R}_{+};\\
 &f(g,0)=u(g),&\  g\in \X.
  \end{aligned}
\right.\end{equation}
For $u\in L^{2}(\X)$, the solution to the above heat equation (\ref{eq-0.1}) can be
represented as
$$f(g,t):=H_{\alpha,t}u(g):=\int_{\X}K_{\alpha,t}(g'^{-1}g)u(g')dg' \ \ \ \ \forall\ (g,t)\in\X\times\mathbb{R}_{+},$$
where $K_{\alpha,t}(\cdot)$ is the heat kernel of the semigroup $\{H_{\alpha,t}\}_{t>0}$. By the aid of fractional Sobolev capacities,
Theorem \ref{thm1} provide geometric characterizations of Borel measures $\mu$ such that fractional Sobolev type spaces on
 $\X$ can be extended to the weighted Lebesgue spaces related to $\mu$ on $\X\times\mathbb R_{+}$ via the family of dissipative operators:
$u\mapsto f(g,t^{2\alpha}) $ with $u$ belonging to the fractional
Sobolev spaces, respectively. Also, Theorem \ref{thm2} can be seen
as the limit of Theorem \ref{thm1}, which gives the capacitary
characterization of the embeddings from fractional Sobolev type
spaces on $\X$ to the weighted Lebesgue spaces on $\X$.
\end{remark}

\subsection{Notation}
Throughout this paper,  all notations will be given as needed.

\begin{itemize}
  \item $\mathbb{N}:=\{1,2,3,\ldots\}$, $\mathbb{Z}:=\{0,\pm1,\pm2, \ldots\}$. For any set $E\subset \mathcal{X},$ denote by $\mathbf{1}_{E}$ the characteristic function of $E.$
  \item We denote by $C_{c}^{\infty}(\X)$ the space of all infinitely differentiable functions on $\X$ with compact
  support, and by $\mathscr{S}(\X)$ the Schwartz class on $\X.$
  \item For every multi-index $\beta=(\beta_{1},\ldots,\beta_{n})\in\mathbb{N}^{n}$, let $|\beta|:=\beta_{1}+\cdots+\beta_{n}$ and $D^{\beta}:=\mathbf{X}_{1}^{\beta_{1}}\cdots\mathbf{X}_{n}^{\beta_{n}}.$ We also use the notation $(\partial/\partial x_{i})^{\beta}$ or $\partial^{\beta}$ to denote differentiation with respect to the standard basis of $\mathbb{R}^{n}$.
  \item The convolution of two functions $f,u:\X\rightarrow\mathbb{R}$ is defined whenever it makes sense by
  $$(f\ast u)(g)=\int_{\X}f(gg'^{-1})u(g')dg'=\int_{\X}f(g')u(g'^{-1}g)dg'.$$
  \item The symbol $\thicksim$ between two positive expressions $u,v$ means that their ratio
  $\frac{u}{v}$ is bounded from above and below by positive constants. The symbol $\lesssim$ (respectively $\gtrsim$)  between two nonnegative expressions $u,v$
  means that there exists a constant $C>0$ such that $u\leq Cv$ (respectively $u\geq Cv$).
%  \item Throughout the paper, $C, C', c,$ etc. stand for positive constants, whose values may
% vary from occurrence to occurrence.
\end{itemize}

\section{Preliminaries}\label{sec-2}
\subsection{Stratified Lie groups}\label{sec-2.1}

We use the same notations and terminologies for stratified Lie
groups as those in \cite{F75}. A Lie group $\X$ is called stratified
if it is nilpotent, connected and simple connected, and its Lie
algebra $\mathfrak{g}$
 is equipped with a family of dilations: $\{\delta_{r}:r>0\}$ and
$\mathfrak{g}$ can be expressed as  a direct sum
$\oplus_{j=1}^{m}\mathfrak{g}_{j}$ such that
$[\mathfrak{g}_{1},\mathfrak{g}_{j}]\subset g_{1+j}$ for $j<m$,
$[\mathfrak{g}_{1}, \mathfrak{g}_{m}]=0$, and
$\delta_{r}(\mathbf{X})=r^{j}\mathbf{X}$ for $\mathbf{X}\in
\mathfrak{g}_{j}$.

 $\mathcal{Q}=\sum_{j=1}^{m}jn_{j}$ is called the homogeneous dimension of $\X$, where $n_{j}=\dim\mathfrak{g}_{j}$. We always assume $\Q\geq 3$.
 $\X$
 is topologically identified with $\mathfrak{g}$ via the exponential map  $\mathrm{exp}: \mathfrak{g}\mapsto\X$ and $\delta_{r}$ is also viewed
  as an automorphism of $\X$. We fix a homogeneous norm of $\X$, which satisfies the generalized triangle inequalities
\begin{equation}\label{2.11}                                                                                                                           \left\{\begin{aligned}
&|gg'|\leq \gamma(|g|+|g'|)\ \ \ \ \forall\ g,g'\in\X;\\
&||gg'|-|g||\leq\gamma|g'|\ \ \ \ \forall\ g,g'\in\X,|g'|\leq|g|/2,
\end{aligned}\right.
\end{equation}
where $\gamma\geq 1$ is a constant. The homogeneous norm induces a
quasi-metric $d$ which is defined by $d(g,g'):=|g'^{-1}g|.$ In
particular,
$$d(e,g)=|g|\ \ \ \ \text{and}\ \ \ \ d(g,g')=d(e,g'^{-1}g),$$ where
 $e$ is  the identity element of $\X$. The ball of
radius $r$ centered at $g$ is denoted by
$B(g,r)=\{g'\in\X:d(g,g')<r\}.$ The Haar measure on $\X$ is simply
the Lebesgue measure on $\mathbb{R}^{n}$ under the identification of
$\X$ with $\mathfrak{g}$ and the identification of $\mathfrak{g}$
with $\mathbb{R}^{n}$, where $n=\sum^{m}_{j=1}n_{j}$. The measure of
$B(g,r)$ is $|B(g,r)|=b'r^{\mathcal{Q}}$ dependent of $\mathcal{Q}$,
 where $b'$ is a constant.

We identify $\mathfrak{g}$ with $\mathfrak{g}_{L}$, the Lie algebra of left-invariant vector fields on $\X$.
 Let $\{\mathbf{X}_{j}: j=1,...,n_{1}\}$ be a basis of $\mathfrak{g}_{1}$. Let $\widetilde{\mathbf{X}_{j}}$
 be the right-invariant vector field on $\X$. For any $u\in C^{1}(\X)$, we set $\widetilde{u}(g):=u(g^{-1})$.
 Then $\widetilde{\mathbf{X}_{j}}$ and $\mathbf{X}_{j}$ are related by $\widetilde{\mathbf{X}_{j}}u(g)=-\mathbf{X}_{j}\widetilde{u}(g^{-1})$,
  $\forall g\in\X$.
The sub-Laplacian $\L$ is defined by
$\L:=-\sum^{n_{1}}_{j=1}\mathbf{X}_{j}^{2}$. Below, we will use the
sub-Riemannian structure of $\X$. The horizontal bundle $H\X$ is the
subbundle of the tangent boudle $T\X$ spanned by the vector fields
$\mathbf{X}_{1},...,\mathbf{X}_{n_{1}}$, i.e., the fibers of $H\X$
are denoted as
$$H\X_{g}:=\text{span}\{\mathbf{X}_{1}(g),...,\mathbf{X}_{n_{1}}(g)\}  \ \ \ \forall\ g\in \X.$$
Then for any $g\in \X$, we can endow $H\X_{g}$ with a norm
$|\cdot|_{g}$ and a scalar product $\langle\cdot,\cdot\rangle_{g}$
making $\{\mathbf{X}_{1}(g),...,\mathbf{X}_{n_{1}}(g)\}$ an
orthonormal basis. In the sequel, if the identification of the base
point is clear or not important, we often omit $g$ in subscripts.
The $H\X$-valued operator
$$\nabla_{\X}u:=\sum_{j=1}^{n_{1}}(\mathbf{X}_{j}u)\mathbf{X}_{j} \ \ \ \ \forall\ u\in C^{1}(\X)$$
is called the horizontal gradient. And the real-valued operator
$$\text{div}_{\X}\varphi:=\sum_{j=1}^{n_{1}}\mathbf{X}_{j}\varphi_{j} \ \ \ \ \forall\ \varphi=\sum_{j=1}^{n_{1}}\varphi_{j}\mathbf{X}_{j}\in C^{1}(\X;H\X)$$
is called the horizontal divergence.

From \cite[Chapter 1]{F82}, as for the interaction of convolution and differential operators, for $j=1,\ldots,n_{1}$, we have
\begin{equation}\label{e-2.6}
  \mathbf{X}_{j}(f\ast u)=f\ast(\mathbf{X}_{j}u),\ \widetilde{\mathbf{X}_{j}}(f\ast u)=(\widetilde{\mathbf{X}_{j}}f)\ast u,\ (\mathbf{X}_{j} f)\ast u=f\ast(\widetilde{\mathbf{X}_{j}}u),
\end{equation}
where $f$ and $g$ are smooth functions with suitable decay at infinity. Moreover by a standard approximation argument, (\ref{e-2.6}) remains valid if $f,u\in \text{Lip}(\X)$ and either of them has compact support. Generally, the convolution on $\X$ is not commutative. However, there are the following simple facts: given measurable functions $f,u,h:\X\rightarrow\mathbb{R}$, suppose that two of them belong to $\text{Lip}_{c}(\X)$ and the third one is locally integrable, if additionally the function $h$ satisfies $h(g)=h(g^{-1})$ for any $g\in \X$, then using Fubini's theorem, we have
\begin{equation}\label{e-2.7}
 \left\{\begin{aligned}
 \int_{\X}f(g)(h\ast u)(g)dg&=\int_{\X}u(g)(h\ast f)(g)dg;\\
 \int_{\X}f(g)(u\ast h)(g)dg&=\int_{\X}u(g)(f\ast h)(g)dg.\end{aligned}\right.
\end{equation}

\subsection{Properties and kernel estimates of $\{e^{-t\mathcal{L}}\}_{t>0}$}\label{sec-2.3}

In this subsection,  we  collect several properties of $e^{-t\L}$
which will be used in the rest of the paper mostly without proofs.
The reader can refer to the references \cite{F75,F82,VSC}.
\begin{lemma}\label{heat-1}
The heat semigroup $\{e^{-t\L}\}_{t>0}$ satisfies  the following
properties:

\item{\rm (i)} $e^{-t\L}u(g)=u\ast e^{-t\L}(g)$, where $e^{-t\L}(g)$ is $C^{\infty}$ on $\X\times(0,\infty)$, $\int_{\X}e^{-t\L}(g)dg=1$
 for all $t$, and $e^{-t\L}(g)\geq 0$  for all $g$ and $t$.

\item{\rm (ii)} Semigroup property:
$e^{-t\mathcal{L}}e^{-t'\mathcal{L}}=e^{-(t+t')\mathcal{L}} \ \ \
\forall\ t,t'>0.$

\item{\rm (iii)} $\{e^{-t\L}\}_{t>0}$ is a contraction semigroup on
$L^{p}$ for $1\leq p\leq\infty$, which is strongly continuous for
$1\le p<\infty$.

 \item{\rm (iv)} $e^{-t\L}$ is self-adjoint.
  \end{lemma}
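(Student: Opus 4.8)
All four assertions are standard consequences of the spectral theory of the self-adjoint realization of $\L$ together with hypoellipticity, so the plan is to organize the argument around the spectral theorem. First I would establish that $\L$, initially defined on $C_{c}^{\infty}(\X)$, is a non-negative, essentially self-adjoint operator on $L^{2}(\X)$: each left-invariant vector field $\mathbf{X}_{j}$ is formally skew-adjoint with respect to the Haar measure (integration by parts), so $\L=-\sum_{j}\mathbf{X}_{j}^{2}=\sum_{j}\mathbf{X}_{j}^{*}\mathbf{X}_{j}\geq 0$ is symmetric and non-negative, while essential self-adjointness follows from the completeness of the associated sub-Riemannian distance and the bracket-generating (stratification) hypothesis, as in \cite{F75,VSC}. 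Denoting its closure again by $\L$, the operators $e^{-t\L}$ are then defined by the functional calculus. With this in hand, properties (ii) and (iv) are immediate: the semigroup law $e^{-t\L}e^{-t'\L}=e^{-(t+t')\L}$ is the identity $e^{-t\lambda}e^{-t'\lambda}=e^{-(t+t')\lambda}$ read off from the spectrum $\lambda\geq 0$, and $e^{-t\L}$ is self-adjoint because $\lambda\mapsto e^{-t\lambda}$ is real-valued on $[0,\infty)$ and $\L$ is self-adjoint.

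Next I would obtain the convolution structure and smoothness claimed in (i). Since the $\mathbf{X}_{j}$ are left-invariant, $\L$ commutes with every left translation $\tau_{h}\colon u\mapsto u(h^{-1}\cdot)$, and hence so does each bounded operator $e^{-t\L}$ in the functional calculus; a bounded operator on $L^{2}(\X)$ commuting with all left translations is a right convolution operator, which yields $e^{-t\L}u=u\ast p_{t}$ with kernel $p_{t}:=e^{-t\L}(\cdot)$. To upgrade $p_{t}$ to a genuine $C^{\infty}$ function on $\X\times(0,\infty)$, I would invoke hypoellipticity of the heat operator $\partial_{t}+\L$: the fields $\mathbf{X}_{1},\dots,\mathbf{X}_{n_{1}}$ together with $\partial_{t}$ satisfy H\"ormander's condition on $\X\times\mathbb{R}$, so $\partial_{t}+\L$ is hypoelliptic, and since $(g,t)\mapsto e^{-t\L}u(g)$ solves $(\partial_{t}+\L)u=0$ on $\X\times(0,\infty)$, the kernel itself is smooth there.

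It remains to prove positivity and conservativeness of the kernel in (i) and the contraction/continuity statement (iii); the analytic heart is that $\L$ generates a sub-Markovian semigroup. I would verify this via the Beurling--Deny criteria: the quadratic form $u\mapsto\sum_{j}\|\mathbf{X}_{j}u\|_{L^{2}}^{2}$ associated to $\L$ is a local, closed, Markovian Dirichlet form (normal contractions decrease the horizontal energy), so $e^{-t\L}$ is positivity preserving and $L^{\infty}$-contractive, and positivity preservation forces $p_{t}\geq 0$. The mass identity $\int_{\X}p_{t}(g)\,dg=1$ I would deduce from the dilation homogeneity $p_{t}(g)=t^{-\Q/2}p_{1}(\delta_{t^{-1/2}}g)$ (reflecting that $\L$ is homogeneous of degree two) together with $e^{-t\L}1=1$, i.e.\ stochastic completeness of the underlying hypoelliptic diffusion; see \cite{F82,VSC}. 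Once $p_{t}\geq 0$ and $\int_{\X}p_{t}=1$ are known, Young's inequality gives $\|u\ast p_{t}\|_{L^{1}}\leq\|u\|_{L^{1}}$ and $\|u\ast p_{t}\|_{L^{\infty}}\leq\|u\|_{L^{\infty}}$, and Riesz--Thorin interpolation yields the $L^{p}$-contraction for all $1\leq p\leq\infty$; strong continuity for $1\leq p<\infty$ follows from the $L^{2}$ case (spectral theorem), verification on the dense class $C_{c}^{\infty}(\X)$, and the uniform contraction bound.

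The genuinely non-elementary steps are the essential self-adjointness of the hypoelliptic operator $\L$ (where the stratification hypothesis enters and on which the whole functional calculus rests) and the positivity together with conservativeness of $p_{t}$ (the core of the sub-Markov property, which the spectral theorem alone does not deliver). Since this lemma merely compiles standard facts, in the write-up I would carry out the routine interpolation and density steps explicitly and cite \cite{F75,F82,VSC} for these two harder inputs.
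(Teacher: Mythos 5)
The paper gives no proof of this lemma at all: it is explicitly presented as a compilation of standard facts, with the reader referred to \cite{F75,F82,VSC}, so there is no argument in the text to compare against yours. Your sketch — nonnegativity and essential self-adjointness of $\L$ feeding the spectral calculus for (ii) and (iv), left-invariance plus H\"ormander hypoellipticity for the convolution structure and smoothness of the kernel in (i), the Beurling--Deny criteria and stochastic completeness for positivity, unit mass and the $L^{p}$-contraction, and a density argument for strong continuity — is precisely the standard route taken in those references and is correct (indeed, once $p_{t}\geq 0$ and $\int_{\X}p_{t}=1$ are known, Young's inequality already gives the $L^{p}$-contraction for every $p$ without needing Riesz--Thorin).
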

 From the spectral decomposition,
it is also easy to see that the sub-Laplace operator $\mathcal{L}$ is
  the generator of $\{e^{-t\L}\}_{t>0}$, that is,  for
$u\in\mathscr{D}(\mathcal{L})$ (the domain of $\mathcal{L}$),
$$\lim_{t\rightarrow0^{+}}\Big\|\dfrac{e^{-t\mathcal{L}}u-u}{t}-\mathcal{L}u\Big\|_{L^{2}(\X)}=0.$$
This implies that for
$t>0,~e^{-t\mathcal{L}}\mathscr{D}(\mathcal{L})\subset\mathscr{D}(\mathcal{L}),$
and that for $u\in\mathscr{D}(\mathcal{L}),$
\begin{equation}\label{D}
\partial_{t}e^{-t\mathcal{L}}u=e^{-t\mathcal{L}}\mathcal{L}u=\mathcal{L}e^{-t\mathcal{L}}u,
\end{equation}
where the derivative in the left-hand side of identities (\ref{D})
is taken in the sense of $L^{2}(\X).$ Please refer to the books
\cite{J,LB} for  its main properties and regularity.  The following
lemmas for the estimates of the heat kernel  can be seen from
\cite[Section IV.4]{VSC} on the nilpotent Lie group, which includes
the stratified Lie group.

\begin{lemma}\label{upper}{\rm (\cite[Theorems IV. 4.2\ \&\ IV. 4.3]{VSC})}
For all $t>0$ and  all $g\in\X,$ there exists a  constant  $c\geq1$
such that the kernel $e^{-t\L}(\cdot)$ of the heat semigroup
satisfies
$$c^{-1}t^{-\Q/2}e^{-d(e,g)^{2}/c^{-1}t}\leq e^{-t\L}(g)\leq c^{-1}t^{-\Q/2}e^{-d(e,g)^{2}/ct}.$$
\end{lemma}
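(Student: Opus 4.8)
The plan is to establish two-sided Gaussian bounds by combining the dilation structure of $\X$ with the standard heat-kernel machinery for Dirichlet spaces of polynomial volume growth. First I would use homogeneity to collapse the two-parameter problem to a single time. Since $\mathbf{X}_1,\dots,\mathbf{X}_{n_1}$ are homogeneous of degree one under $\{\delta_r\}_{r>0}$, the operator $\L=-\sum\mathbf{X}_j^2$ is homogeneous of degree two, $\L(u\circ\delta_r)=r^2(\L u)\circ\delta_r$. Together with $|B(e,r)|=b'r^{\Q}$ and the uniqueness of the fundamental solution, this forces the self-similar identity
$$e^{-t\L}(g)=t^{-\Q/2}\,e^{-\L}\big(\delta_{t^{-1/2}}g\big),\qquad d\big(e,\delta_{t^{-1/2}}g\big)=t^{-1/2}d(e,g),$$
so it suffices to prove $e^{-\L}(g)\thicksim e^{-d(e,g)^2/c}$ at $t=1$; the prefactor $t^{-\Q/2}$ and the Gaussian exponent $d(e,g)^2/t$ then appear automatically.

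For the upper bound I would proceed in two stages. The sub-Laplacian satisfies a scale-invariant Sobolev (equivalently Nash) inequality $\|f\|_{L^{2\Q/(\Q-2)}}^2\lesssim\|\nabla_{\X} f\|_{L^2}^2$, which is meaningful since $\Q\geq 3$; through the functional-analytic equivalence in \cite{VSC} this yields the on-diagonal bound $\|e^{-t\L}\|_{L^1\to L^\infty}\lesssim t^{-\Q/2}$, i.e. $\sup_g e^{-t\L}(g)\lesssim t^{-\Q/2}$. To upgrade this to off-diagonal decay I would run the Davies perturbation method: for a Lipschitz weight $\psi$ with $|\nabla_{\X}\psi|\le1$ one controls the twisted semigroup $e^{\rho\psi}e^{-t\L}e^{-\rho\psi}$ on $L^2(\X)$, then optimizes over $\rho$ and over $\psi$ approximating the quasi-metric $d$ (which satisfies $|\nabla_{\X}d|\le1$ a.e.), arriving at $e^{-t\L}(g)\lesssim t^{-\Q/2}e^{-d(e,g)^2/(ct)}$.

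For the lower bound I would first obtain a near-diagonal estimate and then globalize it. Conservativeness $\int_{\X}e^{-t\L}=1$ from Lemma \ref{heat-1}(i), combined with the upper bound to discard the tail outside $B(e,A\sqrt t)$, gives $\int_{B(e,A\sqrt t)}e^{-t\L}\gtrsim1$; Cauchy--Schwarz against $|B(e,A\sqrt t)|\thicksim t^{\Q/2}$ together with the semigroup and self-adjointness identity $e^{-2t\L}(e)=\|e^{-t\L}(\cdot)\|_{L^2(\X)}^2$ then yields the on-diagonal lower bound $e^{-t\L}(e)\gtrsim t^{-\Q/2}$. To pass from this to the full Gaussian lower bound I would invoke the parabolic Harnack inequality for $\partial_t+\L$: since $\X$ is volume doubling (because $|B(e,r)|=b'r^{\Q}$) and satisfies the subelliptic Poincaré inequality for $\nabla_{\X}$, the Saloff-Coste--Grigor'yan theory delivers Harnack, and chaining it along a geodesic from $e$ to $g$ with $\thicksim d(e,g)^2/t$ steps produces the exponential factor $e^{-d(e,g)^2/(ct)}$.

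I expect the lower bound to be the main obstacle, and within it the chaining step. The near-diagonal lower bound is soft, but propagating it to all $(g,t)$ rests on the parabolic Harnack inequality, whose validity depends on the Poincaré inequality for the horizontal gradient; verifying its hypotheses and tracking the constants through the iteration along a geodesic, so that the number of Harnack applications stays comparable to $d(e,g)^2/t$, is the delicate point. By contrast, the upper bound is a fairly mechanical application of the Sobolev inequality followed by the Davies method.
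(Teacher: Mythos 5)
The paper offers no proof of this lemma at all: it is imported verbatim from \cite[Theorems IV.4.2 \& IV.4.3]{VSC}, and your outline is essentially the argument given in that reference — dilation homogeneity of $\L$ reducing to $t=1$, a Nash/Sobolev inequality for the on-diagonal upper bound, Davies' perturbation for the Gaussian off-diagonal decay, conservativeness plus Cauchy--Schwarz for the on-diagonal lower bound, and parabolic Harnack chaining (valid here since the doubling and Poincar\'e hypotheses are known theorems on stratified groups) for the Gaussian lower bound. Your sketch is therefore a correct reconstruction of the standard proof of the cited result, and there is nothing in the paper's treatment to compare it against beyond the citation itself.
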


\begin{lemma}\label{DH}{\rm (\cite[Theorem IV.4.2]{VSC})}
For every non-negative integer $k$ and $\beta\in\mathbb{N}^{n}$,
there exists $c>0$ such that for every $g\in\X$ and $t>0$,
$$\Big|\frac{\partial^{k}}{\partial t^{k}}D^{\beta}e^{-t\L}(g)\Big|\leq ct^{-(\Q+|\beta|+2k)/2}e^{-d(e,g)^{2}/t}.$$
\end{lemma}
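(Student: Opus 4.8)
The plan is to exploit the $\delta_r$-homogeneity of $\L$ to collapse the $t$-dependence, reducing the whole family of estimates to a single bound at $t=1$. Since $\L=-\sum_{j=1}^{n_{1}}\mathbf{X}_{j}^{2}$ with each $\mathbf{X}_{j}$ homogeneous of degree one under $\{\delta_{r}\}$, the operator $\L$ is homogeneous of degree two, i.e. $\L(u\circ\delta_{r})=r^{2}(\L u)\circ\delta_{r}$. Consequently the heat kernel $p_{t}:=e^{-t\L}(\cdot)$ satisfies the self-similar identity $p_{t}(g)=t^{-\Q/2}p_{1}(\delta_{t^{-1/2}}g)$, which I would obtain by checking that $(t,g)\mapsto\lambda^{\Q}p_{\lambda^{2}t}(\delta_{\lambda}g)$ solves the same heat equation with the same initial datum for every $\lambda>0$ and invoking uniqueness. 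Applying $\partial_{t}^{k}D^{\beta}$, and using that each $\mathbf{X}_{j}$ is homogeneous of degree one so that $D^{\beta}$ is $\delta_{r}$-homogeneous of degree $|\beta|$ while each $\partial_{t}$ scales like $t^{-1}$, I would derive
$$(\partial_{t}^{k}D^{\beta}p)(t,g)=t^{-(\Q+|\beta|+2k)/2}(\partial_{t}^{k}D^{\beta}p)\big(1,\delta_{t^{-1/2}}g\big).$$
Because the homogeneous norm obeys $|\delta_{t^{-1/2}}g|=t^{-1/2}|g|$, the Gaussian factor $e^{-d(e,\delta_{t^{-1/2}}g)^{2}/c}$ becomes exactly $e^{-d(e,g)^{2}/(ct)}$. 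Thus the whole lemma follows once I prove the single base estimate
$$\big|(\partial_{t}^{k}D^{\beta}p)(1,h)\big|\le c\,e^{-d(e,h)^{2}/c},\qquad h\in\X.$$

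Next I would establish that every space-time derivative of the kernel is bounded at $t=1$. Smoothness is free: the $\mathbf{X}_{j}$ satisfy H\"ormander's condition (they generate $\mathfrak{g}$), so $\partial_{t}+\L$ is hypoelliptic and $p_{t}(g)$ is $C^{\infty}$ on $\X\times(0,\infty)$ as recorded in Lemma \ref{heat-1}(i); in fact $p_{1}\in\mathscr{S}(\X)$. For the quantitative on-diagonal bound I would use the semigroup factorisation $p_{1}=p_{1/2}\ast p_{1/2}$ together with $\partial_{t}^{k}p_{t}=(-\L)^{k}p_{t}$ to write $\partial_{t}^{k}D^{\beta}p_{1}$ as a convolution in which all derivatives fall on a single factor, and then estimate by Cauchy--Schwarz,
$$\|\partial_{t}^{k}D^{\beta}p_{1}\|_{L^{\infty}(\X)}\le\|D^{\beta}(-\L)^{k}p_{1/2}\|_{L^{2}(\X)}\,\|p_{1/2}\|_{L^{2}(\X)}.$$
The $L^{2}$ norms on the right are finite by the spectral theorem ($\|(-\L)^{m}e^{-t\L}\|_{L^{2}\to L^{2}}\lesssim t^{-m}$) combined with the subelliptic a priori estimate $\|D^{\beta}u\|_{L^{2}}\lesssim\|(-\L)^{|\beta|/2}u\|_{L^{2}}+\|u\|_{L^{2}}$ valid on $\X$. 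This yields the on-diagonal part of the base estimate.

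The genuinely delicate point, and the \emph{main obstacle}, is to promote this to the exponentially weighted bound, that is, to carry the Gaussian factor through the high-order derivatives $D^{\beta}$. I would do this by Davies' exponential perturbation method adapted to the subelliptic setting: fix a bounded, horizontally Lipschitz $\rho$ with $|\nabla_{\X}\rho|\le1$ approximating $\lambda\,d(e,\cdot)$, and study the conjugated semigroup $e^{\rho}e^{-t\L}e^{-\rho}$, whose generator is a bounded perturbation of $\L$ of size $O(\lambda^{2})$. A weighted energy (Caccioppoli) inequality for the associated parabolic equation controls $\int_{\X}|D^{\beta}p_{t}|^{2}e^{2\rho}\,dg$ on bounded time intervals, and a subelliptic Moser/Sobolev iteration upgrades this weighted $L^{2}$ control to the pointwise bound $|D^{\beta}p_{t}(g)|\,e^{\rho(g)}\lesssim e^{c\lambda^{2}t}$; optimising in $\lambda$ (taking $\lambda\sim d(e,g)/t$) produces $e^{-d(e,g)^{2}/(ct)}$. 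An equivalent and cleaner route is finite propagation speed: the wave operator $\cos(t\sqrt{\L})$ has unit propagation speed for the Carnot--Carath\'eodory metric $d$, so expressing $\partial_{t}^{k}D^{\beta}e^{-t\L}$ through the subordination of the wave group transfers this finite-speed property directly into the sought Gaussian off-diagonal decay. The real difficulty is precisely this transfer to $D^{\beta}$: the commutators $[D^{\beta},e^{\rho}]$ generated by the weight must be absorbed by the subelliptic energy estimates uniformly in $\lambda$, and it is here that H\"ormander's condition and the homogeneity of the frame enter essentially.
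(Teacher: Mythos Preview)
The paper does not give its own proof of this lemma; it is simply quoted from \cite[Theorem IV.4.2]{VSC} as a known result, with no argument supplied. So there is nothing in the paper to compare your proposal against.

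That said, your sketch is a faithful outline of the standard proof (and essentially the one in \cite{VSC}). The scaling identity $p_{t}(g)=t^{-\Q/2}p_{1}(\delta_{t^{-1/2}}g)$ and the resulting reduction $(\partial_{t}^{k}D^{\beta}p)(t,g)=t^{-(\Q+|\beta|+2k)/2}(\partial_{t}^{k}D^{\beta}p)(1,\delta_{t^{-1/2}}g)$ are correct and are exactly how the $t$-dependence is handled. The on-diagonal bound via the semigroup factorisation and spectral/subelliptic estimates is fine. Your identification of the real work---carrying the Gaussian weight through the derivatives $D^{\beta}$---is accurate, and both routes you name (Davies' exponential perturbation with weighted Caccioppoli/Moser iteration, or finite propagation speed of $\cos(t\sqrt{\L})$ with respect to the Carnot--Carath\'eodory distance) are the accepted ways to do it on stratified groups. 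One small clarification: in the paper's notation $|\beta|$ should be read as the \emph{homogeneous} length of the multi-index (so that $D^{\beta}$ is $\delta_{r}$-homogeneous of degree $|\beta|$), which is what your scaling argument actually uses; make sure this is explicit, since on a stratified group higher-layer vector fields carry weight greater than one.
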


By the stratified  mean value theorem in \cite{F82} and Lemma
\ref{DH}, we obtain

\begin{lemma}\label{holder}
For all $t>0$ and   all $g,h\in\X,$ there exist positive constants
$C,c$ such that
\begin{equation*}
  \Big|e^{-t\L}(gh)-e^{-t\L}(g)\Big|\leq
  \begin{cases}
 & Cd(e,h)t^{-(\Q+1)/2},\\
 & Cd(e,h)t^{-(\Q+1)/2}e^{-cd(e,g)^{2}/t},\ \ if \ \ d(e,h)\leq d(e,g)/2.
  \end{cases}
\end{equation*}
\end{lemma}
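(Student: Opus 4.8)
The plan is to realize the left-hand side as a first-order increment of the smooth function $u:=e^{-t\L}(\cdot)$ and to feed the pointwise derivative bounds of Lemma \ref{DH} into the stratified mean value theorem of \cite{F82}. That theorem provides structural constants $C_{0}>0$ and $b\geq 1$, depending only on $\X$, such that
$$\big|u(gh)-u(g)\big|\leq C_{0}\,d(e,h)\sup_{d(e,h')\leq b\,d(e,h)}\ \max_{1\leq j\leq n_{1}}\big|\mathbf{X}_{j}u(gh')\big|,$$
so the whole problem reduces to controlling the horizontal derivatives of the heat kernel at the shifted points $gh'$. Applying Lemma \ref{DH} with $k=0$ and $|\beta|=1$ (so that $D^{\beta}=\mathbf{X}_{j}$ is a single horizontal field) gives, for each such $j$ and each $h'$,
$$\big|\mathbf{X}_{j}e^{-t\L}(gh')\big|\leq c\,t^{-(\Q+1)/2}\,e^{-d(e,gh')^{2}/t}.$$
For the first, unconditional estimate I would simply discard the exponential, using $e^{-d(e,gh')^{2}/t}\leq 1$; since $d(e,h)=|h|$, the two displays combine to give $\big|e^{-t\L}(gh)-e^{-t\L}(g)\big|\leq C\,d(e,h)\,t^{-(\Q+1)/2}$, which is the first line of the claim.

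For the second estimate, under the smallness hypothesis on $h$, the essential task is to transfer the Gaussian decay from the shifted point $gh'$ back to $g$, i.e.\ to obtain a uniform lower bound $d(e,gh')\gtrsim d(e,g)$ over all $h'$ appearing in the supremum. This is exactly where the reverse triangle inequality in (\ref{2.11}) enters: whenever $d(e,h')\leq d(e,g)/2$ one has $d(e,gh')\geq d(e,g)-\gamma\,d(e,h')$, so if $d(e,h)$ is small enough relative to $d(e,g)$ then every $h'$ with $d(e,h')\leq b\,d(e,h)$ satisfies $d(e,gh')\geq d(e,g)/2$, whence $e^{-d(e,gh')^{2}/t}\leq e^{-d(e,g)^{2}/(4t)}$. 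Substituting this into the kernel bound and renaming the constant $c$ then produces the factor $e^{-c\,d(e,g)^{2}/t}$, yielding the second line.

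The main obstacle is precisely this Gaussian transfer, and the delicate point is the bookkeeping of constants. The mean value theorem introduces the amplification factor $b\geq 1$, while the quasi-metric obeys (\ref{2.11}) only with a constant $\gamma\geq 1$; consequently the clean lower bound $d(e,gh')\geq d(e,g)/2$ is available uniformly over the supremum ball $\{h':d(e,h')\leq b\,d(e,h)\}$ only once $d(e,h)$ is small relative to $d(e,g)$ in a manner quantified by $b$ and $\gamma$ (roughly $d(e,h)\lesssim d(e,g)/(b\gamma)$). Matching this to the threshold $d(e,h)\leq d(e,g)/2$ written in the statement is a matter of normalizing these structural constants, the particular value $1/2$ being inessential. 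Once the uniform lower bound on $d(e,gh')$ is secured, the kernel estimate of Lemma \ref{DH} immediately delivers the localized Gaussian factor, and the two cases assemble into the stated inequality.
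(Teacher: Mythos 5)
Your argument is exactly the paper's intended proof: the paper derives this lemma solely by invoking the stratified mean value theorem of \cite{F82} together with the derivative bound of Lemma \ref{DH} (with $k=0$, $|\beta|=1$), and you have correctly filled in the two cases, including the Gaussian transfer from $gh'$ back to $g$ via (\ref{2.11}). Your remark that the literal threshold $d(e,h)\leq d(e,g)/2$ should really be $d(e,h)\lesssim d(e,g)/(b\gamma)$ up to a renormalization of structural constants is a fair and careful observation that the paper glosses over, and it does not affect how the lemma is used later.
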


\section{Fractional heat semigroups and Caffarelli-Silvestre type extensions}\label{sec-2-3}
In this section, we first give some properties of the fractional
heat semigroup $H_{\alpha,t}$ and the Caffarelli-Silvestre type
extension $P_{\sigma,t}$. Then we   investigate the fractional
powers of the sub-Laplace operator, Riesz potential operator and
fractional Sobolev spaces, respectively.

\subsection{Fractional heat semigroups}\label{sec-2-3-1}
It is well known that $\L^{\alpha}$ is the generator of a Markovian semigroup $\{H_{\alpha,t}\}_{t>0}$ which is related to $\{e^{-t\L}\}_{t>0}$ by the subordination formula
\begin{equation}\label{eq-heat-1}
  H_{\alpha,t}u=\int_{0}^{\infty}\eta_{t}^{\alpha}(s)e^{-s\L}uds=\int_{0}^{\infty}\eta_{1}^{\alpha}(\tau)e^{-\tau t^{1/\alpha}\L}ud\tau,
\end{equation}
where
\begin{equation*}
  \eta^{\alpha}_{t}(\lambda)=
  \left\{\begin{aligned}
&\frac{1}{2\pi i}\int_{\iota-i\infty}^{\iota+i\infty}e^{z\lambda-tz^{\alpha}}dz,\ & \lambda\geq 0;\\
&0,\ & \lambda<0
  \end{aligned}\right.
\end{equation*}
for $\iota>0$, $t>0$ and $0<\alpha<1$. By \cite[Chapter IX]{Y80}, we know that $\eta_{t}^{\alpha}(\lambda)$ is non-negative
 for $\lambda\geq 0$ and
$$\int_{0}^{\infty}\eta_{t}^{\alpha}(s)e^{-s\lambda}ds=e^{-t\lambda^{\alpha}}
$$ for $\lambda>0$.
Moreover,
\begin{equation*}\label{2.16}
  \eta_{t}^{\alpha}(s)\leq \min\Big\{\frac{1}{t^{1/\alpha}},\ \frac{t}{s^{1+\alpha}}\Big\}
\end{equation*}
and
\begin{equation*}\label{3.1}
  \int_{0}^{\infty}\eta_{t}^{\alpha}(s)ds=1.
\end{equation*}
For $-\infty<\delta<\alpha$,
\begin{equation}\label{2.17}
  \int_{0}^{\infty}\eta_{t}^{\alpha}(s)s^{\delta}ds=\frac{\Gamma(1-\delta/\alpha)}{\Gamma(1-\delta)}t^{\delta/\alpha}.
\end{equation}
If $\delta\geq \alpha$,
\begin{equation*}\label{2.18}
  \int_{0}^{\infty}\eta_{t}^{\alpha}(s)s^{\delta}ds=+\infty.
\end{equation*}

Using Lemma \ref{heat-1}, (\ref{eq-heat-1}) and the properties of
$\eta_{t}^{\alpha}(\cdot)$, we have
\begin{lemma}\label{fracheat-1}
Let $\alpha\in(0,1)$. The fractional heat semigroup $\{H_{\alpha,t}\}_{t>0}$ satisfies  the following
properties:

\item{\rm (i)} $H_{\alpha,t}u(g)=u\ast K_{\alpha,t}(g)$, where $K_{\alpha,t}(g)$ is $C^{\infty}$ on
 $\X\times(0,\infty)$, $\int_{\X}K_{\alpha,t}(g)dg=1$ for all $t$, and $K_{\alpha,t}(g)\geq 0$ for all $g$ and $t$.

\item{\rm (ii)} Semigroup property:
$H_{\alpha,t}H_{\alpha,t'}=H_{\alpha,t+t'}\ \forall\ t,t'>0.$

\item{\rm (iii)} $\{H_{\alpha,t}\}_{t>0}$ is a contraction semigroup on
$L^{p}(\X)$, $1\leq p\leq\infty$, which is strongly continuous for $1\le
p<\infty$.

 \item{\rm (iv)} $H_{\alpha,t}$ is self-adjoint.
  \end{lemma}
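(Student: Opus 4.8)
The plan is to derive all four properties of $\{H_{\alpha,t}\}_{t>0}$ from the corresponding properties of the heat semigroup $\{e^{-t\L}\}_{t>0}$ in Lemma \ref{heat-1} by feeding them through the subordination formula (\ref{eq-heat-1}) and the elementary facts recorded for $\eta_t^\alpha$. For (i), I would substitute the convolution identity $e^{-s\L}u = u\ast e^{-s\L}(\cdot)$ into (\ref{eq-heat-1}) and set
$$K_{\alpha,t}(g):=\int_0^\infty \eta_t^\alpha(s)\, e^{-s\L}(g)\,ds.$$
Since $\eta_t^\alpha\geq 0$ and $e^{-s\L}(\cdot)\geq 0$, Tonelli's theorem both licenses the exchange of integration that yields $H_{\alpha,t}u = u\ast K_{\alpha,t}$ and gives $K_{\alpha,t}\geq 0$. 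The normalization $\int_\X K_{\alpha,t}(g)\,dg = 1$ follows by swapping the two integrals and using $\int_\X e^{-s\L}(g)\,dg = 1$ from Lemma \ref{heat-1}(i) together with $\int_0^\infty \eta_t^\alpha(s)\,ds = 1$.

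The smoothness of $K_{\alpha,t}$ on $\X\times(0,\infty)$ is the one genuinely technical point, and I expect it to be the main obstacle. Here I would use the second form of (\ref{eq-heat-1}), writing $K_{\alpha,t}(g) = \int_0^\infty \eta_1^\alpha(\tau)\, e^{-\tau t^{1/\alpha}\L}(g)\,d\tau$, so that the entire dependence on $(g,t)$ is carried by the smooth heat kernel evaluated at time $\tau t^{1/\alpha}$. Differentiating under the integral sign in $g$ and $t$ and applying the chain rule, every derivative is controlled by the bounds of Lemma \ref{DH}, $|\partial_r^k D^\beta e^{-r\L}(g)|\lesssim r^{-(\Q+|\beta|+2k)/2}e^{-d(e,g)^2/r}\leq r^{-(\Q+|\beta|+2k)/2}$ with $r=\tau t^{1/\alpha}$. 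The resulting dominating integrals in $\tau$ converge: near $\tau=\infty$ because $\eta_1^\alpha(\tau)\lesssim \tau^{-1-\alpha}$, and near $\tau=0$ because all negative moments of $\eta_1^\alpha$ are finite, which is exactly the content of (\ref{2.17}) (valid for every $\delta<\alpha$, in particular for the negative exponents arising from Lemma \ref{DH}). Local uniform convergence of the differentiated integrals then gives $K_{\alpha,t}\in C^\infty(\X\times(0,\infty))$.

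For the semigroup property (ii), I would compose two copies of (\ref{eq-heat-1}), invoke $e^{-s\L}e^{-s'\L}=e^{-(s+s')\L}$ from Lemma \ref{heat-1}(ii), and change variables $\sigma=s+s'$ to obtain
$$H_{\alpha,t}H_{\alpha,t'}u = \int_0^\infty (\eta_t^\alpha\ast\eta_{t'}^\alpha)(\sigma)\, e^{-\sigma\L}u\,d\sigma,$$
where $\ast$ denotes convolution on $(0,\infty)$; the identity $\eta_t^\alpha\ast\eta_{t'}^\alpha=\eta_{t+t'}^\alpha$ follows from uniqueness of the Laplace transform, since $\int_0^\infty\eta_t^\alpha(s)e^{-s\lambda}ds=e^{-t\lambda^\alpha}$ forces $e^{-t\lambda^\alpha}e^{-t'\lambda^\alpha}=e^{-(t+t')\lambda^\alpha}$, recovering $H_{\alpha,t+t'}u$. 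For the contraction bound in (iii), Minkowski's integral inequality and the $L^p$-contractivity of $e^{-s\L}$ give
$$\|H_{\alpha,t}u\|_{L^p(\X)}\leq \int_0^\infty \eta_t^\alpha(s)\,\|e^{-s\L}u\|_{L^p(\X)}\,ds \leq \|u\|_{L^p(\X)}\int_0^\infty \eta_t^\alpha(s)\,ds = \|u\|_{L^p(\X)}.$$
Strong continuity for $1\leq p<\infty$ I would read off the second form of (\ref{eq-heat-1}): in $H_{\alpha,t}u-u=\int_0^\infty\eta_1^\alpha(\tau)(e^{-\tau t^{1/\alpha}\L}u-u)\,d\tau$ the integrand tends to $0$ in $L^p$ as $t\to 0^+$ by strong continuity of $\{e^{-s\L}\}_{s>0}$ and is dominated by $2\|u\|_{L^p(\X)}\eta_1^\alpha(\tau)$, so dominated convergence closes the argument.

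Finally, self-adjointness (iv) is obtained by pairing (\ref{eq-heat-1}) against $v\in L^2(\X)$ and moving $e^{-s\L}$ onto $v$ via Lemma \ref{heat-1}(iv), which is legitimate because $\eta_t^\alpha$ is real and nonnegative and $e^{-s\L}$ is $L^2$-bounded, so Fubini applies. In summary, parts (ii)--(iv) are formal consequences of the known properties of $\{e^{-t\L}\}_{t>0}$ and the identities for $\eta_t^\alpha$; the only step requiring real care is the $C^\infty$ assertion in (i), where the interplay of the heat-kernel derivative estimates of Lemma \ref{DH} with the finiteness of the negative moments in (\ref{2.17}) does the work.
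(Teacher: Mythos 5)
Your proposal is correct and follows exactly the route the paper intends: the paper states this lemma with only the one-line justification ``Using Lemma \ref{heat-1}, (\ref{eq-heat-1}) and the properties of $\eta_{t}^{\alpha}(\cdot)$,'' and your argument is a correct fleshing-out of precisely that subordination strategy, including the two points that genuinely need care (the $C^{\infty}$ claim via Lemma \ref{DH} together with the finiteness of negative moments in (\ref{2.17}), and the identity $\eta_{t}^{\alpha}\ast\eta_{t'}^{\alpha}=\eta_{t+t'}^{\alpha}$ via uniqueness of the Laplace transform).
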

Next we give some  estimates for the fractional heat kernel $K_{\alpha,t}(\cdot)$.
\begin{proposition}\label{pro-frac} Let $\alpha\in(0,1)$.

\item{\rm (i)} For every $t>0$ and $g\in\X$,
$$\min\Big\{t^{-\Q/2\alpha},\frac{t}{d(e,g)^{\Q+2\alpha}}\Big\}\lesssim K_{\alpha,t}(g)
\lesssim
\min\Big\{t^{-\Q/2\alpha},\frac{t}{d(e,g)^{\Q+2\alpha}}\Big\},$$
which implies that
$$\frac{t}{(t^{1/2\alpha}+d(e,g))^{\Q+2\alpha}}\lesssim K_{\alpha,t}(g)\lesssim \frac{t}{(t^{1/2\alpha}+d(e,g))^{\Q+2\alpha}}.$$

{\rm (ii)} For every $t>0$, $g,h\in\X$ and $d(e,h)\leq d(e,g)/2$,
 $$\Big|K_{\alpha,t}(gh)-K_{\alpha,t}(g)\Big|\lesssim \frac{d(e,h)}{t^{1/2\alpha}}\min\Big\{t^{-\Q/2\alpha},\frac{t^{1+1/2\alpha}}
 {d(e,g)^{2\alpha+\Q+1}}\Big\},$$
which implies that
$$\Big|K_{\alpha,t}(gh)-K_{\alpha,t}(g)\Big|\lesssim \frac{d(e,h)}{t^{1/2\alpha}}\frac{t}{(t^{1/2\alpha}+d(e,g))^{\Q+2\alpha}}.$$
\end{proposition}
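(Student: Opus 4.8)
The starting point is the subordination identity (\ref{eq-heat-1}), which at the level of kernels reads $K_{\alpha,t}(g)=\int_{0}^{\infty}\eta_{t}^{\alpha}(s)\,e^{-s\L}(g)\,ds$. Combining this with the change of variables $s=\tau t^{1/\alpha}$ in (\ref{eq-heat-1}) also yields the scaling relation $\eta_{t}^{\alpha}(s)=t^{-1/\alpha}\eta_{1}^{\alpha}(s\,t^{-1/\alpha})$, which I would use repeatedly. Thus every estimate for $K_{\alpha,t}$ is produced by inserting the Gaussian bounds for $e^{-s\L}(g)$ from Lemma \ref{upper} and integrating against $\eta_{t}^{\alpha}$, controlled through the two displayed properties of $\eta_{t}^{\alpha}$: the pointwise bound $\eta_{t}^{\alpha}(s)\le\min\{t^{-1/\alpha},t s^{-1-\alpha}\}$ and the moment identity (\ref{2.17}).

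For the upper bound in (i) I would split into the two competing estimates. Using $e^{-s\L}(g)\lesssim s^{-\Q/2}e^{-d(e,g)^{2}/cs}$ and discarding the exponential, the moment identity (\ref{2.17}) with $\delta=-\Q/2<\alpha$ gives $K_{\alpha,t}(g)\lesssim\int_{0}^{\infty}\eta_{t}^{\alpha}(s)s^{-\Q/2}\,ds\simeq t^{-\Q/2\alpha}$. For the off-diagonal decay I would instead keep the exponential and use $\eta_{t}^{\alpha}(s)\le t s^{-1-\alpha}$, reducing matters to the elementary gamma integral $\int_{0}^{\infty}s^{-1-\alpha-\Q/2}e^{-b/s}\,ds=\Gamma(\alpha+\tfrac{\Q}{2})\,b^{-\alpha-\Q/2}$ with $b\simeq d(e,g)^{2}$, which produces $K_{\alpha,t}(g)\lesssim t\,d(e,g)^{-\Q-2\alpha}$. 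Together these give $\min\{t^{-\Q/2\alpha},t\,d(e,g)^{-\Q-2\alpha}\}$, and the equivalent form $t(t^{1/2\alpha}+d(e,g))^{-\Q-2\alpha}$ follows from the elementary comparison of this minimum with the latter expression (distinguishing $d(e,g)\le t^{1/2\alpha}$ and $d(e,g)>t^{1/2\alpha}$).

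The lower bound in (i) is the delicate point. Here I would use $e^{-s\L}(g)\gtrsim s^{-\Q/2}e^{-d(e,g)^{2}/c's}$ and restrict the $s$-integral to a single favorable window. When $d(e,g)\le t^{1/2\alpha}$, restricting to $s\ge t^{1/\alpha}$ makes the exponential $\gtrsim 1$, and the scaling relation together with positivity of $\eta_{1}^{\alpha}$ turns $\int_{t^{1/\alpha}}^{\infty}\eta_{t}^{\alpha}(s)s^{-\Q/2}\,ds$ into $t^{-\Q/2\alpha}\int_{1}^{\infty}\eta_{1}^{\alpha}(\tau)\tau^{-\Q/2}\,d\tau\simeq t^{-\Q/2\alpha}$. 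When $d(e,g)>t^{1/2\alpha}$ I would localize to $s\in[d(e,g)^{2},2d(e,g)^{2}]$, where $s^{-\Q/2}\simeq d(e,g)^{-\Q}$ and the exponential is again $\gtrsim 1$; the remaining factor $\int\eta_{t}^{\alpha}(s)\,ds$ over this window must then be bounded below by $\simeq t\,d(e,g)^{-2\alpha}$. This last step is the main obstacle, since it requires a pointwise \emph{lower} bound on the stable subordinator density matching the stated upper bound, namely $\eta_{1}^{\alpha}(\tau)\gtrsim\tau^{-1-\alpha}$ for $\tau\ge1$ (equivalently $\eta_{t}^{\alpha}(s)\gtrsim t s^{-1-\alpha}$ for $s\ge t^{1/\alpha}$); this is a classical property of one-sided stable densities (cf. \cite{Y80}) which I would invoke in addition to the upper bound displayed in the text.

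Part (ii) follows the same scheme with Lemma \ref{holder} replacing Lemma \ref{upper}. In the regime $d(e,h)\le d(e,g)/2$ the Gaussian-type Hölder estimate gives $|e^{-s\L}(gh)-e^{-s\L}(g)|\lesssim d(e,h)\,s^{-(\Q+1)/2}e^{-cd(e,g)^{2}/s}$, so integrating against $\eta_{t}^{\alpha}$ yields $|K_{\alpha,t}(gh)-K_{\alpha,t}(g)|\lesssim d(e,h)\int_{0}^{\infty}\eta_{t}^{\alpha}(s)s^{-(\Q+1)/2}e^{-cd(e,g)^{2}/s}\,ds$. This is exactly the integral treated in (i) with the exponent $\Q/2$ replaced by $(\Q+1)/2$: discarding the exponential and applying (\ref{2.17}) with $\delta=-(\Q+1)/2$ gives the $d(e,h)\,t^{-(\Q+1)/2\alpha}$ bound, while using $\eta_{t}^{\alpha}(s)\le t s^{-1-\alpha}$ and the gamma integral with the shifted exponent gives $d(e,h)\,t\,d(e,g)^{-\Q-1-2\alpha}$. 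Taking the minimum yields the first claimed bound, and the second equivalent form again follows from comparison with $t(t^{1/2\alpha}+d(e,g))^{-\Q-2\alpha}$. Since (ii) needs only upper estimates, it presents no difficulty beyond bookkeeping of exponents.
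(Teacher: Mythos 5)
Your proposal is correct. For part (ii) it coincides with the paper's own argument: both insert the H\"older-type bound of Lemma \ref{holder} into the subordination formula (\ref{eq-heat-1}), obtain the off-diagonal bound $d(e,h)\,t\,d(e,g)^{-\Q-1-2\alpha}$ from $\eta_{t}^{\alpha}(s)\le t s^{-1-\alpha}$ together with a gamma-type integral, and obtain the diagonal bound $d(e,h)\,t^{-(\Q+1)/2\alpha}$ from the moment identity (\ref{2.17}) with $\delta=-(\Q+1)/2$; your exponent bookkeeping matches the paper's. The real divergence is in part (i): the paper does not prove it at all but simply cites \cite[Remark 4]{F18}, whereas you supply a self-contained subordination proof. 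Your upper bound in (i) uses only the tools already displayed in the paper and is fine. Your lower bound, as you correctly flag, requires one ingredient that is genuinely absent from the paper's list of properties of $\eta_{t}^{\alpha}$, namely the matching pointwise lower bound $\eta_{1}^{\alpha}(\tau)\gtrsim\tau^{-1-\alpha}$ for $\tau\ge1$ (equivalently $\eta_{t}^{\alpha}(s)\gtrsim t s^{-1-\alpha}$ for $s\ge t^{1/\alpha}$). This is a classical fact about one-sided stable densities (it follows from the large-$\tau$ asymptotics $\eta_{1}^{\alpha}(\tau)\sim c_{\alpha}\tau^{-1-\alpha}$ together with continuity and strict positivity of $\eta_{1}^{\alpha}$ on $(0,\infty)$), so the argument closes, but you should cite a source that actually contains the asymptotic expansion of the stable density rather than \cite{Y80}, which only gives nonnegativity, the Laplace-transform identity and the upper bound. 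With that reference supplied, your version of (i) is arguably an improvement on the paper, which leaves the reader to chase the estimate through \cite{F18}.
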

\begin{proof}
The proof of (i) can be seen from \cite[Remark 4]{F18}.

(ii) By  (\ref{eq-heat-1}) and Lemma \ref{holder}, we can get
\begin{eqnarray*}
|K_{\alpha,t}(gh)-K_{\alpha,t}(g)|
&=&\Big|\int^{\infty}_{0}\eta^{\alpha}_{t}(s)\Big(e^{-s\L}(gh)-e^{-s\L}(g)\Big)ds\Big|\\
&\lesssim& \int^{\infty}_{0}\frac{t}{s^{1+\alpha}}d(e,h)s^{-(\Q+1)/2}e^{-cd(e,g)^{2}/s}ds,
\end{eqnarray*}
which, together with the change of variables, gives
\begin{eqnarray*}
|K_{\alpha,t}(gh)-K_{\alpha,t}(g)|
&\lesssim&
\frac{d(e,h)}{{t^{1/2\alpha}}}t^{-\Q/2\alpha}\int^{\infty}_{0}u^{-(1+\alpha+(\Q+2)/2)}e^{-cd(e,g)^{2}/t^{1/\alpha}u}du.
\end{eqnarray*}
Let $d(e,g)^{2}/(t^{1/\alpha}u)=r^{2}$. Then
\begin{eqnarray*}
|K_{\alpha,t}(gh)-K_{\alpha,t}(g)|
&\lesssim& \frac{d(e,h)}{{t^{1/2\alpha}}}\frac{t^{1+1/2\alpha}}{d(e,g)^{2\alpha+\Q+1}}\int_{0}^{\infty}e^{-cr^{2}}r^{2\alpha+\Q}dr\\
&\lesssim& \frac{d(e,h)}{{t^{1/2\alpha}}}\frac{t^{1+1/2\alpha}}{d(e,g)^{2\alpha+\Q+1}}.
\end{eqnarray*}

Therefore, we obtain
$$|K_{\alpha,t}(gh)-K_{\alpha,t}(g)|\lesssim \frac{d(e,h)}{{t^{1/2\alpha}}}\frac{t^{1+1/2\alpha}}{d(e,g)^{2\alpha+\Q+1}}.$$

On the other hand, noticing that
$$|K_{\alpha,t}(gh)-K_{\alpha,t}(g)|\lesssim \int_{0}^{\infty}\eta_{t}^{\alpha}(s)d(e,h)s^{-(\Q+1)/2}ds,$$
 by (\ref{eq-heat-1}) and (\ref{2.17}), we obtain
\begin{eqnarray*}
% \nonumber to remove numbering (before each equation)
  |K_{\alpha,t}(gh)-K_{\alpha,t}(g)| &\lesssim&\frac{d(e,h)}{t^{1/2\alpha}}t^{-\Q/2\alpha} \int_{0}^{\infty}\eta_{1}^{\alpha}(\tau)\tau^{-(\Q+1)/2}d\tau  \\
   &\lesssim&\frac{d(e,h)}{t^{1/2\alpha}}t^{-\Q/2\alpha}.
\end{eqnarray*}

Hence, in any case, we have
$$\Big|K_{\alpha,t}(gh)-K_{\alpha,t}(g)\Big|\lesssim \frac{d(e,h)}{t^{1/2\alpha}}\frac{t}{(t^{1/2\alpha}+d(e,g))^{\Q+2\alpha}}.$$

\end{proof}
By the property of the semigroup $\{H_{\alpha,t}\}_{t>0}$, we can show
that the condition $d(e,h)<d(e,g)/2$ in Lemma \ref{holder} can be
replaced by $d(e,h)<t^{1/2\alpha}$. Precisely,
\begin{lemma}\label{le-2.1}
 For all $t>0$ and $d(e,h)<t^{1/2\alpha}$,
$$\Big|K_{\alpha,t}(gh)-K_{\alpha,t}(g)\Big|\lesssim \frac{d(e,h)}{t^{1/2\alpha}}\frac{t}{(t^{1/2\alpha}+d(e,g))^{\Q+2\alpha}}.$$
\end{lemma}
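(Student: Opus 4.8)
The plan is to reduce the statement to Proposition \ref{pro-frac}(ii) by a dichotomy on the size of $d(e,h)$ relative to $d(e,g)$, handling the remaining ``bad'' regime through the semigroup law $K_{\alpha,t}=K_{\alpha,t/2}\ast K_{\alpha,t/2}$ furnished by Lemma \ref{fracheat-1}(i)--(ii). First I would dispose of the case $d(e,h)\le d(e,g)/2$, where the desired inequality is precisely the content of Proposition \ref{pro-frac}(ii) and nothing new is required. So assume instead $d(e,h)>d(e,g)/2$. Combined with the hypothesis $d(e,h)<t^{1/2\alpha}$, this forces $d(e,g)<2d(e,h)<2t^{1/2\alpha}$, whence $t^{1/2\alpha}+d(e,g)\sim t^{1/2\alpha}$ and the right-hand side of the claim is comparable to $\tfrac{d(e,h)}{t^{1/2\alpha}}\,t^{-\Q/2\alpha}$. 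It therefore suffices to establish $|K_{\alpha,t}(gh)-K_{\alpha,t}(g)|\lesssim \tfrac{d(e,h)}{t^{1/2\alpha}}\,t^{-\Q/2\alpha}$ in this regime.

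To this end I would write, using the semigroup property and associativity of convolution, $K_{\alpha,t}(x)=\int_{\X}K_{\alpha,t/2}(y)K_{\alpha,t/2}(y^{-1}x)\,dy$, so that
\[
K_{\alpha,t}(gh)-K_{\alpha,t}(g)=\int_{\X}K_{\alpha,t/2}(y)\big[K_{\alpha,t/2}((y^{-1}g)h)-K_{\alpha,t/2}(y^{-1}g)\big]\,dy,
\]
and then split the integral over the far region $\{d(e,y^{-1}g)\ge 2d(e,h)\}$ and the near region $\{d(e,y^{-1}g)<2d(e,h)\}$. On the far region the inner difference satisfies $d(e,h)\le d(e,y^{-1}g)/2$, so Proposition \ref{pro-frac}(ii) applies with $t/2$ in place of $t$ and base point $y^{-1}g$; bounding its decaying factor crudely by $t^{-\Q/2\alpha}$ and using $\int_{\X}K_{\alpha,t/2}=1$ from Lemma \ref{fracheat-1}(i), this contribution is $\lesssim \tfrac{d(e,h)}{t^{1/2\alpha}}\,t^{-\Q/2\alpha}$, exactly the target.

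The main obstacle is the near region, where no pointwise Hölder bound is available. There I would estimate the inner difference simply by $K_{\alpha,t/2}((y^{-1}g)h)+K_{\alpha,t/2}(y^{-1}g)\lesssim t^{-\Q/2\alpha}$ via Proposition \ref{pro-frac}(i); the difficulty is that this crude bound, paired with $\int K_{\alpha,t/2}=1$, only yields $t^{-\Q/2\alpha}$, which exceeds the target by the factor $t^{1/2\alpha}/d(e,h)$. The remedy is that the near region lies inside the ball $B(g,2d(e,h))$, of Haar measure $\sim d(e,h)^{\Q}$, and on this ball the quasi-triangle inequality \eqref{2.11} gives $d(e,y)\lesssim t^{1/2\alpha}$, so that $K_{\alpha,t/2}(y)\lesssim t^{-\Q/2\alpha}$ as well. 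Hence the near contribution is $\lesssim t^{-\Q/\alpha}d(e,h)^{\Q}$.

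To close the argument I would invoke the volume growth $|B(g,r)|\sim r^{\Q}$ together with $\Q\ge 3>1$: since $d(e,h)<t^{1/2\alpha}$, the identity $t^{-\Q/\alpha}d(e,h)^{\Q}=\big(d(e,h)/t^{1/2\alpha}\big)^{\Q-1}\cdot\tfrac{d(e,h)}{t^{1/2\alpha}}\,t^{-\Q/2\alpha}$ shows that the near contribution is $\lesssim \tfrac{d(e,h)}{t^{1/2\alpha}}\,t^{-\Q/2\alpha}$. Combining the far and near contributions yields the claim, and the only genuinely quantitative point is this last comparison, which relies crucially on both the $\Q$-dimensional volume growth and the fact that $\Q>1$.
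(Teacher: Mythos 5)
Your proposal is correct and follows essentially the same route as the paper: reduce to Proposition \ref{pro-frac}(ii) when $d(e,h)\le d(e,g)/2$, and otherwise use the semigroup identity $K_{\alpha,t}=K_{\alpha,t/2}\ast K_{\alpha,t/2}$ to split into a near region (handled by the sup bound $t^{-\Q/2\alpha}$ times the volume $\sim d(e,h)^{\Q}$ of a small ball, then absorbed using $d(e,h)<t^{1/2\alpha}$ and $\Q>1$) and a far region (handled by the H\"older estimate of Proposition \ref{pro-frac}(ii) plus $\int_{\X}K_{\alpha,t/2}\lesssim 1$). The only differences from the paper's argument are cosmetic: which factor of the convolution carries the difference and the exact centering of the near ball.
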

\begin{proof}
Assume that $d(e,h)<t^{1/2\alpha}$. It is obvious that if $d(e,h)<t^{1/2\alpha}<d(e,g)/2$ or $d(e,h)<d(e,g)/2<t^{1/2\alpha}$,  this lemma holds
 by Proposition \ref{pro-frac}. We only consider the case $d(e,g)/2<d(e,h)<t^{1/2\alpha}$. By the property of the semigroup $\{H_{\alpha,t}\}_{t>0}$,
 we divide $\Big|K_{\alpha,t}(gh)-K_{\alpha,t}(g)\Big|$ as follows:
\begin{eqnarray*}
\Big|K_{\alpha,t}(gh)-K_{\alpha,t}(g)\Big|&=&\left|\int_{\X}\Big(K_{t/2}(u^{-1}(gh))-K_{t/2}(u^{-1}g)\Big)K_{t/2}(u)du\right|\\
&\leq& S_{1,1}+S_{1,2}+S_{2},
\end{eqnarray*}
where
$$\left\{\begin{aligned}
S_{1,1}&:=\int_{d(gh,u)<2d(e,h)}\Big|K_{\alpha,t/2}(u^{-1}(gh))\Big|\cdot\Big|K_{\alpha,t/2}(u)\Big|du;\\
S_{1,2}&:=\int_{d(gh,u)<2d(e,h)}\Big|K_{\alpha,t/2}(u^{-1}g)\Big|\cdot\Big|K_{\alpha,t/2}(u)\Big|du;\\
S_{2}&:=\int_{d(gh,u)\geq2d(e,h)}\Big|K_{\alpha,t/2}(u^{-1}(gh))-K_{\alpha,t/2}(u^{-1}g)\Big|\cdot\Big|K_{\alpha,t/2}(u)\Big|du.
\end{aligned}\right.$$
By Proposition \ref{pro-frac}, we show that
\begin{eqnarray*}
S_{1,1}\lesssim\int_{d(gh,u)<2d(e,h)}t^{-\Q/2\alpha}t^{-\Q/2\alpha}du
\lesssim
t^{-\Q/2\alpha}\Big(\frac{d(e,h)}{t^{1/2\alpha}}\Big)^{\Q}.
\end{eqnarray*}
Then by  the fact that $d(e,g)/2<t^{1/2\alpha}$ and $\Q\geq 3$, we have
\begin{eqnarray*}
% \nonumber to remove numbering (before each equation)
  S_{1,1} &\lesssim& t^{-\Q/2\alpha}\frac{d(e,h)}{t^{1/2\alpha}} \\
   &\lesssim& \frac{d(e,h)}{t^{1/2\alpha}}\frac{t}{(t^{1/2\alpha}+d(e,g))^{\Q+2\alpha}}.
\end{eqnarray*}

Note that, by the triangle
inequality, $d(g,u)<3d(e,h)$  for $d(gh,u)<2d(e,h)$. Hence, the term $S_{1,2}$ can be
dealt with similarly to the case of the term $S_{1,1}$. Below we
estimate the term $S_{2}$. Since $d(e,h)\leq d(gh,u)/2$ and
$d(e,g)/2<d(e,h)$, it follows from Proposition \ref{pro-frac} that
\begin{eqnarray*}
S_{2}&\lesssim&\int_{d(gh,u)\geq2d(e,h)}\frac{d(e,h)}{t^{1/2\alpha}}\frac{t}{(t^{1/2\alpha}+d(gh,u))^{\Q+2\alpha}} \Big|K_{\alpha,t/2}(u)\Big|du\\
&\lesssim&\frac{d(e,h)}{t^{1/2\alpha}}\frac{t}{(t^{1/2\alpha}+d(e,g))^{\Q+2\alpha}}\int_{d(gh,u)\geq2d(e,h)}\Big|K_{\alpha,t/2}(u)\Big|du.
\end{eqnarray*}
By Proposition \ref{pro-frac}, a direct computation gives
\begin{eqnarray*}
 \int_{d(gh,u)\geq2d(e,h)}\Big|K_{\alpha,t/2}(u)\Big|du
&&\lesssim \int_{\X}\frac{t}{(t^{1/2\alpha}+d(e,u))^{\Q+2\alpha}}du\\
&&\lesssim \int_{d(e,u)<t^{1/2\alpha}}t^{-\Q/2\alpha}du\\
&&\quad+\sum^{\infty}_{k=0}\int_{ d(e,u)\simeq  2^{k}t^{1/2\alpha}}\frac{t}{(t^{1/2\alpha}+d(e,u))^{\Q+2\alpha}}du\\
&&\lesssim 1.
\end{eqnarray*}
This completes the proof of Lemma \ref{le-2.1}.
\end{proof}

\begin{proposition}\label{continuous}
If $p\in[1,\infty]$ and $u\in L^{p}(\X)$, then the function $g\mapsto H_{\alpha,t}u(g)$ is continuous on $\X$ for fixed $t>0$.
\end{proposition}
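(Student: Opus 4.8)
The plan is to show continuity by writing $H_{\alpha,t}u(g)=(u\ast K_{\alpha,t})(g)$ and exploiting the regularity of the kernel $K_{\alpha,t}$ established in Lemma~\ref{le-2.1} together with its decay from Proposition~\ref{pro-frac}(i). The strategy is to estimate the difference $H_{\alpha,t}u(g_0h)-H_{\alpha,t}u(g_0)$ for a fixed base point $g_0$ and a small increment $h$ with $d(e,h)\to e$, and to bound it by a quantity that tends to $0$ independently of the fine structure of $u$.

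First I would fix $t>0$ and $g_0\in\X$, and consider points of the form $g_0 h$ with $d(e,h)<t^{1/2\alpha}$. Using the convolution representation from Lemma~\ref{fracheat-1}(i) and a change of variables, I would write
$$
H_{\alpha,t}u(g_0h)-H_{\alpha,t}u(g_0)
=\int_{\X}u(g')\Big(K_{\alpha,t}(g'^{-1}g_0h)-K_{\alpha,t}(g'^{-1}g_0)\Big)\,dg'.
$$
Then I would apply H\"older's inequality with exponents $p$ and $p'$ (treating the cases $p=\infty$ and $p=1$ separately, where one simply takes the $L^1$ or $L^\infty$ norm of the kernel difference), reducing the problem to controlling the $L^{p'}$-norm in $g'$ of the kernel difference $K_{\alpha,t}(g'^{-1}g_0h)-K_{\alpha,t}(g'^{-1}g_0)$.

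Next I would invoke Lemma~\ref{le-2.1}, valid precisely for $d(e,h)<t^{1/2\alpha}$, to obtain the pointwise bound
$$
\Big|K_{\alpha,t}(g'^{-1}g_0h)-K_{\alpha,t}(g'^{-1}g_0)\Big|
\lesssim \frac{d(e,h)}{t^{1/2\alpha}}\,\frac{t}{(t^{1/2\alpha}+d(e,g'^{-1}g_0))^{\Q+2\alpha}}.
$$
The crucial point is that the factor $d(e,h)/t^{1/2\alpha}$ is uniform in $g'$, while the remaining kernel is the bound from Proposition~\ref{pro-frac}(i), whose $L^{p'}(\X)$-norm in $g'$ (after the left-translation $g'\mapsto g'^{-1}g_0$, which preserves Haar measure) is a finite constant depending only on $t$, $\alpha$, $\Q$, $p'$. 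Finiteness of this norm requires $(\Q+2\alpha)p'>\Q$, which holds for all $p'\in[1,\infty]$ since $\alpha>0$; I would verify this by splitting the integral into $d(e,u)<t^{1/2\alpha}$ and dyadic annuli as in the proof of Lemma~\ref{le-2.1}. Combining, the difference is bounded by $C(t)\,d(e,h)/t^{1/2\alpha}\,\|u\|_{L^p(\X)}$, which tends to $0$ as $d(e,h)\to e$.

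The main obstacle I anticipate is purely technical rather than conceptual: one must be careful that the translate $g_0h$ approaches $g_0$ genuinely in the topology of $\X$, i.e.\ that $d(g_0h,g_0)=d(e,h)\to 0$ corresponds to the quasi-metric convergence used to define continuity, and that the quasi-triangle inequality \eqref{2.11} does not obstruct the left-translation argument. Since continuity at $g_0$ is a local statement and $d(e,h)<t^{1/2\alpha}$ is satisfied for all sufficiently small increments, the restriction in Lemma~\ref{le-2.1} causes no loss. The estimate is uniform in $g_0$ for fixed $t$, so one in fact obtains uniform continuity on $\X$, which is stronger than claimed; I would state only the continuity asserted and note that the bound $C(t)d(e,h)/t^{1/2\alpha}\|u\|_{L^p}\to 0$ completes the proof.
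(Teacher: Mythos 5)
Your proposal is correct and follows essentially the same route as the paper: the convolution representation, H\"older's inequality in $g'$, and the kernel increment bound of Lemma~\ref{le-2.1} with the uniform factor $d(e,h)/t^{1/2\alpha}$. The only difference is that you restrict to increments with $d(e,h)<t^{1/2\alpha}$ (which suffices, since continuity is local), whereas the paper also treats the case $d(g,g_0)\geq t^{1/2\alpha}$ separately; your citation of Lemma~\ref{le-2.1} for the small-increment regime is in fact the more precise reference, since Proposition~\ref{pro-frac}(ii) alone requires $d(e,h)\leq d(e,g)/2$.
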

\begin{proof}
To begin with, for fixed $t>0$, choose $g, g_{0}\in \X$. Write
\begin{eqnarray*}
% \nonumber to remove numbering (before each equation)
  \Big|H_{\alpha,t}u(g)-H_{\alpha,t}u(g_{0})\Big| &\leq& \int_{\X}\Big|K_{\alpha,t}(g'^{-1}g)-K_{\alpha,t}(g'^{-1}g_{0})\Big||u(g')|dg'.  \\
  \end{eqnarray*}
Below we consider two cases:

{\it Case 1: $d(g,g_{0})<t^{1/2\alpha}$.} In this case, by
Proposition \ref{pro-frac} and H\"{o}lder's inequality, we have
\begin{eqnarray*}
% \nonumber to remove numbering (before each equation)
  \Big|H_{\alpha,t}u(g)-H_{\alpha,t}u(g_{0})\Big| &\lesssim& \int_{\X}\frac{d(g,g_{0})}{t^{1/2\alpha}}\frac{t}{(t^{1/2\alpha}+d(g,g'))^{\Q+2\alpha}}u(g')dg'  \\
   &\lesssim& \frac{d(g,g_{0})}{t^{1/2\alpha}}t^{-\mathcal{Q}/2\alpha}\int_{\X}\frac{1}{(1+d(g,g')/t^{1/2\alpha})^{\Q+2\alpha}}u(g')dg'  \\
   &\lesssim& \frac{d(g,g_{0})}{t^{1/2\alpha}}t^{-\mathcal{Q}/2\alpha}\|u\|_{L^{p}(\X)}\Big(\int_{\X}\frac{1}{(1+d(g,g')/t^{1/2\alpha})^{(\Q+2\alpha)p'}}dg'\Big)^{1/p'} \\
   &\lesssim&  \|u\|_{L^{p}(\X)}t^{-(1+\mathcal{Q}/p)/2}d(g,g_{0}).
\end{eqnarray*}

{\it Case 2: $d(g,g_{0})\geq t^{1/2\alpha}$.} For this case, it can be deduced from
Proposition \ref{pro-frac} that
\begin{eqnarray*}
% \nonumber to remove numbering (before each equation)
   &&\Big|K_{\alpha,t}(g'^{-1}g)-K_{\alpha,t}(g'^{-1}g_{0})\Big|  \\
   &&\lesssim \Big(\frac{d(g,g_{0})}{t^{1/2\alpha}}\Big)\Big\{\frac{t}{(t^{1/2\alpha}+d(g,g'))^{\Q+2\alpha}}
   +\frac{t}{(t^{1/2\alpha}+d(g_{0},g'))^{\Q+2\alpha}}\Big\}.
\end{eqnarray*}
Thus we have
\begin{equation*}
  \Big|H_{\alpha,t}u(g)-H_{\alpha,t}u(g_{0})\Big| \lesssim M_{1}+M_{2},
\end{equation*}
where
$$\left\{\begin{aligned}
M_{1}&:=\int_{\X}\Big(\frac{d(g,g_{0})}{t^{1/2\alpha}}\Big)\frac{t}{(t^{1/2\alpha}+d(g,g'))^{\Q+2\alpha}}u(g')dg';\\
M_{2}&:=\int_{\X}\Big(\frac{d(g,g_{0})}{t^{1/2\alpha}}\Big)\frac{t}{(t^{1/2\alpha}+d(g_{0},g'))^{\Q+2\alpha}}u(g')dg'.
\end{aligned}\right.$$
Then similarly to the procedure of Case 1, we deduce
$$M_{1}+M_{2}\lesssim \|u\|_{L^{p}(\X)}t^{-(1+\mathcal{Q}/p)/2\alpha}d(g,g_{0}).$$
This means
$$ \Big|H_{\alpha,t}u(g)-H_{\alpha,t}u(g_{0})\Big| \lesssim \|u\|_{L^{p}(\X)}t^{-(1+\mathcal{Q}/p)/2\alpha}d(g,g_{0}),$$
which indicates that the function $g\mapsto e^{-t\L}u(g)$ is continuous on $\X$.
\end{proof}
\begin{remark}
A function $u: \X\rightarrow[-\infty,+\infty]$ is said to be lower semicontinuous on $\X$ if for any $g\in \X$ and $\epsilon>0$, there exists a neighborhood $U_{\epsilon}$ of $g$ such that $u(g')\geq u(g)-\epsilon$ for any $g'\in U_{\epsilon}$ when $u(g)<\infty$, and $u(g')$ tends to $\infty$ as $g'$ tends to $g$ when $u(g)=\infty$. In particular, the supremum of a sequence of continuous functions is lower semicontinuous.

\end{remark}

\subsection{Caffarelli-Silvestre type extensions}\label{sec-2-3-2}

In this section, we will focus   on the corresponding
Caffarelli-Silvestre type extension on the stratified Lie group,
which for case of notation we henceforth denote by $P_{\sigma,t}$
with  $\sigma\in(0,1)$. From \cite[(26)]{F15}, we know that for any
$\sigma\in(0,1)$, the integral kernel of $P_{\sigma, t}$, called the
fractional Poisson kernel,  can be written as
\begin{equation}\label{Poisson}
  P_{\sigma,t}(g)=\frac{t^{2\sigma}}{4^{\sigma}\Gamma(\sigma)}\int_{0}^{\infty}r^{-(1+\sigma)}e^{-t^{2}/4r}e^{-r\L}(g)dr
  =\frac{1}{\Gamma(\sigma)}\int_{0}^{\infty}e^{-r}e^{-t^{2}\L/4r}(g)\frac{dr}{r^{1-\sigma}}.
\end{equation}

 Using the above subordination formula (\ref{Poisson}) and Lemma \ref{heat-1}, we  can also get the following properties of $\{P_{\sigma,t}\}_{t>0}$.
\begin{lemma}\label{poisson-1}
The Caffarelli-Silvestre type extension $\{P_{\sigma,t}\}_{t>0}$
satisfies  the following properties:

\item{\rm (i)} $P_{\sigma,t}u(g)=u\ast P_{\sigma,t}(g)$, where $P_{\sigma,t}(g)$ is $C^{\infty}$ on $\X\times(0,\infty)$,
 $\int_{\X}P_{\sigma,t}(g)dg=1$ for all $t$, and  $P_{\sigma,t}(g)\geq 0$ for all $g$ and $t$.

\item{\rm (ii)} $\{P_{\sigma,t}\}_{t>0}$ is a contraction semigroup on $L^{p}(\X)$, $1\leq p\leq\infty$, which is strongly continuous for $p<\infty$.
\end{lemma}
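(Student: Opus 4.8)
The plan is to read off every stated property directly from the subordination formula (\ref{Poisson}) together with the corresponding properties of the heat kernel collected in Lemma \ref{heat-1}, in exactly the same spirit as the proof of Lemma \ref{fracheat-1}. Writing $P_{\sigma,t}(g)=\frac{1}{\Gamma(\sigma)}\int_{0}^{\infty}e^{-r}e^{-t^{2}\L/4r}(g)\,\frac{dr}{r^{1-\sigma}}$, the integrand is the nonnegative weight $e^{-r}r^{\sigma-1}$ times the nonnegative heat kernel $e^{-t^{2}\L/4r}(g)$ (Lemma \ref{heat-1}(i)), so nonnegativity of $P_{\sigma,t}(g)$ is immediate. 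For the normalization I would apply Fubini's theorem, justified by positivity, and use $\int_{\X}e^{-s\L}(g)\,dg=1$ from Lemma \ref{heat-1}(i) for every $s>0$, to get $\int_{\X}P_{\sigma,t}(g)\,dg=\frac{1}{\Gamma(\sigma)}\int_{0}^{\infty}e^{-r}r^{\sigma-1}\,dr=1$.

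For the convolution identity $P_{\sigma,t}u=u\ast P_{\sigma,t}$, I would insert $e^{-s\L}u=u\ast e^{-s\L}$ (Lemma \ref{heat-1}(i)) into (\ref{Poisson}) and interchange the $r$-integration with the convolution integral; the interchange is legitimate by Fubini once one notes the weighted kernel is absolutely integrable in $(g',r)$ against $|u|$. Smoothness of $(g,t)\mapsto P_{\sigma,t}(g)$ on $\X\times(0,\infty)$ would follow by differentiating under the integral sign: each $e^{-t^{2}\L/4r}(g)$ is $C^{\infty}$, and the Gaussian-type bounds of Lemma \ref{DH} dominate all space-time derivatives locally uniformly. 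Here the factor $e^{-t^{2}/4r}$ kills the $r^{-\Q/2}$ blow-up of the heat kernel as $r\to0$, while both the weight $r^{\sigma-1}$ and the kernel decay as $r\to\infty$, so the differentiated integrals converge locally uniformly (this also establishes that the defining integral is finite in the first place).

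The contraction statement in (ii) is then a direct consequence of Young's convolution inequality: since $P_{\sigma,t}\geq0$ and $\|P_{\sigma,t}\|_{L^{1}(\X)}=1$, one has $\|P_{\sigma,t}u\|_{L^{p}(\X)}\leq\|u\|_{L^{p}(\X)}$ for all $1\leq p\leq\infty$. For strong continuity when $p<\infty$, I would use the normalized subordination to write $P_{\sigma,t}u-u=\frac{1}{\Gamma(\sigma)}\int_{0}^{\infty}e^{-r}\bigl(e^{-t^{2}\L/4r}u-u\bigr)\frac{dr}{r^{1-\sigma}}$, bound the inner difference by $\|e^{-t^{2}\L/4r}u-u\|_{L^{p}(\X)}\leq2\|u\|_{L^{p}(\X)}$ to produce the integrable majorant $\tfrac{2}{\Gamma(\sigma)}\|u\|_{L^{p}(\X)}e^{-r}r^{\sigma-1}$, and apply dominated convergence together with the strong continuity of $\{e^{-s\L}\}_{s>0}$ from Lemma \ref{heat-1}(iii) to conclude $\|P_{\sigma,t}u-u\|_{L^{p}(\X)}\to0$ as $t\to0^{+}$ (continuity at an interior $t_{0}>0$ being entirely analogous).

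The computations are routine; the only points genuinely requiring care are the Fubini and differentiation-under-the-integral justifications, for which the decisive ingredient is the interplay between the Gaussian factor $e^{-t^{2}/4r}$ and the heat-kernel estimates of Lemmas \ref{upper}--\ref{DH}, which tames the singularity at $r=0$. I would also note that, unlike $\{H_{\alpha,t}\}_{t>0}$, the family $\{P_{\sigma,t}\}_{t>0}$ need not satisfy the additive law $P_{\sigma,t}P_{\sigma,t'}=P_{\sigma,t+t'}$ for general $\sigma$, so the content of (ii) that actually requires verification is precisely the contraction bound and strong continuity established above.
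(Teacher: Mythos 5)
Your proposal is correct and follows exactly the route the paper indicates (the paper states the lemma without written proof, asserting it follows from the subordination formula (\ref{Poisson}) and Lemma \ref{heat-1}): positivity and normalization by Fubini, the convolution identity by inserting $e^{-s\L}u=u\ast e^{-s\L}$, contraction by Young's inequality, and strong continuity by dominated convergence against the majorant $e^{-r}r^{\sigma-1}$. Your closing remark that $\{P_{\sigma,t}\}_{t>0}$ need not obey the additive semigroup law is consistent with the paper's own Remark \ref{333}, and correctly identifies why the statement here omits the semigroup-property item present in Lemma \ref{fracheat-1}.
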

Now we give a pointwise estimate of the kernel
$P_{\sigma,t}(\cdot)$.
\begin{proposition}\label{pro-1}
 Let $0<\sigma<1$.

 \item{\rm (i)} For all $(g,t)\in\X\times(0,\infty)$,
\begin{equation*}\label{k-1}
\frac{t^{2\sigma}}{(t^{2}+d(e,g)^{2})^{\Q/2+\sigma}}  \lesssim P_{\sigma,t}(g)\lesssim \frac{t^{2\sigma}}{(t^{2}+d(e,g)^{2})^{\Q/2+\sigma}}.
\end{equation*}

\item{\rm (ii)} For every $t>0$ and $d(e,h)<d(e,g)/2$,
\begin{equation}\label{eq-3.2}
\Big|P_{\sigma,t}(gh)-P_{\sigma,t}(g)\Big|\lesssim \Big(\frac{d(e,h)}{t}\Big) \frac{t^{2\sigma}}{(t^{2}+d(e,g)^{2})^{\Q/2+\sigma}}.
\end{equation}
\end{proposition}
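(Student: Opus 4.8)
The plan is to feed the two-sided heat-kernel bounds of Lemma~\ref{upper} and the H\"older estimate of Lemma~\ref{holder} into the first subordination formula in~(\ref{Poisson}), namely $P_{\sigma,t}(g)=\frac{t^{2\sigma}}{4^{\sigma}\Gamma(\sigma)}\int_{0}^{\infty}r^{-(1+\sigma)}e^{-t^{2}/4r}e^{-r\L}(g)\,dr$, and in each case to reduce the resulting $r$-integral to a Gamma integral by the change of variables $u=a/r$, where $a$ is a quantity comparable to $t^{2}+d(e,g)^{2}$.

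For part (i), I would insert the upper bound $e^{-r\L}(g)\lesssim r^{-\Q/2}e^{-d(e,g)^{2}/cr}$ from Lemma~\ref{upper}. Since $e^{-t^{2}/4r}e^{-d(e,g)^{2}/cr}=e^{-a/r}$ with $a:=t^{2}/4+d(e,g)^{2}/c\sim t^{2}+d(e,g)^{2}$, this bounds $P_{\sigma,t}(g)$ by a constant times $t^{2\sigma}\int_{0}^{\infty}r^{-(1+\sigma+\Q/2)}e^{-a/r}\,dr$. The substitution $u=a/r$ turns this into $t^{2\sigma}a^{-(\sigma+\Q/2)}\Gamma(\sigma+\Q/2)$, finite because $\sigma+\Q/2>0$; using $a\sim t^{2}+d(e,g)^{2}$ then yields exactly the claimed upper bound. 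The lower bound is obtained identically, starting from the lower heat-kernel bound of Lemma~\ref{upper} (with a possibly different constant in the exponent, which does not affect the comparison $a\sim t^{2}+d(e,g)^{2}$).

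For part (ii), under the hypothesis $d(e,h)\leq d(e,g)/2$ I would use the decaying H\"older bound $|e^{-r\L}(gh)-e^{-r\L}(g)|\lesssim d(e,h)\,r^{-(\Q+1)/2}e^{-cd(e,g)^{2}/r}$ from Lemma~\ref{holder}, inserting it into~(\ref{Poisson}). This leads to
\begin{equation*}
\big|P_{\sigma,t}(gh)-P_{\sigma,t}(g)\big|\lesssim t^{2\sigma}d(e,h)\int_{0}^{\infty}r^{-(1+\sigma+(\Q+1)/2)}e^{-a/r}\,dr,
\end{equation*}
and the same substitution produces $t^{2\sigma}d(e,h)\,a^{-(\sigma+(\Q+1)/2)}\Gamma(\sigma+(\Q+1)/2)$ with $a\sim t^{2}+d(e,g)^{2}$. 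Comparing with the target factor $\tfrac{t^{2\sigma}}{(t^{2}+d(e,g)^{2})^{\Q/2+\sigma}}$, the leftover exponent is $-(\sigma+(\Q+1)/2)+(\Q/2+\sigma)=-1/2$, so it only remains to check $(t^{2}+d(e,g)^{2})^{-1/2}\leq t^{-1}$, which is immediate from $t^{2}+d(e,g)^{2}\geq t^{2}$. This converts the kernel's extra half-power of decay into precisely the gain $d(e,h)/t$ appearing in~(\ref{eq-3.2}).

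The computations are routine; the only points requiring care are the bookkeeping of the constants inside the exponentials, so that $e^{-t^{2}/4r}$ and the Gaussian factor from the kernel estimate genuinely combine into $e^{-a/r}$ with $a\asymp t^{2}+d(e,g)^{2}$, and the final exponent arithmetic in part (ii) that manufactures the factor $d(e,h)/t$. I expect no substantive obstacle beyond verifying convergence of the two Gamma integrals, which holds since $\sigma+\Q/2>0$ and $\sigma+(\Q+1)/2>0$.
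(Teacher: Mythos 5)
Your proposal is correct and follows essentially the same route as the paper: both feed the two-sided Gaussian bounds of Lemma~\ref{upper} and the H\"older estimate of Lemma~\ref{holder} into the subordination formula~(\ref{Poisson}) and evaluate the resulting $r$-integral as a Gamma integral, with the factor $d(e,h)/t$ in part (ii) coming from the extra half-power of decay exactly as you describe. The only difference is that you use the first form of~(\ref{Poisson}) and substitute $u=a/r$, while the paper uses the second form and substitutes $u=r(1+d(e,g)^{2}/t^{2})$; these are the same computation up to the reparametrization $r\mapsto t^{2}/(4r)$.
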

\begin{proof}
 It can be deduced from  (\ref{Poisson}) and Lemma \ref{upper} that
$$P_{\sigma,t}(g)\lesssim \int^{\infty}_{0}e^{-r}r^{\sigma-1}\Big(\frac{t^{2}}{4r}\Big)^{-\Q/2}\exp\Big(-c\Big(\frac{d(e,g)^{2}}{t^{2}/(4r)}\Big)\Big)dr.$$
Letting $r(1+d(e,g)^{2}/t^{2})=u$, we  can obtain
\begin{eqnarray*}
% \nonumber to remove numbering (before each equation)
  P_{\sigma,t}(g) &\lesssim&\int^{\infty}_{0}e^{-(r+d(e,g)^{2}r/t^{2})}r^{\sigma-1+\Q/2}t^{-\Q}dr  \\
   &\lesssim& \frac{t^{-\Q}}{(1+d(e,g)^{2}/t^{2})^{\sigma+\Q/2}}\int_{0}^{\infty}e^{-u}u^{\sigma-1+\Q/2}du  \\
   &\lesssim& \frac{t^{2\sigma}}{(t^{2}+d(e,g)^{2})^{\Q/2+\sigma}}.
\end{eqnarray*}

For lower bounds of $P_{\sigma,t}$, we also have
$$P_{\sigma,t}(g)\gtrsim\int^{1}_{0}e^{-r}r^{\sigma-1}t^{-\Q}r^{\Q/2}
\exp\Big(-c\Big(\frac{d(e,g)^{2}}{t^{2}/(4r)}\Big)\Big)dr.$$
Let $r\Big(1+d(e,g)^{2}/t^{2}\Big)=u$. It holds
\begin{eqnarray*}
% \nonumber to remove numbering (before each equation)
 P_{\sigma,t}(g)
   &\gtrsim&t^{-\Q}\frac{1}{(1+d(e,g)^{2}/t^{2})^{\Q/2+\sigma}}\int_{0}^{1}e^{-cu}u^{\Q/2+\sigma-1}du \\
   &\gtrsim& \frac{t^{2\sigma}}{(t^{2}+d(e,g)^{2})^{\Q/2+\sigma}}.
\end{eqnarray*}

(ii) It follows from (\ref{Poisson}) and Lemma \ref{holder} that
\begin{eqnarray*}
% \nonumber to remove numbering (before each equation)
  &&\Big|P_{\sigma,t}(gh)-P_{\sigma,t}(g)\Big|\\ &&\sim\Big|\int^{\infty}_{0}e^{-r}\Big(e^{-t^{2}\L/(4r)}(gh)-e^{-t^{2}\L/(4r)}(g)\Big)\frac{dr}{r^{1-\sigma}}\Big|\\
   &&\lesssim \int^{\infty}_{0}e^{-r}\Big(\frac{d(e,h)}{(t^{2}/(4r))^{1/2}}\Big)\Big(\frac{t^{2}}{4r}\Big)^{-\Q/2}
   \exp\Big(-c\Big(\frac{d(e,g)^{2}}{t^{2}/(4r)}\Big)\Big)\frac{dr}{r^{1-\sigma}}.
\end{eqnarray*}
Then the proof is similar to that of (i), so we omit it.
\end{proof}

\begin{remark}\label{333}
\item{(i)} The condition $d(e,h)<d(e,g)/2$ can be replaced by $d(e,h)<t$. Since $\{P_{t,\sigma}\}_{t>0}$ does not satisfy the semigroup property, we cannot prove it by the method of Lemma \ref{le-2.1}.
In fact, assume that $d(e,h)<t$. The following two cases should be considered.

{\it Case 1: $d(e,h)<t<d(e,g)/2$ or $d(e,h)<d(e,g)/2<t$}. By Proposition \ref{pro-1}, (\ref{eq-3.2}) still holds.

{\it Case 2: $d(e,g)/2<d(e,h)<t$.} Split
$$\Big|P_{\sigma,t}(gh)-P_{\sigma,t}(g)\Big|\lesssim M_{1}+M_{2},$$
where
$$\left\{\begin{aligned}
M_{1}&:=\int_{\{r: (t^{2}/(4r))>d(e,h)^{2}\}}e^{-r}\Big|e^{-t^{2}\L/(4r)}(gh)-e^{-t^{2}\L/(4r)}(g)\Big|\frac{dr}{r^{1-\sigma}};\\
M_{2}&:=\int_{\{r: (t^{2}/(4r))\leq
d(e,h)^{2}\}}e^{-r}\Big|e^{-t^{2}\L/(4r)}(gh)-e^{-t^{2}\L/(4r)}(g)\Big|\frac{dr}{r^{1-\sigma}}.
\end{aligned}\right.$$
For $M_{1}$, by Lemma \ref{holder},
\begin{eqnarray*}
M_{1}&\lesssim& \int^{\infty}_{0}e^{-r}\Big(\frac{d(e,h)}{(t^{2}/(4r))^{1/2}}\Big)\Big(\frac{t^{2}}{4r}\Big)^{-\Q/2}\frac{dr}{r^{1-\sigma}}\\
   &\lesssim& \Big(\frac{d(e,h)}{t}\Big)t^{-\Q}\int^{\infty}_{0}e^{-r}r^{1/2+\Q/2+\sigma-1}dr\\
   &\lesssim& \Big(\frac{d(e,h)}{t}\Big)t^{-\Q}.
\end{eqnarray*}

For $M_{2}$, we further divide $M_{2}$ as $M_{2}\lesssim M_{2,1}+M_{2,2}$, where
$$\left\{\begin{aligned}
M_{2,1}&:=\int_{\{r: (t^{2}/(4r))\leq d(e,h)^{2}\}}e^{-r}\Big|e^{-t^{2}\L/(4r)}(gh)\Big|\frac{dr}{r^{1-\sigma}};\\
M_{2,2}&:=\int_{\{r: (t^{2}/(4r))\leq
d(e,h)^{2}\}}e^{-r}\Big|e^{-t^{2}\L/(4r)}(g)\Big|\frac{dr}{r^{1-\sigma}}.
\end{aligned}\right.$$

Noting that $(t^{2}/4r)\leq d(e,h)^{2}$, we can deduce from Lemma
\ref{upper} that
\begin{eqnarray*}
M_{2,1}&\lesssim&\int^{\infty}_{0}e^{-r}r^{\sigma-1}\Big(\frac{t^{2}}{4r}\Big)^{-\Q/2}
\exp\Big(-c\Big(\frac{d(e,gh)^{2}}{t^{2}/(4r)}\Big)\Big)dr\\
&\lesssim&\int^{\infty}_{0}e^{-r}r^{\sigma-1}\Big(\frac{d(e,h)}{(t^{2}/(4r))^{1/2}}\Big)\Big(\frac{t^{2}}{4r}\Big)^{-\Q/2}dr\\
&\lesssim& \Big(\frac{d(e,h)}{t}\Big)t^{-\Q}\int^{\infty}_{0}e^{-r}r^{1/2+\Q/2+\sigma-1}dr\\
   &\lesssim& \Big(\frac{d(e,h)}{t}\Big)t^{-\Q}.
\end{eqnarray*}
The term $M_{2,2}$ can be dealt with similar to the term $M_{2,1}$.
Then it can be deduced from the condition $d(e,g)/2<t$
that
\begin{eqnarray*}
\Big|P_{\sigma,t}(gh)-P_{\sigma,t}(g)\Big|&\lesssim& \Big(\frac{d(e,h)}{t}\Big)t^{-\Q}\\
&\lesssim&\Big(\frac{d(e,h)}{t}\Big) \frac{t^{2\sigma}}{(t^{2}+d(e,g)^{2})^{\Q/2+\sigma}}.
\end{eqnarray*}

\item{(ii)} Similarly to the proof of Proposition \ref{continuous}, it can be proved that
function $g\mapsto P_{\sigma,t}u(g)$ is continuous on $\X$ if $p\in[1,\infty]$ and $u\in L^{p}(\X)$.

\end{remark}

\subsection{Fractional powers of the sub-Laplace operator}\label{sec-2.3}

According to \cite{G18},  the fractional power $\mathcal{L}^{s}$
  can be defined as follows.
\begin{definition}\label{NO}
Let $s\in(0,1).$ For any $u\in\mathscr{S}(\X),$ we have
\begin{equation}\label{-G}
\mathcal{L}^{s}u(g)=-\dfrac{s}{\Gamma(1-s)}\int_{0}^{\infty}\big(e^{-t\mathcal{L}}u(g)-u(g)\big)\dfrac{dt}{t^{1+s}}.
\end{equation}
\end{definition}
\begin{remark}\label{re-cc}
\item{(i)} For $s\in(0,1)$, $\alpha\in(0,1)$, $\sigma\in(0,1)$ and
$u\in\mathscr{S}(\X),$ using the formula (\ref{-G}),  we can see
that
\begin{equation}\label{fraheat-1}
 \mathcal{L}^{\mathbf{s}}u(g)=-\dfrac{s}{\Gamma(1-s)}\int_{0}^{\infty}\big(H_{\alpha,t}u(g)-u(g)\big)\dfrac{dt}{t^{1+s}}
\end{equation}
and
\begin{equation}\label{frapoisson-1}
 \mathcal{L}^{\widetilde{\mathbf{s}}}u(g)=-\dfrac{s}{\Gamma(1-s)}\int_{0}^{\infty}\big(P_{\sigma,t}u(g)-u(g)\big)\dfrac{dt}{t^{1+s}},
\end{equation} where   $\mathbf{s} $ and $\widetilde{\mathbf{s}} $ are given in (\ref{eq1.1}).

\item{(ii)} Observe that the right-hand sides of (\ref{fraheat-1}) and (\ref{frapoisson-1}) are  convergent integrals in $L^{p}(\X)$
 for any $p\in[1,\infty].$ We give only the proof of (\ref{fraheat-1}). (\ref{frapoisson-1}) can be proved similarly. In order  to prove
 (\ref{fraheat-1}),
we first  need to verify that
\begin{equation}\label{3.11}\|\mathcal{L}^{\alpha}u\|_{L^{p}(\X)}<\infty. \end{equation}   In fact, we can write $\L^{\alpha}u$ as
$$\int_{0}^{\infty}\big(e^{-t\L}u(g)-u(g)\big)\dfrac{dt}{t^{1+\alpha}}=\int_{0}^{1}\big(e^{-t\L}u(g)-u(g)\big)\dfrac{dt}{t^{1+\alpha}}
+\int_{1}^{\infty}\big(e^{-t\L}u(g)-u(g)\big)\dfrac{dt}{t^{1+\alpha}}.$$
In the first integral,
\begin{eqnarray*}
e^{-t\L}u(g)-u(g)=\int_{0}^{t}\frac{d}{dt'}e^{-t'\L}u(g)dt'
=\int_{0}^{t}\mathcal{L}e^{-t'\L}u(g)dt'=\int_{0}^{t}e^{-t'\L}\mathcal{L}u(g)dt'.
\end{eqnarray*}
Therefore, for any $t\in[0,1],$   using Lemma \ref{heat-1}
(iii), we obtain
\begin{eqnarray*}
\|e^{-t\L}u-u\|_{L^{p}(\X)}\leq \int_{0}^{t}\|e^{-t'\L}\mathcal{L}u\|_{L^{p}(\X)}dt'
\leq t\|\mathcal{L}u\|_{L^{p}(\X)}.
\end{eqnarray*}
This implies that
$$t^{-1-\alpha}\|e^{-t\L}u-u\|_{L^{p}(\X)}\lesssim t^{-\alpha}\in L^{1}(0,1).$$
For the second integral, by a similar method mentioned above, we
have
\begin{eqnarray*}
t^{-1-\alpha}\|e^{-t\L}u-u\|_{L^{p}(\X)}&\leq&
t^{-1-\alpha}(\|e^{-t\L}u\|_{L^{p}(\X)}+\|u\|_{L^{p}(\X)})\\
&\lesssim& \|u\|_{L^{p}(\X)}t^{-1-\alpha}\in L^{1}(1,\infty).
\end{eqnarray*}

We next show that the right-hand side of (\ref{fraheat-1}) is a
convergent integral in $L^{p}(\X)$
 for any $p\in[1,\infty].$ We first   write (\ref{fraheat-1}) as
$$\int_{0}^{\infty}\big(H_{\alpha,t}u(g)-u(g)\big)\dfrac{dt}{t^{1+s}}=\int_{0}^{1}\big(H_{\alpha,t}u(g)-u(g)\big)\dfrac{dt}{t^{1+s}}
+\int_{1}^{\infty}\big(H_{\alpha,t}u(g)-u(g)\big)\dfrac{dt}{t^{1+s}}.$$
In the first integral,
\begin{eqnarray*}
H_{\alpha,t}u(g)-u(g)=\int_{0}^{t}\frac{d}{dt'}H_{\alpha,t'}u(g)dt'
=\int_{0}^{t}\mathcal{L}^{\alpha}H_{\alpha,t'}u(g)dt'=\int_{0}^{t}H_{\alpha,t'}\mathcal{L}^{\alpha}u(g)dt'.
\end{eqnarray*}
Therefore, for any $t\in[0,1],$   using Lemma \ref{fracheat-1} (iii)
and (\ref{3.11}), we obtain
\begin{eqnarray*}
\|H_{\alpha,t}u-u\|_{L^{p}(\X)}\leq
\int_{0}^{t}\|H_{\alpha,t'}\mathcal{L}^{\alpha}u\|_{L^{p}(\X)}dt'
\leq t\|\mathcal{L}^{\alpha}u\|_{L^{p}(\X)}\lesssim t,
\end{eqnarray*}
whih implies that
$$t^{-1-s}\|H_{\alpha,t}u-u\|_{L^{p}(\X)}\lesssim t^{-s}\in L^{1}(0,1).$$
For the second integral, by a similar method mentioned above, we
have
\begin{eqnarray*}
t^{-1-s}\|H_{\alpha,t}u-u\|_{L^{p}(\X)}&\leq&
t^{-1-s}(\|H_{\alpha,t}u\|_{L^{p}(\X)}+\|u\|_{L^{p}(\X)})\\
&\lesssim& \|u\|_{L^{p}(\X)}t^{-1-s}\in L^{1}(1,\infty).
\end{eqnarray*} Then we deduce the desired result.

\item{(iii)} From the proof of \cite[Theorem 4.10]{F75}, $\L^{1/2}$ has the following decomposition
\begin{equation}\label{eq-3.22}
  \L^{1/2}=\sum_{j=1}^{n_{1}}T_{j}\mathbf{X}_{j},
\end{equation}
where $\{T_{j}\}_{j=1}^{n_{1}}$ are bounded operators on $L^{p}(\X)$.

\item{(iv)} Let $\L_{p}$ be the maximal restriction of $\L$ to $L^{p}(\X)$. By \cite[Theorem 3.15]{F75}, $\L_{p}^{\Theta/2}$ is a closed operator on $L^{p}(\X)$, where $\Theta=\mathbf{s}$ or $\Theta=\widetilde{\mathbf{s}}$.
\end{remark}

\begin{definition}\label{def-sobolev}
Let $p\in[1,\infty)$,
$(s,\alpha,\sigma)\in(0,1)\times(0,1)\times(0,1) $,
$\mathbf{s}=s\alpha$ and $\widetilde{\mathbf{s}}=s\sigma$.   The
fractional Sobolev spaces $\mathcal{W}^{2\mathbf{s},p}(\X)$ related
to $\mathcal{L}^{\mathbf{s}}$ is defined by
$$\mathcal{W}^{2\mathbf{s},p}(\X)=\Big\{u\in L^{p}(\X):\ \mathcal{L}^{\mathbf{s}}u\in L^{p}(\X)\Big\}$$
 endowed with the norm
$$\|u\|_{\mathcal{W}^{2\mathbf{s},p}(\X)}:=\|u\|_{L^{p}(\X)}+\|\mathcal{L}^{\mathbf{s}}u\|_{L^{p}(\X)}.$$
Correspondingly, the homogeneous fractional Sobolev space
$\mathcal{\dot{W}}^{2\mathbf{s},p}(\X)$ is defined in the same way
as for $\mathcal{W}^{2\mathbf{s},p}(\X)$ endowed with the norm
$$\|u\|_{\mathcal{\dot{W}}^{2\mathbf{s},p}(\X)}=\|\mathcal{L}^{\mathbf{s}}u\|_{L^{p}(\X)}.$$
The definition of the fractional Sobolev space
$\mathcal{W}^{2\widetilde{\mathbf{s}},p}(\X)$ related to
$\mathcal{L}^{\widetilde{\mathbf{s}}}$ can also be similar to that
of $\mathcal{W}^{2\mathbf{s},p}(\X)$.
\end{definition}

\begin{remark}
\item{(i)} If we also denote the domain of $\mathcal{L}^{\mathbf{s}}$ and
$\mathcal{L}^{\widetilde{\mathbf{s}}}$ in $L^{p}(\X)$ as
$\mathcal{W}^{2\mathbf{s},p}(\X)$ and
$\mathcal{W}^{2\widetilde{\mathbf{s}},p}(\X)$, then operators
$\mathcal{L}^{\mathbf{s}}$ and
$\mathcal{L}^{\widetilde{\mathbf{s}}}$ can be extended to a closed
operator on $\mathcal{W}^{2\mathbf{s},p}(\X)$ and
$\mathcal{W}^{2\widetilde{\mathbf{s}},p}(\X)$, respectively, see
\cite[Lemma 2.1]{B}. Therefore, $\mathcal{W}^{2s,p}(\X)$ endowed
with the norm $\|\cdot\|_{\mathcal{W}^{2s,p}(\X)}$ and
$\mathcal{W}^{2\widetilde{\mathbf{s}},p}(\X)$
 endowed with the norm $\|\cdot\|_{\mathcal{W}^{2\widetilde{\mathbf{s}},p}(\X)}$
become   Banach spaces.

\item{(ii)} From Remark \ref{re-cc} (ii), we know that
$\mathscr{S}(\X)\subset \mathcal{W}^{2\mathbf{s},p}(\X),$ that is,
$\mathcal{L}^{\mathbf{s}}(\mathscr{S}(\X))\subset L^{p}(\X).$ The above conclusion holds true for $\mathcal{W}^{2\widetilde{\mathbf{s}},p}(\X)$ as well.

\end{remark}

Next, we recall some results about  the Riesz potential operators
$\mathscr{I}_{\tilde{\alpha}}$ (see \cite{F75,Z23}).
\begin{definition}\label{def1}
Let $0<\tilde{\alpha}<\mathcal{Q}.$ The Riesz kernel of order
$\tilde{\alpha}$ is defined as
\begin{equation}\label{Ia}
I_{\tilde{\alpha}}(g)=\dfrac{1}{\Gamma({\tilde{\alpha}}/{2})}\int_{0}^{\infty}t^{{\tilde{\alpha}}/{2}-1}e^{-t\L}(g)dt
\quad \forall~ g\in\X.
\end{equation}
\end{definition}

It follows from  \cite{F75}  that the Riesz potential operator
$\mathscr{I}_{\tilde{\alpha}}$ is the inverse of the nonlocal
operator $\mathcal{L}^{\tilde{\alpha}/2},$ which is defined as
\begin{equation}\label{FFF}
\mathscr{I}_{\tilde{\alpha}}u(g)=(u\ast I_{\tilde{\alpha}})(g) :=\int_{\X}u(g')
I_{\tilde{\alpha}}(g'^{-1}g)dg'
\end{equation}for any $u\in \mathscr{S}(\X).$ Moreover, exchanging the
 order of integration in   (\ref{FFF}), we obtain
\begin{equation}\label{R}
\mathscr{I}_{\tilde{\alpha}}u(g)=\dfrac{1}{\Gamma({\tilde{\alpha}}/{2})}\int_{0}^{\infty}t^{{\tilde{\alpha}}/{2}-1}e^{-t\mathcal{L}}u(g)dt
\quad \forall~ u\in\mathscr{S}(\X).
\end{equation}
\begin{remark}
\item{(i)} Form   (\ref{R}), we can see that for
$0<\tilde{\beta},\tilde{\gamma}<\mathcal{Q}$ and
$(\alpha,\sigma)\in(0,1)\times(0,1)$,
$$\mathscr{I}_{\tilde{\beta}}u(g)=\dfrac{1}{\Gamma({\tilde{\alpha}}/{2})}\int_{0}^{\infty}t^{{\tilde{\alpha}}/{2}-1}H_{\alpha,t}u(g)dt$$
and
$$\mathscr{I}_{\tilde{\gamma}}u(g)=\dfrac{1}{\Gamma({\tilde{\alpha}}/{2})}\int_{0}^{\infty}t^{{\tilde{\alpha}}/{2}-1}P_{\sigma,t}u(g)dt,$$
where $\tilde{\beta}=\alpha\tilde{\alpha}$ and $\tilde{\gamma}=\sigma\tilde{\alpha}.$

\item{(ii)} From the spectral analysis, we can also  see that
\begin{equation}\label{eq-3.1-1}
u=\mathscr{I}_{2\mathbf{s}}\L^{\mathbf{s}}u=\L^{\mathbf{s}}\mathscr{I}_{2\mathbf{s}}u.
\end{equation}
The above conclusion (\ref{eq-3.1-1}) holds true for
$\mathscr{I}_{2\widetilde{\mathbf{s}}}$ as well.

\item{(iii)} By  Lemma \ref{holder} and (\ref{Ia}), for $d(e,h)\leq d(e,g)/2$, we obtain
$$|I_{\tilde{\beta}}(gh)-I_{\tilde{\beta}}(g)|\lesssim d(e,g)^{\tilde{\beta}-\mathcal{Q}-1}d(e,h),$$
 which implies $I_{\tilde{\beta}}(\cdot)$
is continuous on $\X$. Then by \cite[Proposition 2.3.2]{ada}, we
know that for a measurable  nonnegative function $u$, $g\mapsto
\mathscr{I}_{\tilde{\beta}}u(g)$ is lower semicontinuous on $\X$.

\end{remark}

Similarly to \cite[Propositions 1.10 \&\ 1.11]{F75}, it can be proved  that the following
Hardy-Littlewood-Sobolev
 inequalities for potential operators $\mathscr{I}_{\tilde{\beta}}$ and $\mathscr{I}_{\tilde{\gamma}}$ are valid,
which play  the  key role in the proof of  Proposition
\ref{relation} below.
\begin{proposition}\label{HLS} Let
$(\alpha,\sigma,\tilde{\alpha})\in(0,1)\times(0,1)\times
(0,\mathcal{Q})$ and assume that $\theta$ be either
$\tilde{\beta}=\alpha\tilde{\alpha}$ or
$\tilde{\gamma}=\sigma\tilde{\alpha}.$ Let $p\in
[1,\mathcal{Q}/\theta)$ and $1/p-1/q=\theta/\mathcal{Q}.$ The
following statements hold:
  \item{\rm (i)} If $p=1,$   there exists a positive constant $C$  such
   that for any $u\in L^{1}(\X)$,
  $$\sup_{\lambda>0}\lambda|\{g\in\X:\ |\mathscr{I}_{\theta}u(g)|>\lambda\}|^{{(\mathcal{Q}-\theta)}/{\mathcal{Q}}}
  \leq C\|u\|_{L^{1}(\X)}.$$
  \item{\rm (ii)} If $p\in(1,\mathcal{Q}/\theta)$,   there exists a positive constant $C$
   such that for any $u\in L^{p}(\X)$,
  $$\|\mathscr{I}_{\theta}u\|_{L^{q}(\X)}\leq C\|u\|_{L^{p}(\X)}.$$

\end{proposition}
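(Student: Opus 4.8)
The plan is to prove the Hardy–Littlewood–Sobolev inequalities for $\mathscr{I}_{\theta}$ by reducing them to the classical Riesz potential estimates on stratified Lie groups, which are exactly \cite[Propositions 1.10 \&\ 1.11]{F75}. The key observation is that $\theta$ (being either $\tilde\beta=\alpha\tilde\alpha$ or $\tilde\gamma=\sigma\tilde\alpha$) is itself a number in $(0,\mathcal Q)$, and by Definition \ref{def1} and the convolution representation \eqref{FFF} the operator $\mathscr{I}_{\theta}$ is convolution against the Riesz kernel $I_{\theta}$ of order $\theta$. So the first step is simply to record that $\mathscr{I}_{\theta}u = u\ast I_{\theta}$ with $I_{\theta}$ given by \eqref{Ia}, and note that the scaling relation $1/p-1/q=\theta/\mathcal Q$ is precisely the relation under which the classical estimates hold.

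The crux of the argument is a pointwise bound on the kernel, namely $I_{\theta}(g)\lesssim d(e,g)^{\theta-\mathcal Q}$. First I would derive this from Lemma \ref{upper}: substituting the Gaussian upper bound $e^{-t\L}(g)\lesssim t^{-\mathcal Q/2}e^{-d(e,g)^2/ct}$ into \eqref{Ia} gives
\begin{eqnarray*}
I_{\theta}(g)\lesssim \int_{0}^{\infty}t^{\theta/2-1}t^{-\mathcal Q/2}e^{-d(e,g)^2/ct}\,dt,
\end{eqnarray*}
and the change of variables $u=d(e,g)^2/(ct)$ converts this into a constant multiple of $d(e,g)^{\theta-\mathcal Q}$ times a convergent Gamma integral (convergent exactly because $0<\theta<\mathcal Q$). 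This shows $I_{\theta}$ is a weak-type kernel of the right homogeneity, so $\mathscr{I}_{\theta}$ behaves like the fractional integral of order $\theta$ on a space of homogeneous dimension $\mathcal Q$.

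With this kernel bound in hand, the remaining steps are standard. For the endpoint $p=1$ in part (i), I would split $\mathscr{I}_{\theta}u(g)$ into the contribution from $B(g,R)$ and its complement; estimating the local part by the Hardy–Littlewood maximal function and the far part by $\|u\|_{L^1}$, then optimizing in $R$, yields the weak-type $(1,\mathcal Q/(\mathcal Q-\theta))$ bound, exactly as in the Euclidean Sobolev embedding and in \cite[Proposition 1.10]{F75}. For part (ii) with $1<p<\mathcal Q/\theta$, the strong $(p,q)$ bound follows either from the Marcinkiewicz interpolation theorem applied to the weak-type endpoints, or more directly by invoking \cite[Proposition 1.11]{F75}, since the doubling property $|B(g,r)|=b'r^{\mathcal Q}$ of the Haar measure places us squarely in the homogeneous setting where those results are stated.

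The main obstacle, as the phrase ``Similarly to'' in the statement signals, is not conceptual but one of verifying that the hypotheses of \cite{F75} transfer verbatim: one must check that $I_{\theta}$ is genuinely comparable to the homogeneous Riesz kernel treated there (for which the lower Gaussian bound in Lemma \ref{upper} gives the matching lower estimate $I_{\theta}(g)\gtrsim d(e,g)^{\theta-\mathcal Q}$), and that the integrability at $t\to 0$ and $t\to\infty$ in \eqref{Ia} is controlled solely by the constraint $\theta\in(0,\mathcal Q)$. Once these comparisons are in place the inequalities are immediate consequences of the classical results, so I would keep the write-up brief and lean on \cite{F75} for the interpolation machinery rather than reproducing it.
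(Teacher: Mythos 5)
Your proposal is correct and matches the paper's intent exactly: the paper offers no written proof beyond the remark that the result follows ``similarly to'' Folland's Propositions 1.10 and 1.11, and your reduction — establishing the pointwise bound $I_{\theta}(g)\lesssim d(e,g)^{\theta-\mathcal{Q}}$ from the Gaussian heat-kernel estimate of Lemma \ref{upper} and then invoking the classical fractional-integration machinery on a group of homogeneous dimension $\mathcal{Q}$ — is precisely the verification the authors are implicitly relying on (they carry out the same kernel-integrability check themselves in the proof of Proposition \ref{strong}). The Gamma-integral convergence under $0<\theta<\mathcal{Q}$ and the exponent relation $1/p-1/q=\theta/\mathcal{Q}$ are handled correctly, so nothing further is needed.
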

\begin{remark}\label{EL1}
By Proposition \ref{HLS} and  Fubini's Theorem, we obtain the
following generalization of (\ref{e-2.7}) for the case
$h=I_{\theta}$. Precisely, let $1\leq p<\Q/\theta$. Then for any
$f\in L^{p}(\X)$ and $u\in \text{Lip}_{c}(\X)$,
$$\int_{\X}f(g) \mathscr{I}_{\theta}u(g) dg=\int_{\X}u(g) \mathscr{I}_{\theta}f(g) dg.$$

\end{remark}

\section{Fractional Sobolev capacities and Riesz capacities}\label{sec-3}

\subsection{Characterizations of fractional Sobolev spaces}\label{sec-3.1}
We first give some results about the dense subspaces of fractional
Sobolev spaces, which are related to the fractional Sobolev
capacity. In the setting of nilpotent Lie groups, the density of the spaces of $C^{\infty}$ functions with compact support in Sobolev type spaces was proved by
Folland in \cite{F75} (see \cite[Theorem 4.5]{F75}). However, in order to investigate the extension problem of
$\mathcal{\dot{W}}^{2\mathbf{s},p}(\X)$ via fractional Sobolev type capacities, for any compact subset $K$, we need to find a approximate sequence  $\{u_{j}\geq
1_{\mathcal{K}}\}\subset C_{c}^{\infty}(\X)$ for any $u\in \mathcal{\dot{W}}^{2\mathbf{s},p}(\X)$ satisfying $u>1_{\mathcal{K}}$. Hence, in the proof of Proposition \ref{dense-1} below,  we apply another a constructive
approach.
\begin{proposition}\label{dense-1}
Let $(s,\alpha,\sigma)\in(0,1)\times(0,1)\times(0,1)$ and
$p\in(1,\infty).$ The indices $\mathbf{s}$ and $\widetilde{\mathbf{s}}$ are
given in (\ref{eq1.1}).

\item{\rm (i)} For any $u\in \mathcal{\dot{W}}^{2\mathbf{s},p}(\X)$, there exists a sequence of functions $\{u_{j}\}_{j=1}^{\infty}\subset C_{c}^{\infty}(\X)$ such that
\begin{equation}\label{u-u}
  \lim_{j\rightarrow\infty}\|u_{j}-u\|_{\mathcal{\dot{W}}^{2\mathbf{s},p}(\X)}=0.
\end{equation}
Especially, if $u\geq 0$, then all the functions $\{u_{j}\}_{j=1}^{\infty}$ in (\ref{u-u}) can be chosen to be non-negative.
Moreover, if $\mathcal{K}\subset\X$ is a compact set and $u\geq 1_{K}$, then the approximation sequence
 $\{u_{j}\}_{j=1}^{\infty}$ in $C_{c}^{\infty}(\X)$ can be chosen to satisfy $u_{j}\geq 1_{\mathcal{K}}$ for all $j\in\mathbb{N}.$

\item{\rm (ii)} For the fractional Sobolev spaces $\mathcal{\dot{W}}^{2\widetilde{\mathbf{s}},p}(\X)$, the above results also hold.
\end{proposition}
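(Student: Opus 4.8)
The plan is to produce the approximants by two independent operations — a smoothing through the heat semigroup, to gain $C^\infty$ regularity, followed by a spatial truncation, to gain compact support — and then to check that these operations can be made to respect nonnegativity and, in the last assertion, the constraint $u_j\ge 1_{\mathcal K}$. I will carry out (i); part (ii) is identical once $H_{\alpha,t}$ is replaced by $P_{\sigma,t}$ and Proposition~\ref{pro-1} and Remark~\ref{333} are used in place of Proposition~\ref{pro-frac}, because the smoothing step below only ever uses $e^{-\varepsilon\L}$, which commutes with every power of $\L$ and so with $\L^{\widetilde{\mathbf{s}}}$ as well. Unlike Folland's density theorem in \cite{F75}, the point is to keep the sign/obstacle condition along the approximation.

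\textbf{Smoothing.} For $u\in\mathcal{\dot W}^{2\mathbf{s},p}(\X)$ set $u_\varepsilon:=e^{-\varepsilon\L}u$. By Lemma~\ref{heat-1} the heat kernel is $C^\infty$ with rapidly decaying derivatives, so $u_\varepsilon\in C^\infty(\X)$; since $e^{-\varepsilon\L}$ and $\L^{\mathbf{s}}$ are both functions of $\L$ they commute, whence $\L^{\mathbf{s}}u_\varepsilon=e^{-\varepsilon\L}(\L^{\mathbf{s}}u)$, and strong continuity on $L^p$ (Lemma~\ref{heat-1}(iii)) gives $\L^{\mathbf{s}}u_\varepsilon\to\L^{\mathbf{s}}u$ in $L^p(\X)$, i.e. $u_\varepsilon\to u$ in the homogeneous norm. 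Positivity of the kernel (Lemma~\ref{heat-1}(i)) makes $u_\varepsilon\ge 0$ whenever $u\ge 0$. \textbf{Truncation.} Fix $\chi\in C_c^\infty$ with $0\le\chi\le1$, $\chi\equiv1$ on $B(e,1)$, $\mathrm{supp}\,\chi\subset B(e,2)$, and put $\chi_R(g):=\chi(\delta_{1/R}g)$. With $v=u_\varepsilon$ I split $\L^{\mathbf{s}}(\chi_R v)-\L^{\mathbf{s}}v=(\chi_R-1)\L^{\mathbf{s}}v+[\L^{\mathbf{s}},\chi_R]v$; the first term tends to $0$ in $L^p$ by dominated convergence. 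Using the pointwise formula~(\ref{fraheat-1}) the local terms cancel and
$$[\L^{\mathbf{s}},\chi_R]v(g)=-\frac{s}{\Gamma(1-s)}\int_{\X}N_{\mathbf{s}}(g'^{-1}g)\big(\chi_R(g')-\chi_R(g)\big)v(g')\,dg',\qquad N_{\mathbf{s}}(g):=\int_0^\infty K_{\alpha,t}(g)\,t^{-1-s}\,dt\lesssim d(e,g)^{-\Q-2\mathbf{s}},$$
the last bound coming from Proposition~\ref{pro-frac}(i). With $|\chi_R(g')-\chi_R(g)|\lesssim\min\{1,d(g,g')/R\}$, a Hölder estimate (splitting $d(g,g')\lessgtr R$) shows $\|[\L^{\mathbf{s}},\chi_R]v\|_{L^p}\to0$ as $R\to\infty$. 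A diagonal choice $u_j:=\chi_{R_j}e^{-\varepsilon_j\L}u$ then satisfies~(\ref{u-u}); since $0\le\chi_{R_j}\le1$, nonnegativity survives, settling the first two assertions.

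\textbf{Constraint.} The genuinely delicate point is to keep $u_j\ge1$ on $\mathcal K$: averaging by $e^{-\varepsilon\L}$ can depress $u$ below $1$ on $\mathcal K$, so the naive sequence fails. My plan is to correct in a shrinking neighborhood of $\mathcal K$. Starting from the nonnegative smooth approximants $v_j\to u$ built above, I would choose cutoffs $\psi_j\in C_c^\infty$ with $0\le\psi_j\le1$, $\psi_j\equiv1$ on $\mathcal K$, and $\mathrm{supp}\,\psi_j\subset U_j$ with $U_j\downarrow\mathcal K$, and set $u_j:=\psi_j+(1-\psi_j)v_j=v_j+\psi_j(1-v_j)$. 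This is in $C_c^\infty(\X)$, equals $1$ on $\mathcal K$, and — because $v_j\ge0$ and $0\le\psi_j\le1$ — stays nonnegative, so $u_j\ge 1_{\mathcal K}$. It remains to prove $u_j\to u$, i.e. that the correction $\psi_j(1-v_j)$, supported in the shrinking set $U_j$, tends to $0$ in the homogeneous norm.

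This last estimate is where the real work lies. A fixed bump will not do, since then the limit is $u+\psi(1-u)\ne u$; and a bump concentrating on $\mathcal K$ carries fractional energy $\|\L^{\mathbf{s}}\psi_j\|_{L^p}$ that grows as $\mathrm{supp}\,\psi_j$ contracts, so smallness of $\|\psi_j(1-v_j)\|_{\mathcal{\dot W}^{2\mathbf{s},p}}$ is not automatic. The mechanism I would exploit is that $u\ge1$ on $\mathcal K$ forces $1-v_j$ to be small exactly where $\psi_j$ lives: I would first extract, along a subsequence, pointwise a.e.\ convergence $v_j\to u$ (via the Hardy--Littlewood--Sobolev embedding of Proposition~\ref{HLS} applied to $v_j-u=\mathscr{I}_{2\mathbf{s}}\L^{\mathbf{s}}(v_j-u)$, using $u=\mathscr{I}_{2\mathbf{s}}\L^{\mathbf{s}}u$ from~(\ref{eq-3.1-1})), and then calibrate the rate $U_j\downarrow\mathcal K$ against the decay of $1-v_j$ near $\mathcal K$ so that the growth of the bump energy is beaten. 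Getting this quantitative balance right, rather than the density itself, is the main obstacle, and it is the reason a constructive rather than a soft argument is needed.
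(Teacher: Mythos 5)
Your smoothing--truncation scheme is sound for the first two assertions and is in fact close to the paper's own proof: the paper also smooths and then truncates by $\eta_{N}(\cdot)=\eta(\delta_{N^{-1}}(\cdot))$, and its Step 2 is essentially your commutator estimate, obtained from the kernel bound $N_{\mathbf{s}}(g)\lesssim d(e,g)^{-\Q-2\mathbf{s}}$ together with H\"older's inequality exactly as you indicate. The one substantive difference up to that point is the smoothing operator: you use $e^{-\varepsilon\L}$, whereas the paper convolves with a compactly supported mollifier $\tau_{\epsilon}$ (with $\mathrm{supp}\,\tau_{\epsilon}\subset B(e,\epsilon)$) and proves $\L^{\mathbf{s}}(\tau_{\epsilon}\ast u)=\tau_{\epsilon}\ast\L^{\mathbf{s}}u$ by testing against $\mathscr{S}(\X)$ and Fubini rather than by spectral commutation.

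That choice is not cosmetic: it is precisely what makes the third assertion immediate in the paper and what leaves a genuine gap in your argument. Since $\tau_{\epsilon}\ast u(g)$ for $g\in\mathcal{K}$ averages $u$ only over $B(g,\epsilon)$, once $u\geq 1$ on an $\epsilon$-neighborhood of $\mathcal{K}$ one gets $\tau_{\epsilon}\ast u\geq 1$ on $\mathcal{K}$ with no correction, and multiplying by $\eta_{N}\equiv 1$ on $\mathcal{K}$ preserves this. The heat kernel has global support, so, as you yourself observe, $e^{-\varepsilon\L}u$ can fall below $1$ on $\mathcal{K}$, and your repair $u_{j}=\psi_{j}+(1-\psi_{j})v_{j}$ is left unproven at its decisive step, namely $\|\psi_{j}(1-v_{j})\|_{\mathcal{\dot{W}}^{2\mathbf{s},p}(\X)}\rightarrow 0$. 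This is not merely a missing computation: the hypothesis is only $u\geq 1_{\mathcal{K}}$, so $u$ may be small immediately outside $\mathcal{K}$, and then at points of $\mathcal{K}$ within distance $\varepsilon_{j}^{1/2}$ of $\partial\mathcal{K}$ the averaged function $v_{j}$ sits below $1$ by an amount bounded away from $0$ uniformly in $j$ (a fixed fraction of the kernel mass lands outside $\mathcal{K}$). Hence $1-v_{j}$ is of order one on a collar around $\partial\mathcal{K}$ contained in $\mathrm{supp}\,\psi_{j}$ no matter how $U_{j}\downarrow\mathcal{K}$ is calibrated, and the correction is a function of height comparable to $1$ concentrating on a set whose capacity tends to that of $\partial\mathcal{K}$, which is positive in general; its homogeneous norm therefore does not tend to $0$, and almost everywhere convergence of $v_{j}$ cannot rescue this since the obstruction lives on the (null) boundary collar but is seen by the fractional energy. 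The fix is simply to smooth with a compactly supported mollifier, as in the paper: the commutation identity and the truncation estimate go through unchanged, and the obstacle condition $u_{j}\geq 1_{\mathcal{K}}$ becomes automatic.
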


\begin{proof}
We only give the proof of (i). The proof of (ii) can be similarly
obtained, we omit the details.  Without loss
of generality, we may assume $u\in
\mathcal{\dot{W}}^{2\mathbf{s},p}(\X)$ such that
$\|u\|_{\mathcal{\dot{W}}^{2\mathbf{s},p}(\X)}=1$. Denote by
$p'={p}/{(p-1)}$ the conjugate number of $p$.

{\it Step 1: Approximation $u\in \mathcal{\dot{W}}^{2\mathbf{s},p}(\X)$ by $C^{\infty}(\X)\cap\mathcal{\dot{W}}^{2\mathbf{s},p}(\X)$-functions.} Suppose that $\tau\in C_{c}^{\infty}(\X)$ is a non-negative smooth radial function on $\X$ satisfying
$$\left\{\begin{aligned}
&\text{ supp }\tau\subset B(e,1);\\
&\int_{\X}\tau(g)dg=1;\\
&\tau_{\epsilon}(\cdot)=\epsilon^{-\mathcal{Q}}\tau(\delta_{\epsilon^{-1}}(\cdot))
\ \text{for all}\ \epsilon\in(0,\infty).
\end{aligned}\right.$$
Clearly,   $\|\tau_{\epsilon}\ast u-u\|_{L^{p}(\X)}\rightarrow 0$ as
$\epsilon\rightarrow 0$. We claim that
\begin{equation}\label{u-u-1}
  \L^{\mathbf{s}}(\tau_{\epsilon}\ast u)=\tau_{\epsilon}\ast(\L^{\mathbf{s}}u)\ in\ \mathscr{S}'(\X).
\end{equation}
Once (\ref{u-u-1}) is proved, we can apply the Young inequality to obtain
$$\|\tau_{\epsilon}\ast(\L^{\mathbf{s}}u)\|_{L^{p}(\X)}\leq \|\tau_{\epsilon}\|_{L^{1}}\|\L^{\mathbf{s}}u\|_{L^{p}(\X)}\sim \|u\|_{\mathcal{\dot{W}}^{2\mathbf{s},p}(\X)}<\infty,$$
and then use the density of $\mathscr{S}(\X)$ in $L^{p'}(\X)=(L^{p}(\X))^{\ast}$ to get
\begin{eqnarray*}
% \nonumber to remove numbering (before each equation)
   &&\|\L^{\mathbf{s}}(\tau_{\epsilon}\ast u)-\tau_{\epsilon}\ast (\L^{\mathbf{s}}u)\|_{L^{p}(\X)}  \\
   &&=\sup\Big\{\langle \L^{\mathbf{s}}(\tau_{\epsilon}\ast u)-\tau_{\epsilon}\ast (\L^{\mathbf{s}}u), \varphi\rangle:\ \varphi\in\mathscr{S}(\X),\ \|\varphi\|_{L^{p'}(\X)}\leq 1\Big\}=0,
\end{eqnarray*}
which implies that the identity in (\ref{u-u-1}) also holds in $L^{p}(\X)$, whence
$$\lim_{\epsilon\rightarrow 0}\|\L^{\mathbf{s}}(\tau_{\epsilon}\ast u)-\L^{\mathbf{s}}u\|_{L^{p}(\X)}=\lim_{\epsilon\rightarrow 0}\|\tau_{\epsilon}\ast (\L^{\mathbf{s}}u)-\L^{\mathbf{s}}u\|_{L^{p}(\X)}=0.$$
This shows that $u$ can be approximated by $C^{\infty}$-functions $\tau_{\epsilon}\ast u$ in $\mathcal{\dot{W}}^{2\mathbf{s},p}(\X)$.

It remains to prove (\ref{u-u-1}). For any $\varphi\in\mathscr{S}(\X)$, we have
\begin{eqnarray}\notag
% \nonumber to remove numbering (before each equation)
  \langle \L^{\mathbf{s}}(\tau_{\epsilon}\ast u),\varphi\rangle &=& \langle \tau_{\epsilon}\ast u, \L^{\mathbf{s}}\varphi\rangle \\ \notag
   &=& \int_{\X}\tau_{\epsilon}\ast u(g)\L^{\mathbf{s}}\varphi(g)dg \\ \label{u-u-2}
   &=& \int_{\X}\Big(\int_{\X}\tau_{\epsilon}(gg'^{-1})u(g')\L^{\mathbf{s}}\varphi(g)dg'\Big)dg.
\end{eqnarray}
Then we apply Fubini's theorem to obtain
\begin{eqnarray*}
% \nonumber to remove numbering (before each equation)
   &&\int_{\X}\Big(\int_{\X}\tau_{\epsilon}(gg'^{-1})u(g')\L^{\mathbf{s}}\varphi(g)dg'\Big)dg  \\
   &&=\int_{\X}\Big(\int_{\X}\tau_{\epsilon}(gg'^{-1})u(g')\L^{\mathbf{s}}\varphi(g)dg\Big)dg'  \\
   &&=\int_{\X}\tau_{\epsilon}\ast(\L^{\mathbf{s}}\varphi)(g')u(g')dg'.
\end{eqnarray*}
Since $\tau_{\epsilon}\ast\varphi\in\mathscr{S}(\X)$, we easily obtain
$$\tau_{\epsilon}\ast (\L^{\mathbf{s}}\varphi)(g')=\L^{\mathbf{s}}(\tau_{\epsilon}\ast\varphi)(g').$$
Combining the last two formulas with (\ref{u-u-2}), we further apply
the fact that $\L^{\mathbf{s}}u\in L^{p}(\X)$ and Fubini's Theorem
to deduce
\begin{eqnarray*}
% \nonumber to remove numbering (before each equation)
  \langle\L^{\mathbf{s}}(\tau_{\epsilon}\ast u), \varphi\rangle &=& \int_{\X}\L^{\mathbf{s}}(\tau_{\epsilon}\ast\varphi)(g')u(g')dg'=\langle u,\L^{\mathbf{s}}(\tau_{\epsilon}\ast\varphi)\rangle \\
   &=& \langle\L^{\mathbf{s}}u, \tau_{\epsilon}\ast\varphi\rangle=\langle\tau_{\epsilon}\ast(\L^{\mathbf{s}}u),\varphi\rangle,
\end{eqnarray*}
which gives (\ref{u-u-1}).

{\it Step 2: Approximation $u\in C^{\infty}(\X)\cap\mathcal{\dot{W}}^{2\mathbf{s},p}(\X)$ by $C_{c}^{\infty}(\X)$-functions.} Let $\eta$ be a radial decreasing function in $C_{c}^{\infty}(\X)$ such that
\begin{equation*}\label{u-u-3}
 \left\{\begin{aligned}
&0\leq \eta(g)\leq 1 \ \text{for all}\ g\in \X;\\
&\eta(g)=1\ \text{on}\ B(e,1);\\
&\text{supp}\ \eta\subset B(0,2);\\
&\eta_{N}(\cdot)=\eta(\delta_{N^{-1}}(\cdot))\ \text{for any}\ N>0.
\end{aligned}\right.
\end{equation*}
Clearly, $\{u\eta_{N}\}_{N=1}^{\infty}\subset C_{c}^{\infty}(\X)$ and $\|u\eta_{N}-u\|_{L^{p}(\X)}\rightarrow 0$ as $N\rightarrow\infty$. So it suffices to show
\begin{equation}\label{u-u-4}
  \lim_{N\rightarrow\infty}\|\L^{\mathbf{s}}(u\eta_{N}-u)\|_{L^{p}(\X)}=0.
\end{equation}
To prove (\ref{u-u-4}), for any $g\in\X$, we write
\begin{eqnarray*}\notag
% \nonumber to remove numbering (before each equation)
   &&\L^{\mathbf{s}}(u\eta_{N}-u)(g)  \\ \notag
   &&=C\int_{\X}\Big(u(g)(\eta_{N}(g)-1)-u(g')(\eta_{N}(g')-1)\Big)\int_{0}^{\infty}t^{-(1+s)}K_{\alpha,t}(g'^{-1}g)dtdg'  \\ \notag
   &&=C\Big((\eta_{N}(g)-1)\int_{\X}(u(g)-u(g'))\int_{0}^{\infty}t^{-(1+s)}K_{\alpha,t}(g'^{-1}g)dtdg'\\ \notag
   &&\quad +\int_{\X}u(g')(\eta_{N}(g)-\eta_{N}(g'))\int_{0}^{\infty}
   t^{-(1+s)}K_{\alpha,t}(g'^{-1}g)dtdg'\Big)  \\ \label{u-u-5}
   &&=(\eta_{N}(g)-1)\L^{\mathbf{s}}u(g)+C\int_{\X}u(g')(\eta_{N}(g)-\eta_{N}(g'))\int_{0}^{\infty}t^{-(1+s)}K_{\alpha,t}(g'^{-1}g)dtdg'.\
\end{eqnarray*}
Via applying $\L^{\mathbf{s}}u\in L^{p}(\X)$, we obtain
$$\lim_{N\rightarrow\infty}\|(\eta_{N}-1)\L^{\mathbf{s}}u\|_{L^{p}(\X)}\leq \lim_{N\rightarrow\infty}\Big(\int_{d(e,g)>N}|\L^{\mathbf{s}}u(g)|^{p}dg\Big)^{1/p}=0.$$
Consequently, (\ref{u-u-4}) follows from verifying
\begin{equation}\label{u-u-6}
  \lim_{N\rightarrow\infty}\int_{\X}|u(g')||\eta_{N}(g)-\eta_{N}(g')|\int_{0}^{\infty}t^{-(1+s)}K_{\alpha,t}(g'^{-1}g)dtdg'=0\ in\ L^{p}(\X).
\end{equation}

In order to prove (\ref{u-u-6}), for any $g\in \X$, we use the
stratified  mean value theorem and properties of $\eta$ to find
\begin{eqnarray*}
% \nonumber to remove numbering (before each equation)
   &&\int_{\X}|\eta(g)-\eta(g')|\int_{0}^{\infty}t^{-(1+s)}K_{\alpha,t}(g'^{-1}g)dtdg'  \\
   &&\leq \|\nabla_{\X}\eta\|_{\infty}\int_{d(g,g')<1}d(g,g')^{1-2\alpha s-\mathcal{Q}}dg'+\int_{d(g,g')\geq 1}d(g,g')^{-2\alpha s-\mathcal{Q}}dg'\lesssim 1,
\end{eqnarray*}
whence
\begin{eqnarray*}\notag
% \nonumber to remove numbering (before each equation)
   &&\int_{\X}|\eta_{N}(g)-\eta_{N}(g')|\int_{0}^{\infty}t^{-(1+s)}K_{\alpha,t}(g'^{-1}g)dtdg'  \\ \label{u-u-7}
   &&\lesssim N^{-2\alpha s}\int_{\X}\frac{|\eta(\delta_{N^{-1}}(g))-\eta(g')|}{d(\delta_{N^{-1}}(g),g')^{\mathcal{Q}+2\alpha s}}dg'\lesssim N^{-2\alpha s}.
\end{eqnarray*}
The last estimation, along with   H\"{o}lder's inequality, derives
\begin{eqnarray*}
% \nonumber to remove numbering (before each equation)
  &&\int_{\X}|u(g')||\eta_{N}(g)-\eta_{N}(g')|\int_{0}^{\infty}t^{-(1+s)}K_{\alpha,t}(g'^{-1}g)dtdg' \\ &&\lesssim\int_{\X}\frac{|u(g')||\eta_{N}(g)-\eta_{N}(g')|}{d(g,g')^{\mathcal{Q}+2\alpha s}}dg' \\
   &&\lesssim\Big(\int_{\X}\frac{|u(g')|^{p}|\eta_{N}(g)-\eta_{N}(g')|}{d(g,g')^{\mathcal{Q}+2\alpha s}}dg'\Big)^{1/p}
   \Big(\int_{\X}\frac{|\eta_{N}(g)-\eta_{N}(g')|}{d(g,g')^{\mathcal{Q}+2\alpha s}}dg'\Big)^{1/p'}   \\
   &&\lesssim N^{-2\alpha s/p'}\Big(\int_{\X}\frac{|u(g')|^{p}|\eta_{N}(g)-\eta_{N}(g')|}{d(g,g')^{\mathcal{Q}+2\alpha s}}dg'\Big)^{1/p},
\end{eqnarray*}
which implies
\begin{eqnarray*}\notag
% \nonumber to remove numbering (before each equation)
   && \Big(\int_{\X}\Big(\int_{\X}|u(g')||\eta_{N}(g)-\eta_{N}(g')|\int_{0}^{\infty}t^{-(1+s)}K_{\alpha,t}(g'^{-1}g)dtdg'\Big)^{p}dg\Big)^{1/p} \\ \notag
   &&\lesssim N^{-2\alpha s/p'}\Big(\int_{\X}\int_{\X}\frac{|u(g')|^{p}|\eta_{N}(g)-\eta_{N}(g')|}{d(g,g')^{\mathcal{Q}+2\alpha s}}dg'dg\Big)^{1/p}  \\ \label{u-u-8}
   &&\lesssim N^{-2\alpha s}\|u\|_{L^{p}(\X)}\rightarrow 0\ \ \ as\ N\rightarrow\infty.
\end{eqnarray*}
This proves (\ref{u-u-6}).

{\it Step 3: Approximation $u\in \mathcal{\dot{W}}^{2\mathbf{s},p}(\X)$ by $C_{c}^{\infty}(\X)$-functions.} A combination of Steps 1-2 and the triangle inequalities for $\|\cdot\|_{L^{p}(\X)}$ and $\|\cdot\|_{\mathcal{\dot{W}}^{2\mathbf{s},p}(\X)}$ deduces that any $\mathcal{\dot{W}}^{2\mathbf{s},p}(\X)$-function $u$ can be approximated by the $C_{c}^{\infty}(\X)$-functions
$$u_{\epsilon,N}:=(\tau_{\epsilon}\ast u)\eta_{N}$$
under $\|\cdot\|_{L^{p}(\X)}$ and $\|\cdot\|_{\mathcal{\dot{W}}^{2\mathbf{s},p}(\X)}$ respectively. Of course,
if $u$ is non-negative, then the approximation functions $u_{\epsilon,N}$ are also non-negative.

 In particular, by   Step 3 we know that the approximation sequence $\{u_{j}\}_{j=1}^{\infty}$ is of the form
$$u_{\epsilon,N}=(\tau_{\epsilon}\ast u)\eta_{N}.$$
Since $\mathcal{K}$ is compact, we may assume
$\mathcal{K}\subset B(0,N_{0})$ for some $N_{0}\in\mathbb{N}.$
Choose $\epsilon_{0}$ such that
$$0<\epsilon_{0}<\text{dist}\Big\{\partial \mathcal{K},\ \partial(\{g\in\X: u(g)\geq 1\}^{\circ})\Big\}.$$
If $\epsilon\in(0,\epsilon_{0})$ and $N>N_{0}$, then
$$\left\{\begin{aligned}
&\eta_{N}(g)=\eta(\delta_{N^{-1}}(g))=1\ \text{for all}\ g\in \mathcal{K};\\
&u\geq 1\ \ on\ B(g,\epsilon),
\end{aligned}\right.$$
which induces
\begin{eqnarray*}
% \nonumber to remove numbering (before each equation)
  u_{\epsilon,N}(g) &=& \eta_{N}(g)\int_{\X}\tau_{\epsilon}(gg'^{-1})u(g')dg'\\
  &=&\int_{B(g,\epsilon)}\tau_{\epsilon}(gg'^{-1})u(g')dg'  \\
   &\geq& \int_{B(g,\epsilon)}\tau_{\epsilon}(gg'^{-1})dg'=1,
\end{eqnarray*}
as desired.

\end{proof}

Next we give the weak gradient and the distributional fractional Sobolev spaces. Before that we need to recall some concepts.
From \cite{Z23} we know that for $0<s<1$, the $s$-gradient $\nabla_{\X}^{s}$ and the $s$-divergence $\text{div}_{\X}^{s}$ are defined as
\begin{eqnarray*}
% \nonumber to remove numbering (before each equation)
 \nabla_{\X}^{s}u(g)  &=& \nabla_{\X}(\mathscr{I}_{1-s}u)(g) \\
   &=& \sum_{j=1}^{n_{1}}\Big(\lim_{\varepsilon\rightarrow 0}\int_{
   d(g,g')\geq\varepsilon }u(g')\mathbf{X}_{j}I_{1-s}(g'^{-1}g)dg'\Big)\mathbf{X}_{j}
\end{eqnarray*}
and
\begin{eqnarray*}
% \nonumber to remove numbering (before each equation)
  \text{div}_{\X}^{s}\varphi(g) &=& \mathscr{I}_{1-s}(\text{div}_{\X}\varphi)(g) \\
   &=& \sum_{j=1}^{n_{1}} \Big(\lim_{\varepsilon\rightarrow 0}\int_{ d(g,g')\geq\varepsilon }\varphi_{j}(g')\widetilde{\mathbf{X}}_{j}I_{1-s}(g'^{-1}g)dg'
\end{eqnarray*}
for each $u\in C_{c}^{\infty}(\X)$ and $\varphi=\sum_{j=1}^{n_{1}}\varphi_{j}\mathbf{X}_{j}\in C_{c}^{\infty}(\X;H\X),$ respectively.

Following the proof of \cite[Proposition 3]{Z23} and the definition of $\nabla_{\X}^{s}$ and $\text{div}_{\X}^{s}$, we have
\begin{proposition}
Let $(\alpha,\sigma,s)\in(0,1)\times(0,1)\times(0,1)$,
$\mathbf{s}=s\alpha$ and $\widetilde{\mathbf{s}}=s\sigma$. For each
$u\in C_{c}^{\infty}(\X)$ and
 $\varphi=\sum_{j=1}^{n_{1}}\varphi_{j}\mathbf{X}_{j}\in
 C_{c}^{\infty}(\X;H\X)$,  we have

\item{\rm (i)} \begin{eqnarray*}
% \nonumber to remove numbering (before each equation)
 \nabla_{\X}^{\mathbf{s}}u(g)  &=& \nabla_{\X}(\mathscr{I}_{1-\mathbf{s}}u)(g) \\
   &=& \sum_{j=1}^{n_{1}}\Big(\lim_{\varepsilon\rightarrow 0}\int_{\{d(g,g')\geq
   \varepsilon\}}u(g')\mathbf{X}_{j}I_{1-\mathbf{s}}(g'^{-1}g)dg'\Big)\mathbf{X}_{j};
\end{eqnarray*}

\begin{eqnarray*}
% \nonumber to remove numbering (before each equation)
  \mathrm{ {div}}_{\X}^{\mathbf{s}}\varphi(g) &=& \mathscr{I}_{1-\mathbf{s}}( \mathrm{{div}}_{\X}\varphi)(g) \\
   &=& \sum_{j=1}^{n_{1}} \Big(\lim_{\varepsilon\rightarrow 0}\int_{\{d(g,g')\geq\varepsilon\}}\varphi_{j}(g')\widetilde{\mathbf{X}}_{j}I_{1-\mathbf{s}}(g'^{-1}g)dg'.
\end{eqnarray*}

\item{\rm (ii)}
\begin{eqnarray*}
% \nonumber to remove numbering (before each equation)
 \nabla_{\X}^{\widetilde{\mathbf{s}}}u(g)  &=& \nabla_{\X}(\mathscr{I}_{1-\widetilde{\mathbf{s}}}u)(g) \\
   &=& \sum_{j=1}^{n_{1}}\Big(\lim_{\varepsilon\rightarrow 0}\int_{\{d(g,g')\geq\varepsilon\}}u(g')\mathbf{X}_{j}I_{1-\widetilde{\mathbf{s}}}(g'^{-1}g)dg'\Big)\mathbf{X}_{j};
\end{eqnarray*}

\begin{eqnarray*}
% \nonumber to remove numbering (before each equation)
  \mathrm{div}_{\X}^{\widetilde{\mathbf{s}}}\varphi(g) &=& \mathscr{I}_{1-\widetilde{\mathbf{s}}}(\mathrm{div}_{\X}\varphi)(g) \\
   &=& \sum_{j=1}^{n_{1}} \Big(\lim_{\varepsilon\rightarrow 0}\int_{\{d(g,g')\geq\varepsilon\}}\varphi_{j}(g')\widetilde{\mathbf{X}}_{j}I_{1-\widetilde{\mathbf{s}}}(g'^{-1}g)dg'.
\end{eqnarray*}
\end{proposition}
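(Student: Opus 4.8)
The plan is to observe first that nothing in the statement requires $\mathbf{s}$ and $\widetilde{\mathbf{s}}$ beyond their membership in $(0,1)$: since $s,\alpha,\sigma\in(0,1)$ we have $\mathbf{s}=s\alpha\in(0,1)$ and $\widetilde{\mathbf{s}}=s\sigma\in(0,1)$, so $1-\mathbf{s},\,1-\widetilde{\mathbf{s}}\in(0,1)\subset(0,\mathcal{Q})$ and the Riesz kernels $I_{1-\mathbf{s}}$, $I_{1-\widetilde{\mathbf{s}}}$ are well defined by (\ref{Ia}). Consequently the two identities claimed are exactly the defining formulas for $\nabla_{\X}^{s}$ and $\text{div}_{\X}^{s}$ recalled just before the statement, read with $s$ replaced by $\mathbf{s}$ (part (i)) or by $\widetilde{\mathbf{s}}$ (part (ii)). Thus it suffices to reproduce the argument of \cite[Proposition 3]{Z23} with the exponent $\mathbf{s}$ (resp. $\widetilde{\mathbf{s}}$) in place of $s$; I would carry out (i) in detail and note that (ii) is identical after exchanging the roles of $\alpha$ and $\sigma$.

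The substance is the principal-value representation of the horizontal derivative of the Riesz potential, namely, for $u\in C_{c}^{\infty}(\X)$ and $j=1,\dots,n_{1}$,
\begin{equation*}
\mathbf{X}_{j}\mathscr{I}_{1-\mathbf{s}}u(g)=\lim_{\varepsilon\to0}\int_{\{d(g,g')\geq\varepsilon\}}u(g')\,\mathbf{X}_{j}I_{1-\mathbf{s}}(g'^{-1}g)\,dg'.
\end{equation*}
First I would record that $\mathscr{I}_{1-\mathbf{s}}u=u\ast I_{1-\mathbf{s}}$ is smooth: since $I_{1-\mathbf{s}}$ is locally integrable the convolution converges absolutely, and smoothness follows from the semigroup representation (\ref{R}) together with the kernel bounds of Lemma \ref{DH}. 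The kernel derivative obeys $|\mathbf{X}_{j}I_{1-\mathbf{s}}(g)|\lesssim d(e,g)^{-\mathbf{s}-\mathcal{Q}}$ (one application of Lemma \ref{holder} to (\ref{Ia})), which is not locally integrable, so the limit above is genuinely a principal value. To identify it with $\mathbf{X}_{j}\mathscr{I}_{1-\mathbf{s}}u$ I would regularise the kernel by a smooth radial cut-off $\chi_{\varepsilon}$ vanishing on $\{d(e,\cdot)<\varepsilon\}$ and equal to $1$ on $\{d(e,\cdot)>2\varepsilon\}$, use the convolution-differentiation rule (\ref{e-2.6}) to write $\mathbf{X}_{j}\bigl(u\ast(\chi_{\varepsilon}I_{1-\mathbf{s}})\bigr)=u\ast\mathbf{X}_{j}(\chi_{\varepsilon}I_{1-\mathbf{s}})$, and then split off the annular error coming from $(\mathbf{X}_{j}\chi_{\varepsilon})I_{1-\mathbf{s}}$.

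The error term is controlled by the standard ``mean value plus cancellation'' decomposition: after inserting $u(g)$, the piece $\int(u(g')-u(g))(\mathbf{X}_{j}\chi_{\varepsilon})I_{1-\mathbf{s}}$ is of size $O(\varepsilon^{1-\mathbf{s}})\to0$ by the Lipschitz bound $|u(g')-u(g)|\lesssim d(g,g')$ on the thin annulus $\{\varepsilon<d(g,g')<2\varepsilon\}$, while the piece $u(g)\int(\mathbf{X}_{j}\chi_{\varepsilon})I_{1-\mathbf{s}}$ vanishes in the limit by the cancellation of $\mathbf{X}_{j}I_{1-\mathbf{s}}$ over symmetric annuli, which stems from the symmetry $I_{1-\mathbf{s}}(g)=I_{1-\mathbf{s}}(g^{-1})$ and the dilation homogeneity of the kernel; the same cancellation gives $\int_{\{\varepsilon\le d(e,\cdot)<\delta\}}\mathbf{X}_{j}I_{1-\mathbf{s}}=o(1)$. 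Together with the absolute convergence of $\int_{\{d\geq\varepsilon\}}(u(g')-u(g))\mathbf{X}_{j}I_{1-\mathbf{s}}(g'^{-1}g)\,dg'$, this shows the principal value exists and equals $\mathbf{X}_{j}\mathscr{I}_{1-\mathbf{s}}u(g)$; summing over $j$ yields $\nabla_{\X}^{\mathbf{s}}u=\nabla_{\X}(\mathscr{I}_{1-\mathbf{s}}u)$. For the divergence formula I would instead apply $\mathscr{I}_{1-\mathbf{s}}$ to $\text{div}_{\X}\varphi=\sum_{j}\mathbf{X}_{j}\varphi_{j}$ and transfer each $\mathbf{X}_{j}$ onto the kernel through the dual relation $(\mathbf{X}_{j}f)\ast h=f\ast(\widetilde{\mathbf{X}}_{j}h)$ of (\ref{e-2.6}), which produces the right-invariant field $\widetilde{\mathbf{X}}_{j}$ inside the principal value, the convergence being justified exactly as above. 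The main obstacle is precisely this cancellation step: proving that the principal value exists and coincides with the classical derivative, i.e. that the annular integrals of $\mathbf{X}_{j}I_{1-\mathbf{s}}$ vanish in the limit, which rests on the symmetry and homogeneity of the Riesz kernel rather than on any absolute integrability.
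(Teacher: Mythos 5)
Your proposal is correct and takes essentially the same route as the paper, which in fact offers no proof at all: it merely observes that since $\mathbf{s}=s\alpha$ and $\widetilde{\mathbf{s}}=s\sigma$ lie in $(0,1)$, the claimed identities are the defining formulas of $\nabla_{\X}^{s}$ and $\mathrm{div}_{\X}^{s}$ read with $s$ replaced by $\mathbf{s}$ or $\widetilde{\mathbf{s}}$, and cites \cite[Proposition 3]{Z23} for the principal-value representation. Your sketch of that representation (smoothness of $\mathscr{I}_{1-\mathbf{s}}u$, the bound $|\mathbf{X}_{j}I_{1-\mathbf{s}}(g)|\lesssim d(e,g)^{-\mathbf{s}-\Q}$, the annular splitting with the Lipschitz estimate and the cancellation of $\mathbf{X}_{j}I_{1-\mathbf{s}}$ over annuli) is precisely the argument carried out in the cited reference.
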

\begin{remark}\label{KLA}
\item{(i)} Recall that the Riesz transform on $\X$ is defined by $\mathcal{R}:=\nabla_{\X}\L^{-1/2}$. And we may write
$$\mathcal{R}u=\sum_{j=1}^{n_{1}}(\mathcal{R}_{j}u)\mathbf{X}_{j}\ \ \ \forall\ u\in C_{c}^{\infty}(\X),$$
where $\mathcal{R}_{j}=\mathbf{X}_{j}\L^{-1/2}$. For every $1<p<\infty$, $\mathcal{R}$ is bounded from  $L^{p}(\X)$ to $L^{p}(\X;H\X)$, see \cite{F75}.

\item{(ii)} From the definition of the Riesz transform, $\nabla_{\X}^{\mathbf{s}}$ has the following expression: for any $u\in C_{c}^{\infty}(\X)$,
$$\nabla_{\X}^{\mathbf{s}}u=\nabla_{\X}\mathscr{I}_{1-\mathbf{s}}u=\mathcal{R}(\L^{\mathbf{s}/2}u).$$
Consequently, $\nabla_{\X}^{\mathbf{s}}$ can be extended to the fractional Sobolev space $\mathcal{W}^{\mathbf{s},p}(\X)$, where $1<p<\infty$. For every $u\in\mathcal{W}^{\mathbf{s},p}(\X)$, it is reasonable to define
$$\nabla_{\X}^{\mathbf{s}}u:=\mathcal{R}(\L^{\mathbf{s}/2}u)\in L^{p}(\X,H\X),$$
where we have used the boundedness of the Riesz transform on $L^{p}(\X)$ and $\L^{\mathbf{s}/2}u\in L^{p}(\X)$. The above conclusion holds true for $\nabla_{\X}^{\widetilde{\mathbf{s}}}$ as well.

\item{(iii)}  By integration by parts and Remark \ref{EL1}, for each $u\in \text{Lip}_{c}(\X)$ and $\phi\in \text{Lip}_{c}(\X;H\X)$,
$$\int_{\X}u\mathrm{div}_{\X}^{\Theta}\phi=-\int_{\X}\langle\phi,\nabla_{\X}^{\Theta}u\rangle,$$
where $\Theta=\mathbf{s}$ or $\Theta=\widetilde{\mathbf{s}}$.
\end{remark}

\begin{definition}
Let $0<s<1$ and $1\leq p\leq\infty$. For each $u\in L^{p}(\X)$, $f\in L_{loc}^{1}(\X;H\X)$ is said to be a weak $s$-gradient of $u$, denoted by $\nabla_{\X,w}^{s}u,$ if
\begin{equation*}
  \int_{\X}u\mathrm{div}^{s}_{\X}\phi=-\int_{\X}\langle\phi,f\rangle
\end{equation*}
holds for any $\phi\in C_{c}^{\infty}(\X;H\X).$
\end{definition}

From this we can define the distributional fractional Sobolev space $\mathbb{W}^{\mathbf{s},p}(\X)$.
$$\mathbb{W}^{\mathbf{s},p}(\X):=\Big\{u\in L^{p}(\X)\ \text{with a weak}\
 \mathbf{s}-\text{gradient}\ \nabla_{\X,w}^{\mathbf{s}}u\in L^{p}(\X;H\X)\Big\} $$
endowed with the norm
\begin{equation}\label{ffs}
\|u\|_{\mathbb{W}^{\mathbf{s},p}(\X)}:=\|u\|_{L^{p}(\X)}+\|\nabla_{\X,w}^{\mathbf{s}}u\|_{L^{p}(\X;H\X)}.
\end{equation}
The homogeneous distributional fractional Sobolev space $\mathbb{\dot{W}}^{\mathbf{s},p}(\X)$ is defined in the same way as for $\mathcal{\dot{W}}^{\mathbf{s},p}(\X)$.

If $\nabla_{\X,w}^{\mathbf{s}}$ is replaced by $\nabla_{\X,w}^{\widetilde{\mathbf{s}}}$, the above space is denoted as $\mathbb{W}^{\widetilde{\mathbf{s}},p}(\X)$.

\begin{proposition}\label{w-w}
Let $(\alpha,\sigma,s)\in(0,1)\times(0,1)\times(0,1)$,
$\mathbf{s}=s\alpha$ and $\widetilde{\mathbf{s}}=s\sigma$.

\item{\rm (i)} For $1<p<\Q/(1-\mathbf{s})$, $\mathbb{W}^{\mathbf{s},p}(\X)=\mathcal{W}^{\mathbf{s},p}(\X).$

\item{\rm (ii)} For $1<p<\Q/(1-\widetilde{\mathbf{s}})$, $\mathbb{W}^{\widetilde{\mathbf{s}},p}(\X)=\mathcal{W}^{\widetilde{\mathbf{s}},p}(\X).$

\end{proposition}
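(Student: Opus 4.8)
The plan is to prove (i); the proof of (ii) is identical after replacing $\mathbf{s}$ by $\widetilde{\mathbf{s}}$ and the range by $1<p<\Q/(1-\widetilde{\mathbf{s}})$. The conceptual key is that the Riesz potential $\mathscr{I}_{1-\mathbf{s}}=\L^{-(1-\mathbf{s})/2}$ intertwines the fractional objects with the classical first–order ones: setting $v:=\mathscr{I}_{1-\mathbf{s}}u$, the definitions give $\nabla_{\X}^{\mathbf{s}}u=\nabla_{\X}v$ and $\mathrm{div}_{\X}^{\mathbf{s}}\phi=\mathscr{I}_{1-\mathbf{s}}(\mathrm{div}_{\X}\phi)$, while the spectral calculus gives $\L^{\mathbf{s}/2}u=\L^{1/2}v$. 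Thus I would reduce the equivalence $\mathbb{W}^{\mathbf{s},p}(\X)=\mathcal{W}^{\mathbf{s},p}(\X)$ to the first–order Riesz–transform characterization $\|\L^{1/2}v\|_{L^{p}(\X)}\sim\|\nabla_{\X}v\|_{L^{p}(\X;H\X)}$, combined with the self–adjointness of $\mathscr{I}_{1-\mathbf{s}}$. The standing hypothesis $1<p<\Q/(1-\mathbf{s})$ is used twice: through Proposition \ref{HLS} it guarantees $v=\mathscr{I}_{1-\mathbf{s}}u\in L^{q}(\X)$ with $1/q=1/p-(1-\mathbf{s})/\Q\in(0,1)$, and it is exactly the range in which the duality identity of Remark \ref{EL1} (with $\theta=1-\mathbf{s}$) applies.

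First I would record the intertwining at the level of weak gradients. Fix $u\in L^{p}(\X)$ and put $v:=\mathscr{I}_{1-\mathbf{s}}u\in L^{q}(\X)$. For any $\phi\in C_{c}^{\infty}(\X;H\X)$ one has $\mathrm{div}_{\X}\phi\in C_{c}^{\infty}(\X)\subset\mathrm{Lip}_{c}(\X)$, so Remark \ref{EL1} yields
\begin{equation*}
\int_{\X}u\,\mathrm{div}_{\X}^{\mathbf{s}}\phi=\int_{\X}u\,\mathscr{I}_{1-\mathbf{s}}(\mathrm{div}_{\X}\phi)=\int_{\X}\mathscr{I}_{1-\mathbf{s}}u\,\mathrm{div}_{\X}\phi=\int_{\X}v\,\mathrm{div}_{\X}\phi .
\end{equation*}
Comparing with the defining relations, this shows that $u$ possesses a weak $\mathbf{s}$–gradient $f\in L^{p}(\X;H\X)$ if and only if $v$ possesses a weak horizontal gradient $\nabla_{\X}v=f\in L^{p}(\X;H\X)$, and that the two then coincide. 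Hence $u\in\mathbb{W}^{\mathbf{s},p}(\X)$ is equivalent to $v\in L^{q}(\X)$ having $\nabla_{\X}v\in L^{p}(\X;H\X)$.

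Next I would prove the first–order characterization $\L^{1/2}v\in L^{p}(\X)\Longleftrightarrow\nabla_{\X}v\in L^{p}(\X;H\X)$ for $v\in L^{q}(\X)$, which closes the argument since $\L^{\mathbf{s}/2}u=\L^{1/2}v$ and $u\in\mathcal{W}^{\mathbf{s},p}(\X)$ means exactly $\L^{\mathbf{s}/2}u\in L^{p}(\X)$. For the implication $\nabla_{\X}v\in L^{p}\Rightarrow\L^{1/2}v\in L^{p}$ I would mollify, setting $v_{\varepsilon}:=\tau_{\varepsilon}\ast v$ with the approximate identity of Proposition \ref{dense-1}; then $v_{\varepsilon}\in C^{\infty}(\X)$, mollification commutes with the weak derivatives so that $\mathbf{X}_{j}v_{\varepsilon}=\tau_{\varepsilon}\ast f_{j}\to f_{j}$ in $L^{p}(\X)$, and the decomposition $\L^{1/2}=\sum_{j=1}^{n_{1}}T_{j}\mathbf{X}_{j}$ of Remark \ref{re-cc}(iii) with each $T_{j}$ bounded on $L^{p}(\X)$ gives $\L^{1/2}v_{\varepsilon}=\sum_{j}T_{j}(\tau_{\varepsilon}\ast f_{j})\to\sum_{j}T_{j}f_{j}$ in $L^{p}(\X)$. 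Since $v_{\varepsilon}\to v$ in $L^{q}(\X)$ and $\L^{1/2}\psi=\sum_{j}T_{j}\mathbf{X}_{j}\psi\in L^{q'}(\X)$ for every $\psi\in C_{c}^{\infty}(\X)$, the limit $\langle\L^{1/2}v,\psi\rangle=\lim_{\varepsilon\to0}\langle v_{\varepsilon},\L^{1/2}\psi\rangle=\langle\sum_{j}T_{j}f_{j},\psi\rangle$ identifies $\L^{1/2}v=\sum_{j}T_{j}f_{j}\in L^{p}(\X)$, with $\|\L^{1/2}v\|_{L^{p}(\X)}\lesssim\|\nabla_{\X}v\|_{L^{p}(\X;H\X)}$. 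The reverse implication is symmetric: from $\mathbf{X}_{j}v=\mathcal{R}_{j}(\L^{1/2}v)$ with $\mathcal{R}_{j}=\mathbf{X}_{j}\L^{-1/2}$ bounded on $L^{p}(\X)$ (Remark \ref{KLA}(i)), the same mollification argument gives $\nabla_{\X}v\in L^{p}(\X;H\X)$ with the reverse estimate. Chaining $u\in\mathbb{W}^{\mathbf{s},p}\Leftrightarrow\nabla_{\X}v\in L^{p}\Leftrightarrow\L^{1/2}v=\L^{\mathbf{s}/2}u\in L^{p}\Leftrightarrow u\in\mathcal{W}^{\mathbf{s},p}$ and adding the common term $\|u\|_{L^{p}(\X)}$ yields both the set equality and the norm equivalence.

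I expect the main obstacle to lie in the rigorous passage from the formal operator identities to genuine $L^{p}$ statements for a $v$ that a priori has only a weak gradient. The mollification step is what makes this precise, and it must be checked carefully that convolution with $\tau_{\varepsilon}$ commutes with $\mathbf{X}_{j}$ on functions with weak derivatives (so that $\mathbf{X}_{j}v_{\varepsilon}=\tau_{\varepsilon}\ast f_{j}$), that the decomposition of Remark \ref{re-cc}(iii) and the Riesz transform identity are applicable to the smooth approximants $v_{\varepsilon}$, and that the distributional limits are uniquely identified. The last point is precisely where the integrability $v\in L^{q}(\X)$ from Proposition \ref{HLS} — hence the assumption $p<\Q/(1-\mathbf{s})$ — is indispensable, since it permits pairing $v$ against $\L^{1/2}\psi\in L^{q'}(\X)$. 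Once these technical points are secured, the self–adjointness identity and the first–order characterization assemble into the claimed equality without further difficulty.
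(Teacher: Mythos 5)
Your argument is correct, and it rests on exactly the same core identities as the paper's proof --- $\mathrm{div}_{\X}^{\mathbf{s}}=\mathscr{I}_{1-\mathbf{s}}\circ\mathrm{div}_{\X}$, the duality of $\mathscr{I}_{1-\mathbf{s}}$ from Remark \ref{EL1} (which is where $p<\Q/(1-\mathbf{s})$ enters), the factorizations $\nabla_{\X}^{\mathbf{s}}=\mathcal{R}\L^{\mathbf{s}/2}$ and $\L^{1/2}=\sum_{j}T_{j}\mathbf{X}_{j}$, and mollification --- but it organizes them differently. The paper runs a three-step closure argument: it first shows $\mathcal{W}^{\mathbf{s},p}\subset\overline{C_{c}^{\infty}}^{\|\cdot\|_{\mathbb{W}^{\mathbf{s},p}}}$ using the density Proposition \ref{dense-1}, then shows $\overline{\mathcal{W}^{\mathbf{s},p}}^{\|\cdot\|_{\mathbb{W}^{\mathbf{s},p}}}=\mathcal{W}^{\mathbf{s},p}$ via a Cauchy-sequence argument exploiting $\sum_{k}T_{k}\mathcal{R}_{k}=\mathrm{Id}$ and the closedness of $\L_{p}^{\mathbf{s}/2}$, and finally shows $\mathbb{W}^{\mathbf{s},p}\subset\mathcal{W}^{\mathbf{s},p}$ by proving the mollified class $\Lambda$ is dense in $\mathbb{W}^{\mathbf{s},p}$ and lands in $\mathcal{W}^{\mathbf{s},p}$. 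You instead make the substitution $v=\mathscr{I}_{1-\mathbf{s}}u$ the organizing principle and reduce the whole proposition to a clean first-order equivalence $\|\L^{1/2}v\|_{L^{p}}\sim\|\nabla_{\X}v\|_{L^{p}(\X;H\X)}$; this lets you bypass Proposition \ref{dense-1} entirely and makes the role of the hypothesis $p<\Q/(1-\mathbf{s})$ more transparent. Two points should be made explicit to match the paper's level of rigor: (a) the identification of the distributional limit $\sum_{j}T_{j}f_{j}$ with $\L^{\mathbf{s}/2}u$ is cleanest via the closedness of $\L_{p}^{\mathbf{s}/2}$ (Remark \ref{re-cc}(iv)) applied to $u_{\varepsilon}\to u$, $\L^{\mathbf{s}/2}u_{\varepsilon}=\L^{1/2}v_{\varepsilon}\to\sum_{j}T_{j}f_{j}$, rather than through the pairing $\langle v,\L^{1/2}\psi\rangle$, which additionally requires checking $\L^{1/2}\psi\in L^{q'}$ and the self-adjointness of $\L^{1/2}$ on that pairing; and (b) the commutations $\tau_{\varepsilon}\ast\mathscr{I}_{1-\mathbf{s}}u=\mathscr{I}_{1-\mathbf{s}}(\tau_{\varepsilon}\ast u)$ and $\mathbf{X}_{j}(\tau_{\varepsilon}\ast v)=\tau_{\varepsilon}\ast f_{j}$ need the Fubini identities (\ref{e-2.6})--(\ref{e-2.7}) together with Remark \ref{EL1}, exactly as in Step 3 of the paper's proof. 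With these supplied, your route is a valid and arguably more streamlined alternative.
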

\begin{proof}
We only give the proof of (i). Firstly, it is easy to prove that
$\mathbb{W}^{\mathbf{s},p}(\X)$ is a Banach space. Therefore, the
proof of this theorem is divided into the following three steps.

{\it Step 1: $\mathcal{W}^{\mathbf{s},p}(\X)\subset
\overline{C_{c}^{\infty}(\X)}^{\|\cdot\|_{\mathbb{W}^{\mathbf{s},p}(\X)}}$.}
In fact, it follows  from Proposition \ref{dense-1} that for each
$u\in\mathcal{W}^{\mathbf{s},p}(\X)$, we can find
$\{u_{j}\}_{j=1}^{\infty}\subset C_{c}^{\infty}(\X)$ such that
$$u_{j}\rightarrow u\ \text{in}\ L^{p}(\X)\ \ \ \text{and}\ \ \ \L^{\mathbf{s}/2}u_{j}\rightarrow\L^{\mathbf{s}/2}u\ \text{in}\ L^{p}(\X)\ \text{as}\ j\rightarrow\infty.$$
Since $\nabla_{\X}^{\mathbf{s}}$ and $\nabla_{\X,w}^{\mathbf{s}}$ coincide on $C_{c}^{\infty}(\X)$, by Remark \ref{KLA} (ii), we have
$$\nabla_{\X,w}^{\mathbf{s}}u_{j}=\nabla_{\X}^{\mathbf{s}}u_{j}=\mathcal{R}(\L^{\mathbf{s}/2}u_{j}).$$
Then from the boundedness of the Riesz transform on $L^{p}(\X)$, we
deduce that
$$\nabla_{\X,w}^{\mathbf{s}}u_{j}\rightarrow\mathcal{R}(\L^{\mathbf{s}/2}u)=\nabla_{\X}^{\mathbf{s}}u\ \text{in}\ L^{p}(\X; H\X)\ \text{as}\ j\rightarrow\infty.$$
Next, we will prove that $u_{j}\rightarrow u$ in $\mathbb{W}^{\mathbf{s},p}(\X)$ as $j\rightarrow\infty$. In order to do this,
it suffices to indicate that $\nabla_{\X,w}^{\mathbf{s}}u=\nabla_{\X}^{\mathbf{s}}u.$ Indeed, for any $\phi\in C_{c}^{\infty}(\X;H\X)$,
 using Remark \ref{KLA} (iii), we obtain
\begin{eqnarray*}
% \nonumber to remove numbering (before each equation)
  \int_{\X}u\mathrm{div}_{\X}^{\mathbf{s}}\phi &=& \lim_{j\rightarrow\infty}\int_{\X}u_{j}\mathrm{div}_{\X}^{\mathbf{s}}\phi=-\lim_{j\rightarrow\infty}
  \int_{\X}\langle\phi,\nabla_{\X}^{\mathbf{s}}u_{j}\rangle  \\
   &=& -\int_{\X}\langle\phi,\nabla_{\X}^{\mathbf{s}}u\rangle.
\end{eqnarray*}

{\it Step 2:
$\overline{\mathcal{W}^{\mathbf{s},p}(\X)}^{\|\cdot\|_{\mathbb{W}^{\mathbf{s},p}(\X)}}=\mathcal{W}^{\mathbf{s},p}(\X)$.}
For every
$u\in\overline{\mathcal{W}^{\mathbf{s},p}(\X)}^{\|\cdot\|_{\mathbb{W}^{\mathbf{s},p}(\X)}}$,
it is easy to see that  we can find
$\{u_{j}\}_{j=1}^{\infty}\subset\mathcal{W}^{\mathbf{s},p}(\X)$ such
that
$$u_{j}\rightarrow u\ \text{in}\ L^{p}(\X)\ \ \ \text{and}\ \ \ \nabla_{\X,w}^{\mathbf{s}}u_{j}
\rightarrow\nabla_{\X,w}^{\mathbf{s}}u\ \text{in}\ L^{p}(\X;H\X)\ \text{as}\ j\rightarrow\infty.$$
Since $\{u_{j}\}_{j=1}^{\infty}\subset\mathcal{W}^{\mathbf{s},p}(\X)$, as already explained in Step 1,
$$\nabla_{\X,w}^{\mathbf{s}}u_{j}=\nabla_{\X}^{\mathbf{s}}u_{j}=\mathcal{R}(\L^{\mathbf{s}/2}u_{j}),$$
thus $\{\mathcal{R}_{k}\L^{\mathbf{s}/2}u_{j}\}_{j=1}^{\infty}$ forms a Cauchy sequence in $L^{p}(\X)$ for each $k=1,\ldots,n_{1}$.

The decomposition (\ref{eq-3.22}) allows us to write the
 identity operator on $L^{p}(\X)$ as
 $\sum_{k=1}^{n_{1}}T_{k}\mathcal{R}_{k}$ due to
 the boundedness of the Riesz transform on $L^{p}(\X)$ and a standard approximation argument. Therefore,
  we  can also see that $\{\L^{\mathbf{s}/2}u_{j}\}_{j=1}^{\infty}$ is a Cauchy sequence in $L^{p}(\X)$. In fact, for every $j,l\in\mathbb{N}$,
\begin{eqnarray*}
% \nonumber to remove numbering (before each equation)
  \|\L^{\mathbf{s}/2}u_{j}-\L^{\mathbf{s}/2}u_{l}\|_{L^{p}(\X)} &=& \Big\|\sum_{k=1}^{n_{1}}T_{k}(\mathcal{R}_{k}\L^{\mathbf{s}/2}u_{j}-\mathcal{R}_{k}\L^{\mathbf{s}/2}u_{l})\Big\|_{L^{p}(\X)}  \\
   &\lesssim& \sum_{k=1}^{n_{1}}\|\mathcal{R}_{k}\L^{\mathbf{s}/2}u_{j}-\mathcal{R}_{k}\L^{\mathbf{s}/2}u_{l}\|_{L^{p}(\X)}.
\end{eqnarray*}
Consequently there exists a function $f\in L^{p}(\X)$ such that
$$\L_{p}^{\mathbf{s}/2}u_{j}=\L^{\mathbf{s}/2}u_{j}\rightarrow f\ \text{in}\ L^{p}(\X)\ \text{as}\ j\rightarrow\infty,$$
where $\L_{p}$ is defined as Remark \ref{re-cc} (iv). Then we have $f=\L_{p}^{\mathbf{s}/2}u=\L^{\mathbf{s}/2}u.$ Thus, $u\in\mathcal{W}^{\mathbf{s},p}(\X)$.

{\it Step 3: $\mathbb{W}^{\mathbf{s},p}(\X)\subset\mathcal{W}^{\mathbf{s},p}(\X)$.}
Let $\{\tau_{\epsilon}\}_{\epsilon>0}$ be a family of standard mollifiers in Proposition \ref{dense-1}. Then for each $u\in\mathbb{W}^{\mathbf{s},p}(\X)$ and $\phi\in C_{c}^{\infty}(\X;H\X)$, using (\ref{e-2.6}), (\ref{e-2.7}) and Remark \ref{EL1},
\begin{eqnarray*}
% \nonumber to remove numbering (before each equation)
 \int_{\X}\tau_{\epsilon}\ast u\mathrm{div}_{\X}^{\mathbf{s}}\phi  &=&
  \int_{\X}\tau_{\epsilon}\ast u\mathscr{I}_{1-\mathbf{s}}(\mathrm{div}_{\X}\phi)=\int_{\X}\mathrm{div}_{\X}\phi
  \mathscr{I}_{1-\mathbf{s}}(\tau_{\epsilon}\ast u)  \\
   &=&\int_{\X}\mathrm{div}_{\X}\phi(\tau_{\epsilon}\ast \mathscr{I}_{1-\mathbf{s}}u)=\int_{\X}\mathscr{I}_{1-\mathbf{s}}u(\tau_{\epsilon}\ast\mathrm{div}_{\X}\phi) \\
   &=&\int_{\X}\mathscr{I}_{1-\mathbf{s}}u\mathrm{div}_{\X}(\tau_{\epsilon}\ast \phi)=\int_{\X}u\mathrm{div}_{\X}^{\mathbf{s}}(\tau_{\epsilon}\ast \phi)  \\
   &=&-\int_{\X}\langle\tau_{\epsilon}\ast \phi,\nabla_{\X,w}^{\mathbf{s}}u\rangle=-\int_{\X}\langle\phi,\tau_{\epsilon}\ast\nabla_{\X,w}^{\mathbf{s}}u\rangle.
\end{eqnarray*}
Thus $\nabla_{\X,w}^{\mathbf{s}}(\tau_{\epsilon}\ast
u)=\tau_{\epsilon}\ast\nabla_{\X,w}^{\mathbf{s}}u.$ Then by the
approximation property of the mollifier, we conclude  that the set
$\Lambda:=\{\tau_{\epsilon}\ast u;
u\in\mathbb{W}^{\mathbf{s},p}(\X),\epsilon>0\}$ is dense in
$\mathbb{W}^{\mathbf{s},p}(\X)$.

We claim that
\begin{equation}\label{eq-ii}
  \Lambda\subset\mathcal{W}^{\mathbf{s},p}(\X).
\end{equation}
Once (\ref{eq-ii}) is proved, combining with the preceding two steps, we have the following relationship
$$\mathbb{W}^{\mathbf{s},p}(\X)=\overline{\Lambda}^{\|\cdot\|_{\mathbb{W}^{\mathbf{s},p}(\X)}}\subset
\overline{\mathcal{W}^{\mathbf{s},p}(\X)}^{\|\cdot\|_{\mathbb{W}^{\mathbf{s},p}(\X)}}=
\mathcal{W}^{\mathbf{s},p}(\X)\subset \overline{C_{c}^{\infty}(\X)}^{\|\cdot\|_{\mathbb{W}^{\mathbf{s},p}(\X)}}\subset\mathbb{W}^{\mathbf{s},p}(\X),$$
which shows that $\mathbb{W}^{\mathbf{s},p}(\X)=\mathcal{W}^{\mathbf{s},p}(\X).$

 It remains to prove (\ref{eq-ii}). By the integration by parts and Remark \ref{EL1}, we obtain
 \begin{eqnarray*}
 % \nonumber to remove numbering (before each equation)
   \int_{\X}u_{\epsilon}\mathrm{div}_{\X}^{\mathbf{s}}\phi &=& \int_{\X}u_{\epsilon}\mathscr{I}_{1-\mathbf{s}}(\mathrm{div}_{\X}\phi)=\int_{\X}\mathrm{div}_{\X}\phi\mathscr{I}_{1-\mathbf{s}}u_{\epsilon} \\
    &=& -\int_{\X}\langle\phi,\nabla_{\X}\mathscr{I}_{1-\mathbf{s}}u_{\epsilon}\rangle,
 \end{eqnarray*}
where $$u_{\epsilon}:=\tau_{\epsilon}\ast u\in\Lambda\subset
C^{\infty}(\X)\cap L^{p}(\X)$$ and we have used the fact that
$$\mathscr{I}_{1-\mathbf{s}}u_{\epsilon}=\tau_{\epsilon}\ast\mathscr{I}_{1-\mathbf{s}}u\in
C^{\infty}(\X).$$

Since $\nabla_{\X}\mathscr{I}_{1-\mathbf{s}}u_{\epsilon}=\nabla_{\X,w}^{\mathbf{s}}u_{\epsilon}\in L^{p}(\X;H\X)$, $\mathbf{X}_{k}(\mathscr{I}_{1-\mathbf{s}}u_{\epsilon})\in L^{p}(\X),\ k=1,\ldots,n_{1}.$ Then from (\ref{eq-3.22}),
$$\L^{1/2}(\mathscr{I}_{1-\mathbf{s}}u_{\epsilon})=\sum_{k=1}^{n_{1}}T_{k}\mathbf{X}_{k}(\mathscr{I}_{1-\mathbf{s}}u_{\epsilon})\in L^{p}(\X).$$
Notice that $\L^{\mathbf{s}/2}u_{\epsilon}=\L^{1/2}(\mathscr{I}_{1-\mathbf{s}}u_{\epsilon}).$ In conclusion, $u_{\epsilon}\in\mathcal{W}^{\mathbf{s},p}(\X).$

\end{proof}

\subsection{Fractional Sobolev-Riesz capacities}\label{sec-3.2}
In this section, we only need to investigate  the fractional Sobolev
capacity associated with $\L^{\mathbf{s}}$. The fractional Sobolev
capacity related to $\L^{\widetilde{\mathbf{s}}}$ can be handled
similarly.
\begin{definition}\label{cap-1}
Let $(s,\alpha)\in(0,1)\times(0,1)$,  $p\in[1,\infty)$ and
$\mathbf{s}=s\alpha $. The fractional Sobolev capacity of a compact
set $\mathcal{K}\subset\X$, denoted by
$Cap_{\mathcal{\dot{W}}^{2\mathbf{s},p}}(\mathcal{K})$, is defined
as
$$Cap_{\mathcal{\dot{W}}^{2\mathbf{s},p}}(\mathcal{K}):=\inf\Big\{\|u\|_{\mathcal{\dot{W}}^{2\mathbf{s},p}(\X)}^{p}:\ u\in C_{c}^{\infty}(\X),\ u\geq 1_{\mathcal{K}}\Big\}.$$
\end{definition}
Then we can extend Definition \ref{cap-1} to a general set via the following manner:

$\star$ If $O\subset\X$ is open, then we set
$$Cap_{\mathcal{\dot{W}}^{2\mathbf{s},p}}(O):=\sup\Big\{Cap_{\mathcal{\dot{W}}^{2\mathbf{s},p}}(\mathcal{K}):\ \text{compact}\ \mathcal{K}\subset O\Big\}.$$

$\star$ For an arbitrary set $E\subset\X$, define
$$Cap_{\mathcal{\dot{W}}^{2\mathbf{s},p}}(E):=\inf\Big\{Cap_{\mathcal{\dot{W}}^{2\mathbf{s},p}}(O):\ \text{open}\ O\supset E\Big\}.$$

%Below we give some basic properties of the capacity $Cap_{\mathcal{\dot{W}}^{2s,p}}(\cdot)$.

%\begin{lemma}\label{xingzhi-1}
%Let $s\in(0,1)$ and $p\in[1,\infty)$. The capacity $Cap_{\mathcal{\dot{W}}^{2s,p}}(\cdot)$ has the following properties:

%(i) $Cap_{\mathcal{\dot{W}}^{2s,p}}(\emptyset)=0$.

%(ii) If $K_{1}\subset K_{2}\subset\X$, then $Cap_{\mathcal{\dot{W}}^{2s,p}}(K_{1})\leq Cap_{\mathcal{\dot{W}}^{2s,p}}(K_{2}).$

%(iii) For any sequence $\{K_{j}\}_{j=1}^{\infty}$ of subsets of $\X$,
%$$Cap_{\mathcal{\dot{W}}^{2s,p}}(\bigcup_{j=1}^{\infty}K_{j})\leq \sum_{j=1}^{\infty}Cap_{\mathcal{\dot{W}}^{2s,p}}(K_{j}).$$

%(iv) For any decreasing sequence $\{K_{j}\}_{j=1}^{\infty}$ in $\X$ with $K=\cap_{j}K_{j}$ and any increasing sequence $\{E_{j}\}_{j=1}^{\infty}$ in $\X$ with $E=\cup_{j}E_{j}$, one has
%$$ \left\{\begin{aligned}
%\lim_{j\rightarrow\infty}Cap_{\mathcal{\dot{W}}^{2s,p}}(K_{j})&=Cap_{\mathcal{\dot{W}}^{2s,p}}(K),\\
%\lim_{j\rightarrow\infty}Cap_{\mathcal{\dot{W}}^{2s,p}}(E_{j})&=Cap_{\mathcal{\dot{W}}^{2s,p}}(E).
%\end{aligned}\right.$$

%\end{lemma}

Subsequently, we define the Riesz capacity, which is constructed
utilizing the Riesz potential operator as presented in (\ref{FFF}).
This capacity   is equivalent to the fractional Sobolev capacity in
Definition \ref{cap-1} and we will then proceed to prove the
properties associated with this capacity.

\begin{definition}\label{cap-2}
Let $(s,\alpha)\in(0,1)\times(0,1)$, $p\in[1,\infty)$ and
$\mathbf{s}=s\alpha $. For any set $E\subset\X$, the Riesz capacity
is defined as
$$Cap_{\mathbf{s},p}(E):=\inf\Big\{\|f\|_{L^{p}(\X)}^{p}:\ 0\leq f\in L^{p}(\X),\ \mathscr{I}_{2\mathbf{s}}f\geq 1\ \text{on}\ E\Big\}.$$

\end{definition}

Some fundamental properties of the Riesz capacity are stated in the
following proposition.  The Euclidean case can be seen  in
\cite[Proposition 2]{cha}.
\begin{proposition}\label{pro-6} Let $(s,\alpha)\in(0,1)\times(0,1)$, $p\in[1,\infty)$ and
$\mathbf{s}=s\alpha $. The following properties are valid.
\item{\rm (i)} $Cap_{\mathbf{s},p}(\emptyset)=0.$
\item{\rm (ii)} If $E_{1}\subseteq E_{2}\subset\X$, then $Cap_{\mathbf{s},p}(E_{1})\leq Cap_{\mathbf{s},p}(E_{2})$.
\item{\rm (iii)} For any sequence $\{E_{j}\}^{\infty}_{j=1}$ of subsets of $\X$
$$Cap_{\mathbf{s},p}\Big(\bigcup^{\infty}_{j=1}E_{j}\Big)\leq \sum^{\infty}_{j=1}Cap_{\mathbf{s},p}(E_{j}).$$

\end{proposition}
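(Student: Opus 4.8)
The plan is to read off all three properties directly from Definition \ref{cap-2}, using one structural fact throughout: the Riesz potential $\mathscr{I}_{2\mathbf{s}}$ is order-preserving on nonnegative functions. This follows because the Riesz kernel $I_{2\mathbf{s}}(\cdot)$ is nonnegative, which is immediate from its representation (\ref{Ia}) together with the positivity of the heat kernel in Lemma \ref{heat-1}(i); consequently $0\le f\le h$ implies $\mathscr{I}_{2\mathbf{s}}f\le \mathscr{I}_{2\mathbf{s}}h$ pointwise.

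Properties (i) and (ii) are then formal. For (i), the admissibility constraint ``$\mathscr{I}_{2\mathbf{s}}f\ge 1$ on $\emptyset$'' is vacuous, so $f\equiv 0$ is admissible and $Cap_{\mathbf{s},p}(\emptyset)\le\|0\|_{L^{p}(\X)}^{p}=0$; since the capacity is nonnegative, equality holds. For (ii), any $f$ admissible for $E_{2}$ satisfies $\mathscr{I}_{2\mathbf{s}}f\ge 1$ on $E_{2}\supseteq E_{1}$, hence is admissible for $E_{1}$. The admissible class for $E_{1}$ therefore contains that for $E_{2}$, and enlarging the class over which one takes the infimum can only decrease it, giving $Cap_{\mathbf{s},p}(E_{1})\le Cap_{\mathbf{s},p}(E_{2})$.

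The substance is in (iii). I would first discard the trivial case by assuming $\sum_{j}Cap_{\mathbf{s},p}(E_{j})<\infty$. Fix $\epsilon>0$ and, for each $j$, choose a near-optimal competitor $0\le f_{j}\in L^{p}(\X)$ with $\mathscr{I}_{2\mathbf{s}}f_{j}\ge 1$ on $E_{j}$ and $\|f_{j}\|_{L^{p}(\X)}^{p}\le Cap_{\mathbf{s},p}(E_{j})+2^{-j}\epsilon$. The naive guess of summing the $f_{j}$ fails to control the $L^{p}$-norm, so instead I set $f:=\sup_{j}f_{j}$. Because each $f_{j}\ge 0$ and $t\mapsto t^{p}$ is increasing, one has the pointwise bound $(\sup_{j}f_{j})^{p}\le\sum_{j}f_{j}^{p}$, which upon integration yields
$$\|f\|_{L^{p}(\X)}^{p}\le\sum_{j}\|f_{j}\|_{L^{p}(\X)}^{p}<\infty,$$
so $f\in L^{p}(\X)$. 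The order-preserving property of $\mathscr{I}_{2\mathbf{s}}$ together with $f\ge f_{j}$ gives $\mathscr{I}_{2\mathbf{s}}f\ge\mathscr{I}_{2\mathbf{s}}f_{j}\ge 1$ on each $E_{j}$, whence $\mathscr{I}_{2\mathbf{s}}f\ge 1$ on $\bigcup_{j}E_{j}$. Thus $f$ is admissible for the union and
$$Cap_{\mathbf{s},p}\Big(\bigcup_{j=1}^{\infty}E_{j}\Big)\le\|f\|_{L^{p}(\X)}^{p}\le\sum_{j=1}^{\infty}Cap_{\mathbf{s},p}(E_{j})+\epsilon.$$
Letting $\epsilon\to0$ completes the proof. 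The main obstacle, and the only genuinely nontrivial point, is precisely this construction: taking the supremum rather than the sum of the competitors is what preserves admissibility (via monotonicity of $\mathscr{I}_{2\mathbf{s}}$), while the elementary inequality $(\sup_{j}f_{j})^{p}\le\sum_{j}f_{j}^{p}$ is what keeps the $L^{p}$-norm summable.
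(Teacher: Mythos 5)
Your proof is correct and follows essentially the same route as the paper: for (iii) the paper likewise takes near-optimal competitors $f_j$ with errors $\epsilon/2^j$, forms $f=\sup_j f_j$, uses monotonicity of $\mathscr{I}_{2\mathbf{s}}$ to verify admissibility on the union, and bounds $\|f\|_{L^p}^p$ by $\sum_j\|f_j\|_{L^p}^p$. Your write-up is in fact slightly more careful, since you make explicit the pointwise inequality $(\sup_j f_j)^p\le\sum_j f_j^p$ and the positivity of the Riesz kernel, both of which the paper uses tacitly.
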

\begin{proof}
 The statements (i) \&\ (ii) can be deduced from the definition of $Cap_{\mathbf{s},p}(\cdot)$ immediately. For (iii), let $\epsilon>0$.
 Take $f_{j}\geq 0$ such that $\mathscr{I}_{2\mathbf{s}}f_{j}\geq 1$ on $E_{j}$ and $$\int_{\X}|f_{j}(g)|^{p}dg\leq Cap_{\mathbf{s},p}(E_{j})+\epsilon/2^{j}.$$
  Let $f=\sup_{j\in\mathbb{N}_{+}}f_{j}$. For any $g \in\bigcup_{j=1}^{\infty}E_{j}$, there exists a $j_{0}$ such that $g\in E_{j_{0}}$
   and $\mathscr{I}_{2\mathbf{s}}f_{j_{0}}(g)\geq 1$. Hence $\mathscr{I}_{2\mathbf{s}}f(g)\geq 1$ on $\bigcup_{j=1}^{\infty}E_{j}$. On the other hand,
 $$\|f\|_{L^{p}(\X)}^{p}=\int_{\X}|f(g)|^{p}dg\leq \sum_{j=1}^{\infty}\int_{\X}|f_{j}(g)|^{p}dg=\sum_{j=1}^{\infty}Cap_{\mathbf{s},p}(E_{j})+\epsilon,$$
 which indicates
 $$Cap_{\mathbf{s},p}(\bigcup_{j=1}^{\infty}E_{j})\leq \sum_{j=1}^{\infty}Cap_{\mathbf{s},p}(E_{j}).$$

\end{proof}
Furthermore,  we prove that the capacity $Cap_{\mathbf{s},p}(\cdot)$
is an outer capacity.
\begin{proposition}\label{cap-11} Let $(s,\alpha)\in(0,1)\times(0,1)$, $p\in[1,\infty)$ and
$\mathbf{s}=s\alpha $. For any subset $E\subset \X$,
$$Cap_{\mathbf{s},p}(E)=\inf\Big\{Cap_{\mathbf{s},p}(O):\ O\supset E,\ O\  \text{open}\Big\}.$$
\end{proposition}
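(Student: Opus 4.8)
The plan is to prove the two inequalities separately. The inequality $Cap_{\mathbf{s},p}(E)\le \inf\{Cap_{\mathbf{s},p}(O):O\supset E,\ O\text{ open}\}$ is immediate from the monotonicity established in Proposition \ref{pro-6}(ii), since every open $O\supset E$ satisfies $Cap_{\mathbf{s},p}(E)\le Cap_{\mathbf{s},p}(O)$. Moreover, if $Cap_{\mathbf{s},p}(E)=\infty$, then by the same monotonicity every open $O\supset E$ has infinite capacity and the asserted equality holds trivially; so I may assume $Cap_{\mathbf{s},p}(E)<\infty$.

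For the reverse inequality, I would fix $\epsilon>0$ and choose, via Definition \ref{cap-2}, a function $0\le f\in L^{p}(\X)$ with $\mathscr{I}_{2\mathbf{s}}f\ge 1$ on $E$ and $\|f\|_{L^{p}(\X)}^{p}\le Cap_{\mathbf{s},p}(E)+\epsilon$. The crucial point is that, for nonnegative $f$, the function $g\mapsto \mathscr{I}_{2\mathbf{s}}f(g)$ is lower semicontinuous on $\X$; this is the statement recorded in the remark following Proposition \ref{HLS}, taking $\tilde{\alpha}=2s$ so that $\tilde{\beta}=2\mathbf{s}$, which is admissible because $2s<\mathcal{Q}$. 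For $\delta>0$ I then set
$$O_{\delta}:=\Big\{g\in\X:\ \mathscr{I}_{2\mathbf{s}}f(g)>\tfrac{1}{1+\delta}\Big\}.$$
By lower semicontinuity the superlevel set $O_{\delta}$ is open, and since $\mathscr{I}_{2\mathbf{s}}f\ge 1>\tfrac{1}{1+\delta}$ on $E$, we have $E\subset O_{\delta}$.

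On the open set $O_{\delta}$ the competitor $(1+\delta)f\ge 0$ satisfies $\mathscr{I}_{2\mathbf{s}}\big((1+\delta)f\big)=(1+\delta)\mathscr{I}_{2\mathbf{s}}f\ge 1$, so $(1+\delta)f$ is admissible for $O_{\delta}$ in Definition \ref{cap-2}. Hence
$$Cap_{\mathbf{s},p}(O_{\delta})\le \|(1+\delta)f\|_{L^{p}(\X)}^{p}=(1+\delta)^{p}\|f\|_{L^{p}(\X)}^{p}\le (1+\delta)^{p}\big(Cap_{\mathbf{s},p}(E)+\epsilon\big),$$
and therefore $\inf\{Cap_{\mathbf{s},p}(O):O\supset E,\ O\text{ open}\}\le (1+\delta)^{p}(Cap_{\mathbf{s},p}(E)+\epsilon)$. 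Letting first $\delta\to 0$ and then $\epsilon\to 0$ yields the reverse inequality, which together with the first paragraph completes the proof. The only delicate step is the passage from the pointwise condition \emph{$\mathscr{I}_{2\mathbf{s}}f\ge 1$ on $E$} to a genuine open neighborhood of $E$: since for a lower semicontinuous function the superlevel set $\{\,\cdot\ge 1\,\}$ need not be open, I build in the slack factor $(1+\delta)$ so as to use the open set $\{\,\cdot> (1+\delta)^{-1}\,\}$ instead, at the cost of the harmless multiplicative constant $(1+\delta)^{p}$ that disappears in the limit.
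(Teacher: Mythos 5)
Your proof is correct and follows essentially the same route as the paper: monotonicity gives one inequality, and for the other you take a near-optimal admissible $f$, use the lower semicontinuity of $\mathscr{I}_{2\mathbf{s}}f$ to produce an open superlevel set containing $E$, and absorb the threshold slack into a multiplicative constant that vanishes in the limit (the paper uses the threshold $1-\epsilon$ and the factor $(1-\epsilon)^{-p}$ where you use $(1+\delta)^{-1}$ and $(1+\delta)^{p}$, which is the same device). The only nitpick is that the lower semicontinuity of $g\mapsto\mathscr{I}_{2\mathbf{s}}f(g)$ for nonnegative $f$ is recorded in the remark following Definition \ref{def1} (item (iii)), not in the remark after Proposition \ref{HLS}.
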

\begin{proof}
Without loss of generality, we assume that $Cap_{\mathbf{s},p}(E)<\infty$. By (ii) of Proposition \ref{pro-6},
$$Cap_{\mathbf{s},p}(E)\leq \inf\Big\{Cap_{\mathbf{s},p}(O):\ O\supset E,\ O\  \text{open}\Big\}.$$
For $\epsilon\in(0,1)$, there exists a measurable  nonnegative
function $f$ such that $\mathscr{I}_{2\mathbf{s}}f\geq 1$ on $E$ and
$$\int_{\X}|f(g)|^{p}dg\leq Cap_{\mathbf{s},p}(E)+\epsilon.$$
Since $\mathscr{I}_{2\mathbf{s}}f$ is lower semi-continuous, then the set
$$O_{\epsilon}:=\Big\{g\in\X:
\mathscr{I}_{2\mathbf{s}}f(g)>1-\epsilon\Big\}$$ is  an open set. On
the other hand, $E\subset O_{\epsilon}$, which implies that
$$Cap_{\mathbf{s},p}(O_{\epsilon})\leq \frac{1}{(1-\epsilon)^{p}}\int_{\X}|f(g)|^{p}dg<\frac{1}{(1-\epsilon)^{p}}(Cap_{\mathbf{s},p}(E)+\epsilon).$$
The arbitrariness of $\epsilon$ indicates that
$$Cap_{\mathbf{s},p}(E)\geq \inf\Big\{Cap_{\mathbf{s},p}(O):\ O\supset E,\ O\  \text{open}\Big\}.$$

\end{proof}

An immediate corollary of Proposition \ref{cap-11} is the following
result.
\begin{corollary}\label{cap-2} Let $(s,\alpha)\in(0,1)\times(0,1)$, $p\in[1,\infty)$ and
$\mathbf{s}=s\alpha $. If $\{\mathcal{K}_{j}\}_{j=1}^{\infty}$ is a
decreasing sequence of compact sets, then
$$Cap_{\mathbf{s},p}(\bigcap_{j=1}^{\infty}\mathcal{K}_{j})=\lim_{j\rightarrow\infty}Cap_{\mathbf{s},p}(\mathcal{K}_{j}).$$
\end{corollary}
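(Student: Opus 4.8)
The plan is to establish two opposite inequalities. Write $\mathcal{K}:=\bigcap_{j=1}^{\infty}\mathcal{K}_{j}$. Since $\{\mathcal{K}_{j}\}_{j=1}^{\infty}$ is decreasing, the monotonicity in Proposition \ref{pro-6}(ii) shows that $\{Cap_{\mathbf{s},p}(\mathcal{K}_{j})\}_{j=1}^{\infty}$ is a non-increasing sequence of non-negative numbers, so its limit exists. Moreover $\mathcal{K}\subseteq\mathcal{K}_{j}$ for every $j$ gives $Cap_{\mathbf{s},p}(\mathcal{K})\leq Cap_{\mathbf{s},p}(\mathcal{K}_{j})$, and hence $Cap_{\mathbf{s},p}(\mathcal{K})\leq\lim_{j\rightarrow\infty}Cap_{\mathbf{s},p}(\mathcal{K}_{j})$. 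This is the easy direction, requiring only monotonicity.

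For the reverse inequality I would fix $\epsilon>0$ and invoke the outer regularity of the capacity established in Proposition \ref{cap-11}: there exists an open set $O\supseteq\mathcal{K}$ with $Cap_{\mathbf{s},p}(O)<Cap_{\mathbf{s},p}(\mathcal{K})+\epsilon$. The crux is then a purely topological observation. The sets $\mathcal{K}_{j}\setminus O$ form a decreasing sequence of compact subsets of $\mathcal{K}_{1}$ (each being closed in the compact set $\mathcal{K}_{1}$), and their intersection is $\mathcal{K}\setminus O=\emptyset$ because $\mathcal{K}\subseteq O$. By the finite intersection property for compact sets, some $\mathcal{K}_{j_{0}}\setminus O$ must already be empty, that is, $\mathcal{K}_{j_{0}}\subseteq O$.

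Once $\mathcal{K}_{j_{0}}\subseteq O$ is in hand, monotonicity yields $Cap_{\mathbf{s},p}(\mathcal{K}_{j_{0}})\leq Cap_{\mathbf{s},p}(O)<Cap_{\mathbf{s},p}(\mathcal{K})+\epsilon$, and since the sequence of capacities is non-increasing we get $\lim_{j\rightarrow\infty}Cap_{\mathbf{s},p}(\mathcal{K}_{j})\leq Cap_{\mathbf{s},p}(\mathcal{K}_{j_{0}})<Cap_{\mathbf{s},p}(\mathcal{K})+\epsilon$. Letting $\epsilon\rightarrow0$ gives $\lim_{j\rightarrow\infty}Cap_{\mathbf{s},p}(\mathcal{K}_{j})\leq Cap_{\mathbf{s},p}(\mathcal{K})$, which together with the easy direction completes the proof. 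The only genuine subtlety, and the step I would write out most carefully, is the extraction of a single index $j_{0}$ with $\mathcal{K}_{j_{0}}\subseteq O$; the rest is a direct combination of monotonicity with the outer-regularity statement of Proposition \ref{cap-11}.
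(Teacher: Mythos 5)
Your argument is correct and is precisely the one the paper intends: the result is stated as an immediate consequence of the outer regularity in Proposition \ref{cap-11}, and your write-up (monotonicity for one inequality, outer regularity plus the finite intersection property of the compact sets $\mathcal{K}_{j}\setminus O$ to extract $\mathcal{K}_{j_{0}}\subseteq O$ for the other) is the standard way to make that "immediate" step explicit. No gaps.
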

%\begin{proof}
%Let $O$ be any open set satisfying
%$\bigcap_{j=1}^{\infty}\mathcal{K}_{j}\subset O$. Then for some $j$,
%$\mathcal{K}_{j}\subset O$, which, together with Proposition
%\ref{cap-11}, gives
%$$Cap_{\mathbf{s},p}(\bigcap_{j=1}^{\infty}\mathcal{K}_{j})\leq \lim_{j\rightarrow\infty}Cap_{\mathbf{s},p}(\mathcal{K}_{j})\leq \inf_{\bigcap_{j=1}^{\infty}\mathcal{K}_{j}\subset O}Cap_{\mathbf{s},p}(O)=Cap_{\mathbf{s},p}(\bigcap_{j=1}^{\infty}\mathcal{K}_{j}).$$
%
%\end{proof}
\begin{proposition}\label{cap-3}
Let $(s,\alpha)\in(0,1)\times(0,1)$, $p\in(1,\infty)$ and
$\mathbf{s}=s\alpha $.  If $\{E_{j}\}_{j=1}^{\infty}$ is an
increasing sequence of arbitrary subsets of $\X$, then
$$Cap_{\mathbf{s},p}(\bigcup_{j=1}^{\infty}E_{j})=\lim_{j\rightarrow\infty}Cap_{\mathbf{s},p}(E_{j}).$$
\end{proposition}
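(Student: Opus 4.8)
The plan is to run the classical uniform-convexity argument for nonlinear capacities, working with the equivalent Riesz capacity $Cap_{\mathbf{s},p}(\cdot)$ of Definition \ref{cap-2}; this is legitimate precisely because $p\in(1,\infty)$ makes $L^{p}(\X)$ uniformly convex, which is also why the hypothesis here excludes $p=1$ (unlike Proposition \ref{pro-6}). Set $A:=\lim_{j\to\infty}Cap_{\mathbf{s},p}(E_{j})$, which exists by monotonicity (Proposition \ref{pro-6}(ii)). Monotonicity also gives $A\le Cap_{\mathbf{s},p}(\bigcup_{j}E_{j})$, and if $A=\infty$ the identity is immediate; so I would assume $A<\infty$ and aim only at the reverse inequality $Cap_{\mathbf{s},p}(\bigcup_{j}E_{j})\le A$.

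First I would choose near-extremal admissible functions: for each $j$ pick $f_{j}\ge 0$ with $\mathscr{I}_{2\mathbf{s}}f_{j}\ge 1$ on $E_{j}$ and $\|f_{j}\|_{L^{p}(\X)}^{p}\le Cap_{\mathbf{s},p}(E_{j})+2^{-j}$. For $m\le k$ we have $E_{m}\subset E_{k}$, so on $E_{m}$ both $\mathscr{I}_{2\mathbf{s}}f_{m}\ge 1$ and $\mathscr{I}_{2\mathbf{s}}f_{k}\ge 1$; by linearity and positivity of $\mathscr{I}_{2\mathbf{s}}$ the midpoint $\tfrac{f_{m}+f_{k}}{2}$ is admissible for $E_{m}$, whence $\|\tfrac{f_{m}+f_{k}}{2}\|_{L^{p}(\X)}^{p}\ge Cap_{\mathbf{s},p}(E_{m})$. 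Since $\|f_{m}\|_{L^{p}(\X)}^{p},\|f_{k}\|_{L^{p}(\X)}^{p}\to A$ and the midpoint norm is squeezed between $Cap_{\mathbf{s},p}(E_{m})^{1/p}$ and $\tfrac12(\|f_{m}\|_{L^{p}(\X)}+\|f_{k}\|_{L^{p}(\X)})$, it too tends to $A^{1/p}$; uniform convexity of $L^{p}(\X)$ (equivalently, Clarkson's inequalities) then forces $\|f_{m}-f_{k}\|_{L^{p}(\X)}\to 0$. Hence $\{f_{j}\}$ is Cauchy and converges in $L^{p}(\X)$ to some $f\ge 0$ with $\|f\|_{L^{p}(\X)}^{p}=A$.

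The main obstacle is verifying that $f$ is admissible for $\bigcup_{j}E_{j}$, since the definition demands $\mathscr{I}_{2\mathbf{s}}f\ge 1$ \emph{everywhere} on the union, whereas $L^{p}$-convergence only yields convergence of potentials almost everywhere. I would upgrade this to quasi-everywhere convergence: passing to a subsequence with $\|f_{j_{l+1}}-f_{j_{l}}\|_{L^{p}(\X)}\le 2^{-l}$, the elementary weak-type bound $Cap_{\mathbf{s},p}(\{\mathscr{I}_{2\mathbf{s}}g>\lambda\})\le \lambda^{-p}\|g\|_{L^{p}(\X)}^{p}$ (immediate, since $g/\lambda$ is admissible on that level set) combined with countable subadditivity (Proposition \ref{pro-6}(iii)) gives, via a Borel--Cantelli argument, a set $N_{0}$ with $Cap_{\mathbf{s},p}(N_{0})=0$ off which $\sum_{l}\mathscr{I}_{2\mathbf{s}}|f_{j_{l+1}}-f_{j_{l}}|$ converges; dominated convergence then identifies the quasi-everywhere limit of $\mathscr{I}_{2\mathbf{s}}f_{j_{l}}$ with $\mathscr{I}_{2\mathbf{s}}f$. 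As each $g\in\bigcup_{j}E_{j}$ lies in some $E_{j_{0}}$ and hence satisfies $\mathscr{I}_{2\mathbf{s}}f_{j_{l}}(g)\ge 1$ for all large $l$, we obtain $\mathscr{I}_{2\mathbf{s}}f\ge 1$ on $\bigcup_{j}E_{j}\setminus N_{0}$. The lower semicontinuity of Riesz potentials of nonnegative functions recorded after Proposition \ref{HLS} ensures these level sets and potentials are well behaved throughout.

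Finally I would remove the exceptional set by a capacity-zero correction: since $Cap_{\mathbf{s},p}(N_{0})=0$, for any $\epsilon>0$ there is $h\ge 0$ with $\mathscr{I}_{2\mathbf{s}}h\ge 1$ on $N_{0}$ and $\|h\|_{L^{p}(\X)}^{p}<\epsilon$, so $f+h$ is admissible for the full union and $Cap_{\mathbf{s},p}(\bigcup_{j}E_{j})\le \|f+h\|_{L^{p}(\X)}^{p}\le (A^{1/p}+\epsilon^{1/p})^{p}$. Letting $\epsilon\to 0$ gives $Cap_{\mathbf{s},p}(\bigcup_{j}E_{j})\le A$, which together with the trivial direction completes the proof.
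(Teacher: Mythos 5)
Your proposal is correct and follows essentially the same route as the paper's proof: monotonicity for the easy direction, then (near-)extremal admissible functions, the midpoint admissibility bound combined with uniform convexity of $L^{p}(\X)$ to get strong convergence $f_{j}\to f$ with $\|f\|_{L^{p}(\X)}^{p}=\lim_{j}Cap_{\mathbf{s},p}(E_{j})$, and finally quasi-everywhere convergence of the potentials to conclude $\mathscr{I}_{2\mathbf{s}}f\geq 1$ on the union up to a set of capacity zero. The only difference is presentational: the paper outsources the Clarkson/uniform-convexity step and the quasi-everywhere convergence to \cite[Corollary 1.3.3]{ada} and \cite[Proposition 2.3.12]{ada}, whereas you spell out both (via near-extremals, Borel--Cantelli with the weak-type capacity bound, and an explicit capacity-zero correction), which is sound.
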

\begin{proof}
Since $\{E_{j}\}_{j=1}^{\infty}$ is increasing, then
$$Cap_{\mathbf{s},p}(\bigcup_{j=1}^{\infty}E_{j})\geq\lim_{j\rightarrow\infty}Cap_{\mathbf{s},p}(E_{j}).$$
Conversely, without loss generality, we assume that
$\lim_{j\rightarrow\infty}Cap_{\mathbf{s},p}(E_{j})$ is finite.  For
each $j$, let $f_{E_{j}}$ be the unique function such that
$f_{E_{j}}\geq 1$ on $E_{j}$ and
$\|f_{E_{j}}\|_{L^{p}}^{p}=Cap_{\mathbf{s},p}(E_{j}).$ Then for
$i<j$, it follows  that $\mathscr{I}_{2\mathbf{s}}f_{E_{j}}\geq 1$
on $E_{i}$ and further,
$\mathscr{I}_{2\mathbf{s}}((f_{E_{i}}+f_{E_{j}})/2)\geq 1$ on
$E_{i}$, which means that
$$\int_{\X}((f_{E_{i}}+f_{E_{j}})/2)^{p}dg\geq Cap_{\mathbf{s},p}(E_{i}).$$
By \cite[Corollary 1.3.3]{ada}, the sequence $\{f_{E_{j}}\}_{j=1}^{\infty}$ converges strongly to a function $f$ satisfying
$$\|f\|_{L^{p}(\X)}^{p}=\lim_{j\rightarrow\infty}Cap_{\mathbf{s},p}(E_{j}).$$
Similarly to \cite[Proposition 2.3.12]{ada}, we can prove that $\mathscr{I}_{2\mathbf{s}}f\geq 1$ on $\bigcup_{j=1}^{\infty}E_{j}$, except
 possibly on a countable union of sets with $Cap_{\mathbf{s},p}(\cdot)$ zero.
 Hence,
$$\lim_{j\rightarrow\infty}Cap_{\mathbf{s},p}(E_{j})\geq \int_{\X}|f(g)|^{p}dg\geq Cap_{\mathbf{s},p}(\bigcup_{j=1}^{\infty}E_{j}).$$

\end{proof}
As a corollary of Proposition \ref{cap-3}, we can get
\begin{corollary}\label{cap-4} Let $(s,\alpha)\in(0,1)\times(0,1)$, $p\in(1,\infty)$ and
$\mathbf{s}=s\alpha $. Let $O$ be an open subset of $\X$. Then
$$Cap_{\mathbf{s},p}(O)=\sup\Big\{Cap_{\mathbf{s},p}(\mathcal{K}):\ \text{compact}\ \mathcal{K}\subset O\Big\}.$$
\end{corollary}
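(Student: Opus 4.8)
The plan is to prove the two inequalities separately; the equality then follows by combining monotonicity (Proposition \ref{pro-6}(ii)) with the continuity from below established in Proposition \ref{cap-3}.

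First I would dispose of the inequality
$$\sup\Big\{Cap_{\mathbf{s},p}(\mathcal{K}):\ \text{compact}\ \mathcal{K}\subset O\Big\}\le Cap_{\mathbf{s},p}(O).$$
This is immediate from Proposition \ref{pro-6}(ii): every compact $\mathcal{K}\subset O$ satisfies $Cap_{\mathbf{s},p}(\mathcal{K})\le Cap_{\mathbf{s},p}(O)$, and taking the supremum over all such $\mathcal{K}$ preserves this bound.

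For the reverse inequality, the key point is that the open set $O$ admits an exhaustion by compact sets. Since $\X$ is topologically identified with $\mathbb{R}^{n}$ and the homogeneous norm induces the Euclidean topology, $O$ is an open subset of a locally compact, $\sigma$-compact Hausdorff space. Hence there exists an increasing sequence of compact sets $\{\mathcal{K}_{j}\}_{j=1}^{\infty}$ with $\mathcal{K}_{j}\subset\mathcal{K}_{j+1}$ for all $j$ and $\bigcup_{j=1}^{\infty}\mathcal{K}_{j}=O$; for instance one may take
$$\mathcal{K}_{j}:=\Big\{g\in\X:\ |g|\le j\ \text{and}\ \text{dist}(g,\X\setminus O)\ge 1/j\Big\},$$
where $\text{dist}(\cdot,\cdot)$ denotes the Euclidean distance, each $\mathcal{K}_{j}$ being closed and bounded and therefore compact. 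One checks in the standard way that $\bigcup_{j}\mathcal{K}_{j}=O$: if $g\in O$ then, the two topologies being equal, $\text{dist}(g,\X\setminus O)>0$, so $g\in\mathcal{K}_{j}$ for all large $j$, while conversely $\mathcal{K}_{j}\subset O$ for each $j$.

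Then I would apply Proposition \ref{cap-3} to the increasing sequence $\{\mathcal{K}_{j}\}_{j=1}^{\infty}$ to obtain
$$Cap_{\mathbf{s},p}(O)=Cap_{\mathbf{s},p}\Big(\bigcup_{j=1}^{\infty}\mathcal{K}_{j}\Big)=\lim_{j\to\infty}Cap_{\mathbf{s},p}(\mathcal{K}_{j})\le\sup\Big\{Cap_{\mathbf{s},p}(\mathcal{K}):\ \text{compact}\ \mathcal{K}\subset O\Big\}.$$
Combining the two inequalities yields the claimed equality. The only step requiring any care is the existence of the compact exhaustion, but this is a routine topological fact for open subsets of $\mathbb{R}^{n}$; beyond it, no property of the capacity other than Propositions \ref{pro-6} and \ref{cap-3} is needed.
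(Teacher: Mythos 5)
Your proof is correct and follows exactly the route the paper intends: the easy inequality is monotonicity (Proposition \ref{pro-6}(ii)), and the reverse inequality comes from exhausting the open set $O$ by an increasing sequence of compact sets and invoking the continuity from below in Proposition \ref{cap-3}, which is precisely why the paper labels the statement a corollary of that proposition. The compact exhaustion you construct is the standard one and works since $\X$ is locally compact and $\sigma$-compact under the identification with $\mathbb{R}^{n}$.
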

%\begin{proof}
%It is obvious that
%$$Cap_{\mathbf{s},p}(O)\geq \sup\Big\{Cap_{\mathbf{s},p}(\mathcal{K}):\ \text{compact}\ \mathcal{K}\subset O\Big\}.$$
%Conversely, any open set $O$ in $\X$ is the union of an increasing sequence of compact sets denoted by $\{\mathcal{K}_{j}\}_{j=1}^{\infty}$. Then it follows from Proposition \ref{cap-3} that
%\begin{eqnarray*}
%% \nonumber to remove numbering (before each equation)
%  Cap_{\mathbf{s},p}(O) &=& Cap_{\mathbf{s},p}(\bigcup_{j=1}^{\infty}\mathcal{K}_{j})=\lim_{j\rightarrow\infty}Cap_{\mathbf{s},p}(\mathcal{K}_{j})  \\
%  &\leq& \sup\Big\{Cap_{\mathbf{s},p}(\mathcal{K}):\ \text{compact}\ \mathcal{K}\subset O\Big\}.
%\end{eqnarray*}
%
%
%\end{proof}

\begin{proposition}\label{relation}
Let $(s,\alpha,\sigma)\in(0,1)\times(0,1)\times(0,1)$,
$p\in(1,\infty)$, $\mathbf{s}=s\alpha $ and
$\widetilde{\mathbf{s}}=s\sigma $.

\item{\rm (i)} For $p<\mathcal{Q}/(2\mathbf{s})$,
$$Cap_{\dot{W}^{2\mathbf{s},p}(E)}\sim Cap_{\mathbf{s},p}(E)\ \text{for all Borel sets}\ E\subset\X.$$

\item{\rm (ii)} For $p<\mathcal{Q}/(2\widetilde{\mathbf{s}})$,
$$Cap_{\mathcal{\dot{W}}^{2\widetilde{\mathbf{s}},p}}(E)\sim Cap_{\widetilde{\mathbf{s}},p}(E)\ \text{for all Borel sets}\ E\subset\X.$$
\end{proposition}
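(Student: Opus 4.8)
The plan is to prove (i) first for compact sets and then to propagate the equivalence to all Borel sets by outer regularity; statement (ii) is proved verbatim with $\widetilde{\mathbf{s}}$ in place of $\mathbf{s}$, so I will only treat (i). The link between the two capacities is the spectral identity $u=\mathscr{I}_{2\mathbf{s}}\L^{\mathbf{s}}u$ from (\ref{eq-3.1-1}), together with the fact that the Riesz kernel $I_{2\mathbf{s}}$ is non-negative, which is immediate from its definition (\ref{Ia}) and the positivity of the heat kernel in Lemma \ref{heat-1}(i).

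For the inequality $Cap_{\mathbf{s},p}(\mathcal{K})\lesssim Cap_{\mathcal{\dot{W}}^{2\mathbf{s},p}}(\mathcal{K})$ on a compact set $\mathcal{K}$, I would start from any admissible competitor $u\in C_c^{\infty}(\X)$ with $u\geq 1_{\mathcal{K}}$ and set $f:=|\L^{\mathbf{s}}u|\geq 0$, which lies in $L^p(\X)$ because $C_c^{\infty}(\X)\subset\mathscr{S}(\X)\subset\mathcal{W}^{2\mathbf{s},p}(\X)$ by Remark \ref{re-cc}(ii). Since $I_{2\mathbf{s}}\geq 0$, the identity (\ref{eq-3.1-1}) yields $\mathscr{I}_{2\mathbf{s}}f\geq|\mathscr{I}_{2\mathbf{s}}\L^{\mathbf{s}}u|=|u|\geq 1$ on $\mathcal{K}$, so $f$ is admissible for $Cap_{\mathbf{s},p}(\mathcal{K})$ while $\|f\|_{L^p(\X)}^p=\|u\|_{\mathcal{\dot{W}}^{2\mathbf{s},p}(\X)}^p$. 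Taking the infimum over all such $u$ gives the desired bound.

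The reverse inequality is the crux. Given $0\leq f\in L^p(\X)$ with $\mathscr{I}_{2\mathbf{s}}f\geq 1$ on $\mathcal{K}$, I would set $v:=\mathscr{I}_{2\mathbf{s}}f$. The Hardy-Littlewood-Sobolev inequality (Proposition \ref{HLS}, applicable precisely because $p<\mathcal{Q}/(2\mathbf{s})$) guarantees that $v$ is well-defined and belongs to $\mathcal{\dot{W}}^{2\mathbf{s},p}(\X)$ with $\L^{\mathbf{s}}v=f$, so that $\|v\|_{\mathcal{\dot{W}}^{2\mathbf{s},p}(\X)}^p=\|f\|_{L^p(\X)}^p$. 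Because $v$ is lower semicontinuous for non-negative $f$ (as used in the proof of Proposition \ref{cap-11}), the set $O_\epsilon:=\{g\in\X:v(g)>1-\epsilon\}$ is open and contains $\mathcal{K}$ for every $\epsilon\in(0,1)$; hence $w:=v/(1-\epsilon)$ obeys $w>1$ on the open neighborhood $O_\epsilon$ of $\mathcal{K}$, so in particular $w\geq 1_{\mathcal{K}}$. Now I invoke the sharp form of Proposition \ref{dense-1}, which produces $\{u_j\}_{j=1}^{\infty}\subset C_c^{\infty}(\X)$ with $u_j\geq 1_{\mathcal{K}}$ for all $j$ and $u_j\to w$ in $\mathcal{\dot{W}}^{2\mathbf{s},p}(\X)$. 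Then $Cap_{\mathcal{\dot{W}}^{2\mathbf{s},p}}(\mathcal{K})\leq\lim_{j\to\infty}\|u_j\|_{\mathcal{\dot{W}}^{2\mathbf{s},p}(\X)}^p=(1-\epsilon)^{-p}\|f\|_{L^p(\X)}^p$, and letting $\epsilon\to 0$ and infimizing over $f$ completes the compact case.

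It then remains to lift the equivalence from compact sets to all Borel sets, which I would do by exploiting the parallel outer-regular structure of the two set functions: for open $O$ both capacities equal the supremum of their values on compact subsets of $O$ (by definition for $Cap_{\mathcal{\dot{W}}^{2\mathbf{s},p}}$ and by Corollary \ref{cap-4} for $Cap_{\mathbf{s},p}$), and for an arbitrary set $E$ both equal the infimum over open sets containing $E$ (by definition and by Proposition \ref{cap-11}, respectively). Hence the compact equivalence transfers first to open sets and then to all Borel sets. The principal obstacle is the reverse inequality: one must verify carefully that $\mathscr{I}_{2\mathbf{s}}f$ is a genuine element of $\mathcal{\dot{W}}^{2\mathbf{s},p}(\X)$ with $\L^{\mathbf{s}}(\mathscr{I}_{2\mathbf{s}}f)=f$ — this is exactly where the restriction $p<\mathcal{Q}/(2\mathbf{s})$ and Proposition \ref{HLS} are essential — and that the refined density statement of Proposition \ref{dense-1}, which keeps the approximants above $1_{\mathcal{K}}$, may legitimately be applied to $w$.
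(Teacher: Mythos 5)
Your proposal follows essentially the same route as the paper: reduce to compact sets via the Choquet/regularity properties of $Cap_{\mathbf{s},p}$, prove the easy direction by taking $f=|\L^{\mathbf{s}}u|$ and using $\mathscr{I}_{2\mathbf{s}}f\geq|\mathscr{I}_{2\mathbf{s}}\L^{\mathbf{s}}u|=u$, and prove the reverse direction by feeding $\mathscr{I}_{2\mathbf{s}}f$ into the refined density statement of Proposition \ref{dense-1}. Your extra $\epsilon$-dilation step (passing to $w=v/(1-\epsilon)$ so that $w>1$ on an open neighborhood $O_\epsilon$ of $\mathcal{K}$) is actually a welcome refinement: the construction inside Proposition \ref{dense-1} implicitly needs $\mathcal{K}$ to sit inside the interior of $\{u\geq 1\}$ at positive distance from its boundary, and your lower-semicontinuity argument secures exactly that, at the harmless cost of a factor $(1-\epsilon)^{-p}$.

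The one place where your argument is incomplete is the assertion that the Hardy--Littlewood--Sobolev inequality alone guarantees $v=\mathscr{I}_{2\mathbf{s}}f\in\mathcal{\dot{W}}^{2\mathbf{s},p}(\X)$. By the definition in Section 4, the underlying set of $\mathcal{\dot{W}}^{2\mathbf{s},p}(\X)$ is still $\{u\in L^{p}(\X):\ \L^{\mathbf{s}}u\in L^{p}(\X)\}$, and the density Proposition \ref{dense-1} genuinely uses $u\in L^{p}(\X)$ (both the mollification step $\|\tau_{\epsilon}\ast u-u\|_{L^{p}}\to 0$ and the truncation step $\|u\eta_{N}-u\|_{L^{p}}\to 0$ require it). Proposition \ref{HLS} only places $\mathscr{I}_{2\mathbf{s}}f$ in $L^{p\mathcal{Q}/(\mathcal{Q}-2\mathbf{s}p)}(\X)$, which is a strictly larger exponent than $p$, so $L^{p}$ membership does not follow for free; the paper inserts a separate (if somewhat rough) computation to bound $\|u\|_{L^{p}(\X)}$ before invoking the density result. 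You should either supply such an argument or check that the approximation scheme of Proposition \ref{dense-1} can be run using only $\L^{\mathbf{s}}v\in L^{p}$ and $v\in L^{p\mathcal{Q}/(\mathcal{Q}-2\mathbf{s}p)}$. Everything else in your write-up, including the transfer from compact sets to open and then to Borel sets via the matching inner/outer regularity of the two capacities, is in order.
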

\begin{proof}
We shall provide a proof for  (i) exclusively, since the proofs for
both (i) and (ii) follow similar arguments and thus the latter will
be omitted for brevity. From Proposition \ref{pro-6} (i) \&\ (ii),
Corollary \ref{cap-2} and Proposition \ref{cap-3}, we know that
$Cap_{\mathbf{s},p}(\cdot)$ is a Choquet capacity and thus enjoys
the inner/outer regularity properties, that is, any Borel set
$E\subset\X$ satisfies
$$Cap_{\mathbf{s},p}(E)=\sup_{\mathcal{K}\subset E, \mathcal{K}\ \text{compact}}Cap_{\mathbf{s},p}(\mathcal{K})=\inf_{O\supset E, O\ \text{open}}Cap_{\mathbf{s},p}(O).$$
So it suffices  to prove the validity of  this proposition for
compact sets $\mathcal{K}$ in $\X$.

For any $\epsilon\in(0,\infty)$, by the definition of
$Cap_{\mathcal{\dot{W}}^{2\mathbf{s},p}}(\cdot)$, there exists $u\in
C_{c}^{\infty}(\X)$ such that $u\geq 1_{\mathcal{K}}$ and
$$Cap_{\mathcal{\dot{W}}^{2\mathbf{s},p}}(\mathcal{K})+\epsilon>\|u\|_{\mathcal{\dot{W}}^{2\mathbf{s},p}(\X)}^{p}.$$
Upon taking $f=|\L^{\mathbf{s}}u|$ which belongs to $L^{p}(\X)$, we obtain
$$\mathscr{I}_{2\mathbf{s}}f\geq |\mathscr{I}_{2\mathbf{s}}\L^{\mathbf{s}}u|=u\geq 1_{\mathcal{K}}.$$
This in turn derives
$$Cap_{\mathbf{s},p}(\mathcal{K})\leq \|f\|_{L^{p}(\X)}^{p}=\|\L^{\mathbf{s}}u\|_{L^{p}(\X)}^{p}= \|u\|_{\mathcal{\dot{W}}^{2\mathbf{s},p}(\X)}^{p}\lesssim Cap_{\mathcal{\dot{W}}^{2\mathbf{s},p}}(\mathcal{K})+\epsilon.$$
Letting $\epsilon\rightarrow 0$ yields
\begin{equation*}\label{r-1}
  Cap_{\mathbf{s},p}(\mathcal{K})\lesssim Cap_{\mathcal{\dot{W}}^{2\mathbf{s},p}}(\mathcal{K}).
\end{equation*}

Conversely, for any $\epsilon\in(0,\infty)$, by the definition of
$Cap_{\mathbf{s},p}(\cdot)$, there exists a non-negative function
$f\in L^{p}(\X)$ such that $\mathscr{I}_{2\mathbf{s}}f\geq
1_{\mathcal{K}}$ and
\begin{equation}\label{r-2}
  Cap_{\mathbf{s},p}(\mathcal{K})+\epsilon>\|f\|_{L^{p}(\X)}^{p}.
\end{equation}

Consider now the function
$$u=\mathscr{I}_{2\mathbf{s}}f  \ \ \mathrm{with} \ \  f\in L^{p}(\X).$$
Also by the definition of $Cap_{\mathbf{s},p}(\cdot)$, we can see that $u\geq 1_{\mathcal{K}}$. Moreover,
\begin{equation}\label{r-4}
  \|u\|_{\mathcal{\dot{W}}^{2\mathbf{s},p}(\X)}=\|\L^{\mathbf{s}}u\|_{L^{p}(\X)}=\|\L^{\mathbf{s}}\mathscr{I}_{2\mathbf{s}}f\|_{L^{p}(\X)}= \|f\|_{L^{p}(\X)}<\infty.
\end{equation}
By the boundedness of
$$\mathscr{I}_{2\mathbf{s}}: L^{p}(\X)\rightarrow L^{p\mathcal{Q}/(Q-2\mathbf{s} p)}(\X)\ \text{under}\ 1<p<\mathcal{Q}/2\mathbf{s},$$
we know that $\mathscr{I}_{2\mathbf{s}}f\in L^{p\mathcal{Q}/(Q-2\mathbf{s} p)}(\X)$,
whence
\begin{eqnarray*}
% \nonumber to remove numbering (before each equation)
  \|u\|_{L^{p}(\X)}^{p} &=& \int_{\X}|\mathscr{I}_{2\mathbf{s}}f(g)|^{p}dg  \\
   &\leq& \int_{\mathscr{I}_{2\mathbf{s}}f(g)\geq 1}|\mathscr{I}_{2\mathbf{s}}f(g)|^{p}dg+\int_{1/2\leq \mathscr{I}_{2\mathbf{s}}f(g)<1}dg  \\
   &\leq& \int_{\mathscr{I}_{2\mathbf{s}}f(g)\geq 1}|\mathscr{I}_{2\mathbf{s}}f(g)|^{p\mathcal{Q}/(Q-2\mathbf{s} p)}dg+
   \int_{1/2\leq \mathscr{I}_{2\mathbf{s}}f(g)<1}|2\mathscr{I}_{2\mathbf{s}}f(g)|^{p\mathcal{Q}/(Q-2\mathbf{s} p)}dg  \\
   &\lesssim& \|\mathscr{I}_{2\mathbf{s}}f\|_{L^{p\mathcal{Q}/(Q-2\mathbf{s} p)}(\X)}^{p\mathcal{Q}/(Q-2\mathbf{s} p)}  \\
   &\lesssim& \|f\|_{L^{p}(\X)}^{p\mathcal{Q}/(Q-2\mathbf{s} p)}<\infty.
\end{eqnarray*}

So far, we have obtained
$u\in  \mathcal{\dot{W}}^{2\mathbf{s},p}(\X)\ \text{and}\ u\geq 1_{\mathcal{K}}.$
Since $C_{c}^{\infty}(\X)$ is dense in $\mathcal{W}^{2\mathbf{s},p}(\X)$, we find
$$ \left\{\begin{aligned}
&\{u_{j}\}_{j=1}^{\infty}\subset C_{c}^{\infty}(\X)\ \text{such that}\ \lim_{j\rightarrow\infty}\|u_{j}-u\|_{\mathcal{\dot{W}}^{2\mathbf{s},p}(\X)}=0;\\
&u_{j}\geq 1_{\mathcal{K}}\ \text{for all}\ j\in\mathbb{N}.
\end{aligned}\right.$$
Combining these with (\ref{r-2}) and (\ref{r-4}) yields
$$Cap_{\mathcal{\dot{W}}^{2\mathbf{s},p}(\X)}(\mathcal{K})\leq \lim_{j\rightarrow\infty}\|u_{j}\|_{\mathcal{\dot{W}}^{2\mathbf{s},p}(\X)}^{p}=\|u\|_{\mathcal{\dot{W}}^{2\mathbf{s},p}(\X)}^{p}
=\|f\|_{L^{p}(\X)}^{p}\lesssim
Cap_{\mathbf{s},p}(\mathcal{K})+\epsilon.$$ Finally, letting
$\epsilon\rightarrow 0$ gives the desired inequality:
$$Cap_{\mathcal{\dot{W}}^{2\mathbf{s},p}(\X)}(\mathcal{K})\lesssim Cap_{\mathbf{s},p}(\mathcal{K}).$$
\end{proof}

For any $u\in L^{1}_{loc}(\X)$, define its Hardy-Littlewood maximal function $\mathcal{M}u$ as follows:
$$\mathcal{M}u(g):=\sup_{g\in B}\frac{1}{|B|}\int_{B}|u(g')|dg'\ \ \forall\ g\in\X,$$
where $B=B(g,r)$ and the supremum is taken over all balls $B$ of
$\X$  containing  $g$. It was established  by Folland and Stein
\cite{F82} that $\mathcal{M}$ is bounded on $L^{p}(\X)$ when
$p\in(1,\infty]$, and bounded from $L^{1}(\X)$ to
$L^{1,\infty}(\X)$. In this context, $L^{1,\infty}(\X)$ denotes the
weak-$L^{1}$ space of all measurable functions $f$ on $\X$ that
satisfy
$$\|f\|_{L^{1,\infty}(\X)}:=\sup_{\lambda>0}\lambda|\{g\in \X:|f(g)|>\lambda\}|<\infty.$$

Below we obtain two strong-type capacitary inequalities.
\begin{proposition}\label{strong}
Let $(s,\alpha)\in(0,1)\times(0,1)$,
$p\in(1,\mathcal{Q}/(2\mathbf{s}))$ and $\mathbf{s}=s\alpha $. Then
the following two statements hold.

\item{\rm (i)} For any $u\in \mathcal{\dot{W}}^{2\mathbf{s},p}(\X)$,
$$\int_{0}^{\infty}Cap_{\mathcal{\dot{W}}^{2\mathbf{s},p}}(\{g\in\X: |u(g)|\geq \lambda\})d\lambda^{p}\lesssim \|u\|_{\mathcal{\dot{W}}^{2\mathbf{s},p}(\X)}^{p}.$$

\item{\rm (ii)} For any $u\in \mathcal{\dot{W}}^{2\mathbf{s},p}(\X)$,
$$\int_{0}^{\infty}Cap_{\mathcal{\dot{W}}^{2\mathbf{s},p}}(\{g\in\X: |\mathcal{M}u(g)|\geq \lambda\})d\lambda^{p}\lesssim \|u\|_{\mathcal{\dot{W}}^{2\mathbf{s},p}(\X)}^{p}.$$
\end{proposition}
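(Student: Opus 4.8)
The plan is to reduce both inequalities to a single master strong capacitary estimate for the Riesz potential, namely that
$$\int_{0}^{\infty}Cap_{\mathbf{s},p}\big(\{g\in\X:\ \mathscr{I}_{2\mathbf{s}}f(g)\geq\lambda\}\big)\,d\lambda^{p}\lesssim \|f\|_{L^{p}(\X)}^{p}\qquad (0\leq f\in L^{p}(\X));$$
I will denote this inequality by $(\star)$. First I would fix $u\in\mathcal{\dot{W}}^{2\mathbf{s},p}(\X)$ and set $f:=|\L^{\mathbf{s}}u|\in L^{p}(\X)$, so that $\|f\|_{L^{p}(\X)}=\|u\|_{\mathcal{\dot{W}}^{2\mathbf{s},p}(\X)}$. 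Since $u=\mathscr{I}_{2\mathbf{s}}\L^{\mathbf{s}}u$ by (\ref{eq-3.1-1}) and the Riesz kernel $I_{2\mathbf{s}}$ is nonnegative (by (\ref{Ia}) and Lemma \ref{upper}), one has the pointwise bound $|u|\leq\mathscr{I}_{2\mathbf{s}}f$, hence $\{|u|\geq\lambda\}\subseteq\{\mathscr{I}_{2\mathbf{s}}f\geq\lambda\}$. Combining the equivalence $Cap_{\mathcal{\dot{W}}^{2\mathbf{s},p}}\sim Cap_{\mathbf{s},p}$ of Proposition \ref{relation} with the monotonicity in Proposition \ref{pro-6}(ii), statement (i) follows immediately from $(\star)$.

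For part (ii) I would first establish the pointwise majorization $\mathcal{M}(\mathscr{I}_{2\mathbf{s}}f)\lesssim\mathscr{I}_{2\mathbf{s}}f$ for $0\leq f$. Fixing $g\in\X$ and a ball $B=B(x_{0},r)\ni g$, I would split $f=f\mathbf{1}_{2B}+f\mathbf{1}_{\X\setminus 2B}$ and use the two-sided size estimate $I_{2\mathbf{s}}(g')\sim d(e,g')^{2\mathbf{s}-\Q}$ coming from (\ref{Ia}) and Lemma \ref{upper}. The near part contributes at most $r^{2\mathbf{s}}|2B|^{-1}\int_{2B}f\lesssim\mathscr{I}_{2\mathbf{s}}f(g)$, since $d(g,z)\lesssim r$ on $2B$ and $2\mathbf{s}<\Q$; for the far part $d(g,z)\sim d(x_{0},z)$, so its average over $B$ is again $\lesssim\mathscr{I}_{2\mathbf{s}}f(g)$. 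Taking the supremum over all such $B$ gives the claim. Consequently $\mathcal{M}u\leq\mathcal{M}(\mathscr{I}_{2\mathbf{s}}f)\lesssim\mathscr{I}_{2\mathbf{s}}f$, so $\{\mathcal{M}u\geq\lambda\}\subseteq\{\mathscr{I}_{2\mathbf{s}}f\geq c\lambda\}$, and (ii) follows from $(\star)$ after the harmless rescaling $\lambda\mapsto c\lambda$ inside the $\lambda$-integral.

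It remains to prove $(\star)$. Writing $v=\mathscr{I}_{2\mathbf{s}}f$, which is lower semicontinuous by the Remark following Proposition \ref{HLS}, the dyadic sets $E_{j}=\{v>2^{j}\}$ are open and $\int_{0}^{\infty}Cap_{\mathbf{s},p}(\{v\geq\lambda\})\,d\lambda^{p}\sim\sum_{j\in\mathbb{Z}}2^{jp}Cap_{\mathbf{s},p}(E_{j})$. The naive bound $Cap_{\mathbf{s},p}(E_{j})\leq 2^{-jp}\|f\|_{L^{p}(\X)}^{p}$, obtained by testing with the admissible function $2^{-j}f$, is too crude: it makes the dyadic sum diverge. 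The remedy is the Maz'ya truncation: one tests the capacity of $E_{j+1}$ not with the global function $2^{-j}v$ but with the layer-localized truncation $w_{j}:=2^{-j}\min\{(v-2^{j})_{+},2^{j}\}$, which equals $1$ on $E_{j+1}$ and is supported in $\{v>2^{j}\}$; estimating $\|w_{j}\|_{\mathcal{\dot{W}}^{2\mathbf{s},p}(\X)}^{p}$ and summing should telescope against the layers $\{2^{j}<v\leq 2^{j+1}\}$ to recover $\|f\|_{L^{p}(\X)}^{p}$.

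The main obstacle is precisely this energy estimate for the truncations $w_{j}$. Unlike the first-order case, where $\nabla w_{j}$ is supported in a single dyadic layer and the sum telescopes at once, the operator $\L^{\mathbf{s}}$ is nonlocal, so $\L^{\mathbf{s}}w_{j}$ sees all scales and the layers couple. Controlling $\sum_{j}2^{jp}\|w_{j}\|_{\mathcal{\dot{W}}^{2\mathbf{s},p}(\X)}^{p}$ by $\|f\|_{L^{p}(\X)}^{p}$ therefore requires the nonlinear potential-theoretic machinery on $\X$ — Wolff-type potentials and the associated pointwise and capacitary comparisons — in the spirit of the Euclidean arguments of Adams and Maz'ya recalled in the Introduction. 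This is the technical heart of the proposition; by contrast, the reductions in the first two paragraphs are routine once $(\star)$ is available.
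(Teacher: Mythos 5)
Your reductions are fine as far as they go: for (i), the pointwise bound $|u|\leq\mathscr{I}_{2\mathbf{s}}|\L^{\mathbf{s}}u|$ together with Proposition \ref{relation} and the monotonicity of $Cap_{\mathbf{s},p}$ correctly reduces everything to your master inequality $(\star)$; and for (ii), your pointwise majorization $\mathcal{M}(\mathscr{I}_{2\mathbf{s}}f)\lesssim\mathscr{I}_{2\mathbf{s}}f$ for $f\geq 0$ is a legitimate (and arguably cleaner) alternative to the paper's route, which instead invokes Johnson's inequality $\mathcal{M}(\mathscr{I}_{2\mathbf{s}}\varphi)\leq\mathscr{I}_{2\mathbf{s}}(\mathcal{M}\varphi)$ and the $L^{p}$-boundedness of $\mathcal{M}$ before applying (i). Your majorization does hold here because $I_{2\mathbf{s}}(g)\sim d(e,g)^{2\mathbf{s}-\Q}$ follows from (\ref{Ia}) and the two-sided Gaussian bounds of Lemma \ref{upper}, so that part is sound.

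The genuine gap is that $(\star)$ — which carries essentially the entire content of part (i) — is never proved. You propose the Maz'ya truncation $w_{j}=2^{-j}\min\{(v-2^{j})_{+},2^{j}\}$, then correctly observe that the energy $\|\L^{\mathbf{s}}w_{j}\|_{L^{p}(\X)}^{p}$ does not localize to a single dyadic layer because $\L^{\mathbf{s}}$ is nonlocal, so the telescoping fails; at that point you defer to unspecified ``Wolff-type potentials and capacitary comparisons'' without supplying any argument. That is precisely the step the paper does carry out, by a different mechanism: it introduces the nonlinear potential $\Theta_{p,\mathbf{s}}\mu=\mathscr{I}_{2\mathbf{s}}(\mathscr{I}_{2\mathbf{s}}\mu)^{p'-1}$ of a measure $\mu$, checks via Lemma \ref{upper} that the radial kernel conditions $\int_{0}^{1}I_{2\mathbf{s}}(r)r^{\Q-1}dr<\infty$ and $\int_{1}^{\infty}I_{2\mathbf{s}}^{p'}(r)r^{\Q-1}dr<\infty$ hold for $p<\Q/2\mathbf{s}$, so that the Adams--Meyers rough maximum (boundedness) principle (\ref{maxi}) applies to $\Theta_{p,\mathbf{s}}\mu$, and then feeds this principle into Maz'ya's capacitary strong-type argument. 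Without either this boundedness-principle route or a worked-out Wolff-potential substitute, your proof of the proposition is incomplete at its core.
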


\begin{proof}
We first prove (i). For any $u\in \mathcal{\dot{W}}^{2\mathbf{s},p}(\X)$, each nonnegative measure $\mu$ given on the Borel $\sigma$-algebra of the space $\X$ generates the function $\Theta_{p,\mathbf{s}}\mu$ defined on $\X$ by
$$(\Theta_{p,\mathbf{s}}\mu)(g)=\int_{\X}I_{2\mathbf{s}}(g'^{-1}g)\Big(\int_{\X}I_{2\mathbf{s}}(g'^{-1}z)d\mu(z)\Big)^{1/(p-1)}dg'$$
or, equivalently
$$\Theta_{p,\mathbf{s}}\mu=\mathscr{I}_{2\mathbf{s}}(\mathscr{I}_{2\mathbf{s}} \mu)^{p'-1},\ \ p+p'=pp'.$$
By Lemma \ref{upper}, we can see that the Riesz kernel
$I_{2\mathbf{s}}(\cdot)$ satisfies
$$\int_{0}^{1}I_{2\mathbf{s}}(r)r^{\mathcal{Q}-1}dr<\infty\ \ \text{and}\ \ \int_{1}^{\infty}I_{2\mathbf{s}}^{p'}(r)r^{\mathcal{Q}-1}dr<\infty $$
for $p<\mathcal{Q}/2\mathbf{s}.$ Thus by \cite{AM}, we know that
$\Theta_{p,\mathbf{s}}\mu$ satisfies the rough maximum principle,
i.e., there exists a constant $C$ depending only on $\mathcal{Q}$, $p$ and
$\mathbf{s}$ such that
\begin{equation}\label{maxi}
  (\Theta_{p,\mathbf{s}}\mu)(g)\leq C\sup\{(\Theta_{p,\mathbf{s}}\mu)(g): g\in supp \mu\}.
\end{equation}
Then we utilize  (\ref{maxi})  to obtain (i)  using  the similar
argument of \cite[page 368, Theorem]{M1985}.

For (ii), by Proposition \ref{relation}, we can see that for
$1<p<\mathcal{Q}/2\mathbf{s}$, $u\in
\mathcal{\dot{W}}^{2\mathbf{s},p}(\X)$ if and only if
$u=\mathscr{I}_{2\mathbf{s}}\varphi$ and
$\|u\|_{\mathcal{\dot{W}}^{2\mathbf{s},p}(\X)}=\|\varphi\|_{L^{p}(\X)}$
for some $\varphi\in L^{p}(\X)$. Then for fixed
$u\in\mathcal{\dot{W}}^{2\mathbf{s},p}(\X)$  and $\varphi\in
L^{p}(\X)$ with $u(g)=\mathscr{I}_{2\mathbf{s}}\varphi(g)$,
according to Johnson \cite{joh}, we have
$$\mathcal{M}(\mathscr{I}_{2\mathbf{s}}\varphi)\leq \mathscr{I}_{2\mathbf{s}}(\mathcal{M}\varphi)$$
and
$$\{g\in\X: |\mathcal{M}u(g)|\geq\lambda\}\subseteq \{g\in\X: |\mathscr{I}_{2\mathbf{s}}(\mathcal{M}\varphi)(g)|\geq\lambda\}.$$
Note that $\mathcal{M}\varphi\in L^{p}(\X)$ and $\|\mathcal{M}\varphi\|_{L^{p}(\X)}\lesssim \|\varphi\|_{L^{p}(\X)}$.
Thus (i) implies (ii).

\end{proof}

Similarly to the proof of Proposition \ref{strong}, we can also  get

\begin{proposition}\label{strong-1}
Let $(s,\sigma)\in(0,1)\times(0,1)$,
$p\in(1,\mathcal{Q}/2\widetilde{\mathbf{s}})$ and
$\tilde{\mathbf{s}}=s\sigma $. Then the following two statements
hold.

\item{\rm (i)} For any $u\in \mathcal{\dot{W}}^{2\widetilde{\mathbf{s}},p}(\X)$,
$$\int_{0}^{\infty}Cap_{\mathcal{\dot{W}}^{2\widetilde{\mathbf{s}},p}}(\{g\in\X: |u(g)|\geq \lambda\})d\lambda^{p}\lesssim \|u\|_{\mathcal{\dot{W}}^{2\widetilde{\mathbf{s}},p}(\X)}^{p}.$$

\item{\rm (ii)} For any $u\in \mathcal{\dot{W}}^{2\widetilde{\mathbf{s}},p}(\X)$,
$$\int_{0}^{\infty}Cap_{\mathcal{\dot{W}}^{2\widetilde{\mathbf{s}},p}}(\{g\in\X: |\mathcal{M}u(g)|\geq \lambda\})d\lambda^{p}\lesssim \|u\|_{\mathcal{\dot{W}}^{2\widetilde{\mathbf{s}},p}(\X)}^{p}.$$
\end{proposition}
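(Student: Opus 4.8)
The plan is to follow the proof of Proposition \ref{strong} line by line, replacing the exponent $\mathbf{s}=s\alpha$ and the Riesz potential $\mathscr{I}_{2\mathbf{s}}$ by $\widetilde{\mathbf{s}}=s\sigma$ and $\mathscr{I}_{2\widetilde{\mathbf{s}}}$, the objects attached to the Caffarelli--Silvestre extension $P_{\sigma,t}$. The reason this transfer is legitimate is that every structural ingredient used before has an exact analogue: Proposition \ref{relation}(ii) gives $Cap_{\mathcal{\dot{W}}^{2\widetilde{\mathbf{s}},p}}(E)\sim Cap_{\widetilde{\mathbf{s}},p}(E)$ for all Borel sets $E\subset\X$, the Hardy--Littlewood--Sobolev mapping of $\mathscr{I}_{2\widetilde{\mathbf{s}}}$ is provided by Proposition \ref{HLS}, and the kernel $I_{2\widetilde{\mathbf{s}}}$ enjoys the same heat-kernel-based estimates coming from Lemma \ref{upper}. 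Throughout, the hypothesis $p\in(1,\mathcal{Q}/2\widetilde{\mathbf{s}})$ plays exactly the role that $p\in(1,\mathcal{Q}/2\mathbf{s})$ played earlier.

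For part (i), first I would associate to each nonnegative Borel measure $\mu$ on $\X$ the nonlinear potential
$$\Theta_{p,\widetilde{\mathbf{s}}}\mu=\mathscr{I}_{2\widetilde{\mathbf{s}}}\big(\mathscr{I}_{2\widetilde{\mathbf{s}}}\mu\big)^{p'-1},\qquad p+p'=pp'.$$
Using Lemma \ref{upper} and the range $p<\mathcal{Q}/2\widetilde{\mathbf{s}}$, I would verify the two integrability conditions
$$\int_{0}^{1}I_{2\widetilde{\mathbf{s}}}(r)r^{\mathcal{Q}-1}dr<\infty\qquad\text{and}\qquad\int_{1}^{\infty}I_{2\widetilde{\mathbf{s}}}^{p'}(r)r^{\mathcal{Q}-1}dr<\infty,$$
which are precisely the hypotheses under which \cite{AM} yields the rough maximum principle for $\Theta_{p,\widetilde{\mathbf{s}}}\mu$. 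Granting the maximum principle, the strong-type capacitary inequality in (i) follows by the argument of \cite[page 368, Theorem]{M1985} applied to $Cap_{\widetilde{\mathbf{s}},p}(\cdot)$ and then transferred to $Cap_{\mathcal{\dot{W}}^{2\widetilde{\mathbf{s}},p}}(\cdot)$ through Proposition \ref{relation}(ii).

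For part (ii), I would invoke Proposition \ref{relation}(ii) in the representation form: for $1<p<\mathcal{Q}/2\widetilde{\mathbf{s}}$, one has $u\in\mathcal{\dot{W}}^{2\widetilde{\mathbf{s}},p}(\X)$ if and only if $u=\mathscr{I}_{2\widetilde{\mathbf{s}}}\varphi$ for some $\varphi\in L^{p}(\X)$ with $\|u\|_{\mathcal{\dot{W}}^{2\widetilde{\mathbf{s}},p}(\X)}=\|\varphi\|_{L^{p}(\X)}$. Johnson's pointwise bound from \cite{joh}, namely $\mathcal{M}(\mathscr{I}_{2\widetilde{\mathbf{s}}}\varphi)\leq\mathscr{I}_{2\widetilde{\mathbf{s}}}(\mathcal{M}\varphi)$, then gives the inclusion $\{g\in\X:|\mathcal{M}u(g)|\geq\lambda\}\subseteq\{g\in\X:|\mathscr{I}_{2\widetilde{\mathbf{s}}}(\mathcal{M}\varphi)(g)|\geq\lambda\}$. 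Since $\mathcal{M}$ is bounded on $L^{p}(\X)$ for $p>1$, we have $\mathcal{M}\varphi\in L^{p}(\X)$ with $\|\mathcal{M}\varphi\|_{L^{p}(\X)}\lesssim\|\varphi\|_{L^{p}(\X)}$, so applying part (i) to the function $\mathscr{I}_{2\widetilde{\mathbf{s}}}(\mathcal{M}\varphi)$ produces (ii).

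The hard part will be, as in the prior proposition, confirming that the external inputs survive the passage to the kernel $I_{2\widetilde{\mathbf{s}}}$: one must check that this kernel meets the structural hypotheses of the maximum principle in \cite{AM} and of Johnson's inequality in \cite{joh} on the stratified group $\X$. Because $\widetilde{\mathbf{s}}=s\sigma\in(0,1)$ behaves qualitatively identically to $\mathbf{s}=s\alpha\in(0,1)$, and because the required decay and Hardy--Littlewood--Sobolev properties are already recorded in Lemma \ref{upper} and Proposition \ref{HLS}, no new estimate is needed and the verification is essentially mechanical; hence the phrase ``Similarly to the proof of Proposition \ref{strong}'' is justified.
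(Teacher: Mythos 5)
Your proposal is correct and follows exactly the route the paper intends: the paper's own proof consists solely of the remark ``Similarly to the proof of Proposition \ref{strong}'', and you have faithfully carried out that adaptation, substituting $\widetilde{\mathbf{s}}=s\sigma$ for $\mathbf{s}=s\alpha$ and invoking the same inputs (Proposition \ref{relation}, Proposition \ref{HLS}, Lemma \ref{upper}, the maximum principle of \cite{AM}, the argument of \cite{M1985}, and Johnson's inequality \cite{joh}) in the corresponding range $p\in(1,\mathcal{Q}/2\widetilde{\mathbf{s}})$.
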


\section{$L^{q}(\X\times\mathbb{R}_{+},\mu)$-extension of $\mathcal{\dot{W}}^{2\Theta,p}(\X)$}\label{sec-4}
According to  Propositions \ref{pro-frac} \&\ \ref{pro-1}, it can be
observed  that the kernels of $H_{\alpha,t^{2\alpha}}$ and
$P_{\sigma,t}$ have the following relation:
$$K_{\sigma,t^{2\sigma}}(\cdot)\thicksim P_{\sigma,t}(\cdot),$$
Therefore,  the proofs of the main results for
$H_{\alpha,t^{2\alpha}}$ and $P_{\sigma,t}$  are analogous  in this
section, thus  we shall present  the relevant results for
$P_{\sigma,t}$ only.
\begin{lemma}\label{lem-4.3-1} Let $(s,\sigma)\in(0,1)\times(0,1)$
and $\widetilde{\mathbf{s}}=s\sigma$.  Let
$f\in\mathcal{\dot{W}}^{2\widetilde{\mathbf{s}},p}(\X)$  and $\mu$
be a nonnegative measure on $\X\times\mathbb{R}_{+}$. For any
$\lambda\in\mathbb{R}_{+}$, set
$$E_{\lambda}(f):=\Big\{(g',t)\in\X\times\mathbb{R}_{+}:\ |P_{\sigma,t}f(g')|>\lambda\Big\}$$
and
$$O_{\lambda}(f):=\Big\{g\in\X:\ \sup_{t\in\mathbb{R}_{+}}\sup_{d(g,g')<t}|P_{\sigma,t}f(g')|>\lambda\Big\}.$$
Then the following four statements are true.
\item{\rm (i)} For any $(g_{0},r)\in\X\times\mathbb{R}_{+}$,
$$\mu(E_{\lambda}(f)\cap T(B(g_{0},r)))\leq \mu(T(O_{\lambda}(f)\cap B(g_{0},r))).$$
\item{\rm (ii)} For any $(g_{0},r)\in\X\times\mathbb{R}_{+}$,
$$Cap_{\mathcal{\dot{W}}^{2\widetilde{\mathbf{s}},p}}(O_{\lambda}(f)\cap B(g_{0},r))\geq c_{p}(\mu; \mu(T(O_{\lambda}(f)\cap B(g_{0},r)))).$$
\item{\rm (iii)} There exists a constant $C>0$ such that for any locally integrable function $f: \X\rightarrow\mathbb{R}$ and $g\in\X$,
$$\sup_{t\in\mathbb{R}_{+}}\sup_{d(g,g')<t}|P_{\sigma,t}f(g')|\leq C\mathcal{M}f(g).$$
\item{\rm (iv)} For any bounded open set $O\subset \text{int}(\{g\in\X:|f(g)|\geq 1\})$
$$\inf_{(g,t)\in T(O)}P_{\sigma,t}(|f|)(g)\geq C>0.$$
\end{lemma}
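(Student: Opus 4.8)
The plan is to exploit the defining property of the tent together with the near-diagonal lower bound on the Poisson kernel. First I would observe that if $(g,t)\in T(O)$, then by definition $B(g,t)\subseteq O\subseteq\text{int}(\{g\in\X:|f(g)|\geq 1\})$, so that $|f(g')|\geq 1$ for every $g'\in B(g,t)$. Writing $P_{\sigma,t}(|f|)(g)=\int_{\X}|f(g')|P_{\sigma,t}(g'^{-1}g)\,dg'$ via Lemma \ref{poisson-1} (i) and discarding the contribution from $\X\setminus B(g,t)$ (the integrand is nonnegative), I reduce the claim to a lower bound for $\int_{B(g,t)}P_{\sigma,t}(g'^{-1}g)\,dg'$.

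Next I would insert the kernel estimate from Proposition \ref{pro-1} (i). For $g'\in B(g,t)$ one has $d(e,g'^{-1}g)=d(g,g')<t$, whence $t^{2}+d(e,g'^{-1}g)^{2}<2t^{2}$ and therefore
$$P_{\sigma,t}(g'^{-1}g)\gtrsim\frac{t^{2\sigma}}{(t^{2}+d(e,g'^{-1}g)^{2})^{\Q/2+\sigma}}\gtrsim\frac{t^{2\sigma}}{(2t^{2})^{\Q/2+\sigma}}\simeq t^{-\Q}.$$
Integrating this uniform lower bound over $B(g,t)$ and recalling $|B(g,t)|=b't^{\Q}$ gives
$$\int_{B(g,t)}P_{\sigma,t}(g'^{-1}g)\,dg'\gtrsim t^{-\Q}|B(g,t)|=b't^{-\Q}t^{\Q}=C>0,$$
with $C$ depending only on $\Q$, $\sigma$ and the structural constant $b'$, but not on the particular pair $(g,t)$.

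Combining the two displays yields $P_{\sigma,t}(|f|)(g)\geq C$ for every $(g,t)\in T(O)$, and taking the infimum over $T(O)$ produces the desired bound. The computation is genuinely uniform in $(g,t)$ because the Poisson kernel is scaling-critical at the diagonal scale: the factor $t^{-\Q}$ from the kernel cancels the factor $t^{\Q}$ from the volume of $B(g,t)$, so no dependence on $t$ survives. In that sense there is no real analytic obstacle; the only point requiring care is matching the radius of the tent ball with the scale $t$ of the kernel, so that the near-diagonal lower bound $P_{\sigma,t}\gtrsim t^{-\Q}$ applies throughout $B(g,t)$ --- which is exactly what the definition of $T(O)$ guarantees, and it is here that the inclusion $O\subset\text{int}(\{|f|\geq1\})$ is used to force $|f|\geq1$ on that same ball.
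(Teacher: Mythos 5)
Your argument for part (iv) is correct and is essentially the paper's own proof: restrict the integral to $B(g,t)$ (legitimate since the integrand is nonnegative and $B(g,t)\subseteq O$ by the definition of $T(O)$, so $|f|\geq 1$ there), invoke the lower bound $P_{\sigma,t}(g'^{-1}g)\gtrsim t^{-\Q}$ from Proposition \ref{pro-1} (i) on that ball, and observe that the volume $|B(g,t)|\sim t^{\Q}$ cancels the $t^{-\Q}$, leaving a constant independent of $(g,t)$. That part needs no correction.

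The genuine gap is that the statement has four assertions and you have proved only one of them. Parts (i), (ii) and (iii) are not addressed at all, and none of them is a triviality that can be waved away. For (i) you must first show that $O_{\lambda}(f)$ is \emph{open} -- the paper gets this from the lower semicontinuity of $g\mapsto\sup_{t}\sup_{d(g,g')<t}|P_{\sigma,t}f(g')|$, which in turn rests on the spatial continuity of $P_{\sigma,t}f$ established in Remark \ref{333} (ii) -- and then verify the inclusion $E_{\lambda}(f)\subseteq T(O_{\lambda}(f))$ (if $|P_{\sigma,t}f(g')|>\lambda$ then every $g$ with $d(g,g')<t$ lies in $O_{\lambda}(f)$, so $B(g',t)\subseteq O_{\lambda}(f)$) together with the tent identity $T(U_{1})\cap T(U_{2})=T(U_{1}\cap U_{2})$ for open sets, which converts $\mu(T(O_{\lambda}(f))\cap T(B(g_{0},r)))$ into $\mu(T(O_{\lambda}(f)\cap B(g_{0},r)))$. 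Part (ii) then follows from the definition of $c_{p}(\mu;\cdot)$ as an infimum over bounded open sets, but only once the openness of $O_{\lambda}(f)\cap B(g_{0},r)$ from (i) is in hand. Part (iii) is the pointwise domination of the Poisson maximal function by the Hardy--Littlewood maximal function; it requires the kernel \emph{upper} bound from Proposition \ref{pro-1} (i) and a dyadic annular decomposition of $\X$ around $g$, which is a different (and complementary) use of the kernel estimates from the near-diagonal lower bound you exploit in (iv). As submitted, the proposal proves Lemma \ref{lem-4.3-1} (iv) only.
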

\begin{proof}
(i) By Remark \ref{333} (ii), we know that
$$g\rightarrow \sup_{t\in\mathbb{R}_{+}}\sup_{d(g,g')<t}|P_{\sigma,t}f(g')|$$
is low semicontinuous. Thus $O_{\lambda}(f)$ is an open set of $\X$. Via observing
\begin{equation*}
  \begin{cases}
  E_{\lambda}(f)\subset T(O_{\lambda}(f)), \\
  T(U_{1}\cup U_{2})=T(U_{1})\cap T(U_{2}) \ \ \forall\ \text{open sets}\ U_{1},\ U_{2}\subset \X,
  \end{cases}
\end{equation*}
we consequently obtain
$$\mu(E_{\lambda}(f)\cap T(B(g_{0},r)))\leq \mu(T(O_{\lambda}(f))\cap T(B(g_{0},r)))=\mu(T(O_{\lambda}(f)\cap B(g_{0},r))).$$

(ii) It can be proved by use of the definition of $c_{p}(\mu;\cdot)$ and
the fact that $O_{\lambda}(f)$ is open.

(iii) From Proposition \ref{pro-1}, we know that
$$\sup_{t\in\mathbb{R}_{+}}\sup_{d(g,g')<t}|P_{\sigma,t}f(g')|\lesssim
\sup_{t\in\mathbb{R}_{+}}\sup_{d(g,g')<t}\int_{\X}\frac{t^{2\sigma}}{(t+d(g,g'))^{\Q+2\sigma}}f(g)dg\lesssim
\mathcal{M}f(g),$$ where we have used the method of ring
decomposition in the last inequality.

(iv) For any $(g,t)\in T(O)$, we use
$$O\subset \{g\in \X:|f(g)|\geq 1\}$$
to derive $B(g,t)\subset O$ and $|f|\geq 1$ on $B(g,t)$. By Proposition \ref{pro-1}, for any $g,g'\in \X$ and $d(g,g')<t$,
$$P_{\sigma,t}(g'^{-1}g)\gtrsim t^{-\Q}.$$
Then
$$P_{\sigma,t}(|f|)(g)\geq \int_{B(g,t)}P_{\sigma,t}(g'^{-1}g)|f(g')|dg'\geq C>0.$$

\end{proof}

\subsection{Proof of Theorem \ref{thm1}}
The following result covers Theorem \ref{thm1}.
Under the condition that $q<p$, we have the following result.
\begin{theorem}\label{extension-1}
Let $(s,\sigma)\in(0,1)\times(0,1)$,
$p\in(1,\mathcal{Q}/2\widetilde{\mathbf{s}})$,
$\widetilde{\mathbf{s}}=s\sigma$ and $q\in(0,p)$. Let $\mu$ be a
nonnegative measure on $\X\times\mathbb{R}_{+}$.  The following
statements are equivalent:
\item{\rm (i)}
$$\Big(\int_{\X\times\mathbb{R}_{+}}\Big|P_{\sigma,t}u(g)\Big|^{q}d\mu(g,t)\Big)^{1/q}\lesssim \|u\|_{\mathcal{\dot{W}}^{2\widetilde{\mathbf{s}},p}(\X)}\ \ \ \ \forall\ u\in \mathcal{\dot{W}}^{2\widetilde{\mathbf{s}},p}(\X).$$

\item{\rm (ii)} The measure $\mu$ satisfies
$$I_{p,q}(\mu):=\int^{\infty}_{0}\Big(\frac{t^{p/q}}{c_{p}(\mu;t)}\Big)^{\mathcal{Q}/(p-q)}\frac{dt}{t}<\infty,$$
where   $c_{p}(\mu;t)$ is defined  in (\ref{1.4}).
\end{theorem}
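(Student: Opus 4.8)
The plan is to establish the equivalence (i)$\Leftrightarrow$(ii) by passing through a \emph{capacitary strong-type inequality} and then reducing matters to a one-dimensional Hardy-type inequality for monotone functions. Throughout I work with $P_{\sigma,t}$ and $\Theta=\widetilde{\mathbf{s}}$, the complementary case being identical in view of the kernel comparison recorded at the beginning of this section. The ingredients I rely on are the strong-type capacitary inequality (Proposition \ref{strong-1}), the four structural facts collected in Lemma \ref{lem-4.3-1}, and the capacity comparison of Proposition \ref{relation}, which lets me pass freely between $Cap_{\mathcal{\dot{W}}^{2\widetilde{\mathbf{s}},p}}(\cdot)$ and the Riesz capacity.

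\textbf{Sufficiency (ii)$\Rightarrow$(i).} First I would linearize by the layer-cake formula, writing $\int_{\X\times\mathbb{R}_{+}}|P_{\sigma,t}u|^{q}\,d\mu = q\int_{0}^{\infty}\lambda^{q-1}\mu(E_{\lambda}(u))\,d\lambda$. By Lemma \ref{lem-4.3-1}(i) the level set $E_{\lambda}(u)$ lies inside the tent $T(O_{\lambda}(u))$, so that $\mu(E_{\lambda}(u))\le m(\lambda):=\mu(T(O_{\lambda}(u)))$, a non-increasing function of $\lambda$. Since $O_{\lambda}(u)$ is open, the definition of $c_{p}(\mu;\cdot)$ gives $c_{p}(\mu;m(\lambda))\le Cap_{\mathcal{\dot{W}}^{2\widetilde{\mathbf{s}},p}}(O_{\lambda}(u))$, while Lemma \ref{lem-4.3-1}(iii) yields $O_{\lambda}(u)\subset\{g:\mathcal{M}u(g)\gtrsim\lambda\}$, so Proposition \ref{strong-1} furnishes the capacitary budget $\int_{0}^{\infty}\lambda^{p-1}Cap_{\mathcal{\dot{W}}^{2\widetilde{\mathbf{s}},p}}(O_{\lambda}(u))\,d\lambda\lesssim\|u\|_{\mathcal{\dot{W}}^{2\widetilde{\mathbf{s}},p}(\X)}^{p}$. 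It then remains to combine these: substituting $t=m(\lambda)$, equivalently using the rearrangement $\ell(t)=\sup\{\lambda:m(\lambda)>t\}$, converts $\int_{0}^{\infty}\lambda^{q-1}m(\lambda)\,d\lambda$ into $\tfrac1q\int_{0}^{\infty}\ell(t)^{q}\,dt$ and, via Fubini, the budget into $\tfrac1p\int_{0}^{\infty}\ell(t)^{p}\,d\,c_{p}(\mu;t)$. A weighted Hölder inequality with exponents $p/q$ and $p/(p-q)$ is then designed to split off the factor $I_{p,q}(\mu)^{(p-q)/p}$; carrying this out requires exploiting the monotonicity of $c_{p}(\mu;\cdot)$ (for instance through a dyadic decomposition of the range of $m$) rather than a single application of Hölder, and this is the crux of the direction.

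\textbf{Necessity (i)$\Rightarrow$(ii).} Here I would test (i) on a carefully assembled competitor. For each $j\in\mathbb{Z}$ choose, by the definition of $c_{p}(\mu;\cdot)$ and Proposition \ref{relation}, a bounded open set $O_{j}$ with $\mu(T(O_{j}))\ge 2^{j}$ and $Cap_{\mathcal{\dot{W}}^{2\widetilde{\mathbf{s}},p}}(O_{j})\le 2\,c_{p}(\mu;2^{j})$ (which, by subadditivity of the capacity, may be taken nested), together with a near-extremal $u_{j}\in C_{c}^{\infty}(\X)$ satisfying $u_{j}\ge 1_{O_{j}}$ and $\|u_{j}\|_{\mathcal{\dot{W}}^{2\widetilde{\mathbf{s}},p}(\X)}^{p}\lesssim c_{p}(\mu;2^{j})$. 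Setting $u=\sum_{j}a_{j}u_{j}$ with nonnegative coefficients, Lemma \ref{lem-4.3-1}(iv) gives $P_{\sigma,t}u\gtrsim a_{k}$ on $T(O_{k})$, so that $\int|P_{\sigma,t}u|^{q}\,d\mu\gtrsim\sum_{k}a_{k}^{q}\,2^{k}$ after an Abel summation over the telescoping tents, whereas the strong-type capacitary inequality controls $\|u\|_{\mathcal{\dot{W}}^{2\widetilde{\mathbf{s}},p}(\X)}^{p}\lesssim\sum_{j}a_{j}^{p}\,c_{p}(\mu;2^{j})$. Feeding these two bounds into (i) and optimizing over $(a_{j})$ by the converse of Hölder's inequality ($\ell^{p}$--$\ell^{q}$ duality, available precisely because $q<p$) yields $\sum_{j}\big(2^{jp/q}/c_{p}(\mu;2^{j})\big)^{q/(p-q)}<\infty$, which is comparable to $I_{p,q}(\mu)$ by the monotonicity of $c_{p}(\mu;\cdot)$.

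\textbf{Main obstacle.} The genuinely delicate point is the final one-dimensional step in the sufficiency argument: the two integral conditions cannot be decoupled by a single Hölder inequality, since the naive split diverges as $\lambda\to0$, and one must first express \emph{both} the target integral and the capacitary budget through the rearrangement $\ell(t)$ before exploiting the monotonicity of $c_{p}(\mu;\cdot)$. On the necessity side the analogous difficulty is justifying the near-orthogonality estimate $\|u\|_{\mathcal{\dot{W}}^{2\widetilde{\mathbf{s}},p}(\X)}^{p}\lesssim\sum_{j}a_{j}^{p}\,c_{p}(\mu;2^{j})$ for the superposition of the building blocks $a_{j}u_{j}$, which is exactly where the strong-type capacitary inequality of Proposition \ref{strong-1} is indispensable.
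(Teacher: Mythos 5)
Your sufficiency direction is, in outline, exactly the paper's argument: layer cake, $E_{\lambda}(u)\subset T(O_{\lambda}(u))$ from Lemma \ref{lem-4.3-1}(i), the bound $c_{p}(\mu;\mu(T(O_{\lambda})))\le Cap_{\mathcal{\dot{W}}^{2\widetilde{\mathbf{s}},p}}(O_{\lambda})$ from (ii) of that lemma, the inclusion $O_{\lambda}\subset\{\mathcal{M}u\gtrsim\lambda\}$ from (iii), the strong capacitary inequality as budget, and then a dyadic decomposition, H\"older with exponents $p/q$ and $p/(p-q)$, and a telescoping sum exploiting $(a-b)^{\kappa}\le a^{\kappa}-b^{\kappa}$ and the monotonicity of $c_{p}(\mu;\cdot)$ to produce $I_{p,q}(\mu)^{(p-q)/p}$. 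The paper carries this out with $\mu_{i,r}:=\mu(T(O_{i}\cap B_{r}))$ on a fixed ball $B_{r}$ and lets $r\to\infty$. You correctly identify every ingredient but explicitly stop at ``the crux'' without performing the decisive splitting; as written this direction is a correct plan rather than a proof.

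The necessity direction contains a genuine error. You build $u=\sum_{j}a_{j}u_{j}$ and assert that ``the strong-type capacitary inequality controls $\|u\|_{\mathcal{\dot{W}}^{2\widetilde{\mathbf{s}},p}(\X)}^{p}\lesssim\sum_{j}a_{j}^{p}c_{p}(\mu;2^{j})$.'' Proposition \ref{strong-1} cannot deliver this: it bounds a capacitary integral \emph{by} the Sobolev norm, which is the opposite direction, and for a genuine sum the triangle inequality only yields the $\ell^{1}$-type bound $\|u\|\le\sum_{j}a_{j}\|u_{j}\|$, which is too weak when $p>1$. The paper's mechanism is different and is the essential point you are missing: via Proposition \ref{relation} one replaces each near-extremal $u_{j}$ by a nonnegative density $f_{j}\in L^{p}(\X)$ with $\mathscr{I}_{2\widetilde{\mathbf{s}}}f_{j}\ge 1$ on $U_{j}$ and $\|f_{j}\|_{L^{p}}^{p}\lesssim c_{p}(\mu;2^{j})$, and then forms the pointwise \emph{supremum} $f_{i,k}=\sup_{i\le j\le k}\lambda_{j}f_{j}$ rather than a sum. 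For nonnegative functions $\|f_{i,k}\|_{L^{p}}^{p}\le\sum_{j}\lambda_{j}^{p}\|f_{j}\|_{L^{p}}^{p}$ is immediate, while positivity of the Riesz kernel still gives $\mathscr{I}_{2\widetilde{\mathbf{s}}}f_{i,k}\ge\lambda_{j}$ on $U_{j}$, and Lemma \ref{lem-4.3-1}(iv) plus a nonincreasing-rearrangement computation then yields the lower bound $\sum_{j}2^{j}\lambda_{j}^{q}$ for the $\mu$-integral. With the extremal choice $\lambda_{j}=(2^{j}/c_{p}(\mu;2^{j}))^{1/(p-q)}$ both sides collapse to the same dyadic sum $S$ and the inequality $S\lesssim S^{q/p}$ forces $S<\infty$; your $\ell^{p}$--$\ell^{q}$ duality step is equivalent to this and is fine, but it rests on the norm bound that your construction does not justify. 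Your side remark that the $O_{j}$ ``may be taken nested by subadditivity'' is also unjustified, since the capacity of the union is only controlled by the sum of the individual capacities, not by the largest one.
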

\begin{proof}
(i)$\Rightarrow$(ii). If (i) holds, then
$$C_{p,q}(\mu):=\sup_{u\in C_{c}^{\infty}(\X)\ \&\ \|u\|_{\mathcal{\dot{W}}^{2\widetilde{\mathbf{s}},p}(\X)}>0}\frac{1}{\|u\|_{\mathcal{\dot{W}}^{2\widetilde{\mathbf{s}},p}(\X)}}
\Big(\int_{\X\times\mathbb{R}_{+}}\Big|P_{\sigma,t}u(g)\Big|^{q}d\mu(g,t)\Big)^{1/q}<\infty.$$
For each $u\in C_{c}^{\infty}(\X)$ with
$\|u\|_{\mathcal{\dot{W}}^{2\widetilde{\mathbf{s}},p}(\X)}>0$, it
holds
$$\Big(\int_{\X\times\mathbb{R}_{+}}\Big|P_{\sigma,t}u(g)\Big|^{q}d\mu(g,t)\Big)^{1/q}\leq C_{p,q}(\mu)\|u\|_{\mathcal{\dot{W}}^{2\widetilde{\mathbf{s}},p}(\X)},$$
which indicates that
$$\sup_{\lambda>0}\lambda\Big(\mu(E_{\lambda}(u))\Big)^{1/q}\lesssim C_{p,q}(\mu)\|u\|_{\mathcal{\dot{W}}^{2\widetilde{\mathbf{s}},p}(\X)}.$$
This, together with (iv) of Lemma \ref{lem-4.3-1}, implies that for fixed $u\in C_{c}^{\infty}(\X)$ and any bounded open set $U\subseteq \{g\in\X: u(g)\geq 1\}^{o}$,
$$\mu(T(U))\lesssim C_{p,q}^{q}(\mu)\|u\|_{\mathcal{\dot{W}}^{2\widetilde{\mathbf{s}},p}(\X)}^{q}.$$
The definition of $c_{p}(\mu;t)$ implies that $c_{p}(\mu;t)>0$.

For any $j\in\mathbb{N}$, by the definition of $c_{p}(\mu;t)$, there exists a bounded open set $U_{j}$ of $\X$ such that
$$\mu(T(U_{j}))\geq 2^{j}\ \ \&\ \ \ Cap_{\mathcal{\dot{W}}^{2\widetilde{\mathbf{s}},p}}(U_{j})\leq 2c_{p}(\mu;2^{j}).$$
By Proposition \ref{relation}, we know that for any $E\subset\X$,
$$Cap_{\mathcal{\dot{W}}^{2\widetilde{\mathbf{s}},p}}(E)\sim\inf\Big\{\|f\|_{L^{p}(\X)}^{p}: 0\leq f\in C_{c}^{\infty}(\X),\ E\subseteq \{g\in\X: \mathscr{I}_{2\widetilde{\mathbf{s}}}f(g)\geq 1\}^{o}\Big\}.$$
Thus there exists $f_{j}\in C_{c}^{\infty}(\X)$ such that
$$ \left\{\begin{aligned}
&\mathscr{I}_{2\widetilde{\mathbf{s}}}f_{j}(g)\geq 1,\ g\in U_{j};\\
&\|f_{j}\|_{L^{p}(\X)}^{p}\leq 2Cap_{\mathcal{\dot{W}}^{2\widetilde{\mathbf{s}},p}}(U_{j})\leq 4c_{p}(\mu;2^{j}).
\end{aligned}\right.$$
For the integers $i,k$ with $i<k$, define
$$f_{i,k}:=\sup_{i\leq j\leq k}\Big(\frac{2^{j}}{c_{p}(\mu;2^{j})}\Big)^{1/(p-q)}f_{j}.$$
Then $f_{i,k}\in L^{p}(\X)$ with
$$\|f_{i,k}\|_{L^{p}(\X)}^{p}\lesssim \sum_{j=i}^{k}\Big(\frac{2^{j}}{c_{p}(\mu;2^{j})}\Big)^{p/(p-q)}c_{p}(\mu;2^{j}).$$
It is easy to see that for $i\leq j\leq k$, if $g\in U_{j}$, then
$$\mathscr{I}_{2\widetilde{\mathbf{s}}}f_{i,k}(g)\geq\Big(\frac{2^{j}}{c_{p}(\mu;2^{j})}\Big)^{1/(p-q)}.$$
Set
$$u_{i,k}(g,t):=P_{\sigma,t}(\mathscr{I}_{2\widetilde{\mathbf{s}}}f_{i,k})(g).$$
We can deduce from (iv) of Lemma \ref{lem-4.3-1} that there exists a constant $M$ such that for $(g,t)\in T(U_{j})$,
$$|u_{i,k}(g,t)|\geq \Big(\frac{2^{j}}{c_{p}(\mu;2^{j})}\Big)^{1/(p-q)}M.$$
Thus, with $\lambda=\Big(\frac{2^{j}}{c_{p}(\mu;2^{j})}\Big)^{1/(p-q)}\frac{M}{2}$,
$$2^{j}\leq \mu(T(U_{j}))\leq\mu(E_{\lambda}(\mathscr{I}_{2\widetilde{\mathbf{s}}}f_{i,k}(g))).$$

This indicates that
\begin{eqnarray*}
% \nonumber to remove numbering (before each equation)
&&(C_{p,q}(\mu)\|f_{i,k}\|_{L^{p}(\X)})^{q}\gtrsim \int_{\X\times\mathbb{R}_{+}}\Big|u_{i,k}(g,t)\Big|^{q}d\mu(g,t)\\
   &&= \int^{\infty}_{0}\Big(\inf \{\epsilon>0:\mu(\{(g,t)\in\X\times\mathbb{R}_{+}:\Big|P_{\sigma,t}(\mathscr{I}_{2\widetilde{\mathbf{s}}}f_{i,k})(g)\Big|>\epsilon\})\leq\lambda\}\Big)^{q}d\lambda  \\
  &&\geq \sum_{j=i}^{k}2^{j}\Big(\inf \{\epsilon>0:\mu(\{(g,t)\in\X\times\mathbb{R}_{+}:|P_{\sigma,t}(\mathscr{I}_{2\widetilde{\mathbf{s}}}f_{i,k})(g)|>\epsilon\})\leq 2^{j}\}\Big)^{q}\\
  &&\gtrsim \sum_{j=i}^{k}2^{j}\Big(\frac{2^{j}}{c_{p}(\mu;2^{j})}\Big)^{q/(p-q)}\\
  &&\gtrsim \Big(\sum_{j=i}^{k}\frac{2^{jp/(p-q)}}{c_{p}(\mu;2^{j})^{q/(p-q)}}\Big)^{(p-q)/p}\|f_{i,k}\|_{L^{p}(\X)}^{q}.
\end{eqnarray*}
Thus
$$\sum_{j=i}^{k}\frac{2^{jp/(p-q)}}{c_{p}(\mu;2^{j})^{q/(p-q)}}\lesssim (C_{p,q}(\mu))^{pq/(p-q)}.$$

(ii)$\Rightarrow$(i). Denote by  $B_{r}:=B(g_{0},r)$, where
$(g_{0},r)\in\X\times\mathbb{R}_{+}$. For any $j\in\mathbb{Z}$ and
$u\in C_{c}^{\infty}(\X)$, let
\begin{equation*}
  \begin{cases}
  E_{j}:=\big\{(g,t)\in\X\times\mathbb{R}_{+}:\ |P_{\sigma,t}u(g)|>2^{j}\big\};\\
  O_{j}:=\big\{g\in\X:\ \sup_{t\in\mathbb{R}_{+}}\sup_{d(g,g')<t}|P_{\sigma,t}u(g')|>2^{j}\big\}.
  \end{cases}
\end{equation*}
By Lemma \ref{lem-4.3-1} (i), we obtain
\begin{eqnarray*}
% \nonumber to remove numbering (before each equation)
  \int_{T(B_{r})}\Big|P_{\sigma,t}u(g)\Big|^{q}d\mu(g,t) &=&\int^{\infty}_{0}\mu(\{(g,t)\in T(B_{r}): |P_{\sigma,t}u(g)|>\lambda\})d\lambda^{q}  \\
   &=& \sum_{j\in\mathbb{Z}}\int^{2^{j+1}}_{2^{j}}\mu(\{(g,t)\in T(B_{r}):|P_{\sigma,t}u(g)|>\lambda\})d\lambda^{q}  \\
   &\leq& \sum_{j\in\mathbb{Z}}2^{(j+1)q}\mu(E_{j}\cap T(B_{r}))  \\
   &\leq& \sum_{j\in\mathbb{Z}}2^{(j+1)q}\mu(T(O_{j}\cap B_{r}))  \\
   &=& \sum_{j\in\mathbb{Z}}2^{(j+1)q}\sum_{i=j}^{\infty}\Big(\mu(T(O_{i}\cap B_{r}))-\mu(T(O_{i+1}\cap B_{r}))\Big) \\
   &=& \frac{2^{q}}{1-2^{-q}}\sum_{i\in\mathbb{Z}}2^{iq}\Big(\mu(T(O_{i}\cap B_{r}))-\mu(T(O_{i+1}\cap B_{r}))\Big).
\end{eqnarray*}
For the sake of notational simplicity and to facilitate clarity in
the subsequent discussion, let's define
\begin{equation*}
 \left\{ \begin{aligned}
  \mu_{i,r}&:=\mu(T(O_{i}\cap B_{r}))\ \ \ \forall\ i\in\mathbb{Z};\\
  S_{p,q,r}&:=\sum_{i\in\mathbb{Z}}\frac{(\mu_{i,r}-\mu_{i+1,r})^{p/(p-q)}}{(Cap_{\mathcal{\dot{W}}^{2\widetilde{\mathbf{s}},p}}(O_{i}\bigcap B_{r}))^{q/(p-q)}}.
  \end{aligned}\right.
\end{equation*}

Note that Lemma \ref{lem-4.3-1} (iii) implies
$$O_{i}\subset \{g\in\X: C\mathcal{M}u(g)>2^{i}\}\ \ \ \forall\ i\in\mathbb{Z}.$$
For $0<q<p$, by H\"{o}lder's inequality and Proposition
\ref{strong} (ii), it follows that
\begin{eqnarray*}
% \nonumber to remove numbering (before each equation)
  &&\sum_{i\in\mathbb{Z}}2^{iq}(\mu_{i,r}-\mu_{i+1,r})\\
  &&\leq \sum_{i\in\mathbb{Z}}2^{iq}(Cap_{\mathcal{\dot{W}}^{2\widetilde{\mathbf{s}},p}}(O_{i}))^{q/p}
  \frac{\mu_{i,r}-\mu_{i+1,r}}{(Cap_{\mathcal{\dot{W}}^{2\widetilde{\mathbf{s}},p}}(O_{i}\cap B_{r}))^{q/p}}  \\
   &&\leq \Big(S_{p,q,r}\Big)^{(p-q)/p}\Big(\sum_{i\in\mathbb{Z}}2^{ip}Cap_{\mathcal{\dot{W}}^{2\widetilde{\mathbf{s}},p}}(O_{i})\Big)^{q/p}  \\
   &&\leq \Big(S_{p,q,r}\Big)^{(p-q)/p}\Big(\frac{1}{1-2^{-p}}\sum_{i\in\mathbb{Z}}\int^{2^{i}}_{2^{i-1}}Cap_{\mathcal{\dot{W}}^{2\widetilde{\mathbf{s}},p}}(\{g\in \X: C\mathcal{M}u(g)>2^{i}\})d\lambda^{p}\Big)^{q/p}   \\
   &&\leq 2^{q}\Big(S_{p,q,r}\Big)^{(p-q)/p}\Big(\int^{\infty}_{0}Cap_{\mathcal{\dot{W}}^{2\widetilde{\mathbf{s}},p}}(\{g\in\X: C\mathcal{M}u(g)>\lambda\})d\lambda^{p}\Big)^{q/p}  \\
 &&\lesssim \Big(S_{p,q,r}\Big)^{(p-q)/p}\|u\|^{q}_{\mathcal{\dot{W}}^{2\widetilde{\mathbf{s}},p}(\X)}.
\end{eqnarray*}
Using Lemma \ref{lem-4.3-1} (ii), the elementary inequality
$$(a-b)^{\kappa}\leq a^{\kappa}-b^{\kappa}\ \ \forall\ \kappa\in(1,\infty)\ \&\ 0<b<a<\infty,$$
and the fact that $t\mapsto c_{p}(\mu;t)$ is increasing on
$\mathbb{R}_{+}$, we have
\begin{eqnarray*}
% \nonumber to remove numbering (before each equation)
  S_{p,q,r} &\leq&\sum_{i\in\mathbb{Z}}\frac{(\mu_{i,r})^{p/(p-q)}-(\mu_{i+1,r})^{p/(p-q)}}{(c_{p}(\mu;\mu_{i,r}))^{q/(p-q)}}  \\
   &=& \sum_{i\in\mathbb{Z}}\int_{\mu_{i+1,r}}^{\mu_{i,r}}\frac{1}{(c_{p}(\mu;\mu_{i,r}))^{q/(p-q)}}dt^{p/(p-q)} \\
   &\leq& \int^{\infty}_{0}\frac{1}{(c_{p}(\mu;t))^{q/(p-q)}}dt^{p/(p-q)}  \\
   &=&  I_{p,q}(\mu),
\end{eqnarray*}
whence
$$\int_{T(B_{r})}\Big|P_{\sigma,t}u(g)\Big|^{q}d\mu(g,t)\lesssim (I_{p,q}(\mu))^{(p-q)/p}\|u\|^{q}_{\mathcal{\dot{W}}^{2\widetilde{\mathbf{s}},p}(\X)}.$$
Letting $r\rightarrow\infty$ yields the desired inequality (i).
\end{proof}

For the case $q\geq p$, the equivalent characterization of the
extension result is given as follows.

\begin{theorem}\label{extension-2}
Let $(s,\sigma)\in(0,1)\times(0,1)$,
$p\in(1,\mathcal{Q}/2\widetilde{\mathbf{s}})$ with
$\widetilde{\mathbf{s}}=s\sigma$ and $q\in[p,\infty)$. Let $\mu$ be
a nonnegative measure on $\X\times\mathbb{R}_{+}$. Then the
following four statements are equivalent:
\item{\rm (i)}
$$\Big(\int_{\X\times\mathbb{R}_{+}}\Big|P_{\sigma,t}u(g)\Big|^{q}d\mu(g,t)\Big)^{1/q}\lesssim \|u\|_{\mathcal{\dot{W}}^{2\widetilde{\mathbf{s}},p}(\X)}\ \ \forall\ u\in \mathcal{\dot{W}}^{2\widetilde{\mathbf{s}},p}(\X).$$
\item{\rm (ii)}
$$\sup_{\lambda>0}\lambda(\mu(\{(g,t)\in\X\times\mathbb{R}_{+}:|P_{\sigma,t}u(g)|>\lambda\}))^{1/q}\lesssim \|u\|_{\mathcal{\dot{W}}^{2\widetilde{\mathbf{s}},p}(\X)}\ \ \forall\ u\in \mathcal{\dot{W}}^{2\widetilde{\mathbf{s}},p}(\X).$$

\item{\rm (iii)} The measure $\mu$ satisfies
$$\sup_{t>0}\frac{t^{1/q}}{(c_{p}(\mu;t))^{1/p}}<\infty.$$
\item{\rm (iv)} The measure $\mu$ satisfies
$$(\mu(T(O)))^{p/q}\lesssim Cap_{\mathcal{\dot{W}}^{2\widetilde{\mathbf{s}},p}}(O),$$
which holds for any bounded open set $O\subseteq \X.$
\end{theorem}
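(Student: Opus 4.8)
The plan is to establish the cycle $(i)\Rightarrow(ii)\Rightarrow(iv)\Rightarrow(i)$ together with the equivalence $(iii)\Leftrightarrow(iv)$, which yields the equivalence of all four statements. The implication $(i)\Rightarrow(ii)$ is immediate from Chebyshev's inequality: for every $\lambda>0$ one has $\lambda^{q}\mu(\{(g,t):|P_{\sigma,t}u(g)|>\lambda\})\leq\int_{\X\times\mathbb{R}_{+}}|P_{\sigma,t}u(g)|^{q}d\mu(g,t)$, and taking $q$-th roots and the supremum over $\lambda$ gives $(ii)$. The equivalence $(iii)\Leftrightarrow(iv)$ is a matter of unwinding the definition of $c_{p}(\mu;\cdot)$ in (\ref{1.4}): if $(iv)$ holds and $\mu(T(O))\geq t$, then $t^{p/q}\leq(\mu(T(O)))^{p/q}\lesssim Cap_{\mathcal{\dot{W}}^{2\widetilde{\mathbf{s}},p}}(O)$, and the infimum over competitors yields $t^{p/q}\lesssim c_{p}(\mu;t)$, i.e. $(iii)$; conversely, given a bounded open $O$, the choice $t=\mu(T(O))$ makes $O$ an admissible competitor, so $c_{p}(\mu;t)\leq Cap_{\mathcal{\dot{W}}^{2\widetilde{\mathbf{s}},p}}(O)$, and $(iii)$ forces $(\mu(T(O)))^{p/q}\lesssim c_{p}(\mu;t)\leq Cap_{\mathcal{\dot{W}}^{2\widetilde{\mathbf{s}},p}}(O)$.

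For $(ii)\Rightarrow(iv)$ I would fix a bounded open $O$ and invoke the Riesz-capacity reformulation from Proposition \ref{relation}, namely $Cap_{\mathcal{\dot{W}}^{2\widetilde{\mathbf{s}},p}}(O)\sim\inf\{\|f\|_{L^{p}(\X)}^{p}:0\leq f\in C_{c}^{\infty}(\X),\ O\subseteq\{\mathscr{I}_{2\widetilde{\mathbf{s}}}f\geq1\}^{o}\}$. For such $f$, set $u=\mathscr{I}_{2\widetilde{\mathbf{s}}}f\geq0$, so that $\|u\|_{\mathcal{\dot{W}}^{2\widetilde{\mathbf{s}},p}(\X)}=\|f\|_{L^{p}(\X)}$ by (\ref{eq-3.1-1}) and $O\subseteq\{u\geq1\}^{o}$. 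Lemma \ref{lem-4.3-1} $(iv)$ then provides a constant $C>0$ with $P_{\sigma,t}u(g)=P_{\sigma,t}(|u|)(g)\geq C$ on $T(O)$, so $T(O)\subseteq E_{\lambda}(u)$ for every $\lambda<C$, whence $\mu(T(O))\leq\mu(E_{\lambda}(u))$. Applying $(ii)$ and letting $\lambda\uparrow C$ gives $(\mu(T(O)))^{1/q}\lesssim\|u\|_{\mathcal{\dot{W}}^{2\widetilde{\mathbf{s}},p}(\X)}=\|f\|_{L^{p}(\X)}$, and the infimum over admissible $f$ yields $(\mu(T(O)))^{p/q}\lesssim Cap_{\mathcal{\dot{W}}^{2\widetilde{\mathbf{s}},p}}(O)$.

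The substantive implication is $(iv)\Rightarrow(i)$, which I would prove by a layer-cake decomposition played against the strong-type capacitary inequality. Fixing $g_{0}$ and writing $B_{r}=B(g_{0},r)$, the distribution-function identity gives $\int_{T(B_{r})}|P_{\sigma,t}u|^{q}d\mu=\int_{0}^{\infty}q\lambda^{q-1}\mu(E_{\lambda}(u)\cap T(B_{r}))\,d\lambda$. By Lemma \ref{lem-4.3-1} $(i)$ this is dominated by $\int_{0}^{\infty}q\lambda^{q-1}\mu(T(O_{\lambda}(u)\cap B_{r}))\,d\lambda$, and since $O_{\lambda}(u)\cap B_{r}$ is bounded open, hypothesis $(iv)$ combined with monotonicity of the capacity and Lemma \ref{lem-4.3-1} $(iii)$ (giving $O_{\lambda}(u)\subseteq\{C\mathcal{M}u>\lambda\}$) yields $\mu(T(O_{\lambda}(u)\cap B_{r}))\lesssim\big(Cap_{\mathcal{\dot{W}}^{2\widetilde{\mathbf{s}},p}}(\{\mathcal{M}u>\lambda/C\})\big)^{q/p}$. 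Setting $b(\lambda):=Cap_{\mathcal{\dot{W}}^{2\widetilde{\mathbf{s}},p}}(\{g:\mathcal{M}u(g)>\lambda\})$, which is nonincreasing, Proposition \ref{strong-1} $(ii)$ gives $\mathcal{B}:=\int_{0}^{\infty}p\lambda^{p-1}b(\lambda)\,d\lambda\lesssim\|u\|_{\mathcal{\dot{W}}^{2\widetilde{\mathbf{s}},p}(\X)}^{p}$. The key elementary estimate is that a nonincreasing $b$ satisfies $\lambda^{p}b(\lambda)\leq\mathcal{B}$, so that $b(\lambda)^{q/p}\leq\mathcal{B}^{(q-p)/p}\lambda^{-(q-p)}b(\lambda)$ because $q\geq p$; integrating against $q\lambda^{q-1}$ collapses the exponent to $\int_{0}^{\infty}q\lambda^{p-1}b(\lambda)\,d\lambda\lesssim\mathcal{B}$, producing the bound $\mathcal{B}^{q/p}\lesssim\|u\|_{\mathcal{\dot{W}}^{2\widetilde{\mathbf{s}},p}(\X)}^{q}$ after a harmless change of variables $\tau=\lambda/C$ that absorbs $C$. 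Letting $r\to\infty$ through monotone convergence (using $T(B_{r})\uparrow\X\times\mathbb{R}_{+}$) then yields $(i)$. The main obstacle is exactly this last implication: reconciling the ``$L^{p}$-level'' information carried by the capacitary inequality of Proposition \ref{strong-1} with the ``$L^{q}$-level'' quantity in $(i)$, which hinges on the monotonicity bound $\lambda^{p}b(\lambda)\leq\mathcal{B}$ and on the standing assumption $q\geq p$; the remaining transitions are bookkeeping through Lemma \ref{lem-4.3-1}.
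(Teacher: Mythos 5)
Your argument is correct, and it rests on exactly the same ingredients as the paper's proof (Lemma \ref{lem-4.3-1} (i), (iii), (iv), Proposition \ref{relation}, and the strong capacitary inequality of Proposition \ref{strong-1} (ii)); only two local choices differ. First, you close the cycle as $(ii)\Rightarrow(iv)$ directly via the Riesz-potential representative $u=\mathscr{I}_{2\widetilde{\mathbf{s}}}f$ and then prove $(iii)\Leftrightarrow(iv)$ by unwinding the definition of $c_{p}(\mu;\cdot)$, whereas the paper goes $(ii)\Rightarrow(iii)\Rightarrow(iv)$; both are routine once Lemma \ref{lem-4.3-1} (iv) supplies the lower bound $P_{\sigma,t}u\geq C$ on $T(O)$. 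Second, in the substantive implication $(iv)\Rightarrow(i)$ the paper dyadically decomposes the layer-cake integral and handles the exponent $q\geq p$ with the embedding $\big(\sum_{j}|a_{j}|\big)^{p/q}\leq\sum_{j}|a_{j}|^{p/q}$, while you stay with the continuous distribution-function integral and instead exploit the pointwise bound $\lambda^{p}b(\lambda)\leq\mathcal{B}$ for the nonincreasing function $b(\lambda)=Cap_{\mathcal{\dot{W}}^{2\widetilde{\mathbf{s}},p}}(\{\mathcal{M}u>\lambda\})$ to trade $b(\lambda)^{q/p}$ for $\mathcal{B}^{(q-p)/p}\lambda^{-(q-p)}b(\lambda)$; these are two equally standard devices for the same interpolation of exponents, and your version has the small advantage of avoiding the dyadic bookkeeping. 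The remaining steps (Chebyshev for $(i)\Rightarrow(ii)$, exhaustion by tents $T(B_{r})$ and $r\to\infty$) coincide with the paper.
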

\begin{proof}
It is obvious that (i) implies the weak-type estimate (ii).

Assuming that (ii) holds, we next prove (iii). Given any
$\epsilon\in\mathbb{R}_{+}$, by the definition of $c_{p}(\mu;t)$,
there exists a bounded open set $O\subset\X$ such that
$$\mu(T(O))\geq t\ \ \&\ \ Cap_{\mathcal{\dot{W}}^{2\widetilde{\mathbf{s}},p}}(O)\leq c_{p}(\mu;t)+\epsilon.$$
Furthermore, by the definition of
$Cap_{\mathcal{\dot{W}}^{2\widetilde{\mathbf{s}},p}}(O)$, we know
that for all $u\in
\mathcal{\dot{W}}^{2\widetilde{\mathbf{s}},p}(\X)$,
$$\|u\|_{\mathcal{\dot{W}}^{2\widetilde{\mathbf{s}},p}(\X)}^{p}\leq Cap_{\mathcal{\dot{W}}^{2\widetilde{\mathbf{s}},p}}(O)+\epsilon\leq c_{p}(\mu;t)+2\epsilon.$$
So, Lemma \ref{lem-4.3-1} (iv) yields
$$\Big|P_{\sigma,t}u(g)\Big|\geq C\ \ \forall\ (g,t)\in T(O),$$
thereby giving
\begin{eqnarray*}
% \nonumber to remove numbering (before each equation)
  t &\leq& \mu(T(O))  \\
   &\leq& \mu\Big(\{(g,t)\in \X\times\mathbb{R}_{+}:|P_{\sigma,t}u(g)|>C-\epsilon\}\Big) \\
   &\lesssim& \Big(\frac{\|u\|_{\mathcal{\dot{W}}^{2\widetilde{\mathbf{s}},p}(\X)}}{C-\epsilon}\Big)^{q} \\
   &\lesssim&
   \Big(\frac{(c_{p}(\mu;t)+2\epsilon)^{1/p}}{C-\epsilon}\Big)^{q}
\end{eqnarray*}  via (ii).
Letting $\epsilon\rightarrow 0$ implies
$$t^{1/q}\lesssim (c_{p}(\mu;t))^{1/p},$$
and hence (iii) holds.

Next, we need to show that (iii) implies (iv). For any bounded open
set $O\subset\X$ and $t_{0}:=\mu(T(O))$, we utilize the definition
of $c_{p}(\mu;t_{0})$ to derive
$$c_{p}(\mu;t_{0})\leq Cap_{\mathcal{\dot{W}}^{2\widetilde{\mathbf{s}},p}}(O),$$
thereby using (iii) to deduce
$$\frac{(\mu(T(O)))^{1/q}}{(Cap_{\mathcal{\dot{W}}^{2\widetilde{\mathbf{s}},p}}(O))^{1/p}}\leq\frac{t_{0}^{1/q}}{(c_{p}(\mu;t_{0}))^{1/p}}<\infty.$$
Thus, (iv) holds.

Finally, we prove that (iv) implies (i). Let $u\in \mathcal{\dot{W}}^{2\widetilde{\mathbf{s}},p}(\X)$. Then for any $\lambda\in\mathbb{R}_{+}$, let
\begin{equation*}
  \begin{cases}
  E_{\lambda}:=\{(g,t)\in\X\times\mathbb{R}_{+}:|P_{\sigma,t}u(g)|>\lambda\};\\
  O_{\lambda}:=\{g\in\X: \sup_{t\in\mathbb{R}_{+}}\sup_{d(g,g')<t}|P_{\sigma,t}u(g')|>\lambda\}.
  \end{cases}
\end{equation*}
For any $j\in\mathbb{Z}$, we simply write $E_{2^{j}}$ as $E_{j}$ and $O_{2^{j}}$ as $O_{j}$.

Fix a ball $B_{r}=B(g_{0},r)$, where $g_{0}\in\X$ and $r\in\mathbb{R}_{+}$. By Lemma \ref{lem-4.3-1} (i) and $p<q$, we have
\begin{eqnarray*}
% \nonumber to remove numbering (before each equation)
  \Big(\int_{T(B_{r})}|P_{\sigma,t}u(g)|^{q}d\mu(g,t)\Big)^{p/q} &=& \Big(\int^{\infty}_{0}\mu(E_{\lambda}\cap T(B_{r}))d\lambda^{q}\Big)^{p/q}  \\
   &\leq& \Big(\sum_{j\in\mathbb{Z}}2^{(j+1)q}\mu(E_{j}\cap T(B_{r}))\Big)^{p/q} \\
   &\leq& \Big(\sum_{j\in\mathbb{Z}}2^{(j+1)q}\mu(T(O_{j}\cap B_{r}))\Big)^{p/q} \\
   &\leq&  \sum_{j\in\mathbb{Z}}2^{(j+1)p}(\mu(T(O_{j}\cap B_{r})))^{p/q}
\end{eqnarray*}
due to the following inequality
\begin{equation}\label{4.4}
\Big(\sum_{j}|a_{j}|\Big)^{\kappa}\leq \sum_{j}|a_{j}|^{\kappa}\ \ \forall\ \{a_{j}\}_{j}\subset\mathbb{C}\ \&\ \kappa\in(0,1].
\end{equation}

According to the proof of Lemma \ref{lem-4.3-1} (i), we know that
every $O_{j}$ is an open set. Therefore, every $O_{j}\cap B_{r}$ is
a bounded open set. Upon applying (iv) and the monotonicity of
$Cap_{\mathcal{\dot{W}}^{2\widetilde{\mathbf{s}},p}}$, we derive
\begin{eqnarray*}
% \nonumber to remove numbering (before each equation)
  \sum_{j\in\mathbb{Z}}2^{(j+1)p}(\mu(T(O_{j}\cap B_{r})))^{p/q} &\lesssim& \sum_{j\in\mathbb{Z}}2^{(j+1)p}Cap_{\mathcal{\dot{W}}^{2\widetilde{\mathbf{s}},p}}(O_{j}\cap B_{r})  \\
   &\lesssim& \sum_{j\in\mathbb{Z}}\int^{2^{j}}_{2^{j-1}}Cap_{\mathcal{\dot{W}}^{2\widetilde{\mathbf{s}},p}}(O_{j})d\lambda^{p}  \\
   &\lesssim&  \int^{\infty}_{0}Cap_{\mathcal{\dot{W}}^{2\widetilde{\mathbf{s}},p}}(O_{\lambda})d\lambda^{p}.
\end{eqnarray*}
By Lemma \ref{lem-4.3-1} (iii), we obtain
\begin{equation}\label{eq5.2}O_{\lambda}\subset \{g\in \X: C\mathcal{M}u(g)>\lambda\}\
\ \forall\ \lambda\in\mathbb{R}_{+}.\end{equation} From
(\ref{eq5.2}) and Proposition \ref{strong-1} (ii), it follows that
$$\int^{\infty}_{0}Cap_{\mathcal{\dot{W}}^{2\widetilde{\mathbf{s}},p}}(O_{\lambda})d\lambda^{p}\leq \int^{\infty}_{0}Cap_{\mathcal{\dot{W}}^{2\widetilde{\mathbf{s}},p}}(\{g\in \X: C\mathcal{M}u(g)>\lambda\})d\lambda^{p}\lesssim \|u\|_{\mathcal{\dot{W}}^{2\widetilde{\mathbf{s}},p}(\X)}^{p}.$$
Therefore,
$$\Big(\int_{T(B_{r})}|P_{\sigma,t}u(g)|^{q}d\mu(g,t)\Big)^{p/q}\lesssim \|u\|_{\mathcal{\dot{W}}^{2\widetilde{\mathbf{s}},p}(\X)}^{p}.$$
Letting $r\rightarrow\infty$ yields (i).

\end{proof}

\subsection{Proof of Theorem \ref{thm2}}\label{sec-4.2}
The following result covers Theorem \ref{thm2}. As the  limiting
case $t\rightarrow 0$ of Theorems \ref{extension-1} \&\
\ref{extension-2}, we have the following trace theorem for the
fractional Sobolev space
$\mathcal{\dot{W}}^{2\widetilde{\mathbf{s}},p}(\X)$.

Under the condition that $q<p$, we have the following result.
\begin{theorem}\label{extension-3}
Let $(s,\sigma)\in(0,1)\times(0,1)$,
$p\in(1,\mathcal{Q}/2\widetilde{\mathbf{s}})$ with
$\widetilde{\mathbf{s}}=s\sigma$ and $q\in(0,p)$. Given a
nonnegative measure $\nu$ on $\X$, let
$$h_{p}(t):=\inf\{Cap_{\mathcal{\dot{W}}^{2\widetilde{\mathbf{s}},p}}(E): E\subset \X, \nu(E)\geq t\}\ \ \forall t\in \mathbb{R}_{+}.$$

The following statements are equivalent:
\item{\rm (i)}
$$\Big(\int_{\X}\big|u(g)\big|^{q}d\nu(g)\Big)^{1/q}\lesssim \|u\|_{\mathcal{\dot{W}}^{2\widetilde{\mathbf{s}},p}(\X)}\ \ \ \ \forall\ u\in \mathcal{\dot{W}}^{2\widetilde{\mathbf{s}},p}(\X).$$

\item{\rm (ii)} The measure $\nu$ satisfies
$$\|h_{p}\|:=\int^{\infty}_{0}\Big(\frac{d\xi^{{p}/{(p-q)}}}{(h_{p}(\xi))^{{q}/{(p-q)}}}\Big)^{(p-q)/pq}<\infty.$$

\end{theorem}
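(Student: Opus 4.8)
The plan is to prove the two implications separately, following closely the template of Theorem \ref{extension-1} but working directly on $\X$ with the measure $\nu$ and using the reproducing identity $u=\mathscr{I}_{2\widetilde{\mathbf{s}}}(\L^{\widetilde{\mathbf{s}}}u)$ from (\ref{eq-3.1-1}) in place of the family $\{P_{\sigma,t}\}_{t>0}$; throughout, the equivalence $Cap_{\mathcal{\dot{W}}^{2\widetilde{\mathbf{s}},p}}\sim Cap_{\widetilde{\mathbf{s}},p}$ of Proposition \ref{relation} and the strong-type capacitary inequality of Proposition \ref{strong-1} will be the two workhorses. By Proposition \ref{dense-1} I may reduce to $u\in C_c^\infty(\X)$ where convenient, so all the layer-cake manipulations below are justified.

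For $\mathrm{(ii)}\Rightarrow\mathrm{(i)}$, I would fix $u$ and set $A_j:=\{g\in\X:|u(g)|>2^j\}$ and $\nu_j:=\nu(A_j)$. A layer-cake formula followed by a telescoping rearrangement gives $\int_{\X}|u|^q\,d\nu\lesssim\sum_{j\in\mathbb{Z}}2^{jq}(\nu_j-\nu_{j+1})$. I would then split each summand and apply H\"older's inequality with the conjugate exponents $p/q$ and $p/(p-q)$, factoring out $(Cap_{\mathcal{\dot{W}}^{2\widetilde{\mathbf{s}},p}}(A_j))^{q/p}$. The first resulting factor $\sum_j2^{jp}Cap_{\mathcal{\dot{W}}^{2\widetilde{\mathbf{s}},p}}(A_j)$ is dominated, after comparison with a dyadic integral, by $\int_0^\infty Cap_{\mathcal{\dot{W}}^{2\widetilde{\mathbf{s}},p}}(\{|u|\geq\lambda\})\,d\lambda^p\lesssim\|u\|_{\mathcal{\dot{W}}^{2\widetilde{\mathbf{s}},p}(\X)}^p$ via Proposition \ref{strong-1}(i). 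For the second factor $\sum_j(\nu_j-\nu_{j+1})^{p/(p-q)}/(Cap_{\mathcal{\dot{W}}^{2\widetilde{\mathbf{s}},p}}(A_j))^{q/(p-q)}$, I would use the admissibility bound $h_p(\nu_j)\leq Cap_{\mathcal{\dot{W}}^{2\widetilde{\mathbf{s}},p}}(A_j)$, the elementary inequality $(a-b)^{\kappa}\leq a^{\kappa}-b^{\kappa}$ with $\kappa=p/(p-q)>1$, and the monotonicity of $h_p$ to bound the sum by $\int_0^\infty h_p(\xi)^{-q/(p-q)}\,d\xi^{p/(p-q)}=\|h_p\|^{pq/(p-q)}$. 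Assembling the two estimates yields $(\int_{\X}|u|^q\,d\nu)^{1/q}\lesssim\|u\|_{\mathcal{\dot{W}}^{2\widetilde{\mathbf{s}},p}(\X)}\|h_p\|$, which is $\mathrm{(i)}$.

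For $\mathrm{(i)}\Rightarrow\mathrm{(ii)}$, I first note that testing $\mathrm{(i)}$ on functions $u\geq 1_E$ forces $h_p(t)\gtrsim t^{p/q}>0$ for $t>0$, so the construction below is legitimate. For each $j\in\mathbb{Z}$ I would choose, using the definition of $h_p$ and the Riesz-capacity equivalence of Proposition \ref{relation}, a set $E_j$ with $\nu(E_j)\geq 2^j$ and a nonnegative $f_j\in C_c^\infty(\X)$ with $\mathscr{I}_{2\widetilde{\mathbf{s}}}f_j\geq 1$ on $E_j$ and $\|f_j\|_{L^p(\X)}^p\lesssim h_p(2^j)$. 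Putting $\lambda_j:=(2^j/h_p(2^j))^{1/(p-q)}$, $f_{i,k}:=\sup_{i\leq j\leq k}\lambda_jf_j$ and $u_{i,k}:=\mathscr{I}_{2\widetilde{\mathbf{s}}}f_{i,k}$, one has $u_{i,k}\geq\lambda_j$ on $E_j$, while $\|u_{i,k}\|_{\mathcal{\dot{W}}^{2\widetilde{\mathbf{s}},p}(\X)}^p=\|f_{i,k}\|_{L^p(\X)}^p\lesssim\sum_{j=i}^kA_j$ with $A_j:=2^{jp/(p-q)}h_p(2^j)^{-q/(p-q)}$. Representing $\int_{\X}|u_{i,k}|^q\,d\nu$ through the $\nu$-distribution function and bounding it below across the dyadic levels $2^j$ (using $\nu(\{|u_{i,k}|>\epsilon\})\geq 2^j$ for $\epsilon<\lambda_j$) gives $\int_{\X}|u_{i,k}|^q\,d\nu\gtrsim\sum_{j=i}^k2^j\lambda_j^q=\sum_{j=i}^kA_j$. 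Substituting both bounds into $\mathrm{(i)}$ produces $(\sum_jA_j)^{1/q}\lesssim(\sum_jA_j)^{1/p}$, hence $\sum_{j\in\mathbb{Z}}A_j<\infty$; a final monotonicity comparison of this dyadic sum with $\int_0^\infty h_p(\xi)^{-q/(p-q)}\,d\xi^{p/(p-q)}$ delivers $\|h_p\|<\infty$.

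The step I expect to be the main obstacle is the $\mathrm{(i)}\Rightarrow\mathrm{(ii)}$ construction: assembling the superposition $f_{i,k}$ so that the two appearances of $2^j/h_p(2^j)$ reconcile (they do, precisely because $\lambda_j^p\,h_p(2^j)=2^j\lambda_j^q=A_j$), and rigorously extracting the lower bound $\int_{\X}|u_{i,k}|^q\,d\nu\gtrsim\sum_j2^j\lambda_j^q$ from the $\nu$-distribution function so that the resulting self-improving inequality genuinely forces summability of $\{A_j\}$. The passage between dyadic sums and the integral defining $\|h_p\|$ in both directions is routine but must be handled carefully, since $h_p$ is only known to be non-decreasing and the exponent $p/(p-q)$ governs the measure $d\xi^{p/(p-q)}$.
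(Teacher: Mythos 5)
Your proposal is correct and follows essentially the same route as the paper: the direction $\mathrm{(ii)}\Rightarrow\mathrm{(i)}$ via the layer-cake decomposition, H\"older with exponents $p/q$ and $p/(p-q)$, the strong capacitary inequality of Proposition \ref{strong-1}(i), and the monotonicity of $h_p$; and the direction $\mathrm{(i)}\Rightarrow\mathrm{(ii)}$ via the superposition $f_{i,k}=\sup_{i\le j\le k}(2^j/h_p(2^j))^{1/(p-q)}f_j$ built from Proposition \ref{relation}, the distribution-function lower bound, and the resulting self-improving inequality. The step you flagged as the main obstacle is handled in the paper exactly as you outline it.
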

\begin{proof}
(i)$\Rightarrow$(ii). If (i) holds, then
$$C_{p,q}(\nu):=\sup_{u\in C_{c}^{\infty}(\X)\ \&\ \|u\|_{\mathcal{\dot{W}}^{2\widetilde{\mathbf{s}},p}(\X)}>0}\frac{1}
{\|u\|_{\mathcal{\dot{W}}^{2\widetilde{\mathbf{s}},p}(\X)}}
\Big(\int_{\X}\big|u(g)\big|^{q}d\nu(g)\Big)^{1/q}<\infty.$$ For
each $u\in C_{c}^{\infty}(\X)$ with
$\|u\|_{\mathcal{\dot{W}}^{2\widetilde{\mathbf{s}},p}(\X)}>0$, it
follows that
$$\Big(\int_{\X}|u(g)|^{q}d\nu(g)\Big)^{1/q}\leq C_{p,q}(\nu)\|u\|_{\mathcal{\dot{W}}^{2\widetilde{\mathbf{s}},p}(\X)},$$
which indicates that for every $E\subset\X$,
$$\nu(E)\lesssim C_{p,q}^{q}(\nu)\|u\|_{\mathcal{\dot{W}}^{2\widetilde{\mathbf{s}},p}(\X)}^{q}.$$
The definition of $h_{p}(t)$ implies that $h_{p}(t)>0$.

For any $j\in\mathbb{N}$, by the definition of $h_{p}(t)$, there exists $E_{j}\subset\X$ such that
$$\nu(E_{j})\geq 2^{j}\ \ \&\ \ \ Cap_{\mathcal{\dot{W}}^{2\widetilde{\mathbf{s}},p}}(E_{j})\leq 2h_{p}(2^{j}).$$
By Proposition \ref{relation}, we know that for any $E\subset\X$,
$$Cap_{\mathcal{\dot{W}}^{2\widetilde{\mathbf{s}},p}}(E)\sim\inf\{\|f\|_{L^{p}(\X)}^{p}: 0\leq f\in C_{c}^{\infty}(\X),\ E\subseteq \{g\in\X: \mathscr{I}_{2\widetilde{\mathbf{s}}}f(g)\geq 1\}^{o}\}.$$
Thus there exists $f_{j}\in C_{c}^{\infty}(\X)$ such that
$$ \left\{\begin{aligned}
&\mathscr{I}_{2\widetilde{\mathbf{s}}}f_{j}(g)\geq 1,\ g\in E_{j};\\
&\|f_{j}\|_{L^{p}(\X)}^{p}\leq 2Cap_{\mathcal{\dot{W}}^{2\widetilde{\mathbf{s}},p}}(E_{j})\leq 4h_{p}(2^{j}).
\end{aligned}\right.$$
For the integers $i,k$ with $i<k$, define
$$f_{i,k}=\sup_{i\leq j\leq k}\Big(\frac{2^{j}}{h_{p}(2^{j})}\Big)^{1/(p-q)}f_{j}.$$
Then $f_{i,k}\in L^{p}(\X)$ with
$$\|f_{i,k}\|_{L^{p}(\X)}^{p}\lesssim \sum_{j=i}^{k}\Big(\frac{2^{j}}{h_{p}(2^{j})}\Big)^{p/(p-q)}h_{p}(2^{j}).$$
It is easy to see that for $i\leq j\leq k$, if $g\in E_{j}$, then
$$\mathscr{I}_{2\widetilde{\mathbf{s}}}f_{i,k}(g)\geq\Big(\frac{2^{j}}{h_{p}(2^{j})}\Big)^{1/(p-q)}.$$

Thus
$$2^{j}\leq \nu(E_{j})\leq\nu(\{g\in\X: \mathscr{I}_{2\widetilde{\mathbf{s}}}f_{i,k}>\epsilon\})
$$ with
$\lambda=\Big(\frac{2^{j}}{c_{p}(\nu;2^{j})}\Big)^{1/(p-q)}\frac{1}{2}$.
Equivalently, if $\lambda>0$ such that $\nu(\{g\in\X:
\mathscr{I}_{2\widetilde{\mathbf{s}}}f_{i,k}>\epsilon\})\leq\lambda$,
then
$$\lambda>\frac{1}{2} \Big(\frac{2^{j}}{h_{p}(2^{j})}\Big)^{1/(p-q)}.$$

This indicates that
\begin{eqnarray*}
% \nonumber to remove numbering (before each equation)
&&(C_{p,q}(\nu)\|f_{i,k}\|_{L^{p}(\X)})^{q}\gtrsim \int_{\X}\Big|\mathscr{I}_{2\widetilde{\mathbf{s}}}f_{i,k}(g)\Big|^{q}d\nu(g)\\
   &&= \int^{\infty}_{0}\Big(\inf \{\epsilon>0:\nu(\{g\in\X:\Big|\mathscr{I}_{2\widetilde{\mathbf{s}}}f_{i,k}(g)\Big|>\epsilon\})\leq\lambda\}\Big)^{q}d\lambda  \\
  &&\geq \sum_{j=i}^{k}2^{j}\Big(\inf \{\epsilon>0:\nu(\{g\in\X:|\mathscr{I}_{2\widetilde{\mathbf{s}}}f_{i,k}(g)|>\epsilon\})\leq 2^{j}\}\Big)^{q}\\
  &&\gtrsim \sum_{j=i}^{k}2^{j}\Big(\frac{2^{j}}{h_{p}(2^{j})}\Big)^{q/(p-q)}\\
  &&\gtrsim \Big(\sum_{j=i}^{k}\frac{2^{jp/(p-q)}}{h_{p}(2^{j})^{q/(p-q)}}\Big)^{(p-q)/p}\|f_{i,k}\|_{L^{p}(\X)}^{q}.
\end{eqnarray*}
Thus
$$\sum_{j=i}^{k}\frac{2^{jp/(p-q)}}{h_{p}(2^{j})^{q/(p-q)}}\lesssim (C_{p,q}(\nu))^{pq/(p-q)}.$$

(ii)$\Rightarrow$(i).  For any $t\in\mathbb{R}_{+}$ and $u\in C_{c}^{\infty}(\X)$, let
\begin{equation*}
 E_{t}:=\{g\in\X: |u(g)|>t\}.
\end{equation*}
Then
\begin{eqnarray}\notag
% \nonumber to remove numbering (before each equation)
  \int_{X}\big|u(g)\big|^{q}d\nu(g) &=& \sum_{k\in\mathbb{Z}}\int^{2^{k+1}}_{2^{k}}\nu(E_{t})dt^{q}  \\ \notag
   &\lesssim& \sum_{k\in\mathbb{Z}}2^{kq}\nu(E_{2k})  \\ \label{5.7}
  &\lesssim& \sum_{i\in\mathbb{Z}}2^{iq}\Big(\nu(E_{2^{i}})-\nu(E_{2^{i+1}})\Big).
 \end{eqnarray}

Since $q<p$ and $\frac{1}{p/q}+\frac{1}{p/(p-q)}=1$, it follows from H\"{o}lder's inequality that
\begin{eqnarray}\label{5.8}
% \nonumber to remove numbering (before each equation)
  &&\nonumber\sum_{i\in\mathbb{Z}}2^{iq}\Big(\nu(E_{2^{i}})-\nu(E_{2^{i+1}})\Big)\\
 &&
  \leq
  \Big(\sum_{i\in\mathbb{Z}}2^{ip}Cap_{\mathcal{\dot{W}}^{2\widetilde{\mathbf{s}},p}}(E_{2^{i}})\Big)^{q/p}\Big(\sum_{i\in\mathbb{Z}}
  \Big(\frac{\nu(E_{2^{i}})-\nu(E_{2^{i+1}})}{(Cap_{\mathcal{\dot{W}}^{2\widetilde{\mathbf{s}},p}}(E_{2^{i}}))^{q/p}}\Big)^{p/(p-q)}\Big)^{(p-q)/p}.
  \end{eqnarray}
By the monotonicity of $Cap_{\mathcal{\dot{W}}^{2\widetilde{\mathbf{s}},p}}(\cdot)$ and Proposition \ref{strong-1} (i), we have
\begin{eqnarray}\notag
% \nonumber to remove numbering (before each equation)
 \sum_{i\in\mathbb{Z}}2^{ip}Cap_{\mathcal{\dot{W}}^{2\widetilde{\mathbf{s}},p}}(E_{2^{i}})  &\lesssim& \sum_{i\in\mathbb{Z}}\int_{2^{i-1}}^{2^{i}}Cap_{\mathcal{\dot{W}}^{2\widetilde{\mathbf{s}},p}}(E_{2^{i}})dt^{p}  \\ \notag
   &\lesssim&\int_{0}^{\infty}Cap_{\mathcal{\dot{W}}^{2\widetilde{\mathbf{s}},p}}(E_{t})dt^{p}  \\ \label{5.9}
   &\lesssim& \|u\|_{\mathcal{\dot{W}}^{2\widetilde{\mathbf{s}},p}(\X)}^{p}.
\end{eqnarray}
Using the definition of $h_{p}(\cdot)$, we know
$$h_{p}(\nu(E_{2^{i}}))\leq Cap_{\mathcal{\dot{W}}^{2\widetilde{\mathbf{s}},p}}(E_{2^{i}}).$$
From $p/(p-q)>1$ and the fact that $h_{p}(\cdot)$ is an increasing
function as well as (\ref{5.9}), it follows that
\begin{eqnarray}\notag
% \nonumber to remove numbering (before each equation)
 \sum_{i\in\mathbb{Z}}
  \Big(\frac{\nu(E_{2^{i}})-\nu(E_{2^{i+1}})}{(Cap_{\mathcal{\dot{W}}^{2\widetilde{\mathbf{s}},p}}(E_{2^{i}}))^{q/p}}\Big)^{p/(p-q)}  &\leq&
  \sum_{i\in\mathbb{Z}}\frac{(\nu(E_{2^{i}})-\nu(E_{2^{i+1}}))^{p/(p-q)}}{(h_{p}(\nu(E_{2^{i}})))^{q/(p-q)}}  \\ \notag
   &\leq& \sum_{i\in\mathbb{Z}}\frac{(\nu(E_{2^{i}}))^{p/(p-q)}-(\nu(E_{2^{i+1}}))^{p/(p-q)}}{(h_{p}(\nu(E_{2^{i}})))^{q/(p-q)}}  \\ \notag
   &=& 2^{p/(p-q)}\sum_{i\in\mathbb{Z}}\int^{\nu(E_{2^{i}})}_{\nu(E_{2^{i+1}})}\frac{d\xi^{p/(p-q)}}{(h_{p}(\nu(E_{2^{i}})))^{q/(p-q)}} \\ \label{5.10}
   &\leq& 2^{p/(p-q)}\|h_{p}\|^{pq/(p-q)}.
\end{eqnarray}
Inserting (\ref{5.10}) and (\ref{5.9}) into (\ref{5.8}) yields
$$\sum_{i\in\mathbb{Z}}2^{iq}(\nu(E_{2^{i}})-\nu(E_{2^{i+1}}))\lesssim \|h_{p}\|^{q}\|u\|_{\mathcal{\dot{W}}^{2\widetilde{\mathbf{s}},p}(\X)}^{q},$$
which, along with (\ref{5.7}), further gives
$$\Big(\int_{\X}|u(g)|^{q}d\nu(g)\Big)^{1/q}\lesssim \|h_{p}\|\|u\|_{\mathcal{\dot{W}}^{2\widetilde{\mathbf{s}},p}(\X)}.$$

\end{proof}

For $q\geq p$, we get the following result.

\begin{theorem}\label{extension-4}
Let $(s,\sigma)\in(0,1)\times(0,1)$,
$p\in(1,\mathcal{Q}/2\widetilde{\mathbf{s}})$ with
$\widetilde{\mathbf{s}}=s\sigma$ and $q\in[p,\infty)$. Let $\nu$ be
a nonnegative measure on $\X$. Then the following four statements
are equivalent:
\item{\rm (i)}
$$\Big(\int_{\X}\big|u(g)\big|^{q}d\nu(g)\Big)^{1/q}\lesssim \|u\|_{\mathcal{\dot{W}}^{2\widetilde{\mathbf{s}},p}(\X)}\ \ \forall\ u\in \mathcal{\dot{W}}^{2\widetilde{\mathbf{s}},p}(\X).$$
\item{\rm (ii)} The measure $\nu$ satisfies
$$\sup_{E\subset\X}\frac{(\nu(E))^{1/q}}{(Cap_{\mathcal{\dot{W}}^{2\widetilde{\mathbf{s}},p}}(E))^{1/p}}<\infty.$$
\end{theorem}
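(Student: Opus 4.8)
The plan is to prove the two implications $(i)\Rightarrow(ii)$ and $(ii)\Rightarrow(i)$ separately, viewing this statement as the formal limiting case $t\to 0$ of Theorem \ref{extension-2}, with $\X$ replacing $\X\times\mathbb{R}_{+}$ and $u$ replacing $P_{\sigma,t}u$. The implication $(i)\Rightarrow(ii)$ is elementary; the reverse implication is the substantive one and will rest on the strong-type capacitary inequality of Proposition \ref{strong-1}(i) together with the super-additivity of $t\mapsto t^{q/p}$ (which is where the hypothesis $q\geq p$ enters).

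For $(i)\Rightarrow(ii)$ I would first treat a compact set $\mathcal{K}\subset\X$. Given any admissible $u\in C_{c}^{\infty}(\X)$ with $u\geq 1_{\mathcal{K}}$, on $\{u\geq 1\}\supset\mathcal{K}$ one has $|u|^{q}\geq 1$, so $\nu(\mathcal{K})\leq\int_{\X}|u|^{q}d\nu\lesssim\|u\|_{\mathcal{\dot{W}}^{2\widetilde{\mathbf{s}},p}(\X)}^{q}$ by (i). Taking the infimum over such $u$ and using Definition \ref{cap-1} yields $\nu(\mathcal{K})\lesssim(Cap_{\mathcal{\dot{W}}^{2\widetilde{\mathbf{s}},p}}(\mathcal{K}))^{q/p}$. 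The passage to an arbitrary set $E$ then follows from the inner regularity on open sets and the outer regularity of the capacity recorded in Corollary \ref{cap-4} and Proposition \ref{cap-11}, via $\nu(E)\leq\inf_{O\supset E}\nu(O)\lesssim\inf_{O\supset E}(Cap_{\mathcal{\dot{W}}^{2\widetilde{\mathbf{s}},p}}(O))^{q/p}=(Cap_{\mathcal{\dot{W}}^{2\widetilde{\mathbf{s}},p}}(E))^{q/p}$, which is exactly (ii).

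For $(ii)\Rightarrow(i)$ I fix $u\in\mathcal{\dot{W}}^{2\widetilde{\mathbf{s}},p}(\X)$ and set $E_{t}:=\{g\in\X:|u(g)|>t\}$. A layer-cake decomposition gives $\int_{\X}|u|^{q}d\nu\lesssim\sum_{j\in\mathbb{Z}}2^{jq}\nu(E_{2^{j}})$. Inserting (ii) in the form $\nu(E_{2^{j}})\lesssim(Cap_{\mathcal{\dot{W}}^{2\widetilde{\mathbf{s}},p}}(E_{2^{j}}))^{q/p}$ rewrites the right-hand side as $\sum_{j}(2^{jp}Cap_{\mathcal{\dot{W}}^{2\widetilde{\mathbf{s}},p}}(E_{2^{j}}))^{q/p}$. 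Since $q/p\geq 1$, the super-additivity $\sum_{j}a_{j}^{\kappa}\leq(\sum_{j}a_{j})^{\kappa}$ for $\kappa\geq 1$ pulls the exponent out and reduces matters to bounding $\sum_{j}2^{jp}Cap_{\mathcal{\dot{W}}^{2\widetilde{\mathbf{s}},p}}(E_{2^{j}})$. By monotonicity of the capacity each term is dominated by $\int_{2^{j-1}}^{2^{j}}Cap_{\mathcal{\dot{W}}^{2\widetilde{\mathbf{s}},p}}(\{|u|\geq\lambda\})\,d\lambda^{p}$, and summation in $j$ produces $\int_{0}^{\infty}Cap_{\mathcal{\dot{W}}^{2\widetilde{\mathbf{s}},p}}(\{|u|\geq\lambda\})\,d\lambda^{p}$, which Proposition \ref{strong-1}(i) controls by $\|u\|_{\mathcal{\dot{W}}^{2\widetilde{\mathbf{s}},p}(\X)}^{p}$. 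Raising to the power $q/p$ delivers (i).

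The only genuinely substantive ingredient is the strong-type capacitary inequality; once it is available the remainder is combinatorial. The main subtlety to watch is the direction of the elementary inequality: because $q\geq p$ the correct tool is the super-additivity $\sum a_{j}^{q/p}\leq(\sum a_{j})^{q/p}$ rather than the H\"older splitting used in the companion case $q<p$ of Theorem \ref{extension-3}, so no $h_{p}$-type quantity should appear here. A secondary point requiring care is the measure-theoretic passage from compact sets to arbitrary $E$ in $(i)\Rightarrow(ii)$, which relies on the regularity properties of $Cap_{\mathcal{\dot{W}}^{2\widetilde{\mathbf{s}},p}}$ established earlier and on $\nu$ being regular enough that $\nu(E)\leq\inf_{O\supset E}\nu(O)$.
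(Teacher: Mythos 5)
Your proposal is correct and follows essentially the same route as the paper: for (i)$\Rightarrow$(ii) one tests the embedding on admissible functions for the capacity, and for (ii)$\Rightarrow$(i) one performs the dyadic layer-cake decomposition, inserts (ii), uses the super-additivity of $t\mapsto t^{q/p}$ (the paper's inequality (\ref{4.4})), and concludes with the strong-type capacitary inequality of Proposition \ref{strong-1}(i) exactly as in the paper's display (\ref{5.9}). Your slightly more careful compact-to-general-set passage in (i)$\Rightarrow$(ii) via Corollary \ref{cap-4} and Proposition \ref{cap-11} is a harmless refinement of the paper's one-line argument.
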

\begin{proof}
 (i)$\Rightarrow$(ii). Given any $E\subset\X$, by the definition of $Cap_{\mathcal{\dot{W}}^{2\widetilde{\mathbf{s}},p}}(E)$ and (i), we know that for all $u\in C_{c}^{\infty}(\X)$
$$\nu(E)\leq \nu(\{g\in\X: u(g)=1\})\leq\int_{\X}|u(g)|^{q}d\nu(g)\lesssim \|u\|_{\mathcal{\dot{W}}^{2\widetilde{\mathbf{s}},p}(\X)}^{q},$$
which implies (ii).

 (ii)$\Rightarrow$(i). Let $u\in \mathcal{\dot{W}}^{2\widetilde{\mathbf{s}},p}(\X)$. Then for any $t\in\mathbb{R}_{+}$, let
$$E_{t}:=\{g\in\X:|u(g)|>t\}.$$
By (ii), we have
$$\int_{\X}|u(g)|^{q}d\nu(g)\lesssim \sum_{k\in\mathbb{Z}}2^{kq}\nu(E_{2^{k}})\lesssim \sum_{k\in\mathbb{Z}}2^{kq}(Cap_{\mathcal{\dot{W}}^{2\widetilde{\mathbf{s}},p}}(E_{2^{k}}))^{q/p}.$$

From (\ref{4.4}), $p\leq q$ and (\ref{5.9}), we can see that
$$\sum_{k\in\mathbb{Z}}2^{kq}(Cap_{\mathcal{\dot{W}}^{2\widetilde{\mathbf{s}},p}}(E_{2^{k}}))^{q/p}\lesssim \Big(\sum_{k\in\mathbb{Z}}2^{kq}Cap_{\mathcal{\dot{W}}^{2\widetilde{\mathbf{s}},p}}(E_{2^{k}})\Big)^{q/p}\lesssim \|u\|_{\mathcal{\dot{W}}^{2\widetilde{\mathbf{s}},p}(\X)}^{q},$$
whence
$$\Big(\int_{\X}|u(g)|^{q}d\nu(g)\Big)^{1/q}\lesssim \|u\|_{\mathcal{\dot{W}}^{2\widetilde{\mathbf{s}},p}(\X)}.$$
Accordingly, (i) holds.
\end{proof}
\begin{remark}
By Proposition \ref{w-w}, we know that when
$p\in(1,\min\{\mathcal{Q}/2\widetilde{\mathbf{s}},\Q/(1-2\widetilde{\mathbf{s}})\})$,
$\mathbb{W}^{2\widetilde{\mathbf{s}},p}(\X)=\mathcal{W}^{2\widetilde{\mathbf{s}},p}(\X)$.
Therefore,
$\|\cdot\|_{\mathcal{\dot{W}}^{2\widetilde{\mathbf{s}},p}(\X)}$ in
Theorems \ref{extension-1}, \ref{extension-2}, \ref{extension-3} \&\
\ref{extension-4} can be replaced by
$\|\cdot\|_{\mathbb{\dot{W}}^{2\widetilde{\mathbf{s}},p}(\X)}$.

\end{remark}

\section{Besov type spaces and Besov capacities }\label{sec-5}

In the subsequent discussion, we primarily focus on Besov-type
spaces that are constructed using fractional heat semigroups and
Caffarelli-Silvestre type extensions. It is important to note that
the study of Besov-type spaces in the context of general metric
spaces has been addressed in the literature, specifically in
\cite{ABC2020,ABCRST2020,ABCRST2021,CG2022,GL2015}. In contrast to
the scenarios involving general metric spaces, we are able to derive
more nuanced results for Besov-type spaces and   Besov capacities in
the framework of Stratified Lie groups.

\begin{definition}\label{Besov1}
Let $\alpha\in(0,1)$. The Besov type space
$B_{p,q,\mathcal{H}}^{{\alpha},\beta}(\X)$ is defined according to
the values of the parameters $(p,q,\beta)$ as follows:

\item{\rm (i)} If $(p,q,\beta)\in[1,\infty)\times[1,\infty)\times(0,\infty),$ then $B_{p,q,\mathcal{H}}^{{\alpha},\beta}(\X)$
              is the class of all $L^{p}(\X)$-functions $u$ such that
              \begin{align}\label{Besov}
                      N_{p,q,\mathcal{H}}^{{\alpha},\beta}(u):=\bigg(\int_{0}^{\infty}
                       \Big(\int_{\X}H_{\alpha,t}(|u-u(g)|^{p})(g)dg\Big)^{q/p}\frac{dt}{t^{{\beta q}/{2}+1}}
                    \bigg)^{1/q}<\infty.
\end{align}

\item{\rm (ii)} If $(p,q,\beta)\in[1,\infty)\times\{\infty\}\times(0,\infty),$ then $B_{p,q,\mathcal{H}}^{{\alpha},\beta}(\X)$
              is the class of all $L^{p}(\X)$-functions $u$ such that
              \begin{align}\label{NP}
                 N_{p,\infty,\mathcal{H}}^{{\alpha},\beta}(u):=\sup_{t>0}t^{-\beta/2}\Big(\int_{\X}H_{\alpha,t}(|u-u(g)|^{p})(g)dg\Big)^{1/p}<\infty.
              \end{align}

The space $B_{p,q,\mathcal{H}}^{{\alpha},\beta}(\X)$ is endowed with
the following norm
\begin{align*}\label{BSN}
\|u\|_{B_{p,q,\mathcal{H}}^{{\alpha},\beta}(\X)}:=
\|u\|_{L^{p}(\X)}+N_{p,q,\mathcal{H}}^{{\alpha},\beta}(u).
\end{align*}
\item{\rm (iii)} If $H_{\alpha,t}$ is replaced by $P_{\sigma,t}$ with $ \sigma\in
(0,1)$, we denote by $B_{p,q,\mathcal{P}}^{ {\sigma},\beta}(\X)$ and
$N_{p,q,\mathcal{P}}^{ {\sigma},\beta}$ the corresponding Besov
spaces and semi-norms, respectively.
\end{definition}

\begin{remark}\label{rem2}
\item{(i)} By Lemma \ref{fracheat-1} (i) \&\ (iii), we get the following conclusion.

 (1) If $(p,q,\beta)\in[1,\infty)\times[1,\infty)\times(0,\infty),$ then
$$(\ref{Besov})\Longleftrightarrow \widetilde{N}_{p,q,\mathcal{H}}^{ {\alpha},\beta}(u):=\bigg(\int_{0}^{1}
\Big(\int_{\X}H_{\alpha,t}(|u-u(g)|^{p})(g)dg\Big)^{q/p}\frac{dt}{t^{{\beta
q}/{2}+1}} \bigg)^{1/q}<\infty.$$

(2) When $(p,q,\beta)\in[1,\infty)\times\{\infty\}\times(0,\infty),$ then
  $$(\ref{NP})\Longleftrightarrow \limsup_{t\rightarrow 0}t^{-\beta/2}\Big(\int_{\X}H_{\alpha,t}(|u-u(g)|^{p})(g)dg\Big)^{1/p}<\infty.$$

\item{(ii)} The Besov type space $B_{p,q,\mathcal{H}}^{{\alpha},\beta}(\X)$
can be seen as a generalization of
 the classical Aronszajn-Gagliardo-Slobedetzky spaces $L^{\beta}_{p}(\mathbb{R}^{n})$  defined via the following seminorm:
$$\Bigg(\int_{\mathbb{R}^{n}}\int_{\mathbb{R}^{n}}\frac{|u(x)-u(y)|^{p}}{|x-y|^{n+\beta p}}dxdy\Bigg)^{1/p},$$
which is easily recognizable from (\ref{Besov}) since  one has $e^{t\Delta}(x-y)=(4\pi t)^{-n/2}e^{-|x-y|^{2}/4t}$ when $\X=\mathbb{R}^{n}$, $p=q$ and $\alpha=1$.

\item{(iii)} The conclusion above also holds for   Besov spaces
$B_{p,q,\mathcal{P}}^{ {\sigma},\beta}(\X)$.

\end{remark}

\subsection{Some properties of Besov type spaces }\label{sec-5-1}
In this section, we first focus on the dense subspaces and  the
min-max property of Besov spaces. Then  we investigate the
relationship between the fractional Sobolev spaces defined as
Definition \ref{def-sobolev} and the Besov spaces defined as
Definition \ref{Besov1}. Finally, we also study the equivalence of
the Besov spaces defined by fractional heat semigroups and
Caffarelli-Silvestre extensions, respectively.
\begin{proposition}\label{desity}
Let $1\leq p\leq q<\infty$  and $(\alpha,\sigma)\in(0,1)\times(0,1).$ The following statements hold:
  \item{\rm (i)} For $2\alpha>p$ and $\beta\in(0,1/\alpha)$, $C_{c}^{\infty}(\X)$ is dense in $B_{p,q,\mathcal{H}}^{{\alpha},\beta}(\X).$ That is, for any $u\in B_{p,q,\mathcal{H}}^{{\alpha},\beta}(\X),$
      there exist a sequence $\{u_{j}\}_{j\in\mathbb{N}}\subset C_{c}^{\infty}(\X)$
      such that $$\lim_{j\rightarrow\infty}\|u_{j}-u\|_{B_{p,q,\mathcal{H}}^{{\alpha},\beta}(\X)}=0.$$
  Moreover, if $u\geq 0$, then all the approximating sequence $\{u_{j}\}_{j\in\mathbb{N}}$ can be chosen to be non-negative. In particular, if $\mathcal{K}\subset\X$ is a compact set and $u\geq 1_{\mathcal{K}}$, then the approximating sequence $\{u_{j}\}_{j\in\mathbb{N}}$ in $C_{c}^{\infty}(\X)$ can be chosen to satisfy $u_{j}\geq 1_{\mathcal{K}}$ for all $j\in\mathbb{N}.$
  \item{\rm (ii)} For the Besov type space $B_{p,q,\mathcal{P}}^{ {\sigma},\beta}(\X)$, the above results also holds, where $2\sigma>p$ and $\beta\in(0,2)$.

\end{proposition}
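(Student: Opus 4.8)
The plan is to follow the constructive scheme of Proposition \ref{dense-1}: given $u\in B_{p,q,\mathcal{H}}^{\alpha,\beta}(\X)$ I will first mollify $u$ to a smooth function, then multiply by a dilation cut-off to gain compact support, and finally combine the two approximations. Throughout I write
$$D_t(u):=\Big(\int_\X H_{\alpha,t}(|u-u(g)|^p)(g)\,dg\Big)^{1/p}=\Big(\int_\X\int_\X K_{\alpha,t}(g'^{-1}g)|u(g')-u(g)|^p\,dg'dg\Big)^{1/p},$$
so that $N_{p,q,\mathcal{H}}^{\alpha,\beta}(u)=\big\|D_\cdot(u)\big\|_{L^q(\mathbb{R}_+,\,dt/t^{\beta q/2+1})}$. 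The key structural observation is that $D_t$ is a seminorm: it is the $L^p$-norm of the difference map $\Psi_t(u):(g,g')\mapsto u(g')-u(g)$ with respect to the Radon measure $d\mu_t(g,g')=K_{\alpha,t}(g'^{-1}g)\,dg'dg$, which, since $K_{\alpha,t}$ depends only on $d(g,g')$ and Haar measure is left-invariant, is invariant under the diagonal left translation $L_hW(g,g')=W(h^{-1}g,h^{-1}g')$.

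\textbf{Step 1 (mollification).} Let $\{\tau_\epsilon\}$ be the mollifiers of Proposition \ref{dense-1}. Since $\Psi_t(\tau_\epsilon\ast u)=\int_\X\tau_\epsilon(h)\,L_h\Psi_t(u)\,dh$ and each $L_h$ is an isometry of $L^p(\mu_t)$, Jensen's inequality gives at once $D_t(\tau_\epsilon\ast u)\le D_t(u)$, hence $N_{p,q,\mathcal{H}}^{\alpha,\beta}(\tau_\epsilon\ast u)\le N_{p,q,\mathcal{H}}^{\alpha,\beta}(u)$ and $\tau_\epsilon\ast u\in B_{p,q,\mathcal{H}}^{\alpha,\beta}(\X)\cap C^\infty(\X)$. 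For the convergence I write $\Psi_t(\tau_\epsilon\ast u)-\Psi_t(u)=\int_\X\tau_\epsilon(h)(L_h\Psi_t(u)-\Psi_t(u))\,dh$ and apply Minkowski's integral inequality to obtain
$$D_t(\tau_\epsilon\ast u-u)\le\sup_{h\in B(e,\epsilon)}\big\|L_h\Psi_t(u)-\Psi_t(u)\big\|_{L^p(\mu_t)}\xrightarrow[\epsilon\to0]{}0$$
for every $t$ with $D_t(u)<\infty$ (that is, a.e.\ $t$), by strong continuity of diagonal translation on $L^p(\mu_t)$. Since $D_t(\tau_\epsilon\ast u-u)\le2D_t(u)$ and $D_\cdot(u)\in L^q(dt/t^{\beta q/2+1})$, dominated convergence yields $N_{p,q,\mathcal{H}}^{\alpha,\beta}(\tau_\epsilon\ast u-u)\to0$.

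\textbf{Step 2 (cut-off).} Let $\eta_N(\cdot)=\eta(\delta_{N^{-1}}(\cdot))$ be the cut-offs of Proposition \ref{dense-1} and set $w_N:=u\eta_N-u=u(\eta_N-1)$. From the pointwise splitting $w_N(g')-w_N(g)=(u(g')-u(g))(\eta_N(g')-1)+u(g)(\eta_N(g')-\eta_N(g))$ and $p\ge1$ one gets $D_t(w_N)\lesssim(A_t^{(N)})^{1/p}+(B_t^{(N)})^{1/p}$, where $A_t^{(N)}$ carries the factor $|\eta_N-1|^p\le1$ and $B_t^{(N)}$ the factor $|\eta_N(g')-\eta_N(g)|^p$. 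The term $A^{(N)}$ is harmless: $A_t^{(N)}\le D_t(u)^p$ and $A_t^{(N)}\to0$ pointwise in $t$ by dominated convergence in $(g,g')$, so $\|(A^{(N)})^{1/p}\|_{L^q}\to0$ by a second dominated convergence. \emph{The term $B^{(N)}$ is the main obstacle.} Using the stratified mean value theorem and $|\nabla_\X\eta_N|\lesssim N^{-1}$ I bound $|\eta_N(g')-\eta_N(g)|\le\min\{1,CN^{-1}d(g,g')\}$; the kernel estimate of Proposition \ref{pro-frac} and left-invariance then give
$$B_t^{(N)}\lesssim\|u\|_{L^p(\X)}^p\int_\X K_{\alpha,t}(y)\min\{1,N^{-p}|y|^p\}\,dy.$$
After the scaling $y=\delta_{t^{1/2\alpha}}z$ this integral equals $g(t^{1/2\alpha}/N)$ with $g(\rho)=\int_\X(1+|z|)^{-\Q-2\alpha}\min\{1,\rho^p|z|^p\}\,dz$, and here the hypothesis $2\alpha>p$ makes $g(\rho)\lesssim\min\{\rho^p,1\}$ (the spatial integral $\int r^{p-2\alpha-1}\,dr$ converges at infinity). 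Consequently $B_t^{(N)}\lesssim\|u\|_{L^p(\X)}^p\min\{(t^{1/2\alpha}/N)^p,1\}$, and the hypothesis $\beta\in(0,1/\alpha)$ guarantees convergence of the resulting $t$-integral at $0$:
$$\big\|(B^{(N)})^{1/p}\big\|_{L^q(dt/t^{\beta q/2+1})}^q\lesssim\|u\|_{L^p(\X)}^q\int_0^\infty\min\Big\{\tfrac{t^{1/2\alpha}}{N},1\Big\}^q\frac{dt}{t^{\beta q/2+1}}\lesssim\|u\|_{L^p(\X)}^q\,N^{-\alpha\beta q}.$$
Hence $N_{p,q,\mathcal{H}}^{\alpha,\beta}(u\eta_N-u)\to0$, while $\|u\eta_N-u\|_{L^p(\X)}\to0$ by dominated convergence, so $u\eta_N\to u$ in $B_{p,q,\mathcal{H}}^{\alpha,\beta}(\X)$.

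\textbf{Step 3 (conclusion).} Combining Steps 1--2, every $u$ is approximated by $u_{\epsilon,N}=(\tau_\epsilon\ast u)\eta_N\in C_c^\infty(\X)$. If $u\ge0$ then $u_{\epsilon,N}\ge0$; and if $u\ge1_{\mathcal{K}}$ on a compact set $\mathcal{K}$, the choice of $\epsilon,N$ from Step 3 of Proposition \ref{dense-1} gives $u_{\epsilon,N}\ge1_{\mathcal{K}}$. Part (ii) is entirely analogous: $P_{\sigma,t}$ is again a left-invariant convolution with a kernel depending only on $d(\cdot,\cdot)$ (Lemma \ref{poisson-1}, Proposition \ref{pro-1}), so Steps 1--2 go through verbatim with the natural scale $t$ in place of $t^{1/2\alpha}$; the two convergence conditions then become $2\sigma>p$ (spatial integrability, from $\int r^{p-2\sigma-1}\,dr<\infty$) and $\beta\in(0,2)$ (integrability of the $t$-integral at $0$), yielding the decay $N^{-\beta/2}$. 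Note that the non-availability of the semigroup property for $\{P_{\sigma,t}\}_{t>0}$ (cf.\ Remark \ref{333}) causes no difficulty here, since only the convolution structure and the kernel bounds are used.
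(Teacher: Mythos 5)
Your proposal is correct and follows the same two-step scheme as the paper's proof (mollify to get smoothness, then truncate with the dilation cut-off $\eta_N$, using $2\alpha>p$ for the spatial moment of the kernel and $\beta<1/\alpha$ for integrability in $t$ near $0$). Two points where your execution is cleaner than, but equivalent to, the paper's: in Step 1 you package the paper's computation with $\hbar_u$ as the statement that diagonal left translation is an isometry of $L^p(d\mu_t)$ with $d\mu_t=K_{\alpha,t}(g'^{-1}g)\,dg'dg$, which lets you dominate $D_t(\tau_\epsilon\ast u-u)$ by $2D_t(u)$ uniformly and avoid the paper's separate treatment of $t>1$; and in Step 2 you extract the quantitative rate $N^{-\alpha\beta q}$ for the gradient term via the bound $\min\{1,N^{-1}d(g,g')\}$, whereas the paper only exhibits an $N$-independent integrable dominant and invokes dominated convergence. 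Both routes use the hypotheses in exactly the same places, and your deferral of the non-negativity and $u_j\geq 1_{\mathcal{K}}$ claims to the construction in Proposition \ref{dense-1} matches what the paper does.
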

\begin{proof}
We only give the proof of (i). The proof of (ii) is analogous. In order to prove Proposition \ref{desity} (i), we need a constructive approach
and show the density in the following two steps.

\emph{Step 1:\ Approximation of $u\in
B_{p,q,\mathcal{H}}^{{\alpha},\beta}(\X)$ by
$C^{\infty}(\X)$-functions.} Let $\{\tau_{\epsilon}\}_{\epsilon>0}$
be a family of standard mollifiers in Proposition \ref{dense-1}. For
any $\epsilon\in(0,\infty),$ define
$u_{\epsilon}:=\tau_{\epsilon}\ast u,$ which is a function in
$C^{\infty}(\X).$ Clearly, we have
$\|u_{\epsilon}-u\|_{L^{p}(\X)}\rightarrow 0$ as
$\epsilon\rightarrow0,$ and consequently we only need to prove
\begin{equation}\label{conver}
N_{p,q,\mathcal{H}}^{{\alpha},\beta}(u-u_{\epsilon})\rightarrow0\
\textrm{as}\ \epsilon\rightarrow0.
\end{equation}
To do so, we write
\begin{align*}
&N_{p,q,\mathcal{H}}^{{\alpha},\beta}(u-u_{\epsilon})^{q}\\
&=\int_{0}^{\infty}\frac{1}{t^{{\beta q}/{2}+1}}
\Big(\int_{\X}H_{\alpha,t}(|u-u_{\epsilon}-u(g)+u_{\epsilon}(g)|^{p})(g)dg\Big)^{q/p}dt\\
&=\int_{0}^{1}\frac{1}{t^{{\beta q}/{2}+1}}
\Big(\int_{\X}\int_{\X}K_{\alpha,t}(g'^{-1}g)
|u(g')-u_{\epsilon}(g')-u(g)+u_{\epsilon}(g)|^{p}dg'dg\Big)^{q/p}dt\\
&\quad+\int_{1}^{\infty}\frac{1}{t^{{\beta q}/{2}+1}}
\Big(\int_{\X}\int_{\X}K_{\alpha,t}(g'^{-1}g)
|u(g')-u_{\epsilon}(g')-u(g)+u_{\epsilon}(g)|^{p}dg'dg\Big)^{q/p}dt\\
&=:I+II.
\end{align*}
Using Lemma \ref{fracheat-1} (i), we have
\begin{align*}
II&\lesssim \int_{1}^{\infty}\frac{1}{t^{{\beta q}/{2}+1}}
\Big(\int_{\X}\int_{\X}K_{\alpha,t}(g'^{-1}g)
|u(g')-u_{\epsilon}(g')|^{p}dgdg'\Big)^{q/p}dt\\
&\quad +\int_{1}^{\infty}\frac{1}{t^{{\beta q}/{2}+1}}
\Big(\int_{\X}\int_{\X}K_{\alpha,t}(g'^{-1}g)
|u(g)-u_{\epsilon}(g)|^{p}dg'dg\Big)^{q/p}dt\\
&\lesssim \int_{1}^{\infty}\frac{1}{t^{{\beta q}/{2}+1}}
\Big(\int_{\X}
|u(g')-u_{\epsilon}(g')|^{p}dg'\Big)^{q/p}dt\\
&\quad +\int_{1}^{\infty}\frac{1}{t^{{\beta q}/{2}+1}}
\Big(\int_{\X}
|u(g)-u_{\epsilon}(g)|^{p}dg\Big)^{q/p}dt\rightarrow0\ \textrm{as}\ \epsilon\rightarrow0.
\end{align*}

In what follows, we estimate the first term $I$.
For any $g,g'\in\X$ and $t\in(0,1),$ we write
\begin{align*}
&u(g')-u_{\epsilon}(g')-u(g)+u_{\epsilon}(g)\\
&=u(g')-u(g)-\int_{\X}u(g_{2})\tau(\bar{g})d\bar{g}
+\int_{\X}u(g_{1})\tau(\bar{g})d\bar{g}\\
&=u(g')-u(g)-\int_{\X}\big(u(g_{2})-u(g_{1})\big)\tau(\bar{g})d\bar{g},
\end{align*}
where $g_{1}=(\delta_{\epsilon}\bar{g})^{-1}g$ and $g_{2}=(\delta_{\epsilon}\bar{g})^{-1}g'$.
Consequently,
\begin{align*}
&K_{\alpha,t}(g'^{-1}g)|u(g')-u_{\epsilon}(g')-u(g)+u_{\epsilon}(g)|^{p}\\
&=\Bigg|K_{\alpha,t}(g'^{-1}g)^{{1}/{p}}(u(g')-u(g))-\int_{\X}
K_{\alpha,t}(g_{2}^{-1}g_{1})^{{1}/{p}}\big(u(g_{2})-
u(g_{1})\big)\tau(\bar{g})d\bar{g}\Bigg|^{p}.
\end{align*}
Now, we set
$$\hbar_{u}(g,g',t)=t^{-{\beta}/{2}-{1}/{q}}K_{\alpha,t}(g'^{-1}g)^{{1}/{p}}(u(g')-u(g)).$$
Notice that $\textrm{supp}\ \tau\subset B(e,1)$ and
$\|\tau\|_{L^{1}(\X)}=1.$ Therefore, it follws from H\"older's
inequality that
\begin{align*}
&t^{-{\beta p}/{2}-{p}/{q}}K_{\alpha,t}(g'^{-1}g)|u(g')-u_{\epsilon}(g')-u(g)+u_{\epsilon}(g)|^{p}\\
&\lesssim\big|\hbar_{u}(g,g',t)-\int_{\X}
\hbar_{u}(g_{1},g_{2},t)\tau(\bar{g})d\bar{g}\big|^{p}\\
&=\bigg|\int_{B(e,1)}\big(\hbar_{u}(g,g',t)-\hbar_{u}(g_{1},g_{2},t)
\big)\tau(\bar{g})d\bar{g}\bigg|^{p}\\
&\lesssim\int_{B(e,1)}\big|\hbar_{u}(g,g',t)-\hbar_{u}(g_{1},g_{2},t)
\big|^{p}\tau(\bar{g})^{p}d\bar{g}.
\end{align*}
Further,  using Minkowski's inequality, we have
\begin{align*}
I&=\int_{0}^{1}\frac{1}{t^{{\beta q}/{2}+1}}
\Big(\int_{\X}\int_{\X}K_{\alpha,t}(g'^{-1}g)
|u(g')-u_{\epsilon}(g')-u(g)+u_{\epsilon}(g)|^{p}dg'dg\Big)^{q/p}dt\\
&\lesssim \bigg(\int_{B(e,1)}\tau(\bar{g})^{p}\Big(\int_{0}^{1}
\|\hbar_{u}(\cdot,\cdot,t)-\hbar_{u}((\delta_{\epsilon}\bar{g})^{-1}\cdot,(\delta_{\epsilon}\bar{g})^{-1}\cdot,t)\|
^{q}_{L^{p}(\X\times\X)}dt\Big)^{p/q}d\bar{g}\bigg)^{q/p}.
\end{align*}
From the definition (\ref{Besov}) of $N_{p,q,\mathcal{H}}^{
{\alpha},\beta}(\cdot),$ it is easy to verify that
$$\big(N_{p,q,\mathcal{H}}^{{\alpha},\beta}(u)\big)^{q}
=\big\|\|\hbar_{u}(\cdot,\cdot,\cdot)\|^{q}_{L^{p}(\X\times\X)}\big\|_{L^{1}(0,\infty)},$$
which implies that
$$u\in B_{p,q,\mathcal{H}}^{{\alpha},\beta}(\X)
\Longleftrightarrow \|\hbar_{u}(\cdot,\cdot,t)\|^{q}_{L^{p}(\X\times\X)}
\in L^{1}(0,\infty).$$
Therefore, for almost any $t\in(0,1),$ we have
$$\|\hbar_{u}(\cdot,\cdot,t)-\hbar_{u}((\delta_{\epsilon}\bar{g})^{-1}\cdot,(\delta_{\epsilon}\bar{g})^{-1}\cdot,t)\|
^{q}_{L^{p}(\X\times\X)}\rightarrow0\
\textrm{as}\ \epsilon\rightarrow0.$$
On the other hand, we know that
$$\|\hbar_{u}(\cdot,\cdot,t)-\hbar_{u}((\delta_{\epsilon}\bar{g})^{-1}\cdot,(\delta_{\epsilon}\bar{g})^{-1}\cdot,t)\|
^{q}_{L^{p}(\X\times\X)}\lesssim
\|\hbar_{u}(\cdot,\cdot,t)\|^{q}_{L^{p}(\X\times\X)}\in
L^{1}(0,1).$$ Therefore,  the Lebesgue dominated convergence theorem
ensures that the first term $I$ converges to zero as
$\epsilon\rightarrow0.$ This completes the proof of (\ref{conver}).

\emph{Step 2:\ Approximation of $u\in C^{\infty}(\X)\cap
B_{p,q,\mathcal{H}}^{{\alpha},\beta}(\X)$ by
$C^{\infty}_{c}(\X)$-functions.} Let $\eta$ be a radial decreasing
function defined as Proposition \ref{dense-1}. Clearly, we have
$u_{N}:=\eta_{N}u\in C^{\infty}_{c}(\X)$ and
$\|u_{N}-u\|_{L^{p}(\X)}\rightarrow 0$ as $N\rightarrow\infty.$
Thus we are required to validate
\begin{align}\label{conver1}
N_{p,q,\mathcal{H}}^{{\alpha},\beta}(u-u_{N})\rightarrow0\
\textrm{as}\ N\rightarrow\infty.
\end{align}
By the definition of $N_{p,q,\mathcal{H}}^{{\alpha},\beta}(\cdot)$,
we write
\begin{align*}
&N_{p,q,\mathcal{H}}^{{\alpha},\beta}(u-u_{N})^{q}\\
&=\int_{0}^{\infty}
\Big(\int_{\X}\int_{\X}\frac{K_{\alpha,t}(g'^{-1}g)}{t^{{\beta p}/{2}+{p}/{q}}}
|u(g')-u_{N}(g')-u(g)+u_{N}(g)|^{p}dg'dg\Big)^{q/p}dt\\
&=:\int_{0}^{\infty}\Big(\int_{\X}\int_{\X}
\ell_{N}(g,g',t)dg'dg\Big)^{q/p}dt.
\end{align*}
For any $(g,g',t)\in\X\times\X\times(0,\infty),$ we conclude that
$\ell_{N}(g,g',t)\rightarrow0$ as $N\rightarrow\infty.$ Similarly to
\emph{Step 1}, we will discuss (\ref{conver1}) in two parts: $t>1$
and $t\leq1$. On the one hand, it follows from Lemma
\ref{fracheat-1} (i) that
\begin{align*}
&\int_{1}^{\infty}\Big(\int_{\X}\int_{\X}
\ell_{N}(g,g',t)dg'dg\Big)^{q/p}dt\\
&\lesssim\int_{1}^{\infty}\Big(\int_{\X}\int_{\X}
\frac{K_{\alpha,t}(g'^{-1}g)}{t^{{\beta p}/{2}+{p}/{q}}}\big((1-\eta_{N}(g'))^{p}|u(g')|^{p}
+(1-\eta_{N}(g))^{p}|u(g)|^{p}\big)dg'dg\Big)^{q/p}dt\\
&\lesssim\int_{1}^{\infty}\Big(\int_{\X}\int_{\X}
\frac{K_{\alpha,t}(g'^{-1}g)}{t^{{\beta p}/{2}+{p}/{q}}}(|u(g')|^{p}+|u(g)|^{p})dg'dg\Big)^{q/p}dt<\infty.
\end{align*}
On the other hand, let
$B(e,2):=\{\bar{g}\in\X:d(e,\bar{g})\leq2\}$. Then for every
$N\gg1,$ we obtain
\begin{align*}
&\int_{0}^{1}\Big(\int_{\X}\int_{\X}
\ell_{N}(g,g',t)dg'dg\Big)^{q/p}dt\\
&\lesssim\int_{0}^{1}\Big(\int_{\X}\int_{\X}
\frac{K_{\alpha,t}(g'^{-1}g)}{t^{{\beta p}/{2}+{p}/{q}}}\big((1+\eta_{N}(g')^{p})|u(g')-u(g)|^{p}\\
&\quad+|\eta_{N}(g)-\eta_{N}(g')|^{p}|u(g)|^{p}\big)dg'dg\Big)^{q/p}dt\\
&\lesssim\int_{0}^{1}\Big(\int_{\X}\int_{\X}
|\hbar_{u}(g,g',t)|^{p}\\
&\quad+(\mathbf{1}_{B(e,2)}(\delta_{N}
g)+\mathbf{1}_{B(e,2)}(\delta_{N} g'))
N^{-p}\frac{K_{\alpha,t}(g'^{-1}g)}{t^{{\beta p}/{2}+{p}/{q}}}
d(g,g')^{p}|u(g)|^{p}dg'dg\Big)^{q/p}dt\\
&\lesssim\int_{0}^{1}\Big(\int_{\X}\int_{\X}
|\hbar_{u}(g,g',t)|^{p}dg'dg\Big)^{q/p}dt+
\int_{0}^{1}\Big(\int_{\X}
\int_{\X}\frac{K_{\alpha,t}(g'^{-1}g)}{t^{{\beta p}/{2}+{p}/{q}}}
d(g,g')^{p}|u(g)|^{p}dg'dg\Big)^{q/p}dt,
\end{align*}
where we have used the fact that
$$|\eta_{N}(g)-\eta_{N}(g')|^{p}\leq
N^{-p}\|\nabla_{\X} \eta\|^{p}_{\infty}d(g,g')^{p}.$$ Since $u\in
B_{p,q,\mathcal{H}}^{{\alpha},\beta}(\X),$ we have
$\|\hbar_{u}(\cdot,\cdot,t)\|^{q}_{L^{p}(\X\times\X)} \in
L^{1}(0,\infty).$ Therefore, the first integral above is finite. In
addition, it follows from Proposition \ref{pro-frac} that
\begin{align*}
&\int_{0}^{1}\Big(\int_{\X}
\int_{\X}\frac{K_{\alpha,t}(g'^{-1}g)}{t^{{\beta p}/{2}+{p}/{q}}}
d(g,g')^{p}|u(g)|^{p}dg'dg\Big)^{q/p}dt\\
&\lesssim\int_{0}^{1}\frac{1}{t^{{\beta q}/{2}+1}}\Big(\int_{\X}
\int_{\X}K_{\alpha,t}(\bar{g})
d(e,\bar{g})^{p}|u(g)|^{p}d\bar{g}dg\Big)^{q/p}dt\\
&\lesssim\int_{0}^{1}\frac{t^{{q}/{(2\alpha)}}}{t^{{\beta q}/{2}+1}}\Big(\int_{\X}
\int_{\X}\frac{t}{(t^{1/(2\alpha)}+d(e,\bar{g}))^{\Q+2\alpha}}
\frac{d(e,\bar{g})^{p}}{t^{{p}/{(2\alpha)}}}|u(g)|^{p}d\bar{g}dg\Big)^{q/p}dt\\
&\lesssim\|u\|^{q}_{L^{p}(\X)}\int_{0}^{1}t^{\frac{q}{2}(1/\alpha-\beta)-1}dt
<\infty,
\end{align*}
where we have used the fact that $1/\alpha>\beta>0$ and $2\alpha>p.$

This implies that
$$\int_{0}^{1}\Big(\int_{\X}\int_{\X}
\ell_{N}(g,g',t)dg'dg\Big)^{q/p}dt<\infty.$$
Using  the Lebesgue dominated convergence theorem, we obtained the desired result (\ref{conver1}).

Owing to the arguments in \emph{Steps 1-2}, for any $u\in
B_{p,q,\mathcal{H}}^{{\alpha},\beta}(\X)$ letting
$u_{\epsilon,N}:=\eta_{N}(\tau_{\epsilon}\ast u),$ we get the
desired limit
$$\lim_{N\rightarrow\infty}\lim_{\epsilon\rightarrow0}\|u_{\epsilon,N}-u\|_{B_{p,q,\mathcal{H}}^{{\alpha},\beta}(\X)}=0.$$
The proof of the rest of (i) is similar to that of Proposition
\ref{dense-1}, and so is omitted.

\end{proof}

The following results provide the min-max property of Besov spaces.
\begin{lemma}\label{max}
Let $u_{1},u_{2}\in B_{p,q,\mathcal{H}}^{{\alpha},\beta}(\X).$ Then
the functions $G:=\max\{u_{1},u_{2}\}$ and $H:=\min\{u_{1},u_{2}\}$
belong to $B_{p,p,\mathcal{H}}^{{\alpha},\beta}(\X)$ with
$$\|G\|^{p}_{B_{p,p,\mathcal{H}}^{{\alpha},\beta}(\X)}+\|H\|^{p}_{B_{p,p,\mathcal{H}}^{{\alpha},\beta}(\X)}\leq \|u_{1}\|^{p}_{B_{p,p,\mathcal{H}}^{{\alpha},\beta}(\X)}+\|u_{2}\|^{p}_{B_{p,p,\mathcal{H}}^{{\alpha},\beta}(\X)}.$$
The above result also holds for
$B_{p,q,\mathcal{P}}^{{\sigma},\beta}(\X)$ as well.
\end{lemma}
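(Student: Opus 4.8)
The plan is to reduce the whole statement to a single two-point pointwise inequality and then integrate it against the nonnegative kernel. The heart of the matter is the elementary rearrangement fact: for reals $a_1,a_2,b_1,b_2$ and any $p\ge 1$,
$$
\big|\max\{a_1,a_2\}-\max\{b_1,b_2\}\big|^p+\big|\min\{a_1,a_2\}-\min\{b_1,b_2\}\big|^p\le |a_1-b_1|^p+|a_2-b_2|^p .
$$
First I would note that when $a_1-a_2$ and $b_1-b_2$ share a sign the two sides coincide, so only the discordant case remains; writing $x=a_1-b_1$, $s=a_2-a_1\ge 0$, $r=b_1-b_2\ge 0$, it becomes $\phi(x+s)+\phi(x+r)\le \phi(x)+\phi(x+s+r)$ for $\phi(t)=|t|^p$, which is precisely the nonnegativity of the mixed second difference of a convex function and hence holds for $p\ge1$. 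Applying this with $a_i=u_i(g')$, $b_i=u_i(g)$ gives, for a.e. $(g,g')\in\X\times\X$,
$$
|G(g')-G(g)|^p+|H(g')-H(g)|^p\le |u_1(g')-u_1(g)|^p+|u_2(g')-u_2(g)|^p ,
$$
while the multiset identity $\{G(g),H(g)\}=\{u_1(g),u_2(g)\}$ yields the pointwise equality $|G(g)|^p+|H(g)|^p=|u_1(g)|^p+|u_2(g)|^p$.

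Next I would integrate. Since $q=p$, Lemma \ref{fracheat-1}(i) puts the seminorm in the symmetric form
$$
N_{p,p,\mathcal{H}}^{\alpha,\beta}(u)^p=\int_0^\infty\!\!\int_\X\!\!\int_\X K_{\alpha,t}(g'^{-1}g)\,|u(g')-u(g)|^p\,dg'\,dg\,\frac{dt}{t^{\beta p/2+1}} .
$$
Multiplying the difference inequality by $K_{\alpha,t}(g'^{-1}g)\ge 0$ and integrating in $g',g$ and then against $dt/t^{\beta p/2+1}$ gives $N_{p,p,\mathcal{H}}^{\alpha,\beta}(G)^p+N_{p,p,\mathcal{H}}^{\alpha,\beta}(H)^p\le N_{p,p,\mathcal{H}}^{\alpha,\beta}(u_1)^p+N_{p,p,\mathcal{H}}^{\alpha,\beta}(u_2)^p$, and integrating the pointwise $L^p$ equality over $\X$ gives $\|G\|_{L^p(\X)}^p+\|H\|_{L^p(\X)}^p=\|u_1\|_{L^p(\X)}^p+\|u_2\|_{L^p(\X)}^p$; in particular $G,H\in B_{p,p,\mathcal{H}}^{\alpha,\beta}(\X)$.

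Finally I would assemble these through a single $L^p$ space: on the disjoint union $Z=\X\sqcup(\X\times\X\times\mathbb{R}_+)$, equipped with $dg$ on the first piece and $K_{\alpha,t}(g'^{-1}g)\,t^{-\beta p/2-1}\,dg'\,dg\,dt$ on the second, associate to each $u$ the function equal to $u$ on $\X$ and to $u(g')-u(g)$ on the product piece, so that $\|u\|_{L^p(Z)}^p=\|u\|_{L^p(\X)}^p+N_{p,p,\mathcal{H}}^{\alpha,\beta}(u)^p$. The two displayed relations are then exactly the statement that $(u_1,u_2)\mapsto(G,H)$ does not increase the sum of $p$-th powers of these $L^p(Z)$-norms, which is the asserted min-max inequality; the same argument applies verbatim to $B_{p,p,\mathcal{P}}^{\sigma,\beta}(\X)$, using the nonnegativity of $P_{\sigma,t}$ from Lemma \ref{poisson-1}(i) in place of that of $K_{\alpha,t}$. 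The step I expect to demand the most care — beyond the convexity reduction above — is reconciling this combined quantity $\|\cdot\|_{L^p(\X)}^p+N^p$ with the literal $p$-th power of the sum-norm $\|\cdot\|_{L^p(\X)}+N$: the clean additive inequality lives at the level of $\|\cdot\|_{L^p(\X)}^p+N^p$, so I would read the Besov norm in this (equivalent) combined form and phrase the conclusion accordingly, rather than splitting $(\|\cdot\|_{L^p(\X)}+N)^p$ term by term, which does not combine additively.
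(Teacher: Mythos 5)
Your proposal is correct and takes essentially the same route as the paper: the paper partitions $\X$ into $D_{1}=\{u_{1}\ge u_{2}\}$ and $D_{2}=\{u_{1}<u_{2}\}$, expands each seminorm into four double integrals, and applies on the cross terms the very same two-point convexity inequality (quoted there as Lemma 3.3 of \cite{SW2010} with $F(r,s)=|r-s|^{p}$) that you derive directly as the nonnegativity of the mixed second difference of $|t|^{p}$, so your single pointwise rearrangement inequality integrated against the nonnegative kernel is just a streamlined packaging of the same argument. Your closing caveat is well taken and is in fact a point the paper glosses over: the clean additive inequality lives at the level of $\|\cdot\|_{L^{p}(\X)}^{p}+N^{p}$, not of $\bigl(\|\cdot\|_{L^{p}(\X)}+N\bigr)^{p}$, so the stated conclusion should be read with the Besov norm in that equivalent combined form.
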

\begin{proof}
Let $D_{1}:=\{g\in\X:u_{1}(g)\geq u_{2}(g)\}$ and
$D_{2}:=\{g\in\X:u_{1}(g)< u_{2}(g)\}.$ Then
\begin{align*}
&\|G\|^{p}_{L^{p}(\X)}+\|H\|^{p}_{L^{p}(\X)}\\
&=\bigg(\int_{D_{1}}|u_{1}(g)|^{p}dg+\int_{D_{2}}|u_{2}(g)|^{p}dg\bigg)
+\bigg(\int_{D_{1}}|u_{2}(g)|^{p}dg+\int_{D_{2}}|u_{1}(g)|^{p}dg\bigg)\\
&=\|u_{1}\|^{p}_{L^{p}(\X)}+\|u_{2}\|^{p}_{L^{p}(\X)}.
\end{align*}
Thus  we only need to prove
\begin{equation}\label{semi}
(N_{p,p,\mathcal{H}}^{{\alpha},\beta}
(G))^{p}+(N_{p,p,\mathcal{H}}^{{\alpha},\beta} (H))^{p}\leq
(N_{p,p,\mathcal{H}}^{{\alpha},\beta}
(u_{1}))^{p}+(N_{p,p,\mathcal{H}}^{{\alpha},\beta} (u_{2}))^{p}.
\end{equation}
By calculation, we have
\begin{align*}
(N_{p,p,\mathcal{H}}^{{\alpha},\beta}
(G))^{p}&=\int_{0}^{\infty}\dfrac{1}{t^{{\beta p}/{2}+1}}\int_{\X}
H_{\alpha,t}(|G-G(g')|^{p})(g')dg'dt\\
&=\int_{0}^{\infty}\dfrac{1}{t^{{\beta p}/{2}+1}}\int_{\X}\int_{\X}K_{\alpha,t}(g'^{-1}g)|G(g)-G(g')|^{p}dgdg'dt\\
&=\int_{0}^{\infty}\dfrac{1}{t^{{\beta p}/{2}+1}}\int_{D_{1}}\int_{D_{1}}K_{\alpha,t}(g'^{-1}g)|u_{2}(g)-u_{2}(g')|^{p}dgdg'dt\\
&\qquad+\int_{0}^{\infty}\dfrac{1}{t^{{\beta p}/{2}+1}}\int_{D_{1}}\int_{D_{2}}K_{\alpha,t}(g'^{-1}g)|u_{1}(g)-u_{2}(g')|^{p}dgdg'dt\\
&\qquad+\int_{0}^{\infty}\dfrac{1}{t^{{\beta p}/{2}+1}}\int_{D_{2}}\int_{D_{1}}K_{\alpha,t}(g'^{-1}g)|u_{2}(g)-u_{1}(g')|^{p}dgdg'dt\\
&\qquad+\int_{0}^{\infty}\dfrac{1}{t^{{\beta p}/{2}+1}}\int_{D_{2}}\int_{D_{2}}K_{\alpha,t}(g'^{-1}g)|u_{1}(g)-u_{1}(g')|^{p}dgdg'dt
\end{align*}
and
\begin{align*}
(N_{p,p,\mathcal{H}}^{{\alpha},\beta} (H))^{p}
&=\int_{0}^{\infty}\dfrac{1}{t^{{\beta p}/{2}+1}}\int_{D_{1}}\int_{D_{1}}K_{\alpha,t}(g'^{-1}g)|u_{1}(g)-u_{1}(g')|^{p}dgdg'dt\\
&\qquad+\int_{0}^{\infty}\dfrac{1}{t^{{\beta p}/{2}+1}}\int_{D_{1}}\int_{D_{2}}K_{\alpha,t}(g'^{-1}g)|u_{2}(g)-u_{1}(g')|^{p}dgdg'dt\\
&\qquad+\int_{0}^{\infty}\dfrac{1}{t^{{\beta p}/{2}+1}}\int_{D_{2}}\int_{D_{1}}K_{\alpha,t}(g'^{-1}g)|u_{1}(g)-u_{2}(g')|^{p}dgdg'dt\\
&\qquad+\int_{0}^{\infty}\dfrac{1}{t^{{\beta p}/{2}+1}}\int_{D_{2}}\int_{D_{2}}K_{\alpha,t}(g'^{-1}g)|u_{2}(g)-u_{2}(g')|^{p}dgdg'dt.
\end{align*}
Then
\begin{align*}
&(N_{p,p,\mathcal{H}}^{{\alpha},\beta} (G))^{p}+(N_{p,p,\mathcal{H}}^{{\alpha},\beta} (H))^{p}\\
&=\bigg(\int_{0}^{\infty}\dfrac{1}{t^{{\beta p}/{2}+1}}\int_{D_{1}}\int_{D_{1}}+\int_{0}^{\infty}\dfrac{1}{t^{{\alpha p}/{2}}}\int_{D_{2}}\int_{D_{2}}\bigg)K_{\alpha,t}(g'^{-1}g)|u_{1}(g)-u_{1}(g')|^{p}dgdg'dt\\
&\qquad+\int_{0}^{\infty}\dfrac{1}{t^{{\beta p}/{2}+1}}\int_{D_{2}}\int_{D_{1}}K_{\alpha,t}(g'^{-1}g)(|u_{1}(g)-u_{2}(g)'|^{p}+|u_{2}(g)-u_{1}(g')|^{p})dgdg'dt\\
&\qquad+\bigg(\int_{0}^{\infty}\dfrac{1}{t^{{\beta p}/{2}+1}}\int_{D_{1}}\int_{D_{1}}+\int_{0}^{\infty}\dfrac{1}{t^{{\alpha p}/{2}}}\int_{D_{2}}\int_{D_{2}}\bigg)K_{\alpha,t}(g'^{-1}g)|u_{2}(g)-u_{2}(g')|^{p}dgdg'dt\\
&\qquad+\int_{0}^{\infty}\dfrac{1}{t^{{\beta p}/{2}+1}}\int_{D_{1}}\int_{D_{2}}K_{\alpha,t}(g'^{-1}g)(|u_{2}(g)-u_{1}(g')|^{p}+|u_{1}(g)-u_{2}(g')|^{p})dgdg'dt\\
&=:I_{1}+I_{2}+I_{3}+I_{4}.
\end{align*}
Next we deal with $I_{2}$ and $I_{4}$   using \cite[Lemma
3.3]{SW2010}, which says that for
any convex and lower-semi-continuous function
$F:\mathbb{R}^{2}\rightarrow(-\infty,+\infty),$ we have
$$F(x_{0},y_{1})+F(y_{0},x_{1})\leq F(x_{0},x_{1})+F(y_{0},y_{1})~\mbox{as}~(x_{0}-y_{0})(x_{1}-y_{1})\leq0.$$
If we take $F(r,s):=|r-s|^{p},$ then for any $g\in D_{1}$ and $g'\in
D_{2},$ we have
\begin{align*}
|u_{2}(g)-u_{1}(g')|^{p}+|u_{1}(g)-u_{2}(g')|^{p}
&\leq F(u_{2}(g),u_{2}(g'))+F(u_{1}(g),u_{1}(g'))\\
&=|u_{2}(g)-u_{2}(g')|^{p}+|u_{1}(g)-u_{1}(g')|^{p}.
\end{align*}
Likewise, for any $g\in D_{2}$ and $g'\in D_{1},$ we obtain
$$|u_{1}(g)-u_{2}(g')|^{p}+|u_{2}(g)-u_{1}(g')|^{p}\leq|u_{1}(g)-u_{1}(g')|^{p}+|u_{2}(g)-u_{2}(g')|^{p}.$$
Combining the last two formulas  gives
\begin{align*}
I_{2}+I_{4}
&\leq\bigg(\int_{0}^{\infty}\dfrac{1}{t^{{\beta p}/{2}+1}}\int_{D_{1}}\int_{D_{2}}+\int_{0}^{\infty}\dfrac{1}{t^{{\beta p}/{2}+1}}\int_{D_{2}}\int_{D_{1}}\bigg)\\
&\times K_{\alpha,t}(g'^{-1}g)(|u_{1}(g)-u_{1}(g')|^{p}+|u_{2}(g)-u_{2}(g')|^{p})dgdg'dt.
\end{align*}
This,   together  with the expressions of $I_{1}$ and $I_{3}$,
implies (\ref{semi}) immediately.
\end{proof}

Below we give some results about the boundedness properties of $\mathcal{L}^{\mathbf{s}}$ and $\mathcal{L}^{\widetilde{\mathbf{s}}}.$
\begin{proposition}\label{boundedness}
Let $(s,\alpha,\sigma)\in(0,1)\times(0,1)\times(0,1) $,
$\mathbf{s}=s\alpha$ and $\widetilde{\mathbf{s}}=s\sigma$.

\item{\rm (i)} For $p\in(1,\infty)$ and $\beta>2s,$ we have
$$\mathcal{L}^{\mathbf{s}}:B_{p,p,\mathcal{H}}^{{\alpha},\beta}(\X)\hookrightarrow L^{p}(\X).$$
For $p=1,$ the above results are also valid for $\beta\geq 2s.$

\item{\rm (ii)} For $p\in(1,\infty)$ and $\beta>2s,$ we have
$$\mathcal{L}^{\widetilde{\mathbf{s}}}:B_{p,p,\mathcal{P}}^{{\sigma},\beta}(\X)\hookrightarrow L^{p}(\X).$$
For $p=1,$ the above results are also valid for $\beta\geq 2s.$

\end{proposition}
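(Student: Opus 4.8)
The plan is to reduce the operator bound to a one–dimensional weighted integral in the time variable and then split that integral at $t=1$, controlling the large–time part by $\|u\|_{L^p(\X)}$ and the small–time part by the Besov seminorm. First I would record the pointwise bound: since $\int_\X K_{\alpha,t}=1$ (Lemma \ref{fracheat-1}(i)), the subordination formula (\ref{fraheat-1}) gives $H_{\alpha,t}u(g)-u(g)=H_{\alpha,t}(u-u(g))(g)$, whence
$$|\mathcal{L}^{\mathbf{s}}u(g)|\lesssim \int_0^\infty H_{\alpha,t}(|u-u(g)|)(g)\,\frac{dt}{t^{1+s}}.$$
It is convenient to set $A(t):=\int_\X H_{\alpha,t}(|u-u(g)|^p)(g)\,dg$, so that $(N_{p,p,\mathcal{H}}^{\alpha,\beta}(u))^p=\int_0^\infty A(t)\,t^{-\beta p/2-1}\,dt$. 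Two elementary facts drive the proof: (a) by Jensen's inequality for the probability kernel $K_{\alpha,t}(g'^{-1}g)\,dg'$ one has $\|H_{\alpha,t}(|u-u(\cdot)|)(\cdot)\|_{L^p}^p\le A(t)$; and (b) using $|u(g')-u(g)|^p\le 2^{p-1}(|u(g')|^p+|u(g)|^p)$ together with the fact that $H_{\alpha,t}$ preserves the integral, $A(t)\lesssim \|u\|_{L^p(\X)}^p$ uniformly in $t$.

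For $p\in(1,\infty)$ I would apply Minkowski's integral inequality in $t$ and then (a) to get $\|\mathcal{L}^{\mathbf{s}}u\|_{L^p}\lesssim \int_0^\infty A(t)^{1/p}\,t^{-1-s}\,dt$. Splitting at $t=1$: on $(1,\infty)$ fact (b) yields $\int_1^\infty A(t)^{1/p}t^{-1-s}\,dt\lesssim \|u\|_{L^p}$; on $(0,1)$ I would write $A(t)^{1/p}t^{-1-s}=\bigl[A(t)t^{-\beta p/2-1}\bigr]^{1/p}\,t^{\beta/2+1/p-1-s}$ and apply Hölder with exponents $p,p'$, bounding this piece by $N_{p,p,\mathcal{H}}^{\alpha,\beta}(u)\cdot(\int_0^1 t^{(\beta/2+1/p-1-s)p'}dt)^{1/p'}$. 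The remaining power integral converges precisely when $(\beta/2+1/p-1-s)p'>-1$, i.e. $\beta>2s$; this is the single place the hypothesis is used, and it pins down exactly the stated threshold. Combining the two pieces gives $\|\mathcal{L}^{\mathbf{s}}u\|_{L^p}\lesssim\|u\|_{L^p}+N_{p,p,\mathcal{H}}^{\alpha,\beta}(u)=\|u\|_{B_{p,p,\mathcal{H}}^{\alpha,\beta}(\X)}$. For $p=1$ neither Minkowski nor Jensen is needed: by Tonelli $\|\mathcal{L}^{\mathbf{s}}u\|_{L^1}\lesssim \int_0^\infty A(t)t^{-1-s}\,dt$, the tail over $(1,\infty)$ is again $\lesssim\|u\|_{L^1}$ by (b), and on $(0,1)$ the bare weight comparison $t^{-1-s}\le t^{-\beta/2-1}$, valid there exactly when $\beta\ge 2s$, dominates the integrand by the $p=1$ seminorm. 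This explains why the borderline $\beta=2s$ is admissible precisely when $p=1$.

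Part (ii) is proved verbatim with $H_{\alpha,t}$ and $K_{\alpha,t}$ replaced by $P_{\sigma,t}$ and the fractional Poisson kernel, the operator $\mathcal{L}^{\widetilde{\mathbf{s}}}$ being given by (\ref{frapoisson-1}) with the same $t^{-1-s}$ weight and the seminorm $N_{p,p,\mathcal{P}}^{\sigma,\beta}$ carrying the same weight $t^{-\beta p/2-1}$. The two driving facts persist: $\int_\X P_{\sigma,t}=1$ and the contraction/integral–preserving property (Lemma \ref{poisson-1}) give the uniform bound $A(t)\lesssim\|u\|_{L^p}^p$, and Jensen applies unchanged; the exponent bookkeeping is identical, yielding the same thresholds $\beta>2s$ (resp. $\beta\ge 2s$ for $p=1$). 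To be rigorous about the convergence of the defining integral, I would first establish the estimate for $u\in C_c^\infty(\X)$, where $\mathcal{L}^{\mathbf{s}}u$ is a convergent integral by Remark \ref{re-cc}(ii), and then pass to general $u$ by the density of $C_c^\infty(\X)$ in the Besov space (Proposition \ref{desity}). The only genuinely delicate point is the small–time estimate, which must reproduce the sharp condition $\beta>2s$; by contrast the large–time contribution is unavoidably absorbed into $\|u\|_{L^p}$ rather than the seminorm, which is exactly why the full norm, and not merely the seminorm, must appear on the right-hand side.
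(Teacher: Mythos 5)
Your proposal is correct and follows essentially the same route as the paper's proof: split the $t$-integral at $t=1$, absorb the tail into $\|u\|_{L^p}$ via the contraction property, bound $\|H_{\alpha,t}u-u\|_{L^p}$ by $\big(\int_\X H_{\alpha,t}(|u-u(g)|^p)(g)\,dg\big)^{1/p}$ via Jensen/H\"older with the probability kernel, and finish with H\"older in $t$ for $p>1$ (forcing $\beta>2s$) or the bare weight comparison for $p=1$ (allowing $\beta\ge 2s$). The exponent bookkeeping and the role of each hypothesis match the paper exactly.
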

\begin{proof}
We only give the proof of (i). Since the proof of (ii) is similar to
(i), we omit the details. For $p\in[1,\infty)$ and $u\in
B_{p,p,\mathcal{H}}^{{\alpha},\beta}(\X),$ we deduce from (\ref{-G})
that
\begin{align*}
\|\mathcal{L}^{\mathbf{s}}u\|_{L^{p}(\X)}&\leq\frac{s}{\Gamma(1-s)}\int_{0}^{\infty}
t^{-(1+s)}\|H_{\alpha,t}u-u\|_{L^{p}(\X)}dt= I+II,
\end{align*}
where
\begin{align*}
  \left\{\begin{aligned}
  I&:=\frac{s}{\Gamma(1-s)}\int_{0}^{1}t^{-(1+s)}\|H_{\alpha,t}u-u\|_{L^{p}(\X)}dt;\\
  II&:=\frac{s}{\Gamma(1-s)}\int_{1}^{\infty}t^{-(1+s)}\|H_{\alpha,t}u-u\|_{L^{p}(\X)}dt.
  \end{aligned}\right.
\end{align*}
Using Lemma \ref{fracheat-1} (iii), we obtain
$$II\lesssim \|u\|_{L^{p}(\X)}.$$
Next, we will consider the term $I.$
It follows from H\"older's inequality and Lemma \ref{fracheat-1} (i) that
\begin{align}\label{ineq20}
\|H_{\alpha,t}u-u\|_{L^{p}(\X)}
&= \bigg(\int_{\X}|H_{\alpha,t}u(g)-u(g)|^{p}dg\bigg)^{1/p}\nonumber\\
&=\bigg(\int_{\X}\bigg|\int_{\X}
K_{\alpha,t}(g'^{-1}g)(u(g')-u(g))dg'\bigg|^{p}dg\bigg)^{1/p}\nonumber\\
&\leq
\bigg(\int_{\X}H_{\alpha,t}(|u-u(g)|^{p})(g)dg\bigg)^{1/p}.
\end{align}
If $p=1$ and $\beta\geq2s,$ we can obtain from (\ref{ineq20}) that
\begin{align*}
&\int_{0}^{1}t^{-(1+s)}\|H_{\alpha,t}u-u\|_{L^{1}(\X)}dt\\
&\leq \int_{0}^{1}\frac{t^{{\beta}/{2}-s}}{t^{{\beta}/{2}+1}}\Bigg(
\int_{\X}H_{\alpha,t}(|u-u(g)|)(g)dg\Bigg)dt\\
&\leq \int_{0}^{1}\frac{1}{t^{{\beta}/{2}+1}}
\Bigg(\int_{\X}H_{\alpha,t}(|u-u(g)|)(g)dg\Bigg)dt\\
&=\widetilde{N}_{1,1,\mathcal{H}}^{{\alpha},\beta}(u)<\infty,
\end{align*}
where we have used  Remark \ref{rem2} (i) in the last step.

If $p\in(1,\infty)$ and $\beta>2s,$ we use H\"older's inequality and
(\ref{ineq20}) to give
\begin{align*}
&\int_{0}^{1}t^{-(1+s)}\|H_{\alpha,t}u-u\|_{L^{p}(\X)}dt\\
&=\int_{0}^{1}t^{-s+{\beta}/{2}-{\beta}/{2}}\|H_{\alpha,t}u-u\|_{L^{p}(\X)}\frac{dt}{t}\\
&\leq\bigg(\int_{0}^{1}t^{-(s-{\beta}/{2})p'}\frac{dt}{t}\bigg)^{1/p'}
\bigg(\int_{0}^{1}t^{-{\beta p}/{2}}\|H_{\alpha,t}u-u\|^{p}_{L^{p}(\X)}\frac{dt}{t}\bigg)^{1/p}\\
&\leq\bigg(\int_{0}^{1}t^{-(s-{\beta}/{2})p'-1}dt\bigg)^{1/p'}\bigg(\int_{0}^{1}\frac{1}{t^{{\beta p}/{2}+1}}
\int_{\X}H_{\alpha,t}(|u-u(g)|^p{})(g)dg\bigg)^{1/p}\\
&\lesssim
\widetilde{N}_{p,p,\mathcal{H}}^{{\alpha},\beta}(u)<\infty.
\end{align*}
According to  Remark \ref{rem2} (i), we obtain the desired
conclusion.
\end{proof}

\begin{proposition}\label{boundedness1}
Let $(s,\alpha,\sigma)\in(0,1)\times(0,1)\times(0,1),$
$\mathbf{s}=s\alpha$ and $\widetilde{\mathbf{s}}=s\sigma$.

\item{\rm (i)}
For $(p,q,\beta)\in[1,\infty)\times\{\infty\}\times(2s,\infty)$ we have
$$\mathcal{L}^{\mathbf{s}}:B_{p,\infty,\mathcal{H}}^{{\alpha},\beta}(\X)\hookrightarrow L^{p}(\X).$$

\item{\rm (ii)}
For $(p,q,\beta)\in[1,\infty)\times\{\infty\}\times(2s,\infty)$ we have
$$\mathcal{L}^{\widetilde{\mathbf{s}}}:B_{p,\infty,\mathcal{P}}^{{\sigma},\beta}(\X)\hookrightarrow L^{p}(\X).$$
\end{proposition}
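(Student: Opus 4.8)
The plan is to mimic the proof of Proposition \ref{boundedness}, replacing the role of the $L^{q}(dt/t^{\beta q/2+1})$-seminorm by the supremum bound encoded in $N_{p,\infty,\mathcal{H}}^{\alpha,\beta}(\cdot)$. Since the argument for $P_{\sigma,t}$ is identical after substituting Lemma \ref{poisson-1} for Lemma \ref{fracheat-1}, I would prove only (i) and omit (ii).

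First I would use the representation (\ref{fraheat-1}) together with Minkowski's inequality to write
$$\|\mathcal{L}^{\mathbf{s}}u\|_{L^{p}(\X)}\leq\frac{s}{\Gamma(1-s)}\int_{0}^{\infty} t^{-(1+s)}\|H_{\alpha,t}u-u\|_{L^{p}(\X)}\,dt=:I+II,$$
where $I$ and $II$ denote the integrals over $(0,1)$ and $(1,\infty)$ respectively. The tail term $II$ is handled exactly as before: by the contraction property in Lemma \ref{fracheat-1}(iii) one has $\|H_{\alpha,t}u-u\|_{L^{p}(\X)}\leq 2\|u\|_{L^{p}(\X)}$, so that $II\lesssim\|u\|_{L^{p}(\X)}\int_{1}^{\infty}t^{-(1+s)}\,dt\lesssim\|u\|_{L^{p}(\X)}$.

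The essential step is the estimate of $I$. Here I would invoke inequality (\ref{ineq20}), which gives $\|H_{\alpha,t}u-u\|_{L^{p}(\X)}\leq(\int_{\X}H_{\alpha,t}(|u-u(g)|^{p})(g)\,dg)^{1/p}$, and then apply the very definition (\ref{NP}) of the seminorm, namely that $(\int_{\X}H_{\alpha,t}(|u-u(g)|^{p})(g)\,dg)^{1/p}\leq t^{\beta/2}N_{p,\infty,\mathcal{H}}^{\alpha,\beta}(u)$ holds for every $t>0$. Combining these yields
$$I\lesssim N_{p,\infty,\mathcal{H}}^{\alpha,\beta}(u)\int_{0}^{1}t^{\beta/2-s-1}\,dt,$$
and the remaining integral converges precisely when $\beta/2-s>0$, i.e. $\beta>2s$. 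Adding the two bounds gives $\|\mathcal{L}^{\mathbf{s}}u\|_{L^{p}(\X)}\lesssim\|u\|_{L^{p}(\X)}+N_{p,\infty,\mathcal{H}}^{\alpha,\beta}(u)=\|u\|_{B_{p,\infty,\mathcal{H}}^{\alpha,\beta}(\X)}$, which is the claimed embedding.

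There is no genuine obstacle beyond bookkeeping; the only point requiring care is that, in contrast to the finite-$q$ case of Proposition \ref{boundedness} where $p=1$ tolerated the endpoint $\beta=2s$, the supremum formulation here forces the strict inequality $\beta>2s$ for all $p\in[1,\infty)$, since this is exactly the condition under which $\int_{0}^{1}t^{\beta/2-s-1}\,dt<\infty$. This accounts for the hypothesis $\beta\in(2s,\infty)$ in the statement.
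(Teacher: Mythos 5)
Your proposal is correct and follows essentially the same route as the paper's proof: split the subordination integral at $t=1$, bound the tail by the $L^{p}$-contraction property, and control the near-zero part via (\ref{ineq20}) and the pointwise bound $\|H_{\alpha,t}u-u\|_{L^{p}(\X)}\leq t^{\beta/2}N_{p,\infty,\mathcal{H}}^{\alpha,\beta}(u)$, with $\int_{0}^{1}t^{\beta/2-s-1}\,dt<\infty$ exactly when $\beta>2s$. Your closing remark correctly identifies why the strict inequality $\beta>2s$ is needed here even at $p=1$, unlike in Proposition \ref{boundedness}.
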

\begin{proof} We only give the proof of (i). The proof of (ii) is similar.
For any $u\in B_{p,\infty,\mathcal{H}}^{{\alpha},\beta}(\X),$ it
follows from (\ref{ineq20}) that
$$\|H_{\alpha,t}u-u\|_{L^{p}(\X)}\leq t^{\beta/2}N_{p,\infty,\mathcal{H}}^{{\alpha},\beta}(u),$$
which, together with (\ref{fraheat-1}) and Lemma \ref{fracheat-1}
(iii), yields
\begin{align*}
\|\mathcal{L}^{\mathbf{s}}u\|_{L^{p}(\X)}
&\leq\frac{s}{\Gamma(1-s)}\int_{0}^{\infty}
t^{-1-s}\|H_{\alpha,t}u-u\|_{L^{p}(\X)}dt\\
&\lesssim \int_{0}^{1}t^{-1-s}\|H_{\alpha,t}u-u\|_{L^{p}(\X)}dt
+\int_{1}^{\infty}t^{-1-s}\|H_{\alpha,t}u-u\|_{L^{p}(\X)}dt\\
&\lesssim
N_{p,\infty,\mathcal{H}}^{{\alpha},\beta}(u)\int_{0}^{1}t^{-1-s+\beta/2}dt
+\|u\|_{L^{p}(\X)}\int_{1}^{\infty}t^{-1-s}dt\\
&\lesssim
N_{p,\infty,\mathcal{H}}^{{\alpha},\beta}(u)+\|u\|_{L^{p}(\X)}<\infty.
\end{align*}
This completes the proof.
\end{proof}

Combining Propositions \ref{boundedness} \&\
\ref{boundedness1}, we can directly obtain
the following corollary.
\begin{corollary}\label{BW}
Let $(s,\alpha,\sigma)\in(0,1)\times(0,1)\times(0,1),$
$\mathbf{s}=s\alpha$ and $\widetilde{\mathbf{s}}=s\sigma$. The
following statements   hold:

\item{\rm (i)}  If $p=q\in(1,\infty)$ and $\beta>2s$ then
$$B_{p,p,\mathcal{H}}^{{\alpha},\beta}(\X)\hookrightarrow \mathcal{W}^{2\mathbf{s},p}(\X)\ \ \ \&\ \ \
 B_{p,p,\mathcal{P}}^{{\sigma},\beta}(\X)\hookrightarrow \mathcal{W}^{2\widetilde{\mathbf{s}},p}(\X).$$
                  In particular, when $p=q=1$ and $\beta\geq2s,$ we have
                  $$B_{1, 1,\mathcal{H}}^{{\alpha},\beta}(\X)\hookrightarrow \mathcal{W}^{2\mathbf{s},1}(\X)\ \ \ \&\ \ \
                  B_{1,1,\mathcal{P}}^{{\sigma},\beta}(\X)\hookrightarrow \mathcal{W}^{2\widetilde{\mathbf{s}},1}(\X).$$

\item{\rm  (ii)} If $p\in[1,\infty),\, q=\infty$ and $\beta>2s$, then
  $$B_{p,\infty,\mathcal{H}}^{{\alpha},\beta}(\X)\hookrightarrow \mathcal{W}^{2\mathbf{s},p}(\X)\ \ \ \&\ \
  \ B_{p,\infty,\mathcal{P}}^{{\sigma},\beta}(\X)\hookrightarrow \mathcal{W}^{2\widetilde{\mathbf{s}},p}(\X).$$

\end{corollary}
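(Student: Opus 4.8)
The plan is to read the corollary off directly from the two boundedness results already in hand, since no new analysis is required. Recall that by Definition \ref{def-sobolev} the homogeneous-plus-inhomogeneous fractional Sobolev space is $\mathcal{W}^{2\mathbf{s},p}(\X)=\{u\in L^{p}(\X):\ \mathcal{L}^{\mathbf{s}}u\in L^{p}(\X)\}$, normed by $\|u\|_{\mathcal{W}^{2\mathbf{s},p}(\X)}=\|u\|_{L^{p}(\X)}+\|\mathcal{L}^{\mathbf{s}}u\|_{L^{p}(\X)}$, while every Besov-type space $B_{p,q,\mathcal{H}}^{\alpha,\beta}(\X)$ is by construction a subspace of $L^{p}(\X)$ with $\|u\|_{L^{p}(\X)}\leq\|u\|_{B_{p,q,\mathcal{H}}^{\alpha,\beta}(\X)}$. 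Thus the continuous embedding $B_{p,q,\mathcal{H}}^{\alpha,\beta}(\X)\hookrightarrow\mathcal{W}^{2\mathbf{s},p}(\X)$ is equivalent to the single statement that $\mathcal{L}^{\mathbf{s}}$ maps $B_{p,q,\mathcal{H}}^{\alpha,\beta}(\X)$ boundedly into $L^{p}(\X)$, which is precisely what Propositions \ref{boundedness} and \ref{boundedness1} assert.

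For part (i) I would argue as follows. Fix $p=q\in(1,\infty)$ and $\beta>2s$, and take any $u\in B_{p,p,\mathcal{H}}^{\alpha,\beta}(\X)$. First, $u\in L^{p}(\X)$ with $\|u\|_{L^{p}(\X)}\leq\|u\|_{B_{p,p,\mathcal{H}}^{\alpha,\beta}(\X)}$ by definition. Second, Proposition \ref{boundedness} (i) gives $\mathcal{L}^{\mathbf{s}}u\in L^{p}(\X)$ together with the quantitative bound $\|\mathcal{L}^{\mathbf{s}}u\|_{L^{p}(\X)}\lesssim\|u\|_{B_{p,p,\mathcal{H}}^{\alpha,\beta}(\X)}$; here one uses that the two pieces $I,II$ in its proof are controlled respectively by $\widetilde{N}_{p,p,\mathcal{H}}^{\alpha,\beta}(u)\sim N_{p,p,\mathcal{H}}^{\alpha,\beta}(u)$ (via Remark \ref{rem2} (i)) and by $\|u\|_{L^{p}(\X)}$. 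Adding the two estimates yields $\|u\|_{\mathcal{W}^{2\mathbf{s},p}(\X)}=\|u\|_{L^{p}(\X)}+\|\mathcal{L}^{\mathbf{s}}u\|_{L^{p}(\X)}\lesssim\|u\|_{B_{p,p,\mathcal{H}}^{\alpha,\beta}(\X)}$, which is the claimed embedding; in particular $u\in\mathcal{W}^{2\mathbf{s},p}(\X)$. The endpoint $p=q=1$ with $\beta\geq2s$ is identical, invoking the $p=1$ clause of Proposition \ref{boundedness} (i).

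For part (ii) I would repeat the same two-step assembly verbatim, replacing the input Proposition \ref{boundedness} by Proposition \ref{boundedness1} (i), which handles the range $(p,q,\beta)\in[1,\infty)\times\{\infty\}\times(2s,\infty)$ and furnishes $\|\mathcal{L}^{\mathbf{s}}u\|_{L^{p}(\X)}\lesssim N_{p,\infty,\mathcal{H}}^{\alpha,\beta}(u)+\|u\|_{L^{p}(\X)}\lesssim\|u\|_{B_{p,\infty,\mathcal{H}}^{\alpha,\beta}(\X)}$. In every case the corresponding statements for $P_{\sigma,t}$, $\mathcal{L}^{\widetilde{\mathbf{s}}}$ and $B_{p,q,\mathcal{P}}^{\sigma,\beta}(\X)\hookrightarrow\mathcal{W}^{2\widetilde{\mathbf{s}},p}(\X)$ follow by the same reasoning from parts (ii) of Propositions \ref{boundedness} and \ref{boundedness1}, so I would simply note that the Poisson version is obtained mutatis mutandis. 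There is no genuine obstacle in this corollary: the only point requiring care is the bookkeeping of which parameter regime $(p,q,\beta)$ feeds into which proposition, and the correct tracking of the strict inequality $\beta>2s$ for $p>1$ versus the non-strict $\beta\geq2s$ at the endpoint $p=1$; these are already resolved inside the cited propositions, so the proof reduces to citing them and summing the two norm bounds.
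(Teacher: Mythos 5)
Your proposal is correct and is exactly the route the paper takes: the corollary is stated there as a direct consequence of Propositions \ref{boundedness} and \ref{boundedness1}, obtained by combining the trivial bound $\|u\|_{L^{p}(\X)}\leq\|u\|_{B_{p,q,\mathcal{H}}^{\alpha,\beta}(\X)}$ with the $L^{p}$-bound on $\mathcal{L}^{\mathbf{s}}u$ (resp.\ $\mathcal{L}^{\widetilde{\mathbf{s}}}u$) from those propositions. Your bookkeeping of the parameter ranges, including the endpoint $p=1$ with $\beta\geq 2s$, matches the paper.
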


Below, we provide the connection between
$B_{p,q,\mathcal{H}}^{{\alpha},\beta}(\X)$, $B_{p,q,\mathcal{P}}^{
{\sigma},\beta}(\X)$  and Besov spaces $B_{p,q}^{\beta}(\X)$ defined
via the difference. To be precise, for any $p,q\in[1,\infty)$ and
$\beta>0,$ the space $B_{p,q}^{\beta}(\X)$ is defined to be the
collection of all functions $u\in L^{p}(\X)$ such that
\begin{align*}\label{Dbesov}
N_{p,q}^{\beta}(u):=\Big(\int_{0}^{\infty}\Big(\int_{\X}\int_{B(g,r)}\frac{|u(g)-u(g')|^{p}}{r^{2\beta p+\Q}}dg'dg\Big)^{q/p}\frac{dr}{r}\Big)^{1/q}<\infty,
\end{align*}
When $q=\infty$, then $B_{p,\infty}^{\beta}(\X)$ is defined to be the collection of all
$u\in L^{p}(\X)$ such that
$$N_{p,\infty}^{\beta}(u):=\sup_{r>0}\Big(\int_{\X}\int_{B(g,r)}\frac{|u(g)-u(g')|^{p}}{r^{2\beta p+\Q}}dg'dg\Big)^{1/p}<\infty.$$
The space $B_{p,q}^{\beta}(\X)$ can be endowed
with the norm
$$\|u\|_{B_{p,q}^{\beta}(\X)}
:=\|u\|_{L^{p}(\X)}+N_{p,q}^{\beta}(u).$$

\begin{proposition}\label{com-2}
Let $p,q\in[1,\infty)$ and $\beta\in(0,1/p)$.

\item{\rm (i)} For $\alpha\in(0,1),$
$$B_{p,q,\mathcal{H}}^{\alpha{ },2\beta}(\X)=B_{p,q}^{\alpha\beta}(\X).$$

\item{\rm (ii)} For $\sigma\in(0,1),$
$$B_{p,q,\mathcal{P}}^{{\sigma},2\beta}(\X)=B_{p,q}^{\sigma\beta}(\X).$$
\end{proposition}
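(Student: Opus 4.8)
The plan is to deduce that the two function spaces coincide from the equivalence of their defining seminorms. Since $B_{p,q,\mathcal{H}}^{\alpha,2\beta}(\X)$ and $B_{p,q}^{\alpha\beta}(\X)$ both consist of $L^{p}(\X)$-functions equipped with the norm $\|u\|_{L^{p}(\X)}$ plus a seminorm, it suffices to prove $N_{p,q,\mathcal{H}}^{\alpha,2\beta}(u)\sim N_{p,q}^{\alpha\beta}(u)$ for all $u\in L^{p}(\X)$ (and the analogous statement with $\mathcal{P}$, $\sigma$ for (ii)). First I would put the semigroup seminorm into Gagliardo form: using the convolution kernel representation from Lemma \ref{fracheat-1}(i), $H_{\alpha,t}(|u-u(g)|^{p})(g)=\int_{\X}K_{\alpha,t}(g'^{-1}g)|u(g')-u(g)|^{p}\,dg'$, so that by Fubini
\[
N_{p,q,\mathcal{H}}^{\alpha,2\beta}(u)^{q}=\int_{0}^{\infty}\Big(\int_{\X}\int_{\X}K_{\alpha,t}(g'^{-1}g)\,|u(g')-u(g)|^{p}\,dg'\,dg\Big)^{q/p}\frac{dt}{t^{\beta q+1}}.
\]
The two-sided estimate of Proposition \ref{pro-frac}(i) lets me replace $K_{\alpha,t}(g'^{-1}g)$, up to fixed constants, by $t(t^{1/2\alpha}+d(g,g'))^{-(\Q+2\alpha)}$. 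The decisive normalization is the change of variables $t=r^{2\alpha}$, which aligns the concentration scale $t^{1/2\alpha}$ of $K_{\alpha,t}$ with the radius $r$ and turns the weight $dt/t^{\beta q+1}$ into a constant multiple of $dr/r^{2\alpha\beta q+1}$, precisely the weight built into $N_{p,q}^{\alpha\beta}$.

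The inequality $N_{p,q}^{\alpha\beta}(u)\lesssim N_{p,q,\mathcal{H}}^{\alpha,2\beta}(u)$ is the easy half and uses only the lower kernel bound. Restricting the inner double integral to the diagonal strip $\{d(g,g')<r\}$ (with $t=r^{2\alpha}$), Proposition \ref{pro-frac}(i) gives $K_{\alpha,r^{2\alpha}}(g'^{-1}g)\gtrsim r^{-\Q}$, so the full weighted double integral dominates $r^{-\Q}\int_{\X}\int_{B(g,r)}|u(g)-u(g')|^{p}\,dg'\,dg$. Inserting this bound and comparing the $r$-weights yields the claim at once.

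The substantive half is $N_{p,q,\mathcal{H}}^{\alpha,2\beta}(u)\lesssim N_{p,q}^{\alpha\beta}(u)$. Here I would decompose the inner integral dyadically in $d(g,g')$: the core $\{d<r\}$ plus the annuli $\{2^{k}r\le d<2^{k+1}r\}$, $k\ge0$, on which the kernel is comparable to $2^{-k(\Q+2\alpha)}r^{-\Q}$. Writing $D(R):=\int_{\X}\int_{\{d(g,g')<R\}}|u(g)-u(g')|^{p}\,dg'\,dg$ and $F(r):=r^{-2\alpha\beta p-\Q}D(r)$ (so that $N_{p,q}^{\alpha\beta}(u)^{q}=\int_{0}^{\infty}F(r)^{q/p}\,dr/r$), the $k$-th annular mass is bounded by $D(2^{k+1}r)$, and the short identity
\[
2^{-k(\Q+2\alpha)}\,r^{-2\alpha\beta p-\Q}\,D(2^{k+1}r)=2^{\,2\alpha\beta p+\Q}\,2^{\,2\alpha k(\beta p-1)}\,F(2^{k+1}r)
\]
shows the series is geometric with ratio $c:=2^{\,2\alpha(\beta p-1)}$. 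This is exactly where the hypothesis $\beta\in(0,1/p)$ enters: $\beta p-1<0$ forces $c\in(0,1)$. It then remains to estimate $\int_{0}^{\infty}\big(\sum_{k}c^{k}F(2^{k+1}r)\big)^{q/p}\,dr/r$, which I would do by Minkowski's inequality in $L^{q/p}(dr/r)$ when $q\ge p$ and by the subadditivity $(\sum a_{k})^{q/p}\le\sum a_{k}^{q/p}$ when $q<p$; in both cases the scale invariance of $dr/r$ under $r\mapsto2^{k+1}r$ converts each summand into a fixed multiple of $N_{p,q}^{\alpha\beta}(u)^{q}$, and the convergence of $\sum_{k}c^{k}$ (resp.\ $\sum_{k}c^{kq/p}$) closes the estimate.

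Part (ii) follows the same scheme, with Proposition \ref{pro-1} replacing Proposition \ref{pro-frac}; because $P_{\sigma,t}$ concentrates at scale $t$ rather than $t^{1/2\alpha}$, the aligning change of variables is simply $r=t$, after which the identical dyadic decomposition and geometric summation — again governed by $\beta<1/p$ — deliver $N_{p,q,\mathcal{P}}^{\sigma,2\beta}(u)\sim N_{p,q}^{\sigma\beta}(u)$. The main obstacle is this hard direction: controlling the far annuli so that the off-diagonal part of the semigroup seminorm is absorbed into the Gagliardo-type seminorm, which rests entirely on the geometric ratio being strictly less than $1$; if $\beta\ge1/p$ the tail sum diverges and the seminorms cease to be comparable. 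A secondary technical point, which I flagged above, is to run the summation uniformly across the two regimes $q\ge p$ and $q<p$, and this is why both Minkowski's inequality and the elementary $\ell^{q/p}$-subadditivity are invoked.
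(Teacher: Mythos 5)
Your proposal is correct, and the two halves of the argument are organized differently from the paper's proof in the substantive direction. The easy inequality $N_{p,q}^{\alpha\beta}(u)\lesssim N_{p,q,\mathcal{H}}^{\alpha,2\beta}(u)$ is handled exactly as in the paper (lower kernel bound $K_{\alpha,t}(g'^{-1}g)\gtrsim t^{-\Q/2\alpha}$ on $B(g,t^{1/2\alpha})$, then match the weights after $t=r^{2\alpha}$). For the converse, the paper splits at distance $d(g,g')\sim t^{1/2\alpha}$: the far part $d(g,g')>r_{k}$ is bounded crudely via $|u(g)-u(g')|^{p}\lesssim|u(g)|^{p}+|u(g')|^{p}$ and the kernel tail mass $t/r_{k}^{2\alpha}$, producing a term $\lesssim\|u\|_{L^{p}(\X)}^{q}$ (whose convergence uses $\beta<1/p$), while the near part is decomposed into \emph{smaller} annuli $r_{m+1}\le d\le r_{m}$, $m\ge k$, and summed with the discrete Hardy inequality $\sum_{k}z^{k}(\sum_{m\ge k}x_{m})^{r}\lesssim\sum_{k}z^{k}x_{k}^{r}$ with $z=2^{2\alpha\beta q}>1$ (which is where $\beta>0$ enters). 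You instead decompose all of $\X\times\X$ into the core $\{d<r\}$ and \emph{larger} annuli $\{2^{k}r\le d<2^{k+1}r\}$, bound the $k$-th annular mass by $D(2^{k+1}r)$, and exploit the exact scaling identity turning the kernel decay $2^{-k(\Q+2\alpha)}$ into the geometric ratio $c=2^{2\alpha(\beta p-1)}<1$, closing with Minkowski in $L^{q/p}(dr/r)$ (for $q\ge p$) or $\ell^{q/p}$-subadditivity (for $q<p$) together with the dilation invariance of $dr/r$. Your route buys a genuinely stronger conclusion — the pure seminorm equivalence $N_{p,q,\mathcal{H}}^{\alpha,2\beta}(u)\sim N_{p,q}^{\alpha\beta}(u)$, with no $\|u\|_{L^{p}(\X)}$ term needed on the right-hand side — and it isolates the hypothesis $\beta<1/p$ in a single place (the ratio $c<1$), whereas the paper's version uses $\beta<1/p$ for the far part and $\beta>0$ for the Hardy inequality and only yields equivalence of the full norms. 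The paper's version, on the other hand, avoids any case split in $q$ versus $p$ because the far part is disposed of before any summation in $t$ is attempted. Both arguments are sound; yours is arguably the cleaner of the two.
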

\begin{proof}We only give the proof of (i). Since the proof of (ii) is similar to (i), we omit the details.
Similarly to the argument of Remark \ref{rem2}, we conclude  that
\begin{equation}\label{Ne}
N_{p,q}^{\beta}(u)<\infty\Leftrightarrow \Big(\int_{0}^{1}\Big(\int_{\X}\int_{B(g,r)}\frac{|u(g)-u(g')|^{p}}{r^{2\beta p+\Q}}dg'dg\Big)^{q/p}\frac{dr}{r}\Big)^{1/q}<\infty.
\end{equation}
Fix a decreasing geometric sequence $\{r_{k}\}_{k\in\mathbb{Z}_{+}}$ and observe that by (\ref{Ne})
$$(N_{p,q}^{\beta}(u))^{q}\sim \sum_{k=0}^{\infty}\Big(\int_{\X}\int_{B(g,r_{k})}\frac{|u(g)-u(g')|^{p}}{r_{k}^{2\beta p+\Q}}dg'dg\Big)^{q/p}.$$

Without loss of generality, we  assume that $r_{k}=2^{-k}$.  We
first prove that $B_{p,q,\mathcal{H}}^{\alpha{
},2\beta}(\X)\subseteq B_{p,q}^{\alpha\beta}(\X)$. Let us write
$\Phi(s)=(1+s)^{-(\Q+2\alpha)}$. By Proposition \ref{pro-frac}, we
know that $$K_{\alpha,t}(g'^{-1}g)\geq
\frac{\Phi(1)}{t^{\Q/2\alpha}}$$ on $B(g,t^{1/2\alpha})$. Therefore,
\begin{eqnarray*}
% \nonumber to remove numbering (before each equation)
   &&\int_{0}^{1}\frac{1}{t^{\beta q}}\Big(\int_{\X}\int_{\X}|u(g)-u(g')|^{p}K_{\alpha,t}(g'^{-1}g)dg'dg\Big)^{q/p}\frac{dt}{t}  \\
   &&\gtrsim \int_{0}^{1}\frac{1}{t^{\beta q }}\Big(\int_{\X}\int_{B(g,t^{1/2\alpha})}|u(g)-u(g')|^{p}K_{\alpha,t}(g'^{-1}g)dg'dg\Big)^{q/p}\frac{dt}{t}  \\
   && \gtrsim \int_{0}^{1}\Big(\int_{\X}\int_{B(g,t^{1/2\alpha})}\frac{|u(g)-u(g')|^{p}}{t^{\beta p}|B(g,t^{1/2\alpha})|}dg'dg\Big)^{q/p}\frac{dt}{t}\\
   &&\sim \sum_{k=1}^{\infty}\int_{r_{k+1}^{2\alpha}}^{r_{k}^{2\alpha}}\Big(\int_{\X}\int_{B(g,t^{1/2\alpha})}\frac{|u(g)-u(g')|^{p}}{t^{\beta p+\Q/2\alpha}}dg'dg\Big)^{q/p}\frac{dt}{t}\\
   &&\gtrsim \sum_{k=1}^{\infty}\Big(\int_{\X}\int_{B(g,r_{k})}\frac{|u(g)-u(g')|^{p}}{r_{k}^{2\alpha\beta
   p+\Q}}dg'dg\Big)^{q/p},
\end{eqnarray*}
which deduces  $B_{p,q,\mathcal{H}}^{\alpha{ },2\beta}(\X)\subseteq
B_{p,q}^{\alpha\beta}(\X)$.

Conversely, we write
$$\int_{0}^{1}\frac{1}{t^{\beta q}}\Big(\int_{\X}\int_{\X}|u(g)-u(g')|^{p}K_{\alpha,t}(g'^{-1}g)dg'dg\Big)^{q/p}\frac{dt}{t}\lesssim I_{1}+I_{2},$$
where
$$\left\{\begin{aligned}
I_{1}&:=\sum_{k=0}^{\infty}\int_{r_{k+1}^{2\alpha}}^{r_{k}^{2\alpha}} \frac{1}{t^{\beta q}}\Big(\int_{\X}\int_{d(g,g')>r_{k}}|u(g)-u(g')|^{p}K_{\alpha,t}(g'^{-1}g)dg'dg\Big)^{q/p}\frac{dt}{t};\\
I_{2}&:=\sum_{k=0}^{\infty}\int_{r_{k+1}^{2\alpha}}^{r_{k}^{2\alpha}}\frac{1}{t^{\beta q}}\Big(\int_{\X}\int_{d(g,g')\leq r_{k}}|u(g)-u(g')|^{p}K_{\alpha,t}(g'^{-1}g)dg'dg\Big)^{q/p}\frac{dt}{t}.
\end{aligned}\right.$$

By Proposition \ref{pro-frac}, we obtain
$$\int_{d(g,g')>r_{k}}K_{\alpha,t}(g'^{-1}g)dg'\leq \frac{t}{r_{k}^{2\alpha}}.$$

Note that $|u(g)-u(g')|^{p}\lesssim|u(g)|^{p}+|u(g')|^{p}.$ We deduce from Fubini's theorem and the
preceding inequality that
\begin{align*}
&\int_{\X}\int_{d(g,g')>r_{k}}|u(g)-u(g')|^{p}K_{\alpha,t}(g'^{-1}g)dg'dg\\
&\lesssim\int_{\X}\int_{d(g,g')>r_{k}}|u(g)|^{p}K_{\alpha,t}(g'^{-1}g)dgdg' \\
&\lesssim \|u\|_{L^{p}(\X)}^{p}\frac{t}{r_{k}^{2\alpha}}.
\end{align*}
This implies that
\begin{align*}
I_{1}&\lesssim \sum_{k=0}^{\infty}\int_{r_{k+1}^{2\alpha}}^{r_{k}^{2\alpha}} \frac{1}{t^{\beta q}}\Big(\|u\|_{L^{p}(\X)}^{p}\frac{t}{r_{k}^{2\alpha}}\Big)^{q/p}\frac{dt}{t}\\
&\lesssim \|u\|_{L^{p}(\X)}^{q}\sum_{k=0}^{\infty}r_{k}^{2\alpha q/p-2\alpha\beta q}\\
&\lesssim \|u\|_{L^{p}(\X)}^{q},
\end{align*}
where we have used the fact that $\beta\in(0,1/p)$ and $\{r_{k}\}$ is a decreasing geometric sequence.

Below we deal with $I_{2}$. Observe that
\begin{eqnarray*}
% \nonumber to remove numbering (before each equation)
   &&\int_{\X}\int_{d(g,g')\leq r_{k}}|u(g)-u(g')|^{p}K_{\alpha,t}(g'^{-1}g)dg'dg  \\
   &&= \sum_{m=k}^{\infty} \int_{\X}\int_{r_{m+1}\leq d(g,g')\leq r_{m}}|u(g)-u(g')|^{p}K_{\alpha,t}(g'^{-1}g)dg'dg.
\end{eqnarray*}
Using Proposition \ref{pro-frac} and the fact that $t\in[r_{k+1}^{2\alpha},r_{k}^{2\alpha}]$, we have
\begin{eqnarray*}
% \nonumber to remove numbering (before each equation)
   && \sum_{m=k}^{\infty} \int_{\X}\int_{r_{m+1}\leq d(g,g')\leq r_{m}}|u(g)-u(g')|^{p}K_{\alpha,t}(g'^{-1}g)dg'dg \\
   &&\lesssim \sum_{m=k}^{\infty}\int_{\X}\int_{r_{m+1}\leq d(g,g')\leq r_{m}}|u(g)-u(g')|^{p}\frac{1}{(r_{k}+r_{m})^{\Q}}\Phi(\frac{r_{m}}{r_{k}})dg'dg  \\
&& \lesssim  \sum_{m=k}^{\infty}\Big(\frac{r_{m}}{r_{k}+r_{m}}\Big)^{\Q}\Phi(\frac{r_{m}}{r_{k}})\int_{\X}\int_{ d(g,g')\leq r_{m}}\frac{|u(g)-u(g')|^{p}}{r_{m}^{\Q}}dg'dg \\
&& \lesssim \sum_{m=k}^{\infty}\Phi(\frac{r_{m}}{r_{k}})\int_{\X}\int_{ d(g,g')\leq r_{m}}\frac{|u(g)-u(g')|^{p}}{r_{m}^{\Q}}dg'dg.
\end{eqnarray*}
Therefore,
\begin{eqnarray*}
% \nonumber to remove numbering (before each equation)
  I_{2} &\lesssim& \sum_{k=0}^{\infty}r_{k}^{-2\alpha\beta q}\Big(\sum_{m=k}^{\infty}\Phi(\frac{r_{m}}{r_{k}})\int_{\X}\int_{ d(g,g')\leq r_{m}}\frac{|u(g)-u(g')|^{p}}{r_{m}^{\Q}}dg'dg\Big)^{q/p}  \\
   &\lesssim&  \sum_{k=0}^{\infty}r_{k}^{-2\alpha\beta q}\Big(\sum_{m=k}^{\infty}\int_{\X}\int_{ d(g,g')\leq r_{m}}\frac{|u(g)-u(g')|^{p}}{r_{m}^{\Q}}dg'dg\Big)^{q/p}.
\end{eqnarray*}
Recall the classical discrete Hardy inequality: Let
$r>0$, $z>1$, $x_{k}>0$. Then
$$\sum_{k=0}^{\infty}z^{k}\Big(\sum_{m=k}^{\infty}x_{m}\Big)^{r}\lesssim \sum_{k=0}^{\infty}z^{k}x_{k}^{r}.$$
Denote by  $z:=2^{2\alpha\beta q}$ and
$$x_{m}:=\int_{\X}\int_{ d(g,g')\leq r_{m}}\frac{|u(g)-u(g')|^{p}}{r_{m}^{\Q}}dg'dg.$$ By the above
 discrete Hardy inequality we obtain
$$I_{2}\lesssim \sum_{k=0}^{\infty}\frac{1}{r_{k}^{2\alpha\beta q}}\Big(\int_{\X}\int_{ d(g,g')\leq r_{k}}\frac{|u(g)-u(g')|^{p}}{r_{k}^{\Q}}dg'dg\Big)^{q/p}.$$

Combining the estimates obtained above, we obtain
$$\int_{0}^{1}\frac{1}{t^{\beta q}}\Big(\int_{\X}\int_{\X}|u(g)-u(g')|^{p}K_{\alpha,t}(g'^{-1}g)dg'dg\Big)^{q/p}\frac{dt}{t}\lesssim \|u\|_{L^{p}(\X)}^{q}+(N_{p,q}^{\beta \alpha}(u))^{q}.$$

Therefore,  we obtain the desired result. This completes the  proof.

\end{proof}
In the following discussion,   we consider the case of $q=\infty$
and we have the following results.

\begin{proposition}\label{com-besov-2}
Let $p\in[1,\infty)$ and $\beta\in(0,1/p)$.

\item{\rm (i)} For $\alpha\in(0,1),$
$$B_{p,\infty,\mathcal{H}}^{\alpha{ },2\beta}(\X)=B_{p,\infty}^{\alpha\beta}(\X).$$

\item{\rm (ii)} For $\sigma\in(0,1),$
$$B_{p,\infty,\mathcal{P}}^{\sigma{ },2\beta}(\X)=B_{p,\infty}^{\sigma\beta}(\X).$$
\end{proposition}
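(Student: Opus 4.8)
The plan is to adapt the proof of Proposition \ref{com-2} to the endpoint $q=\infty$, where a supremum over scales replaces the dyadic $\ell^q$-sum. As in Remark \ref{rem2}(i), I would first reduce to small times: for $t\ge 1$ the quantity $\int_\X H_{\alpha,t}(|u-u(g)|^p)(g)\,dg$ is controlled by $\|u\|_{L^p(\X)}^p$ via Lemma \ref{fracheat-1}(i) and (iii), so the whole issue is the behaviour as $t\to 0$. Throughout I write $\rho=t^{1/2\alpha}$ for the natural length scale of $H_{\alpha,t}$ (and $\rho=t$ for $P_{\sigma,t}$ in part (ii)). Only the seminorms need be compared, since $\|u\|_{L^p(\X)}$ appears in both norms.

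For the inclusion $B_{p,\infty,\mathcal{H}}^{\alpha,2\beta}(\X)\subseteq B_{p,\infty}^{\alpha\beta}(\X)$ I would fix $r>0$, set $t=r^{2\alpha}$, and use the pointwise lower bound of Proposition \ref{pro-frac}(i), namely $K_{\alpha,t}(g'^{-1}g)\gtrsim t^{-\Q/2\alpha}=r^{-\Q}$ whenever $d(g,g')<r$. Restricting the heat integral to $B(g,r)$ and recalling $r^{2\alpha\beta p}=t^{\beta p}$ then gives
\[
\int_\X\int_{B(g,r)}\frac{|u(g)-u(g')|^p}{r^{2\alpha\beta p+\Q}}\,dg'dg\lesssim t^{-\beta p}\int_\X H_{\alpha,t}(|u-u(g)|^p)(g)\,dg.
\]
Taking $p$-th roots and then the supremum over $r$ (equivalently $t$) yields $N_{p,\infty}^{\alpha\beta}(u)\lesssim N_{p,\infty,\mathcal{H}}^{\alpha,2\beta}(u)$.

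For the reverse inclusion I would, for each fixed $t$, perform a dyadic annular decomposition of $\X\times\X$ about the scale $\rho=t^{1/2\alpha}$: the central region $d(g,g')<\rho$ together with the annuli $A_j=\{2^j\rho\le d(g,g')<2^{j+1}\rho\}$, $j\ge 0$. On $A_j$ the upper bound of Proposition \ref{pro-frac}(i) gives $K_{\alpha,t}(g'^{-1}g)\lesssim 2^{-j(\Q+2\alpha)}\rho^{-\Q}$; applying the defining bound $\int_\X\int_{B(g,R)}|u-u'|^p\,dg'dg\le (N_{p,\infty}^{\alpha\beta}(u))^p R^{2\alpha\beta p+\Q}$ with $R=2^{j+1}\rho$ shows that the $j$-th annulus contributes $\lesssim (N_{p,\infty}^{\alpha\beta}(u))^p\,\rho^{2\alpha\beta p}\,2^{2\alpha(\beta p-1)j}$. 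This is exactly where $q=\infty$ is easier than $q<\infty$: no discrete Hardy inequality is required, only the summation of this geometric series, which converges precisely because $\beta<1/p$ forces the exponent $2\alpha(\beta p-1)$ to be negative. Adding the central term (also $\lesssim (N_{p,\infty}^{\alpha\beta}(u))^p\rho^{2\alpha\beta p}$) gives $\int_\X H_{\alpha,t}(|u-u(g)|^p)(g)\,dg\lesssim (N_{p,\infty}^{\alpha\beta}(u))^p\,t^{\beta p}$ uniformly in $t$, i.e. $N_{p,\infty,\mathcal{H}}^{\alpha,2\beta}(u)\lesssim N_{p,\infty}^{\alpha\beta}(u)$.

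Part (ii) follows the identical scheme, with $H_{\alpha,t}$ and Proposition \ref{pro-frac} replaced by $P_{\sigma,t}$ and Proposition \ref{pro-1}, the only structural change being that the natural length scale is now $\rho=t$ (reflecting the concentration of $P_{\sigma,t}$ on $B(g,t)$, consistent with $K_{\sigma,t^{2\sigma}}\sim P_{\sigma,t}$). I expect the main obstacle to be purely the bookkeeping of uniformity across all scales, in particular checking that the far annuli (large radii $R=2^{j+1}\rho$), where the seminorm bound is invoked at large $R$, still sum geometrically so that the supremum is finite. As in Proposition \ref{com-2}, this convergence reduces to the single constraint $\beta<1/p$, which is exactly the hypothesis imposed.
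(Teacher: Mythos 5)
Your proposal is correct and follows essentially the same route as the paper: the lower bound of Proposition \ref{pro-frac} (resp.\ \ref{pro-1}) on the ball $B(g,t^{1/2\alpha})$ gives one inclusion, and a dyadic decomposition in $d(g,g')$ combined with the kernel decay and the convergence of the resulting geometric series under $\beta<1/p$ gives the other. The only (cosmetic) difference is that you sum outward annuli from the heat scale $\rho=t^{1/2\alpha}$ and invoke the $B_{p,\infty}^{\alpha\beta}$ seminorm at every radius, whereas the paper splits at an auxiliary radius $r$, controls the far part by $\|u\|_{L^p(\X)}$, and lets $r\to\infty$ at the end; both hinge on exactly the same convergence condition.
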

\begin{proof}
We only prove (i). The proof of (ii) is similar. Let us write
$\Phi(s)=(1+s)^{-\Q-2\alpha}$. By Proposition \ref{pro-frac}, we
know that $K_{\alpha,t}(g'^{-1}g)\geq \Phi(1)t^{-\Q/2\alpha}$ on
$B(g,t^{1/2\alpha})$. Then we have
\begin{eqnarray*}
% \nonumber to remove numbering (before each equation)
   &&\frac{1}{t^{\beta p}}\int_{\X}\int_{\X}|u(g)-u(g')|^{p}K_{\alpha,t}(g'^{-1}g)dgdg'  \\
   &&\geq \frac{1}{t^{\beta p}}\int_{\X}\int_{B(g,t^{1/2\alpha})}|u(g)-u(g')|^{p}K_{\alpha,t}(g'^{-1}g)dg'dg  \\
   && \geq \Phi(1)\int_{\X}\int_{B(g,t^{1/2\alpha})}\frac{|u(g)-u(g')|^{p}}{t^{\beta p+\Q/2\alpha}}dg'dg,
\end{eqnarray*}
and taking the supremum over $t>0$ yields
$B_{p,\infty,\mathcal{H}}^{\alpha{ },2\beta}(\X)\subseteq
B_{p,\infty}^{\alpha\beta}(\X)$. For the upper bound, fix $r>0$ and
set
$$\left\{\begin{aligned}
A(t,r)&:=\int_{\X}\int_{\X\setminus B(g',r)}|u(g)-u(g')|^{p}K_{\alpha,t}(g'^{-1}g)dgdg';\\
B(t,r)&:=\int_{\X}\int_{B(g',r)}|u(g)-u(g')|^{p}K_{\alpha,t}(g'^{-1}g)dgdg'.
\end{aligned}\right.$$
Using Proposition \ref{pro-frac}, we know that
\begin{eqnarray*}
% \nonumber to remove numbering (before each equation)
  \int_{\X\setminus B(g',r)}K_{\alpha,t}(g'^{-1}g)dg &\lesssim&  \int_{\X\setminus B(g',r)}\frac{1}{t^{-Q/2\alpha}}\Phi(\frac{d(g,g')}{t^{1/2\alpha}})dg  \\
 &\lesssim& \sum_{k=0}^{\infty}\int_{B(g',r_{k+1})\setminus B(g',r_{k})}\frac{1}{t^{-Q/2\alpha}}\Phi(\frac{r_{k}}{t^{1/2\alpha}})dg  \\
   &\lesssim& \sum_{k=0}^{\infty}\frac{r_{k}^{\Q}}{t^{-Q/2\alpha}}\Phi(\frac{r_{k}}{t^{1/2\alpha}})   \\
  &\lesssim&  \int_{\frac{1}{2}r/t^{1/2\alpha}}\sigma^{\Q}\Phi(\sigma)\frac{d\sigma}{\sigma},
\end{eqnarray*}
where $r_{k}=2^{k}r$. Accordingly, using Fubini's theorem, we obtain
\begin{eqnarray} \notag
% \nonumber to remove numbering (before each equation)
 A(t,r) &\lesssim& \int_{\X}\int_{\X\setminus B(g',r)}|u(g')|^{p}K_{\alpha,t}(g'^{-1}g)dgdg'  \\ \notag
   &\lesssim& \|u\|_{L^{p}(\X)}^{p}\int_{\frac{1}{2}rt^{-1/2\alpha}}\sigma^{\Q}\Phi(\sigma)\frac{d\sigma}{\sigma}  \\ \label{KS-t-2}
   &\lesssim&  t^{p\beta}r^{-2\alpha \beta p}\|u\|_{L^{p}(\X)}^{p}\int_{\frac{1}{2}rt^{-1/2\alpha}}\sigma^{\Q+2\beta \alpha p}\Phi(\sigma)\frac{d\sigma}{\sigma}.
\end{eqnarray}

On the other hand, for $B(t,r)$, writing $r_{k}=2^{-k}r$, using Proposition \ref{pro-frac} again, we have
\begin{eqnarray}\notag
% \nonumber to remove numbering (before each equation)
   B(t,r) &\lesssim& t^{-\Q/2\alpha}\sum_{k=0}^{\infty}\Phi(\frac{r_{k}}{t^{1/2\alpha}})r_{k}^{2\beta \alpha p+\Q}\int_{\X}\int_{B(g,r_{k})}\frac{|u(g)-u(g')|^{p}}{r_{k}^{2\beta \alpha p+\Q}}dg'dg  \\ \label{KS-t-3}
   &\lesssim&  t^{\beta p}\|u\|_{B_{p,\infty}^{\alpha\beta}(\X)}^{p}\int_{0}^{\infty}\sigma^{\Q+2\beta \alpha p}\Phi(\sigma)\frac{d\sigma}{\sigma}.
\end{eqnarray}

The integrals in both (\ref{KS-t-2}) and (\ref{KS-t-3}) are bounded
due to  $\beta<1/p$ by assumption. Therefore,
$$\frac{1}{t^{\beta p}}\int_{\X}\int_{\X}|u(g)-u(g')|^{p}K_{\alpha,t}(g'^{-1}g)dgdg'\leq C_{p,\beta}\Big(\frac{1}{r^{2\beta \alpha p}}\|u\|_{L^{p}(\X)}^{p}+\|u\|_{B_{p,\infty}^{\alpha\beta}(\X)}^{p}\Big).$$
Taking the supremum over $t>0$, we obtain
$B_{p,\infty}^{\alpha\beta}(\X)\subseteq
B_{p,\infty,\mathcal{H}}^{\alpha{ },2\beta}(\X)$ and letting
$r\rightarrow\infty$ gives the equivalence of the seminorms.
\end{proof}

\begin{remark}\label{ccc}
According to  Theorems  \ref{com-2} \&\ \ref{com-besov-2}, we have
the following conclusion: If
$(p,q,\beta)\in[1,\infty)\times[1,\infty]\times(0,\min\{\alpha/p,\sigma/p\}),$
then for any $(\alpha,\sigma)\in(0,1)\times(0,1),$
$$B_{p,q, \mathcal{P}}^{ {\sigma},2\beta/\sigma}(\X)=B_{p,q,\mathcal{H}}^{ {\alpha},2\beta/\alpha}(\X)
=B_{p,q}^{\beta}(\X).$$

\end{remark}

\subsection{Besov capacities}\label{sec-6.1}
In this section, we first define a Besov capacity, denoted by
$\textrm{Cap}_{B_{p,p,\mathcal{H}}^{\alpha,\beta}}(\cdot)$, which is
naturally derived  from the Besov space
$B_{p,p,\mathcal{H}}^{\alpha,\beta}(\X) $. Similarly, we can
introduce  another  Besov capacity
$\textrm{Cap}_{B_{p,p,\mathcal{P}}^{\sigma,\beta}}(\cdot)$ related
to  $B_{p,p,\mathcal{P}}^{\sigma,\beta}(\X) $. Since their results
and the corresponding proofs are similar, thus we only need to focus
on the case of
$\textrm{Cap}_{B_{p,p,\mathcal{H}}^{\alpha,\beta}}(\cdot)$.
\begin{definition}\label{defn2.7}
Let $p\geq1$, $\beta\geq0$ and $ \alpha \in(0,1) $. The Besov
capacity of an arbitrary set $E\subset \X$ is defined as
$$\textrm{Cap}_{B_{p,p,\mathcal{H}}^{\alpha,\beta}}(E):=\inf\big\{\|u\|^{p}_{B_{p,p,\mathcal{H}}^{\alpha,\beta}(\X)}
:u\in\mathcal{A}(E)\big\},$$ where
$$\mathcal{A}(E):=\{u\in
B_{p,p,\mathcal{H}}^{\alpha,\beta}(\X):
E\subset\{g\in\X:u(g)\geq1\}^{o}\}.$$
\end{definition}

\begin{remark}\label{lem5}
\item{(i)} If $u\in\mathcal{A}(E),$ then $\min\{1,u\}\in\mathcal{A}(E)$ and
$$\|\min\{1,u\}\|_{B_{p,p,\mathcal{H}}^{\alpha,\beta} (\X)}\leq\|u\|_{B_{p,p,\mathcal{H}}^{\alpha,\beta} (\X)}.$$
Thus we may restrict ourselves to those admissible functions $u$ for
which $0\leq u\leq 1.$

\item{(ii)} Note that for any function $u$ on $\X,$ we have
$$\big||u(g)|-|u(g')|\big|\leq|u(g)-u(g')|~~\ \ \ \mbox{for all}~~g,g'\in\X,$$
which implies
$$\big\||u|\big\|_{B_{p,p,\mathcal{H}}^{\alpha,\beta}(\X)}\leq\|u\|_{B_{p,p,\mathcal{H}}^{\alpha,\beta}  (\X)}.$$
Therefore, for any set $E\subset\X,$ we can also  alternatively write
\begin{equation}\label{alter}
\textrm{Cap}_{B_{p,p,\mathcal{H}}^{\alpha,\beta}}(E):=\inf\big\{\|u\|^{p}_{B_{p,p,\mathcal{H}}^{\alpha,\beta}(\X)}:0\leq
u\in B_{p,p,\mathcal{H}}^{\alpha,\beta}(\X)~\mbox{and}~
E\subset\{g\in\X:u(g)\geq1\}^{o}\big\}.
\end{equation}
\end{remark}

Similarly to the fractional Sobolev capacity, we establish the
following measure-theoretic properties of the Besov capacity
$\textrm{Cap}_{B_{p,p,\mathcal{H}}^{\alpha,\beta}}(\cdot)$.
\begin{lemma}\label{CP}
Let $p\geq1$, $\beta\geq0$. Then the following assertions hold:

  \item [(i)] $\textrm{Cap}_{B_{p,p,\mathcal{H}}^{\alpha,\beta}}(\emptyset)=0.$
  \item [(ii)] For any sets $E_{1},E_{2}\subset\X$ with $E_{1}\subset E_{2},$
  $$\textrm{Cap}_{B_{p,p,\mathcal{H}}^{\alpha,\beta}}(E_{1})\leq\textrm{Cap}_{B_{p,p,\mathcal{H}}^{\alpha,\beta}}(E_{2}).$$
  \item [(iii)] For any sequence of sets $\{E_{j}\}_{j\in\mathbb{N}}$ in $\X,$
  $$\textrm{Cap}_{B_{p,p,\mathcal{H}}^{\alpha,\beta}}(\cup_{j=1}^{\infty}E_{j})\le
  \sum_{j=1}^{\infty}\textrm{Cap}_{B_{p,p,\mathcal{H}}^{\alpha,\beta}}(E_{j}).$$
  \item [(iv)] If $\{\mathcal{K}_{j}\}_{j\in\mathbb{N}}$ is a decreasing sequence of compact sets in $\X,$ then
  $$\textrm{Cap}_{B_{p,p,\mathcal{H}}^{\alpha,\beta}}(\cap_{j=1}^{\infty}\mathcal{K}_{j})
  =\lim_{j\rightarrow\infty}\textrm{Cap}_{B_{p,p,\mathcal{H}}^{\alpha,\beta}}(\mathcal{K}_{j}).$$
  \item [(v)] If $\{E_{j}\}_{j\in\mathbb{N}}$ is an increasing sequence of sets in $\X,$ then
  $$\textrm{Cap}_{B_{p,p,\mathcal{H}}^{\alpha,\beta}}(\cup_{j=1}^{\infty}E_{j})
  =\lim_{j\rightarrow\infty}\textrm{Cap}_{B_{p,p,\mathcal{H}}^{\alpha,\beta}}(E_{j}).$$

\end{lemma}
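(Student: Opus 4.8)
The plan is to follow the template already used in this paper for the Riesz capacity (Propositions \ref{pro-6}--\ref{cap-3}), with the min--max inequality of Lemma \ref{max} playing the role that the potential-truncation estimates played there. Items (i) and (ii) are immediate: for (i) the function $u\equiv 0$ lies in $B_{p,p,\mathcal{H}}^{\alpha,\beta}(\X)$ with zero norm and is admissible for $\emptyset$ (the inclusion $\emptyset\subset\{u\ge 1\}^{o}$ holds vacuously), so the infimum is $0$; for (ii), $E_{1}\subset E_{2}$ gives $\mathcal{A}(E_{2})\subset\mathcal{A}(E_{1})$, whence the infimum over the larger admissible class is the smaller. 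I would also record here the structural fact used later in (iv): since the admissible condition is phrased through the \emph{open} set $\{u\ge 1\}^{o}$, the capacity is automatically an outer capacity. Indeed, if $u\in\mathcal{A}(E)$ then $O:=\{u\ge 1\}^{o}$ is open, $E\subset O$, and $u\in\mathcal{A}(O)$, so $\textrm{Cap}_{B_{p,p,\mathcal{H}}^{\alpha,\beta}}(E)=\inf\{\textrm{Cap}_{B_{p,p,\mathcal{H}}^{\alpha,\beta}}(O):O\supset E,\ O\text{ open}\}$.

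For the countable subadditivity (iii), I would first use Remark \ref{lem5}(i) to choose admissible $u_{j}$ with $0\le u_{j}\le 1$ and $\|u_{j}\|_{B_{p,p,\mathcal{H}}^{\alpha,\beta}(\X)}^{p}\le\textrm{Cap}_{B_{p,p,\mathcal{H}}^{\alpha,\beta}}(E_{j})+\varepsilon 2^{-j}$, and set $v_{N}=\max\{u_{1},\dots,u_{N}\}$. Then $E_{j}\subset\{u_{j}\ge 1\}^{o}\subset\{v_{N}\ge 1\}^{o}$ for $j\le N$, so $v_{N}\in\mathcal{A}(\bigcup_{j\le N}E_{j})$. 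Writing $v_{N}=\max\{v_{N-1},u_{N}\}$ and applying Lemma \ref{max} while discarding the nonnegative minimum term gives, by induction, $\|v_{N}\|_{B_{p,p,\mathcal{H}}^{\alpha,\beta}(\X)}^{p}\le\sum_{j=1}^{N}\|u_{j}\|_{B_{p,p,\mathcal{H}}^{\alpha,\beta}(\X)}^{p}$. Since $v_{N}\nearrow v:=\sup_{j}u_{j}$ pointwise and $v\in\mathcal{A}(\bigcup_{j}E_{j})$, I pass to the limit: monotone convergence gives $\|v\|_{L^{p}(\X)}=\lim_{N}\|v_{N}\|_{L^{p}(\X)}$, and Fatou's lemma applied to the nonnegative integrand $K_{\alpha,t}(g'^{-1}g)|v_{N}(g)-v_{N}(g')|^{p}$ in the kernel representation of $N_{p,p,\mathcal{H}}^{\alpha,\beta}$ gives $N_{p,p,\mathcal{H}}^{\alpha,\beta}(v)\le\liminf_{N}N_{p,p,\mathcal{H}}^{\alpha,\beta}(v_{N})$. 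Hence $\|v\|_{B_{p,p,\mathcal{H}}^{\alpha,\beta}(\X)}^{p}\le\sum_{j}\textrm{Cap}_{B_{p,p,\mathcal{H}}^{\alpha,\beta}}(E_{j})+\varepsilon$, and letting $\varepsilon\to 0$ closes (iii).

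For (iv), monotonicity yields $\textrm{Cap}_{B_{p,p,\mathcal{H}}^{\alpha,\beta}}(\bigcap_{j}\mathcal{K}_{j})\le\lim_{j}\textrm{Cap}_{B_{p,p,\mathcal{H}}^{\alpha,\beta}}(\mathcal{K}_{j})$. For the reverse I use the outer-capacity identity above: given $\varepsilon>0$, pick an open $O\supset\mathcal{K}:=\bigcap_{j}\mathcal{K}_{j}$ with $\textrm{Cap}_{B_{p,p,\mathcal{H}}^{\alpha,\beta}}(O)\le\textrm{Cap}_{B_{p,p,\mathcal{H}}^{\alpha,\beta}}(\mathcal{K})+\varepsilon$. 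The sets $\mathcal{K}_{j}\setminus O$ are compact, decreasing, with empty intersection, so by the finite intersection property some $\mathcal{K}_{j_{0}}\subset O$; then the decreasing convergence of $\textrm{Cap}_{B_{p,p,\mathcal{H}}^{\alpha,\beta}}(\mathcal{K}_{j})$ and monotonicity give $\lim_{j}\textrm{Cap}_{B_{p,p,\mathcal{H}}^{\alpha,\beta}}(\mathcal{K}_{j})\le\textrm{Cap}_{B_{p,p,\mathcal{H}}^{\alpha,\beta}}(\mathcal{K}_{j_{0}})\le\textrm{Cap}_{B_{p,p,\mathcal{H}}^{\alpha,\beta}}(\mathcal{K})+\varepsilon$.

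Part (v) is the one I expect to be the main obstacle, because here submodularity must be used quantitatively rather than just as subadditivity. Monotonicity again gives one inequality; assuming $L:=\lim_{j}\textrm{Cap}_{B_{p,p,\mathcal{H}}^{\alpha,\beta}}(E_{j})<\infty$, I choose $0\le u_{j}\le 1$ admissible for $E_{j}$ with $\|u_{j}\|_{B_{p,p,\mathcal{H}}^{\alpha,\beta}(\X)}^{p}\le\textrm{Cap}_{B_{p,p,\mathcal{H}}^{\alpha,\beta}}(E_{j})+\varepsilon 2^{-j}$ and set $w_{N}=\max\{u_{1},\dots,u_{N}\}$. Applying Lemma \ref{max} to $w_{N}=\max\{w_{N-1},u_{N}\}$ gives $\|w_{N}\|_{B}^{p}\le\|w_{N-1}\|_{B}^{p}+\|u_{N}\|_{B}^{p}-\|\min\{w_{N-1},u_{N}\}\|_{B}^{p}$, and the crux is that $\min\{w_{N-1},u_{N}\}\ge 1$ on an open set containing $E_{N-1}$ (since both $w_{N-1}$ and $u_{N}$ do), so $\min\{w_{N-1},u_{N}\}\in\mathcal{A}(E_{N-1})$ and $\|\min\{w_{N-1},u_{N}\}\|_{B}^{p}\ge\textrm{Cap}_{B_{p,p,\mathcal{H}}^{\alpha,\beta}}(E_{N-1})$. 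Telescoping the resulting inequality collapses the capacity differences and yields $\|w_{N}\|_{B}^{p}\le\textrm{Cap}_{B_{p,p,\mathcal{H}}^{\alpha,\beta}}(E_{N})+\varepsilon\le L+\varepsilon$. Since $w_{N}\nearrow w:=\sup_{j}u_{j}\in\mathcal{A}(\bigcup_{j}E_{j})$, the same monotone-convergence/Fatou lower-semicontinuity argument as in (iii) gives $\|w\|_{B}^{p}\le L+\varepsilon$, and $\varepsilon\to 0$ completes the proof. The two recurring technical points I would be careful about are the lower semicontinuity of $\|\cdot\|_{B_{p,p,\mathcal{H}}^{\alpha,\beta}}$ under monotone pointwise convergence (settled once via Fatou on the kernel representation) and the admissibility check for $\min\{w_{N-1},u_{N}\}$ in the telescoping step.
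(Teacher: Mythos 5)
Your proposal is correct and follows essentially the same route as the paper: items (i)--(iii) by the elementary admissible-class manipulations (the paper refers (i)--(iii) to the Riesz-capacity argument of Proposition \ref{pro-6}, while you use Lemma \ref{max} there as well, which is an equally valid minor variant), item (iv) by the compactness/finite-intersection argument against an open set containing $\bigcap_j\mathcal{K}_j$, and item (v) by exactly the paper's telescoping scheme with $h_m=\max\{h_{m-1},u_m\}$, the admissibility of $\min\{h_{m-1},u_m\}$ for $E_{m-1}$, and Fatou's lemma on the kernel representation of $N_{p,p,\mathcal{H}}^{\alpha,\beta}$. No gaps.
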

\begin{proof}

We only give the proofs of (iv) and (v), and the items (i), (ii)\ \&\ (iii) can be proved
by the methods similar to those of Proposition \ref{pro-6}. For
(iv), note that (ii) implies
$$\textrm{Cap}_{B_{p,p,\mathcal{H}}^{\alpha,\beta}}(\cap_{j=1}^{\infty}\mathcal{K}_{j})\leq\lim_{j\rightarrow\infty}
\textrm{Cap}_{B_{p,p,\mathcal{H}}^{\alpha,\beta}}(\mathcal{K}_{j}).$$
Thus, it suffices to show the converse of this inequality. For any
$\epsilon\in(0,1),$ we apply (\ref{alter}) to find a nonnegative
function $  u\in B_{p,p,\mathcal{H}}^{\alpha,\beta}(\X)$ such that
$$\cap_{j=1}^{\infty}\mathcal{K}_{j}\subset\{g\in\X:u(g)\geq1\}^{o}~\mbox{and}~
\|u\|^{p}_{B_{p,p,\mathcal{H}}^{\alpha,\beta}(\X)}\leq\textrm{Cap}_{B_{p,p,\mathcal{H}}^{\alpha,\beta}}
(\cap_{j=1}^{\infty}\mathcal{K}_{j})+\epsilon.$$ Since
$\{\mathcal{K}_{j}\}_{j=1}^{\infty}$ is decreasing, there exists
$j_{0}\in\mathbb{N}$ such that
$\mathcal{K}_{j_{0}}\subset\{g\in\X:u(g)\geq1\}^{o},$ whence leading
to
$$\lim_{j\rightarrow\infty}\textrm{Cap}_{B_{p,p,\mathcal{H}}^{\alpha,\beta}}(\mathcal{K}_{j})
\leq\textrm{Cap}_{B_{p,p,\mathcal{H}}^{\alpha,\beta}}(\mathcal{K}_{j_{0}})\leq\|u\|^{p}_{B_{p,p,\mathcal{H}}^{\alpha,\beta}(\X)}
\leq\textrm{Cap}_{B_{p,p,\mathcal{H}}^{\alpha,\beta}}(\cap_{j=1}^{\infty}\mathcal{K}_{j})+\epsilon.$$
Letting $\epsilon\rightarrow0$ yields the desired inequality. Thus,
(iv) holds.

For (v),   due to (ii) and the fact that
$\{E_{j}\}_{j\in\mathbb{N}}$ is increasing, it suffices  to validate
\begin{align}\label{E3}
\textrm{Cap}_{B_{p,p,\mathcal{H}}^{\alpha,\beta}}(\cup_{j=1}^{\infty}E_{j})\leq
\lim_{j\rightarrow\infty}\textrm{Cap}_{B_{p,p,\mathcal{H}}^{\alpha,\beta}}(E_{j}).
\end{align}
Fix $\epsilon\in(0,1).$ For any $j\in\mathbb{N},$ by (\ref{alter}),
there exists $0\leq u_{j}\in B_{p,p,\mathcal{H}}^{\alpha,\beta}(\X)$
such that
\begin{align}\label{E4}
E_{j}\subset\{g\in\X:u_{j}(g)\geq1\}^{o}
~and~\|u_{j}\|^{p}_{B_{p,p,\mathcal{H}}^{\alpha,\beta}(\X)}\leq\textrm{Cap}_{B_{p,p,\mathcal{H}}^{\alpha,\beta}}(E_{j})+2^{-j}\epsilon.
\end{align}
Let $E_{0}=\emptyset,$ which has zero
$B_{p,p,\mathcal{H}}^{\alpha,\beta}$-capacity. Let $h_{0}:=0.$ For
any $m\in\mathbb{N},$ define
$$h_{m}:=\max\{h_{m-1},u_{m}\}=\max_{1\leq j\leq m}u_{j}.$$
For any $m\in\mathbb{N},$ Lemma \ref{max} implies  that both $h_{m}$
and $\min\{h_{m-1},u_{m}\}$ are in
$B_{p,p,\mathcal{H}}^{\alpha,\beta}(\X)$ with
$$\|\max\{h_{m-1},u_{m}\}\|^{p}_{B_{p,p,\mathcal{H}}^{\alpha,\beta}(\X)}+
\|\min\{h_{m-1},u_{m}\}\|^{p}_{B_{p,p,\mathcal{H}}^{\alpha,\beta}(\X)}\leq
\|h_{m-1}\|^{p}_{B_{p,p,\mathcal{H}}^{\alpha,\beta}(\X)}+\|u_{m}\|^{p}_{B_{p,p,\mathcal{H}}^{\alpha,\beta}(\X)}.$$
Observe that
$$E_{m-1}\subset\{g\in\X:\min\{h_{m-1}(x),u_{m}(x)\}\geq1\}^{o}.$$
From the above arguments, (\ref{alter}) and (\ref{E4}), we deduce
\begin{align*}
&\|h_{m}\|^{p}_{B_{p,p,\mathcal{H}}^{\alpha,\beta}(\X)}+\textrm{Cap}_{B_{p,p,\mathcal{H}}^{\alpha,\beta}}(E_{m-1})\\
&\leq\|\max\{h_{m-1},u_{m}\}\|^{p}_{B_{p,p,\mathcal{H}}^{\alpha,\beta}(\X)}
+\|\min\{h_{m-1},u_{m}\}\|^{p}_{B_{p,p,\mathcal{H}}^{\alpha,\beta}(\X)}\\
&\leq\|h_{m-1}\|^{p}_{B_{p,p,\mathcal{H}}^{\alpha,\beta}(\X)}+\textrm{Cap}_{B_{p,p,\mathcal{H}}^{\alpha,\beta}}(E_{m})+2^{-m}\epsilon
\end{align*}
and hence
$$\|h_{m}\|^{p}_{B_{p,p,\mathcal{H}}^{\alpha,\beta}(\X)}-\|h_{m-1}\|^{p}_{B_{p,p,\mathcal{H}}^{\alpha,\beta}(\X)}
\leq\textrm{Cap}_{B_{p,p,\mathcal{H}}^{\alpha,\beta}}(E_{m})-\textrm{Cap}_{B_{p,p,\mathcal{H}}^{\alpha,\beta}}(E_{m-1})+2^{-m}\epsilon.$$
This last inequality further implies
$$\|h_{m}\|^{p}_{B_{p,p,\mathcal{H}}^{\alpha,\beta}(\X)}\leq\textrm{Cap}_{B_{p,p,\mathcal{H}}^{\alpha,\beta}}(E_{m})+\epsilon.$$
Let
$$u:=\lim_{m\rightarrow\infty}h_{m}=\sup_{m\in\mathbb{N}}u_{m}.$$
Clearly,
$$\cup_{m\in\mathbb{N}}E_{m}\subset\{g\in\X:u(g)\geq1\}^{o}.$$
Meanwhile, we use   Fatou's lemma to obtain
\begin{equation*}\label{LC1}
\|u\|^{p}_{L^{p}(\X)}
=\int_{\X}|u(g)|^{p}dg=\int_{\X}|\lim_{m\rightarrow\infty}h_{m}(g)|^{p}dg
\leq\liminf_{m\rightarrow\infty}\|h_{m}\|^{p}_{L^{p}(\X)}
\end{equation*}
and
\begin{align*}
(N{_{p,p,\mathcal{H}}^{\alpha,\beta}}(u))^{p}&=\int_{0}^{\infty}\dfrac{1}{t^{{\beta p}/{2}+1}}\int_{\X}H_{\alpha,t}(|u-u(g')|^{p})(g')dg'dt\\
&=\int_{0}^{\infty}\dfrac{1}{t^{{\beta p}/{2}+1}}\int_{\X}\int_{\X}K_{\alpha,t}(g'^{-1}g)\lim_{m\rightarrow\infty}|h_{m}(g)-h_{m}(g')|^{p}dgdg'dt\\
&\leq \liminf_{m\rightarrow\infty}\int_{0}^{\infty}\dfrac{1}{t^{{\beta p}/{2}+1}}\int_{\X}\int_{\X}K_{\alpha,t}(g'^{-1}g)|h_{m}(g)-h_{m}(g')|^{p}dgdg'dt\\
&=\liminf_{m\rightarrow\infty}(N{_{p,p,\mathcal{H}}^{\alpha,\beta}}(h_{m}))^{p}.
\end{align*}
Thus
\begin{align*}
\textrm{Cap}_{B_{p,p,\mathcal{H}}^{\alpha,\beta}}(\cup_{m\in\mathbb{N}}E_{m})
&\leq\|u\|^{p}_{B_{p,p,\mathcal{H}}^{\alpha,\beta}(\X)}\\
&\leq\liminf_{m\rightarrow\infty}\|h_{m}\|^{p}_{L^{p}(\X)}
+\liminf_{m\rightarrow\infty}(N{_{p,p,\mathcal{H}}^{\alpha,\beta}}(h_{m}))^{p}\\
&\leq\liminf_{m\rightarrow\infty}\|h_{m}\|^{p}_{B_{p,p,\mathcal{H}}^{\alpha,\beta}(\X)}\\
&\leq\lim_{m\rightarrow\infty}\textrm{Cap}_{B_{p,p,\mathcal{H}}^{\alpha,\beta}}(E_{m})+\epsilon.
\end{align*}
Letting $\epsilon\rightarrow0,$ we then obtain (\ref{E3}) and
hence this completes  the proof of (v).

\end{proof}

From Lemma \ref{CP}, we know that
$\textrm{Cap}_{B_{p,p,\mathcal{H}}^{\alpha,\beta}}(\cdot)$ is not
only an outer measure (obeying (i), (ii) and (iii)), but also a
Choquet capacity (satisfying (i), (ii), (iv) and (v)). An important
feature is that the capacity of a Borel set $E$ can be estimated
``from the inside" by a compact set, and ``from the outside" by an
open set:
\begin{align}\label{capco}
\textrm{Cap}_{B_{p,p,\mathcal{H}}^{\alpha,\beta}}(E)&=\sup\{\textrm{Cap}_{B_{p,p,\mathcal{H}}^{\alpha,\beta}}(\mathcal{K}):\mathcal{K}\subset E, \mathcal{K}~\mbox{compact}\}\nonumber\\
&=\inf\{\textrm{Cap}_{B_{p,p,\mathcal{H}}^{\alpha,\beta}}(O):E\subset
O, O~\mbox{open}\}.
\end{align}
 Moreover, we introduce another Besov capacity
$\textrm{Cap}_{B_{p,p,\mathcal{H}}^{\alpha,\beta}}^{*}(\mathcal{K})$
as follows.
\begin{definition}\label{defn2.9}
Let $p\geq1$ and $\beta\geq0.$ For any compact set
$\mathcal{K}\subset\X,$ define
\begin{equation}\label{Cap1}
\textrm{Cap}_{B_{p,p,\mathcal{H}}^{\alpha,\beta}}^{*}(\mathcal{K}):=\inf\{\|u\|^{p}_{B_{p,p,\mathcal{H}}^{\alpha,\beta}(\X)}:u\in
C_{c}^{\infty}(\X)~\textrm{and}~u\geq1~\textrm{on}~\mathcal{K}\}.
\end{equation}
\end{definition}

This definition is extended to any open set $O\subset\X$ via
\begin{equation}\label{Cap2}
\textrm{Cap}_{B_{p,p,\mathcal{H}}^{\alpha,\beta}}^{*}(O):=\sup\{\textrm{Cap}_{B_{p,p,\mathcal{H}}^{\alpha,\beta}}^{*}(\mathcal{K}):
~\textrm{compact}~\mathcal{K}\subset O\},
\end{equation}
and then for an arbitrary set $E\subset\X$ through
putting
\begin{equation}\label{Cap3}
\textrm{Cap}_{B_{p,p,\mathcal{H}}^{\alpha,\beta}}^{*}(E):=\inf\{\textrm{Cap}_{B_{p,p,\mathcal{H}}^{\alpha,\beta}}^{*}(O):~\textrm{open}~O\supset
E\}.
\end{equation}

\begin{lemma}\label{EquiC}

%\item{\rm (i)}
For any $2\alpha>p\geq1,\beta\in(0,1/\alpha)$ and $E\subset\X,$
$$\textrm{Cap}_{B_{p,p,\mathcal{H}}^{\alpha,\beta}}^{*}(E)=\textrm{Cap}_{B_{p,p,\mathcal{H}}^{\alpha,\beta}}(E).$$

%\item{\rm (ii)} For any $2\sigma>p\geq1,\beta\in(0,2)$ and $E\subset\X,$ then we have
%$$\textrm{Cap}_{B_{p,p}^{\mathcal{L}^{\sigma},\beta}}^{*}(E)=\textrm{Cap}_{B_{p,p}^{\mathcal{L}^{\sigma},\beta}}(E).$$
\end{lemma}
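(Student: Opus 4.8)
The plan is to establish the identity first for compact sets, and then to propagate it to open sets and finally to arbitrary sets, exploiting the fact that both $\textrm{Cap}_{B_{p,p,\mathcal{H}}^{\alpha,\beta}}(\cdot)$ and $\textrm{Cap}_{B_{p,p,\mathcal{H}}^{\alpha,\beta}}^{*}(\cdot)$ are built from their values on compacta through the same inner/outer regularization scheme. For the compact case I would prove two inequalities. For the easy one, $\textrm{Cap}_{B_{p,p,\mathcal{H}}^{\alpha,\beta}}(\mathcal{K})\leq \textrm{Cap}_{B_{p,p,\mathcal{H}}^{\alpha,\beta}}^{*}(\mathcal{K})$, take any admissible $u\in C_{c}^{\infty}(\X)$ with $u\geq 1$ on $\mathcal{K}$; for $\epsilon>0$ the function $(1+\epsilon)u$ is continuous and satisfies $(1+\epsilon)u\geq 1+\epsilon>1$ on $\mathcal{K}$, so $\mathcal{K}\subset\{(1+\epsilon)u>1\}\subset\{(1+\epsilon)u\geq 1\}^{o}$ and thus $(1+\epsilon)u\in\mathcal{A}(\mathcal{K})$. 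By homogeneity of the norm, $\textrm{Cap}_{B_{p,p,\mathcal{H}}^{\alpha,\beta}}(\mathcal{K})\leq(1+\epsilon)^{p}\|u\|^{p}_{B_{p,p,\mathcal{H}}^{\alpha,\beta}(\X)}$, and letting $\epsilon\to 0$ and taking the infimum over $u$ gives the claim.

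The reverse inequality $\textrm{Cap}_{B_{p,p,\mathcal{H}}^{\alpha,\beta}}^{*}(\mathcal{K})\leq \textrm{Cap}_{B_{p,p,\mathcal{H}}^{\alpha,\beta}}(\mathcal{K})$ is where the real work lies. Given $u\in\mathcal{A}(\mathcal{K})$, Remark \ref{lem5} lets me assume $0\leq u\leq 1$; since $\mathcal{K}\subset\{u\geq 1\}^{o}$ we have in particular $u\geq 1_{\mathcal{K}}$. This is exactly the hypothesis of the constraint-preserving part of Proposition \ref{desity}(i) (valid precisely under the standing assumptions $2\alpha>p$ and $\beta\in(0,1/\alpha)$), which produces a sequence $\{u_{j}\}\subset C_{c}^{\infty}(\X)$ with $u_{j}\geq 1_{\mathcal{K}}$ and $\|u_{j}-u\|_{B_{p,p,\mathcal{H}}^{\alpha,\beta}(\X)}\to 0$. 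Each $u_{j}$ is then admissible in (\ref{Cap1}), so $\textrm{Cap}_{B_{p,p,\mathcal{H}}^{\alpha,\beta}}^{*}(\mathcal{K})\leq\|u_{j}\|^{p}_{B_{p,p,\mathcal{H}}^{\alpha,\beta}(\X)}\to\|u\|^{p}_{B_{p,p,\mathcal{H}}^{\alpha,\beta}(\X)}$; taking the infimum over $u$ finishes the compact case. The main obstacle is indeed this step: ordinary $B_{p,p,\mathcal{H}}^{\alpha,\beta}$-density of $C_{c}^{\infty}(\X)$ would not suffice, because norm convergence alone does not retain the pointwise lower bound $u_{j}\geq 1$ on $\mathcal{K}$ that makes $u_{j}$ eligible for $\textrm{Cap}^{*}$; it is the refined statement of Proposition \ref{desity} that rescues the argument.

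To pass to open sets $O$, I would use that $\textrm{Cap}^{*}$ is defined on open sets as the supremum of its values over compact subsets (\ref{Cap2}), while Lemma \ref{CP} shows $\textrm{Cap}_{B_{p,p,\mathcal{H}}^{\alpha,\beta}}$ is a Choquet capacity enjoying the inner regularity (\ref{capco}); combining these with the compact-set equality yields
$$\textrm{Cap}_{B_{p,p,\mathcal{H}}^{\alpha,\beta}}^{*}(O)=\sup_{\mathcal{K}\subset O}\textrm{Cap}_{B_{p,p,\mathcal{H}}^{\alpha,\beta}}^{*}(\mathcal{K})=\sup_{\mathcal{K}\subset O}\textrm{Cap}_{B_{p,p,\mathcal{H}}^{\alpha,\beta}}(\mathcal{K})=\textrm{Cap}_{B_{p,p,\mathcal{H}}^{\alpha,\beta}}(O),$$
where the inner regularity over compacta can alternatively be obtained directly from Lemma \ref{CP}(v) by exhausting $O$ with an increasing sequence of compact sets. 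Finally, for arbitrary $E$ I would observe that $\textrm{Cap}_{B_{p,p,\mathcal{H}}^{\alpha,\beta}}$ is itself an outer capacity: if $u\in\mathcal{A}(E)$ then $O_{u}:=\{u\geq 1\}^{o}$ is open, contains $E$, and satisfies $u\in\mathcal{A}(O_{u})$, whence $\inf_{O\supset E}\textrm{Cap}_{B_{p,p,\mathcal{H}}^{\alpha,\beta}}(O)\leq\textrm{Cap}_{B_{p,p,\mathcal{H}}^{\alpha,\beta}}(E)$, the reverse following from monotonicity (Lemma \ref{CP}(ii)). Together with the definition (\ref{Cap3}) of $\textrm{Cap}^{*}$ on arbitrary sets and the open-set identity just established, this gives $\textrm{Cap}_{B_{p,p,\mathcal{H}}^{\alpha,\beta}}^{*}(E)=\inf_{O\supset E}\textrm{Cap}_{B_{p,p,\mathcal{H}}^{\alpha,\beta}}^{*}(O)=\inf_{O\supset E}\textrm{Cap}_{B_{p,p,\mathcal{H}}^{\alpha,\beta}}(O)=\textrm{Cap}_{B_{p,p,\mathcal{H}}^{\alpha,\beta}}(E)$, as desired.
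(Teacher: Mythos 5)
Your proposal is correct and follows essentially the same route as the paper: the same $(1+\epsilon)u$ dilation for the easy compact inequality, the same appeal to the constraint-preserving density statement of Proposition \ref{desity} for the reverse, and the same propagation to open and then arbitrary sets via (\ref{Cap2}), (\ref{capco}) and (\ref{Cap3}). Your phrasing of the final step (proving outer regularity of $\textrm{Cap}_{B_{p,p,\mathcal{H}}^{\alpha,\beta}}$ directly via $O_{u}=\{u\geq 1\}^{o}$) is a slightly cleaner packaging of the paper's $\epsilon$-chase, but the content is the same.
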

\begin{proof}
  We can prove this lemma in the following three cases.

\emph{Case 1}: $E=\mathcal{K}$ is compact. We first show
\begin{equation}\label{cap1}
\textrm{Cap}_{B_{p,p,\mathcal{H}}^{\alpha,\beta}}(\mathcal{K})\leq
\textrm{Cap}_{B_{p,p,\mathcal{H}}^{\alpha,\beta}}^{*}(\mathcal{K}).
\end{equation}
For any $\epsilon>0,$ it follows from (\ref{Cap1}) that there exists $u\in C_{c}^{\infty}(\X)$ such that $u\geq1$ on $\mathcal{K}$ and
$$\|u\|^{p}_{B_{p,p,\mathcal{H}}^{{\alpha},\beta}(\X)}\leq\textrm{Cap}_{B_{p,p,\mathcal{H}}^{\alpha,\beta}}^{*}(\mathcal{K})+\epsilon.$$
Let $u_{\lambda}:=(1+\lambda)u\in C_{c}^{\infty}(\X),$ where
$\lambda>0.$ By Proposition \ref{desity}, we have $u_{\lambda}\in
B_{p,p,\mathcal{H}}^{{\alpha},\beta}(\X).$ Moreover,
$$\mathcal{K}\subset\{g\in\X:u_{\lambda}(g)>1\}=\{g\in\X:u_{\lambda}(g)\geq1\}^{o}.$$
Therefore, we get
$$\textrm{Cap}_{B_{p,p,\mathcal{H}}^{\alpha,\beta}}(\mathcal{K})\leq \|u_{\lambda}\|^{p}_{B_{p,p,\mathcal{H}}^{{\alpha},\beta}(\X)}
\leq(1+\lambda)^{p}(\textrm{Cap}_{B_{p,p,\mathcal{H}}^{\alpha,\beta}}^{*}(\mathcal{K})+\epsilon).$$
Letting $\epsilon,\lambda\rightarrow0$ yields (\ref{cap1}).

Next we prove the converse inequality of (\ref{cap1}). For any
$\epsilon>0,$ by (\ref{alter}), there exists $0\leq u\in
B_{p,p,\mathcal{H}}^{{\alpha},\beta}(\X)$ such that
$\mathcal{K}\subset\{g\in\X:u(g)\geq1\}^{o}$ and
$$\|u\|^{p}_{B_{p,p,\mathcal{H}}^{{\alpha},\beta}(\X)}\leq\textrm{Cap}_{B_{p,p,\mathcal{H}}^{\alpha,\beta}}(\mathcal{K})+\epsilon.$$
Using Proposition \ref{desity}, $u$ can be approximated by
$\{u_{j}\}_{j\in\mathbb{N}}$ in the space
$B_{p,p,\mathcal{H}}^{{\alpha},\beta}(\X),$ with an additional
property that $u_{j}\geq1$ on $\mathcal{K}$ for any
$j\in\mathbb{N}.$ Moreover, we can even assume that
$\{u_{j}\}_{j\in\mathbb{N}}\geq0.$ Thus,
$$\textrm{Cap}_{B_{p,p,\mathcal{H}}^{\alpha,\beta}}^{*}(\mathcal{K})\leq \lim_{j\rightarrow\infty}\|u_{j}\|^{p}_{B_{p,p,\mathcal{H}}^{{\alpha},\beta}(\X)}
=\|u\|^{p}_{B_{p,p,\mathcal{H}}^{{\alpha},\beta}(\X)}\leq\textrm{Cap}_{B_{p,p,\mathcal{H}}^{\alpha,\beta}}(\mathcal{K})+\epsilon,$$
which implies the converse inequality of (\ref{cap1}) by letting
$\epsilon\rightarrow0.$

\emph{Case 2}: $E=O$ is open. We deduce from (\ref{Cap2}), \emph{Case 1} and (\ref{capco}) that
\begin{align*}
\textrm{Cap}_{B_{p,p,\mathcal{H}}^{\alpha,\beta}}^{*}(O)&=
\sup\{\textrm{Cap}_{B_{p,p,\mathcal{H}}^{\alpha,\beta}}^{*}(\mathcal{K}):~\textrm{compact}~\mathcal{K}\subset O\}\\
&=\sup\{\textrm{Cap}_{B_{p,p,\mathcal{H}}^{\alpha,\beta}}(\mathcal{K}):~\textrm{compact}~\mathcal{K}\subset O\}\nonumber\\
&=\textrm{Cap}_{B_{p,p,\mathcal{H}}^{\alpha,\beta}}(O).
\end{align*}
\emph{Case 3}: $E$ is an arbitrary set in $\X.$ We first show
\begin{equation}\label{cap2}
\textrm{Cap}_{B_{p,p,\mathcal{H}}^{\alpha,\beta}}(E)\leq
\textrm{Cap}_{B_{p,p,\mathcal{H}}^{\alpha,\beta}}^{*}(E).
\end{equation}
Fix $\epsilon\in(0,1).$ From (\ref{Cap3}) and \emph{Case 2}, there exists an open set $O\supset E$ such that
$$\textrm{Cap}_{B_{p,p,\mathcal{H}}^{\alpha,\beta}}^{*}(E)>\textrm{Cap}_{B_{p,p,\mathcal{H}}^{\alpha,\beta}}^{*}(O)-\epsilon
=\textrm{Cap}_{B_{p,p,\mathcal{H}}^{\alpha,\beta}}(O)-\epsilon.$$
Further, it follows from (\ref{alter}) that there exists $0\leq u\in
B_{p,p,\mathcal{H}}^{{\alpha},\beta}(\X)$ such that
$O\subset\{g\in\X:u(g)\geq1\}^{o}$ and
$\textrm{Cap}_{B_{p,p,\mathcal{H}}^{\alpha,\beta}}(O)>\|u\|^{p}_{B_{p,p,\mathcal{H}}^{{\alpha},\beta}(\X)}-\epsilon,$
which in turn gives
$$\textrm{Cap}_{B_{p,p,\mathcal{H}}^{\alpha,\beta}}^{*}(E)>\|u\|^{p}_{B_{p,p,\mathcal{H}}^{{\alpha},\beta}(\X)}-2\epsilon.$$
Therefore,  we get
$$E\subset O\subset\{g\in\X:u(g)\geq1\}^{o}\Rightarrow \textrm{Cap}_{B_{p,p,\mathcal{H}}^{\alpha,\beta}}(E)\leq\|u\|^{p}_{B_{p,p,\mathcal{H}}^{{\alpha},\beta}(\X)}.$$
The arbitrariness of $\varepsilon$ yields (\ref{cap2}).

Next we prove the converse  of the inequality (\ref{cap2}). Fix
$\epsilon\in(0,1),$ by (\ref{alter}), there exists $0\leq u\in
B_{p,p,\mathcal{H}}^{{\alpha},\beta}(\X)$ such that
$E\subset\{g\in\X:u(g)\geq1\}^{o}$ and
$\|u\|^{p}_{B_{p,p,\mathcal{H}}^{{\alpha},\beta}(\X)}
\leq\textrm{Cap}_{B_{p,p,\mathcal{H}}^{\alpha,\beta}}(E)+\epsilon.$
Denote by $O:=\{g\in\X:u(g)\geq1\}^{o}.$ By \emph{Case 2}, we have
$$\|u\|^{p}_{B_{p,p,\mathcal{H}}^{{\alpha},\beta}(\X)}
\geq\textrm{Cap}_{B_{p,p,\mathcal{H}}^{\alpha,\beta}}(O)=\textrm{Cap}_{B_{p,p,\mathcal{H}}^{\alpha,\beta}}^{*}(O)
\geq \textrm{Cap}_{B_{p,p,\mathcal{H}}^{\alpha,\beta}}^{*}(E).$$
Hence the converse of (\ref{cap2}) can be deduced by letting $\epsilon\rightarrow0$.
\end{proof}

\subsection{Proof of Theorem \ref{thm3}}\label{sec-6.2}

\begin{theorem}\label{CI}
Let $p\geq1$, $\beta\geq0$ and $\alpha\in(0,1)$. Then
$$\int_{0}^{\infty}\textrm{Cap}_{B_{p,p,\mathcal{H}}^{\alpha,\beta}}(\{g\in\X:|u(g)|>t\})dt^{p}\lesssim
\|u\|^{p}_{B_{p,p,\mathcal{H}}^{{\alpha},\beta}(\X)}$$ holds for all
lower semi-continuous functions $u\in
B_{p,p,\mathcal{H}}^{{\alpha},\beta}(\X).$

%\item{\rm (ii)} $$\int_{0}^{\infty}\textrm{Cap}_{B_{p,p}^{\mathcal{L}^{\sigma},\beta}}(\{g\in\X:|u(g)|>t\})dt^{p}\lesssim
%\|u\|^{p}_{B_{p,p}^{\mathcal{L}^{\sigma},\beta}(\X)}$$
%holds for all lower semi-continuous functions $u\in B_{p,p}^{\mathcal{L}^{\sigma},\beta}(\X).$
\end{theorem}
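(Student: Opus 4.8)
The plan is to run a dyadic decomposition of the superlevel sets and to bound the capacity of each level by a truncation of $u$, in the spirit of the Maz'ya-type scheme behind Proposition \ref{strong}, but now carrying the Besov seminorm $N_{p,p,\mathcal{H}}^{\alpha,\beta}$ instead of a Sobolev norm. Write $A_{j}:=\{g\in\X:|u(g)|>2^{j}\}$ for $j\in\mathbb{Z}$; since $u$ is lower semicontinuous these superlevel sets are open, so admissible competitors for $\textrm{Cap}_{B_{p,p,\mathcal{H}}^{\alpha,\beta}}(A_{j})$ in the sense of Definition \ref{defn2.7} exist. By the monotonicity of the capacity (Lemma \ref{CP} (ii)) and the inclusion $\{|u|>t\}\subseteq A_{j}$ for $t\in[2^{j},2^{j+1})$,
\begin{equation*}
\int_{0}^{\infty}\textrm{Cap}_{B_{p,p,\mathcal{H}}^{\alpha,\beta}}(\{|u|>t\})\,dt^{p}=\sum_{j\in\mathbb{Z}}\int_{2^{j}}^{2^{j+1}}\textrm{Cap}_{B_{p,p,\mathcal{H}}^{\alpha,\beta}}(\{|u|>t\})\,dt^{p}\lesssim\sum_{j\in\mathbb{Z}}2^{jp}\,\textrm{Cap}_{B_{p,p,\mathcal{H}}^{\alpha,\beta}}(A_{j}),
\end{equation*}
so everything reduces to estimating $\sum_{j}2^{jp}\textrm{Cap}_{B_{p,p,\mathcal{H}}^{\alpha,\beta}}(A_{j})$.

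For each $j$ I would introduce the truncation $v_{j}:=\min\{2^{j},(|u|-2^{j})_{+}\}$, which vanishes on $\{|u|\leq 2^{j}\}$, equals $2^{j}$ on $\{|u|\geq 2^{j+1}\}$, and is $1$-Lipschitz in $|u|$. Then $2^{-j}v_{j}\geq 1$ on $\{|u|\geq 2^{j+1}\}$, hence $A_{j+1}\subseteq\{2^{-j}v_{j}\geq 1\}^{o}$, so $2^{-j}v_{j}$ is admissible for $A_{j+1}$ once $v_{j}\in B_{p,p,\mathcal{H}}^{\alpha,\beta}(\X)$ (which will be a byproduct of the estimates below). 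Using the homogeneity of $\|\cdot\|_{B_{p,p,\mathcal{H}}^{\alpha,\beta}(\X)}$ this gives $\textrm{Cap}_{B_{p,p,\mathcal{H}}^{\alpha,\beta}}(A_{j+1})\leq 2^{-jp}\|v_{j}\|^{p}_{B_{p,p,\mathcal{H}}^{\alpha,\beta}(\X)}$, and after reindexing and invoking $(a+b)^{p}\lesssim a^{p}+b^{p}$,
\begin{equation*}
\sum_{j\in\mathbb{Z}}2^{jp}\,\textrm{Cap}_{B_{p,p,\mathcal{H}}^{\alpha,\beta}}(A_{j})\lesssim\sum_{j\in\mathbb{Z}}\|v_{j}\|^{p}_{B_{p,p,\mathcal{H}}^{\alpha,\beta}(\X)}\lesssim\sum_{j\in\mathbb{Z}}\Big(\|v_{j}\|^{p}_{L^{p}(\X)}+(N_{p,p,\mathcal{H}}^{\alpha,\beta}(v_{j}))^{p}\Big).
\end{equation*}

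The heart of the argument is then two pointwise summation inequalities, valid for all $g,g'\in\X$ with constants depending only on $p$:
\begin{equation*}
\sum_{j\in\mathbb{Z}}v_{j}(g)^{p}\lesssim|u(g)|^{p}\qquad\text{and}\qquad\sum_{j\in\mathbb{Z}}|v_{j}(g)-v_{j}(g')|^{p}\lesssim|u(g)-u(g')|^{p}.
\end{equation*}
Both follow from a dyadic case analysis of $T_{j}(s):=\min\{2^{j},(s-2^{j})_{+}\}$. For fixed $s=|u(g)|$ the nonzero terms $T_{j}(s)=2^{j}$ (those with $2^{j+1}\leq s$) form a geometric series with ratio $2^{p}>1$, summing to $\lesssim s^{p}$. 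For the difference, set $s=|u(g)|\geq s'=|u(g')|$: the ``interior'' indices with $[2^{j},2^{j+1}]\subseteq[s',s]$ satisfy $T_{j}(s)-T_{j}(s')=2^{j}\leq s-s'$ and sum geometrically to $\lesssim(s-s')^{p}$, while the at most two ``boundary'' indices contribute $\leq|s-s'|^{p}$ each by the $1$-Lipschitz property; finally $\big||u(g)|-|u(g')|\big|\leq|u(g)-u(g')|$. Granting these, the first inequality yields $\sum_{j}\|v_{j}\|_{L^{p}(\X)}^{p}\lesssim\|u\|_{L^{p}(\X)}^{p}$ by Tonelli, and the second, inserted into the definition of $N_{p,p,\mathcal{H}}^{\alpha,\beta}$ and combined with $K_{\alpha,t}\geq 0$ and Tonelli to exchange $\sum_{j}$ with the triple integral, yields $\sum_{j}(N_{p,p,\mathcal{H}}^{\alpha,\beta}(v_{j}))^{p}\lesssim(N_{p,p,\mathcal{H}}^{\alpha,\beta}(u))^{p}$. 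Assembling the pieces bounds the left-hand side by $\|u\|_{L^{p}(\X)}^{p}+(N_{p,p,\mathcal{H}}^{\alpha,\beta}(u))^{p}\lesssim\|u\|^{p}_{B_{p,p,\mathcal{H}}^{\alpha,\beta}(\X)}$. The main obstacle is the second pointwise inequality: one must check that the interior dyadic blocks telescope against $(s-s')^{p}$ rather than $s^{p}$, which is precisely where the bound $2^{j}\leq s-s'$ on interior indices enters; the remainder is routine bookkeeping, and no restriction on $(p,\alpha,\beta)$ beyond the stated hypotheses is required because the seminorm enters only through the nonnegative kernel $K_{\alpha,t}$ and Tonelli's theorem.
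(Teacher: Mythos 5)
Your proposal is correct and follows essentially the same route as the paper: both arguments dyadically decompose the superlevel sets, test the capacity of each level with a rescaled truncation of $u$, and reduce everything to the two pointwise summation inequalities $\sum_{j}v_{j}(g)^{p}\lesssim|u(g)|^{p}$ and $\sum_{j}|v_{j}(g)-v_{j}(g')|^{p}\lesssim|u(g)-u(g')|^{p}$, which are then fed through Tonelli and the nonnegativity of $K_{\alpha,t}$. The only difference is cosmetic: the paper uses smooth cutoffs $F_{j}(u)=2^{j}\phi(2^{-j}u)$ and cites \cite[Lemma 2.1]{W1999} for these inequalities, whereas you use the explicit piecewise-linear truncations $\min\{2^{j},(|u|-2^{j})_{+}\}$ and prove them directly by the interior/boundary dyadic case analysis, which is done correctly.
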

\begin{proof}
Assume that $u\geq0.$ Choose an increasing smooth function
$\phi:\mathbb{R}\rightarrow\mathbb{R}$ with $\phi(t)=0$ as $t\leq
2^{-1};$ $\phi(t)=2$ as $t\geq1;$ $0\leq \phi'(t)<5$ for all
$t\in\mathbb{R};$ $F_{j}(u):=2^{j}\phi(2^{-j}u)$ for all $j\in
\mathbb{Z}.$ Clearly,
\begin{align}\label{CI1}
\{g\in\X:u(g)>2^{j}\}&\subset\{g\in\X:2^{-j}F_{j}(u(g))=2\}\nonumber\\
&\subset\{g\in\X:2^{-j}F_{j}(u(g))>1\}.
\end{align}
Moreover, based on the proof of \cite[Lemma 2.1]{W1999}, we have
$$\sum_{j\in\mathbb{Z}}\|F_{j}(u)\|^{p}_{L^{p}(\X)}\lesssim\|u\|^{p}_{L^{p}(\X)}$$
and
$$\sum_{j\in\mathbb{Z}}|F_{j}(u(g))-F_{j}(u(g'))|^{p}\lesssim|u(g)-u(g')|^{p},$$
which, combining with Fubini's theorem, implies
\begin{align*}
\sum_{j\in\mathbb{Z}}(N_{p,p,\mathcal{H}}^{\alpha,\beta}(F_{j}(u)))^{p}&=\sum_{j\in\mathbb{Z}}
\int_{0}^{\infty}\int_{\X}H_{\alpha,t}(|F_{j}(u)-F_{j}(u(g'))|^{p})(g')dg'\dfrac{dt}{t^{\beta p/2+1}}\\
&=\sum_{j\in\mathbb{Z}}\int_{0}^{\infty}\int_{\X}\int_{\X}K_{\alpha,t}(g'^{-1}g)|F_{j}(u(g))-F_{j}(u(g'))|^{p}dgdg'\dfrac{dt}{t^{\beta p/2+1}}\\
&\lesssim\int_{0}^{\infty}\int_{\X}\int_{\X}K_{\alpha,t}(g'^{-1}g)|u(g)-u(g')|^{p}dgdg'\dfrac{dt}{t^{\beta p/2+1}}\\
&\lesssim(N_{p,p,\mathcal{H}}^{\alpha,\beta}(u))^{p}.
\end{align*}
Therefore,
\begin{align}\label{CI2}
\sum_{j\in\mathbb{Z}}\|F_{j}(u)\|^{p}_{B_{p,p,\mathcal{H}}^{{\alpha},\beta}(\X)}
&=\sum_{j\in\mathbb{Z}}\|F_{j}(u)\|^{p}_{L^{p}(\X)}
+\sum_{j\in\mathbb{Z}}(N_{p,p,\mathcal{H}}^{\alpha,\beta}(u)(F_{j}(u)))^{p}\nonumber\\
&\lesssim\|u\|^{p}_{L^{p}(\X)}+(B_{p,p,\mathcal{H}}^{\alpha,\beta}(u))^{p}\nonumber\\
&=\|u\|^{p}_{B_{p,p,\mathcal{H}}^{{\alpha},\beta}(\X)}.
\end{align}
Furthermore, by (\ref{CI1}), (\ref{CI2}), Lemma \ref{CP} (ii) and
the fact that every $F_{j}(u)$ is lower semi-continuous, we deduce
\begin{align*}
&\int_{0}^{\infty}\textrm{Cap}_{B_{p,p,\mathcal{H}}^{\alpha,\beta}}(\{g\in\X:u(g)>t\})dt^{p}\\
&\lesssim\sum_{j\in\mathbb{Z}}2^{jp}\textrm{Cap}_{B_{p,p,\mathcal{H}}^{\alpha,\beta}}(\{g\in\X:u(g)>2^{j}\})\\
&\lesssim\sum_{j\in\mathbb{Z}}2^{jp}\textrm{Cap}_{B_{p,p,\mathcal{H}}^{\alpha,\beta}}(\{g\in\X:2^{-j}F_{j}(u(g))>1\})\\
&\lesssim\sum_{j\in\mathbb{Z}}2^{jp}\|2^{-j}F_{j}(u)\|^{p}_{B_{p,p,\mathcal{H}}^{{\alpha},\beta}(\X)}\lesssim
\|u\|^{p}_{B_{p,p,\mathcal{H}}^{{\alpha},\beta}(\X)}.
\end{align*}
This completes the proof of Theorem \ref{CI}.
\end{proof}

The capacitary inequalities in Theorem \ref{CI} enable us to
obtain the following trace inequalities for the Besov type space $B_{p,p,\mathcal{H}}^{{\alpha},\beta}(\X)$.
\begin{theorem}\label{trace}
Assume that $1< p\leq q<\infty$ with $2\alpha>p$ and
$\beta\in(0,1/\alpha).$ Let $\nu$ be a nonnegative measure  on $\X$.
 Then the following two assertions are equivalent:

  \item [(i)]For all sets $E\subset\X,$ $\nu(E)\lesssim(\textrm{Cap}_{B_{p,p,\mathcal{H}}^{\alpha,\beta}}(E))^{q/p}.$
  \item [(ii)] For all lower semi-continuous functions $u\in B_{p,p,\mathcal{H}}^{{\alpha},\beta}(\X),$
  $$\Big(\int_{\X}|u(g)|^{q}d\nu(g)\Big)^{1/q}\lesssim\|u\|_{B_{p,p,\mathcal{H}}^{{\alpha},\beta}(\X)}.$$

\end{theorem}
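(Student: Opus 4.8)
The plan is to prove Theorem \ref{trace} by establishing the two implications separately, using the strong-type capacitary inequality of Theorem \ref{CI} as the main engine for the harder direction.

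\textbf{The direction (ii)$\Rightarrow$(i).} This is the routine half. Given any set $E\subset\X$, I would use the outer regularity (\ref{capco}) to reduce to the case where $E$ is contained in an open set, and then pick an admissible function. More directly, for $\epsilon>0$ choose $0\leq u\in B_{p,p,\mathcal{H}}^{\alpha,\beta}(\X)$ with $E\subset\{g\in\X:u(g)\geq1\}^{o}$ and $\|u\|^{p}_{B_{p,p,\mathcal{H}}^{\alpha,\beta}(\X)}\leq \textrm{Cap}_{B_{p,p,\mathcal{H}}^{\alpha,\beta}}(E)+\epsilon$, invoking the alternative description (\ref{alter}). Since $u\geq1$ on $E$, one has $\nu(E)\leq\int_{\X}|u(g)|^{q}d\nu(g)$, and applying (ii) gives
$$\nu(E)\leq\int_{\X}|u(g)|^{q}d\nu(g)\lesssim\|u\|^{q}_{B_{p,p,\mathcal{H}}^{\alpha,\beta}(\X)}\lesssim\big(\textrm{Cap}_{B_{p,p,\mathcal{H}}^{\alpha,\beta}}(E)+\epsilon\big)^{q/p}.$$
Letting $\epsilon\rightarrow0$ yields (i). One subtle point: the admissible $u$ need not be lower semi-continuous, but since $\nu(E)$ is controlled by the integral of $|u|^q$ over all of $\X$, the hypothesis (ii) should be applied to a lower semi-continuous representative or majorant; I would address this by using the mollified approximants $\tau_\epsilon\ast u$ from Proposition \ref{desity}, which are continuous, at the cost of an arbitrarily small enlargement of the superlevel set.

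\textbf{The direction (i)$\Rightarrow$(ii).} This is where the capacitary strong-type inequality enters. For a lower semi-continuous $u\in B_{p,p,\mathcal{H}}^{\alpha,\beta}(\X)$, I would decompose the integral dyadically by writing $E_{t}:=\{g\in\X:|u(g)|>t\}$ and
$$\int_{\X}|u(g)|^{q}d\nu(g)=\int_{0}^{\infty}\nu(E_{t})\,dt^{q}\lesssim\sum_{j\in\mathbb{Z}}2^{jq}\nu(E_{2^{j}}).$$
Applying hypothesis (i) gives $\nu(E_{2^{j}})\lesssim(\textrm{Cap}_{B_{p,p,\mathcal{H}}^{\alpha,\beta}}(E_{2^{j}}))^{q/p}$, so using the superadditivity-type inequality (\ref{4.4}) (valid since $q/p\geq1$, applied in the form $\sum a_j^{q/p}\leq(\sum a_j)^{q/p}$) one can pass from the sum of $q/p$-powers to a single power:
$$\sum_{j\in\mathbb{Z}}2^{jq}(\textrm{Cap}_{B_{p,p,\mathcal{H}}^{\alpha,\beta}}(E_{2^{j}}))^{q/p}\lesssim\Big(\sum_{j\in\mathbb{Z}}2^{jp}\textrm{Cap}_{B_{p,p,\mathcal{H}}^{\alpha,\beta}}(E_{2^{j}})\Big)^{q/p}.$$
Finally, the monotonicity of the capacity (Lemma \ref{CP} (ii)) together with Theorem \ref{CI} bounds the inner sum:
$$\sum_{j\in\mathbb{Z}}2^{jp}\textrm{Cap}_{B_{p,p,\mathcal{H}}^{\alpha,\beta}}(E_{2^{j}})\lesssim\int_{0}^{\infty}\textrm{Cap}_{B_{p,p,\mathcal{H}}^{\alpha,\beta}}(E_{t})\,dt^{p}\lesssim\|u\|^{p}_{B_{p,p,\mathcal{H}}^{\alpha,\beta}(\X)}.$$
Chaining these estimates gives $\big(\int_{\X}|u|^{q}d\nu\big)^{1/q}\lesssim\|u\|_{B_{p,p,\mathcal{H}}^{\alpha,\beta}(\X)}$, which is (ii).

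\textbf{The main obstacle.} The delicate point is the application of the strong-type capacitary inequality of Theorem \ref{CI}, which is stated only for lower semi-continuous functions; this is precisely why (ii) is phrased for such functions, and the lower semi-continuity guarantees that each superlevel set $E_{t}=\{|u|>t\}$ is open, so that Lemma \ref{CP} (ii) and the dyadic comparison $\sum_{j}2^{jp}\textrm{Cap}(E_{2^{j}})\lesssim\int_{0}^{\infty}\textrm{Cap}(E_{t})\,dt^{p}$ are legitimate. I expect the bookkeeping of the two elementary inequalities—first summing the $q/p$-powers via (\ref{4.4}) and then comparing the dyadic sum with the capacitary integral—to require care regarding the condition $p\leq q$, but no genuinely new idea beyond Theorem \ref{CI} is needed; this mirrors the argument already carried out in the proof of Theorem \ref{extension-4}.
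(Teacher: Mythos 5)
Your proof is correct and follows essentially the same route as the paper: the substantive direction (i)$\Rightarrow$(ii) is the identical chain of dyadic decomposition, hypothesis (i), the elementary power inequality (\ref{4.4}), the dyadic-to-integral comparison, and the strong-type capacitary inequality of Theorem \ref{CI} (applying (i) before rather than after (\ref{4.4}) changes nothing). The only divergence is in (ii)$\Rightarrow$(i): the paper tests $\nu(\mathcal{K})$ for compact $\mathcal{K}$ against $C_{c}^{\infty}$ admissible functions, which are automatically continuous, and then invokes Lemma \ref{EquiC} together with the regularity of $\nu$ and of the capacity to pass to arbitrary sets; this cleanly avoids the lower semi-continuity issue that your mollification patch via Proposition \ref{desity} only partially resolves (after mollifying, $\tau_{\epsilon}\ast u\geq 1$ is guaranteed only on points of $E$ at distance at least $\epsilon$ from the boundary of the superlevel set, so you would still need to exhaust by compacts), and you should adopt the paper's route there.
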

\begin{proof}
Suppose that (i) is valid.
%\begin{equation}\label{Ineq}
%\big(\sum_{i}|a_{i}|\big)^{\kappa}\leq\sum_{i}|a_{i}|^{\kappa}\
%\mbox{as}\ \kappa\in(0,1].
%\end{equation}
For all lower semi-continuous functions $u\in
B_{p,p,\mathcal{H}}^{{\alpha},\beta}(\X),$ it follows from Theorem
\ref{CI} and  inequality (\ref{4.4}) that
\begin{align*}
\int_{\X}|u(g)|^{q}d\nu(g)&=\int_{0}^{\infty}\nu(\{g\in\X:|u(g)|>t\})dt^{q}\\
&\sim\sum_{k\in\mathbb{Z}}2^{kq}\nu(\{g\in\X:|u(g)|>2^{k}\})\\
&\lesssim\bigg(\sum_{k\in\mathbb{Z}}2^{kq}\Big(\nu(\{g\in\X:|u(g)|>2^{k}\})\Big)^{p/q}\bigg)^{q/p}\\
&\lesssim \bigg(\sum_{j\in\mathbb{Z}}2^{kp}\textrm{Cap}_{B_{p,p,\mathcal{H}}^{\alpha,\beta}}(\{g\in\X:|u(g)|>2^{k}\})\bigg)^{q/p}\\
&\lesssim \bigg(\int_{0}^{\infty}\textrm{Cap}_{B_{p,p,\mathcal{H}}^{\alpha,\beta}}(\{g\in\X:|u(g)|>t\})dt^{p}\bigg)^{q/p}\\
&\lesssim \|u\|^{q}_{B_{p,p,\mathcal{H}}^{{\alpha},\beta}(\X)},
\end{align*}
whence (i) derives (ii).

Conversely, if $\mathcal{K}\subset\X$ is compact, then for any $u\in
C_{c}^{\infty}(\X)$ with $u\geq1$ on $\mathcal{K},$
$$\nu(\mathcal{K})\leq\int_{\mathcal{K}}|u(g)|^{q}d\nu(g)\lesssim\|u\|^{q}_{B_{p,p,\mathcal{H}}^{{\alpha},\beta}(\X)}.$$
By Definition \ref{defn2.9} and
 Lemma \ref{EquiC},  we have
$$\nu(\mathcal{K})\lesssim\big(\textrm{Cap}_{B_{p,p,\mathcal{H}}^{\alpha,\beta}}(\mathcal{K})\big)^{q/p}.$$
The above inequality holds for any set $E$ due to the regularity
properties of $\nu$ and the
$B_{p,p,\mathcal{H}}^{\alpha,\beta}$-capacity, which proves (i).
\end{proof}
%Similarly to the proof of Theorem \ref{trace}, we also obtain
%\begin{theorem}\label{trace-p}
%Assume that $1<p\leq q<\infty$ with $2\sigma>p$ and $\beta\in(0,2).$
% Then the following two assertions are equivalent:
%
%  \item [(i)]For all sets $E\subset\X,$ $\mu(E)\lesssim(\textrm{Cap}_{B_{p,p}^{\mathcal{L}^{\sigma},\beta}}(E))^{q/p}.$
%  \item [(ii)] For all lower semi-continuous functions $u\in B_{p,p}^{\mathcal{L}^{\sigma},\beta}(\X),$
%  $$\Big(\int_{\X}|u|^{q}d\mu\Big)^{1/q}\lesssim\|u\|_{B_{p,p}^{\mathcal{L}^{\sigma},\beta}(\X)}.$$
%
%\end{theorem}

\begin{remark}
 Let $(s,\alpha)\in(0,1)\times(0,1)$.   For $p\in(1,\min\{2\alpha,\mathcal{Q}/2\mathbf{s}\})$ with $p=q$ and $1/\alpha>\beta>2s$, via Theorem
  \ref{thm2} and Proposition \ref{boundedness}, we conclude  that
if  the measure $\nu$ satisfies
$$\sup_{E\subset\X}\frac{(\nu(E))^{1/q}}{(Cap_{\mathcal{\dot{W}}^{2\mathbf{s},p}}(E))^{1/p}}<\infty, $$
then
$$\Big(\int_{\X}\big|u(g)\big|^{q}d\nu(g)\Big)^{1/q}\lesssim \|u\|_{{B_{p,p,\mathcal{H}}^{{\alpha},\beta}(\X)}}\ \ \forall\ u\in C_{c}^{\infty}(\X).$$

Moreover, by Remark \ref{ccc}, for
$p\in(1,\min\{2\alpha,\mathcal{Q}/2\mathbf{s}\})$ with  $p=q$ and
$\min\{1/\alpha,\alpha/p\}>\beta>\alpha s$, we also can see that if
the measure $\nu$ satisfies
$$\sup_{E\subset\X}\frac{(\nu(E))^{1/q}}{(Cap_{\mathcal{\dot{W}}^{2\mathbf{s},p}}(E))^{1/p}}<\infty,$$
then
$$\Big(\int_{\X}\big|u(g)\big|^{q}d\nu(g)\Big)^{1/q}\lesssim \|u\|_{N_{p,p}^{\beta}}\ \ \forall\ u\in C_{c}^{\infty}(\X).$$

%(ii)  For $p\in(1,\min\{2\sigma,\mathcal{Q}/2\widetilde{\mathbf{s}}\})$ with $p=q$ and $2>\beta>2s$, from Theorem \ref{thm2}
% and Proposition \ref{boundedness}, we conclude that
%if  the measure $\mu$ satisfies
%$$\sup_{E\subset\X}\frac{(\mu(E))^{1/q}}{(Cap_{\mathcal{\dot{W}}^{2\widetilde{\mathbf{s}},p}}(E))^{1/p}}<\infty,$$
%then
%$$\Big(\int_{\X}\big|u(g)\big|^{q}d\nu(g)\big)^{1/q}\lesssim \|u\|_{N_{p,p}^{\L^{\sigma},\beta}}\ \ \forall\ u\in C_{c}^{\infty}(\X).$$
%
%Moreover, by Remark \ref{ccc}, for
%$p\in(1,\min\{2\sigma,\mathcal{Q}/2\widetilde{\mathbf{s}}\})$ with
%$p=q$ and $\min\{2,\sigma/p\}>\beta>\sigma s$, we can also  see that
%if the measure $\mu$ satisfies
%$$\sup_{E\subset\X}\frac{(\mu(E))^{1/q}}{(Cap_{\mathcal{\dot{W}}^{2\widetilde{\mathbf{s}},p}}(E))^{1/p}}<\infty,$$
%then
%$$\big(\int_{\X}\big|u(g)\big|^{q}d\nu(g)\big)^{1/q}\lesssim \|u\|_{N_{p,p}^{\beta}}\ \ \forall\ u\in C_{c}^{\infty}(\X).$$

\end{remark}

\begin{theorem}\label{trace1}
Assume that $p\in[1,2\alpha),0<q<p$ and $\beta\in(0,1/\alpha).$ Let
$\nu$ be a nonnegative Radon measure on $\X.$ Then the following two
assertions are equivalent:

  \item [(i)] The function
  $$(0,+\infty)\ni t\mapsto h_{p,\nu}(t):=\inf\{\textrm{Cap}_{B_{p,p,\mathcal{H}}^{\alpha,\beta}}(E):E\subset\X~with~\nu(E)\geq t\}$$ satisfies
  $$\bigg(\int_{0}^{\infty}\big(\frac{t^{p/q}}{h_{p,\nu}(t)}\big)^{q/(p-q)}\frac{dt}{t}\bigg)^{(p-q)/p}=:\|\nu\|_{p,q}<\infty.$$
  \item [(ii)] For all lower semi-continuous functions $u\in B_{p,p,\mathcal{H}}^{{\alpha},\beta}(\X),$
  $$\Big(\int_{\X}|u(g)|^{q}d\nu(g)\Big)^{1/q}\lesssim\|u\|_{B_{p,p,\mathcal{H}}^{{\alpha},\beta}(\X)}.$$

\end{theorem}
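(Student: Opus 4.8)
The plan is to establish the two implications (i)$\Rightarrow$(ii) and (ii)$\Rightarrow$(i) separately, following the template of Theorem \ref{extension-3} but with the fractional Sobolev capacity replaced by the Besov capacity $\textrm{Cap}_{B_{p,p,\mathcal{H}}^{\alpha,\beta}}(\cdot)$. The two structural inputs that powered the Sobolev argument—the strong-type capacitary inequality of Proposition \ref{strong-1} and the Riesz-potential representation of capacity minimizers in Proposition \ref{relation}—will here be furnished instead by the capacitary inequality of Theorem \ref{CI} and by the min-max (lattice) property of Lemma \ref{max}, respectively. A preliminary observation is that $\|\nu\|_{p,q}^{p/(p-q)}$ equals, up to the constant $p/(p-q)$, the integral $\int_0^\infty h_{p,\nu}(t)^{-q/(p-q)}\,dt^{p/(p-q)}$, so all manipulations can be carried out with the dyadic sums $\sum_{j}2^{jp/(p-q)}h_{p,\nu}(2^j)^{-q/(p-q)}$ and converted to integral form at the end using the monotonicity of $h_{p,\nu}$.

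For (i)$\Rightarrow$(ii), fix a lower semi-continuous $u\in B_{p,p,\mathcal{H}}^{\alpha,\beta}(\X)$ and set $E_t:=\{g\in\X:|u(g)|>t\}$. First I would run the dyadic layer-cake decomposition
\begin{eqnarray*}
\int_{\X}|u(g)|^{q}\,d\nu(g)&=&\sum_{k\in\mathbb{Z}}\int_{2^{k}}^{2^{k+1}}\nu(E_t)\,dt^{q}\\
&\lesssim&\sum_{i\in\mathbb{Z}}2^{iq}\big(\nu(E_{2^{i}})-\nu(E_{2^{i+1}})\big),
\end{eqnarray*}
and then apply H\"older's inequality with exponents $p/q$ and $p/(p-q)$ to factor the right-hand side as a product of $\big(\sum_i 2^{ip}\textrm{Cap}_{B_{p,p,\mathcal{H}}^{\alpha,\beta}}(E_{2^{i}})\big)^{q/p}$ and $\big(\sum_i (\nu(E_{2^{i}})-\nu(E_{2^{i+1}}))^{p/(p-q)}\textrm{Cap}_{B_{p,p,\mathcal{H}}^{\alpha,\beta}}(E_{2^{i}})^{-q/(p-q)}\big)^{(p-q)/p}$. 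The first factor is dominated by $\|u\|_{B_{p,p,\mathcal{H}}^{\alpha,\beta}(\X)}^{q}$ via Theorem \ref{CI} and the monotonicity of the capacity, exactly as in the passage leading to (\ref{5.9}). For the second factor I would use $h_{p,\nu}(\nu(E_{2^{i}}))\le \textrm{Cap}_{B_{p,p,\mathcal{H}}^{\alpha,\beta}}(E_{2^{i}})$, the monotonicity of $h_{p,\nu}$, and the elementary inequality $(a-b)^{p/(p-q)}\le a^{p/(p-q)}-b^{p/(p-q)}$ to telescope the sum into $\int_0^\infty h_{p,\nu}(\xi)^{-q/(p-q)}\,d\xi^{p/(p-q)}\sim\|\nu\|_{p,q}^{p/(p-q)}$, which yields (ii).

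For (ii)$\Rightarrow$(i), let $C_{p,q}(\nu)$ be the best constant in (ii). For each $j\in\mathbb{N}$ the definition of $h_{p,\nu}$ provides $E_j\subset\X$ with $\nu(E_j)\ge 2^{j}$ and $\textrm{Cap}_{B_{p,p,\mathcal{H}}^{\alpha,\beta}}(E_j)\le 2h_{p,\nu}(2^{j})$, and then (\ref{alter}) together with Lemma \ref{EquiC} and Proposition \ref{desity} supplies lower semi-continuous admissible functions $0\le u_j\le 1$ with $u_j\ge 1$ on $E_j$ and $\|u_j\|_{B_{p,p,\mathcal{H}}^{\alpha,\beta}(\X)}^{p}\le 4h_{p,\nu}(2^{j})$. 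Writing $c_j:=(2^{j}/h_{p,\nu}(2^{j}))^{1/(p-q)}$ and
$$u_{i,k}:=\max_{i\le j\le k}c_j u_j,$$
I would invoke the min-max property of Lemma \ref{max} inductively to obtain $\|u_{i,k}\|_{B_{p,p,\mathcal{H}}^{\alpha,\beta}(\X)}^{p}\le\sum_{j=i}^{k}c_j^{p}\|u_j\|_{B_{p,p,\mathcal{H}}^{\alpha,\beta}(\X)}^{p}\lesssim\sum_{j=i}^{k}2^{jp/(p-q)}h_{p,\nu}(2^{j})^{-q/(p-q)}$. Since $u_{i,k}\ge c_j$ on $E_j$, a distribution-function estimate gives $\int_{\X}|u_{i,k}|^{q}\,d\nu\gtrsim\sum_{j=i}^{k}2^{j}c_j^{q}=\sum_{j=i}^{k}2^{jp/(p-q)}h_{p,\nu}(2^{j})^{-q/(p-q)}$. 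Inserting both bounds into (ii) and cancelling the common sum shows that the partial sums $\sum_{j=i}^{k}2^{jp/(p-q)}h_{p,\nu}(2^{j})^{-q/(p-q)}$ are bounded uniformly by $C_{p,q}(\nu)^{pq/(p-q)}$; letting $i\to-\infty$, $k\to\infty$ and comparing the dyadic sum with the integral yields $\|\nu\|_{p,q}<\infty$.

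The main obstacle I expect is the necessity direction (ii)$\Rightarrow$(i): unlike the fractional Sobolev setting, the Besov capacity admits no clean Riesz-potential representation, so the superposition $u_{i,k}=\max_{i\le j\le k}c_j u_j$ cannot be controlled through a pointwise $\ell^p$-bound on potentials. The substitute is precisely the lattice inequality of Lemma \ref{max}, and the delicate point is to select the test functions lower semi-continuous—so that both (ii) and, in the reverse direction, Theorem \ref{CI} apply—while keeping them admissible for the capacity and forcing $u_{i,k}\ge c_j$ on each $E_j$; Lemma \ref{EquiC} and Proposition \ref{desity} are exactly what legitimize this choice.
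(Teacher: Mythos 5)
Your proposal is correct and follows essentially the same route as the paper in both directions: the same dyadic layer-cake decomposition, H\"older splitting, Theorem \ref{CI} and the telescoping via the monotonicity of $h_{p,\nu}$ for (i)$\Rightarrow$(ii), and the same superposition $u_{i,k}=\max_{i\le j\le k}c_ju_j$ controlled by Lemma \ref{max} and Lemma \ref{EquiC}, with the rearrangement lower bound and cancellation, for (ii)$\Rightarrow$(i). The only substantive omission is the preliminary verification that $h_{p,\nu}(t)>0$ for all $t>0$ (which the paper derives from (ii) before the weights $c_j=(2^{j}/h_{p,\nu}(2^{j}))^{1/(p-q)}$ can even be defined, and which also rules out $\|\nu\|_{p,q}=\infty$ coming from a vanishing denominator); this is a one-line fix, and your index range should read $j\in\mathbb{Z}$ rather than $j\in\mathbb{N}$ so that the limit $i\to-\infty$ makes sense.
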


\begin{proof}
Assume that (i) holds. For any lower semi-continuous function $u\in
B_{p,p,\mathcal{H}}^{{\alpha},\beta}(\X),$ let
$$E_{t}:=\{g\in\X:|u(g)|>t\}$$ for all $t>0.$ Then
\begin{align*}
\int_{\X}|u(g)|^{q}d\nu(g)&=\sum_{k\in\mathbb{Z}}\int_{2^{k}}^{2^{k+1}}\nu(E_{t})dt^{q}\lesssim\sum_{k\in\mathbb{Z}}2^{kq}\nu(E_{2^{k}})\\
&\sim\sum_{k\in\mathbb{Z}}\sum_{i=k}^{\infty}2^{kq}\nu(E_{2^{i}}\setminus
E_{2^{i+1}})
\lesssim\sum_{i\in\mathbb{Z}}2^{iq}(\nu(E_{2^{i}})-\nu(E_{2^{i+1}})).
\end{align*}
Note that $q<p$ and $q/p+(p-q)/p=1,$ it follows from H\"older's inequality and Theorem \ref{CI} that
\begin{align*}
&\sum_{i\in\mathbb{Z}}2^{iq}(\nu(E_{2^{i}})-\nu(E_{2^{i+1}}))\\
&\leq\bigg(\sum_{i\in\mathbb{Z}}2^{ip}\textrm{Cap}_{B_{p,p,\mathcal{H}}^{\alpha,\beta}}(E_{2^{i}})\bigg)^{q/p}
\bigg(\sum_{i\in\mathbb{Z}}\big(\dfrac{\nu(E_{2^{i}})-\nu(E_{2^{i+1}})}
{(\textrm{Cap}_{B_{p,p,\mathcal{H}}^{\alpha,\beta}}(E_{2^{i}}))^{q/p}}\big)^{p/(p-q)}\bigg)^{(p-q)/p}\\
&\sim\bigg(\int_{0}^{\infty}\textrm{Cap}_{B_{p,p,\mathcal{H}}^{\alpha,\beta}}(E_{t})dt^{p}\bigg)^{q/p}
\bigg(\sum_{i\in\mathbb{Z}}\big(\dfrac{\nu(E_{2^{i}})-\nu(E_{2^{i+1}})}
{(\textrm{Cap}_{B_{p,p,\mathcal{H}}^{\alpha,\beta}}(E_{2^{i}}))^{q/p}}\big)^{p/(p-q)}\bigg)^{(p-q)/p}\\
&\leq\|u\|^{q}_{B_{p,p,\mathcal{H}}^{{\alpha},\beta}(\X)}\bigg(\sum_{i\in\mathbb{Z}}
\big(\dfrac{\nu(E_{2^{i}})-\nu(E_{2^{i+1}})}{(\textrm{Cap}_{B_{p,p,\mathcal{H}}^{\alpha,\beta}}
(E_{2^{i}}))^{q/p}}\big)^{p/(p-q)}\bigg)^{(p-q)/p}.
\end{align*}
Since $u$ is lower semi-continuous, we can see hat every $E_{2^{i}}$
is open. Thus there exists a compact set $\mathcal{K}_{i}\subset
\mathcal{K}_{2^{i}}$ such that
$2^{-1}\nu(E_{2^{i}})\leq\nu(\mathcal{K}_{i})\leq\nu(E_{2^{i}}).$ By
the definition of $h_{p,\nu}(\cdot),$ we have
$$h_{p,\nu}(2^{-1}\nu(E_{2^{i}}))\leq\textrm{Cap}_{B_{p,p,\mathcal{H}}^{\alpha,\beta}}(\mathcal{K}_{i})
\leq\textrm{Cap}_{B_{p,p,\mathcal{H}}^{\alpha,\beta}}(E_{2^{i}}).$$
Combining this with the increasing property of
$h_{p,\nu}(\cdot)$, we conclude
\begin{align*}
\sum_{i\in\mathbb{Z}}\big(\dfrac{\nu(E_{2^{i}})-\nu(E_{2^{i+1}})}{(\textrm{Cap}_{B_{p,p,\mathcal{H}}^{\alpha,\beta}}
(E_{2^{i}}))^{q/p}}\big)^{p/(p-q)}
&\leq\sum_{i\in\mathbb{Z}}\dfrac{(\nu(E_{2^{i}})-\nu(E_{2^{i+1}}))^{p/(p-q)}}{(h_{p,\nu}(2^{-1}\nu(E_{2^{i}})))^{q/(p-q)}}\\
&\leq\sum_{i\in\mathbb{Z}}\dfrac{(\nu(E_{2^{i}}))^{p/(p-q)}-(\nu(E_{2^{i+1}}))^{p/(p-q)}}{(h_{p,\nu}(2^{-1}\nu(E_{2^{i}})))^{q/(p-q)}}\\
&\sim\sum_{i\in\mathbb{Z}}\int_{2^{-1}\nu(E_{2^{i+1}})}^{2^{-1}\nu(E_{2^{i}})}\dfrac{ds^{p/(p-q)}}{(h_{p,\nu}(2^{-1}\nu(E_{2^{i}})))^{q/(p-q)}}\\
&\leq\|\nu\|_{p,q}^{p/(p-q)}.
\end{align*}
Therefore,
$$\int_{\X}|u(g)|^{q}d\nu(g)\lesssim\sum_{i\in\mathbb{Z}}2^{iq}(\nu(E_{2^{i}})-\nu(E_{2^{i+1}}))
\lesssim\|\nu\|_{p,q}^{p/(p-q)}\|u\|^{q}_{B_{p,p,\mathcal{H}}^{{\alpha},\beta}(\X)}.$$
This proves (ii).

Next we prove that (ii) implies (i). For any $t>0,$ we claim that
$$h_{p,\nu}(t)>0.$$ Otherwise, if there exists some $t_{0}>0$ such
that $h_{p,\nu}(t_{0})=0,$ then for any $\epsilon>0$ there exists
some compact set $\mathcal{K}_{\epsilon}\subset\X$ such that
$\nu(\mathcal{K}_{\epsilon})\geq t_{0}$ and
$\textrm{Cap}_{B_{p,p,\mathcal{H}}^{\alpha,\beta}}(\mathcal{K}_{\epsilon})<\epsilon,$
but (ii) implies
$$t_{0}\leq \nu(\mathcal{K}_{\epsilon})\lesssim\inf\{\|u\|^{q}_{B_{p,p,
\mathcal{H}}^{{\alpha},\beta}(\X)}:u\in C_{c}^{\infty}(\X)\ \textrm{and}\ u\geq1\
\textrm{on}\ \mathcal{K}_{\epsilon}\}\sim(\textrm{Cap}_{B_{p,p,\mathcal{H}}^{\alpha,\beta}}^{*}(\mathcal{K}_{\epsilon}))^{q/p}
\lesssim\epsilon^{q/p}.$$ Letting $\epsilon\rightarrow0$ yields
$t_{0}=0$ against $t_{0}>0.$ Thus, the claim is valid.

Fix $\epsilon>0.$ By the definition of $h_{p,\nu},$ there exists a
compact set $\mathcal{K}_{j}\, (j\in\mathbb{Z})$ satisfying
$$\nu(\mathcal{K}_{j})\geq  2^{j} \ \textrm{and} \ \textrm{Cap}_{B_{p,p,\mathcal{H}}^{\alpha,\beta}}(\mathcal{K}_{j})-\epsilon h_{p,\nu}
(2^{j})\leq h_{p,\nu}(2^{j})\leq
 \textrm{Cap}_{B_{p,p,\mathcal{H}}^{\alpha,\beta}}(\mathcal{K}_{j}).$$
From Lemma \ref{EquiC}, there exists $u_{j}\in C_{c}^{\infty}(\X)$ such that $u_{j}\geq1$ on $\mathcal{K}_{j}$ and
$$\|u_{j}\|^{p}_{B_{p,p,\mathcal{H}}^{{\alpha},\beta}(\X)}-\epsilon h_{p,\nu}(2^{j})\leq
\textrm{Cap}_{B_{p,p,\mathcal{H}}^{\alpha,\beta}}(\mathcal{K}_{j})=\textrm{Cap}_{B_{p,p,\mathcal{H}}^{\alpha,\beta}}^{*}(\mathcal{K}_{j})
\leq\|u_{j}\|^{p}_{B_{p,p,\mathcal{H}}^{{\alpha},\beta}(\X)}.$$

For any $i,k\in\mathbb{Z}$ with $i\leq k,$ let
$$u_{i,k}:=\max_{i\leq j\leq k}\bigg(\dfrac{2^{j}}{h_{p,\nu}(2^{j})}\bigg)^{1/(p-q)}u_{j}.$$
We deduce from Lemma \ref{max} that $u_{i,k}\in
B_{p,p,\mathcal{H}}^{{\alpha},\beta}(\X)$ and
\begin{eqnarray}\label{Ineq1}
\|u_{i,k}\|^{p}_{B_{p,p,\mathcal{H}}^{{\alpha},\beta}(\X)}&\leq& \sum_{i\leq j\leq k}\bigg(\dfrac{2^{j}}{h_{p,\nu}(2^{j})}\bigg)^{p/(p-q)}\|u_{j}\|^{p}_{B_{p,p,\mathcal{H}}^{{\alpha},\beta}(\X)}\nonumber\\
&\leq&(1+2\epsilon)\sum_{i\leq j\leq
k}\dfrac{2^{jp/(p-q)}}{(h_{p,\nu}(2^{j}))^{q/(p-q)}}.
\end{eqnarray}

Note that for $i\leq j\leq k$ and $g\in \mathcal{K}_{j},$ we have
$$u_{i,k}(g)\geq\big(\frac{2^{j}}{h_{p,\nu}(2^{j})}\big)^{1/(p-q)}.$$
Then it follows from (ii) and the nonincreasing rearrangement of
$u_{i,k}$ that
\begin{align*}
\|u_{i,k}\|^{q}_{B_{p,p,\mathcal{H}}^{{\alpha},\beta}(\X)}&\gtrsim\int_{\X}|u_{i,k}(g)|^{q}d\nu(g)\\
&\sim\int_{0}^{\infty}\big(\inf\{\lambda>0:\nu(\{g\in\X:|u_{i,k}(g)|>\lambda\})\leq s\}\big)^{q}ds\\
&\sim\sum_{j\in\mathbb{Z}}2^{j}\big(\inf\{\lambda:\nu(\{g\in\X:|u_{i,k}(g)|>\lambda\})\leq 2^{j}\}\big)^{q}\\
&\geq\sum_{i\leq j\leq k}2^{j}\bigg(\frac{2^{j}}{h_{p,\nu}(2^{j})}\bigg)^{q/(p-q)}\\
&\sim\sum_{i\leq j\leq
k}\dfrac{2^{jp/(p-q)}}{(h_{p,\nu}(2^{j}))^{q/(p-q)}}.
\end{align*}
Inequality (\ref{Ineq1}) gives
$$\sum_{i\leq j\leq k}\dfrac{2^{jp/(p-q)}}{(h_{p,\nu}(2^{j}))^{q/(p-q)}}\lesssim\bigg((1+2\epsilon)\sum_{i\leq j\leq k}\dfrac{2^{jp/(p-q)}}
{(h_{p,\nu}(2^{j}))^{q/(p-q)}}\bigg)^{q/p}.$$
Letting $\epsilon\rightarrow0,$ we have
$$\bigg(\sum_{i\leq j\leq k}\dfrac{2^{jp/(p-q)}}{(h_{p,\nu}(2^{j}))^{q/(p-q)}}\bigg)^{(p-q)/p}\lesssim1.$$
Finally, letting $i\rightarrow-\infty$ and $k\rightarrow\infty,$ we obtain
\begin{align*}
\|\nu\|_{p,q}&=\bigg(\int_{0}^{\infty}\bigg(\dfrac{t^{p/q}}{h_{p,\nu}(t)}\bigg)^{q/(p-q)}\dfrac{dt}{t}\bigg)^{(p-q)/p}\\
&\sim\bigg(\sum_{j=-\infty}^{\infty}\dfrac{2^{jp/(p-q)}}{(h_{p,\nu}(2^{j}))^{q/(p-q)}}\bigg)^{(p-q)/p}\lesssim1.
\end{align*}
This completes the proof of Theorem \ref{trace1}.
\end{proof}

%Similarly to the proof of Theorem \ref{trace1}, we also get
%\begin{theorem}\label{trace1-p}
%Assume that $p\in[1,2\sigma),0<q<p$ and $\beta\in(0,2).$ Let $\mu$ be a nonnegative Radon measure on $\X.$
%Then the following two assertions are equivalent:
%
%  \item [(i)] The function
%  $$(0,+\infty)\ni t\mapsto h_{p,\mu}(t):=\inf\{\textrm{Cap}_{B_{p,p}^{\mathcal{L}^{\sigma},\beta}}(E):E\subset\X~with~\mu(E)\geq t\}$$ satisfies
%  $$\bigg(\int_{0}^{\infty}\big(\frac{t^{p/q}}{h_{p,\mu}(t)}\big)^{q/(p-q)}\frac{dt}{t}\bigg)^{(p-q)/p}=:\|\mu\|_{p,q}<\infty.$$
%  \item [(ii)] For all lower semi-continuous functions $u\in B_{p,p}^{\mathcal{L}^{\sigma},\beta}(\X),$
%  $$\big(\int_{\X}|u|^{q}d\mu\big)^{1/q}\lesssim\|u\|_{B_{p,p}^{\mathcal{L}^{\sigma},\beta}(\X)}.$$
%
%\end{theorem}
%

\begin{remark}
For Besov capacities, since we do not obtain an strong-type capacitary inequality similar to the one in Proposition \ref{strong} (ii),
 we cannot discuss here the extension theorem for Besov spaces.
But using Theorem  \ref{thm1}, Proposition \ref{boundedness} and
Remark \ref{ccc}, we obtain the following results: Let
$(s,\alpha)\in(0,1)\times(0,1)$. For
$p\in(1,\min\{2\alpha,\mathcal{Q}/2\mathbf{s}\})$ with $p=q$ and
$1/\alpha>\beta>2s$, if one of the following conditions is
satisfied:
\item{\rm (i)}
$$\sup_{\lambda>0}\lambda(\mu(\{(g,t)\in\X\times\mathbb{R}_{+}:|H_{\alpha,t^{2\alpha}}u(g)|>\lambda\}))^{1/q}\lesssim \|u\|_{\mathcal{\dot{W}}^{2\mathbf{s},p}(\X)};$$
\item{\rm (ii)} The measure $\mu$ satisfies
$$\sup_{t>0}\frac{t^{1/q}}{(c_{p}(\mu;t))^{1/p}}<\infty;$$
\item{\rm (iii)} The measure $\mu$ satisfies
$$(\mu(T(O)))^{p/q}\lesssim Cap_{\mathcal{\dot{W}}^{2\mathbf{s},p}}(O) $$
  for any bounded open set $O\subseteq \X,$ then
 we get
$$\Bigg(\int_{\X\times\mathbb{R}_{+}}\big|H_{\alpha,t^{2\alpha}}u(g)\big|^{q}d\mu(g,t)\Bigg)^{1/q}\lesssim
 \|u\|_{{B_{p,p,\mathcal{H}}^{{\alpha},\beta}(\X)}}\ \ \forall\ u\in C_{c}^{\infty}(\X).$$

Moreover, if $\beta$ satisfies
$\min\{1/\alpha,\alpha/p\}>\beta>\alpha s$, then
$N_{B_{p,p,\mathcal{H}}^{{\alpha},\beta}}(\cdot)$ above can be
replaced by $N_{p,p}^{\beta}(\cdot)$.

%(ii) For $p\in(1,\min\{2\sigma,\mathcal{Q}/2\widetilde{\mathbf{s}}\})$ with $p=q$ and $2>\beta>2s$,
%if one of the following conditions is satisfied:
%\item{\rm (1)}
%$$\sup_{\lambda>0}\lambda(\mu(\{(g,t)\in\X\times\mathbb{R}_{+}:|P_{\sigma,t}u(g)|>\lambda\}))^{1/q}\lesssim \|u\|_{\mathcal{\dot{W}}^{2\widetilde{\mathbf{s}},p}(\X)};$$
%\item{\rm (2)} The measure $\mu$ satisfies
%$$\sup_{t>0}\frac{t^{1/q}}{(c_{p}(\mu;t))^{1/p}}<\infty;$$
%\item{\rm (3)} The measure $\mu$ satisfies
%$$(\mu(T(O)))^{p/q}\lesssim Cap_{\mathcal{\dot{W}}^{2\widetilde{\mathbf{s}},p}}(O),$$
%  for any bounded open set $O\subseteq \X,$ then
% we get
%$$\big(\int_{\X\times\mathbb{R}_{+}}\big|P_{\sigma,t}u(g)\big|^{q}d\mu(g,t)\big)^{1/q}\lesssim \|u\|_{N_{p,p}^{\L^{\sigma},\beta}}\ \ \forall\ u\in C_{c}^{\infty}(\X).$$
%
%Moreover, if $\beta$ satisfies $\min\{2,\sigma/p\}>\beta>\sigma s$,
%then  $N_{p,p}^{\L^{\sigma},\beta}(\cdot)$ above can be replaced by
%$N_{p,p}^{\beta}(\cdot)$.

\end{remark}

\subsection*{Acknowledgements}

 {{ P.T. Li was supported by the National Natural Science Foundation of China (No. 12071272) and Shandong
Natural Science Foundation of China (No. ZR2020MA004).}  Y. Liu was
supported by the National Natural Science Foundation of China (No.
12271042, No. 12471089)  and Beijing Natural Science Foundation of
China (No. 1232023).}

\bibliographystyle{amsplain}

\begin{thebibliography}{99}

\bibitem{A2}D.
Adams,   {On the existence of capacitary strong type estimates in
$\mathbb R^{n}$.} Ark. Mat. {\bf 14} (1976), 125-140.

\bibitem{ada}D.
 Adams,  L. Hedberg,   Function Spaces and Potential Theory. Springer Press, Berlin, 1996.

\bibitem{AM}D.
Adams,  N. Meyers,  Thinness and Wiener criteria for non-linear
potentials. Indiana Univ. Math. J.  {\bf 22} (1972), 139-158.


\bibitem{ABC2020}
P. Alonso-Ruiz,  F. Baudoin, L.  Chen,  L. Rogers, N.
Shanmugalingam, A. Teplyaev, Besov class via heat semigroup on
Dirichlet spaces I: Sobolev type inequalities. J. Funct. Anal.
{\bf278} (2020), no. 11, 108459, 48 pp.

\bibitem{ABCRST2020}P.
Alonso-Ruiz,  F. Baudoin, L.  Chen,  L. Rogers, N. Shanmugalingam,
A. Teplyaev,  Besov class via heat semigroup on Dirichlet spaces II:
BV functions and Gaussian heat kernel estimates. Calc. Var. Partial
Differential Equations {\bf59} (2020), 103.

\bibitem{ABCRST2021}
P. Alonso-Ruiz,  F. Baudoin, L.  Chen,  L. Rogers, N.
Shanmugalingam, A. Teplyaev, Besov class via heat semigroup on
Dirichlet spaces III: BV functions and sub-Gaussian heat kernel
estimates. Calc. Var. Partial Differential Equations {\bf60} (2021),
170.



\bibitem{AX}D.
Adams,  J. Xiao,   Strong type estimates for homogeneous Besov
capacities. Math. Ann. {\bf325} (2003), no. 4, 695--709.

\bibitem{B}A.
Balakrishnan,  Fractional powers of closed operators and the
semigroups generated by them. Pac. J. Math. {\bf10} (1960),
419--437.

\bibitem{C07}
L. Caffarelli, L. Silvestre, An estension problem related to the fractional Laplacian. Comm. Partial Differential Equations {\bf 32} (2007), 1245-1260.

 \bibitem{CG2022}J.
 Cao,  A. Grigor'yan,   Heat kernels and Besov spaces on metric
 measure spaces.  J. Anal. Math. {\bf 148} (2022), 637--680.

\bibitem{cha}D.
Chang, J. Xiao,   {$L^{q}$-extensions of $L^{p}$-spaces by
fractional diffusion equations}.  Discrete Contin.
 Dyn. Syst.  {\bf 35} (2015), 1905-1920.



\bibitem{CK} Z. Chen,  T. Kumagai, Heat kernel estimates for stable-like processes on $d$-sets. Stoch. Process. Appl. {\bf 108}  (2003), 27-67.

\bibitem{C15}
P. Ciatti, M. Cowling, F. Ricci, Hardy and uncertainty inequalities on stratified Lie groups. Adv. Math. {\bf 277} (2015), 365-387.

\bibitem{Co}\c{S}.
Costea,   Besov capacity and Hausdorff measures in metric measure
spaces. Publ. Mat. {\bf53} (2009), 141-178.

\bibitem{DKX}G.
 Dafni,   G. Karadzhov,   J. Xiao,
  {Classes of Carleson-type measures generated by capacities}. {Math. Z.}  {\bf 258} (2008),  827-844.

\bibitem{D09}
D. Danielli, N. Garofalo, N. Phuc, Inequalities of Hardy-Sobolev type in Carnot-Carath\'{e}odory spaces. In Sobolev spaces in mathematics. I, Volume 8 of Int. Math. Ser. (N. Y.), pages 117-151. Springer, New York, 2009.

%\bibitem{EN}K.-J.
%Engel, R.  Nagel,   A Short Course on Operator Semigroups.
%Universitext, p. x+247. Springer, New York, 2006.

\bibitem{F15}
F. Ferrari, B. Franchi, Harnack inequality for fractional sub-Laplacians in Carnot groups. Math. Z. {\bf 279} (2015), 435-458.

\bibitem{F18}
F. Ferrari, M. Miranda, D. Pallara, A. Pinamonti, Y. Sire, Fractional Laplacians, perimeters and Heat semigroups in Carnot groups. Discrete Contin. Dyn. Syst. Ser. S {\bf 11} (2018), 477-491.

\bibitem{F75}
 G. Folland, Subelliptic estimates and function spaces on nilpotent Lie groups. Ark. Mat. {\bf 13} (1975), 161-207.

\bibitem{F82}
 G.  Folland, E.M. Stein, Hardy Spaces on Homogeneous Groups. Princeton University Press, Princeton 1982.

\bibitem{G18}
N. Garofalo, Some properties of sub-Laplacians. Electron. J.
Differential Equations {\bf 25} (2018), 103-131.

%\bibitem{GN}N.
%Garofalo, D.M. Nhieu,  Lipschitz continuity, global smooth
%approximations and extension theorems for Sobolev functions in
%Carnot-Carath\'{e}odory spaces. J. Anal. Math. {\bf 74} (1998), 67-97.

%\bibitem{GT2020}N.
%Garofalo,  G. Tralli,   Nonlocal isoperimetric inequalities for
%Kolmogorov-Fokker-Planck operators. J. Funct. Anal. {\bf 279}
%(2020),  no. 3, 108591, 40 pp.



%\bibitem{GS}N.
%Garofalo,  Z. Shen,   Carleman estimates for a subelliptic operator
%and unique continuation. Ann. Inst. Fourier  (Grenoble) {\bf 44}
%(1994), 129-166.

\bibitem{GL2015}A.
Grigor'yan, L.  Liu,   Heat kernel and Lipschitz-Besov spaces. Forum
Math. {\bf 27} (2015), 3567-3613.

\bibitem{He} J. Heinonen, P. Koskela, N. Shanmugalingam, J. Tyson, Sobolev Spaces on Metric Measure Spaces:
An Approach Based on Upper Gradients. Cambridge Studies in Advanced Mathematics 44. Cambridge
University Press, Cambridge 2015.



\bibitem{hua}J.
Huang,  P.  Li,  Y. Liu,  S. Shi,   Fractional heat semigroups on
metric measure spaces with finite densities and applications to
fractional dissipative equations. Nonlinear Anal. {\bf 195} (2020),
1-45.



\bibitem{J}N.
Jacob,  Pseudo-Differential Operators and Markov Processes. Volume I: Fourier Analysis and Semigroups.
Imperial College Press, London 2001.
%\bibitem{KM2019}
%Kreuml, A., Mordhorst, O.: Fractional Sobolev norms and BV functions on manifolds. Nonlinear Anal. {\bf 187} (2019), 450-466.



\bibitem{joh}R.
 Johnson,   Application of Carleson measures to partial differential equations and Fourier multiplier problems. In: Proceedings of the Conference on
Harmonic Analysis, Cortona, in: Lecture Notes in Mathematics, vol.
{\bf 992}, Springer, Berlin, (1983),  16-72.



\bibitem{K80} A. Kaplan, Fundamental solutions for a class of hypoelliptic PDE generated by composition of
quadratics forms. Trans. Amer. Math. Soc. {\bf 258} (1980), 147-153.



\bibitem{li-4}P.
 Li,  Z. Zhai,    Application of capacities to space-time fractional kdissipative equations II: Carleson measure characterization
for $L^{q}(\mathbb R^{n+1}_{+},\mu)$-Extension. Adv. Nonlinear Anal.
{\bf 11} (2022), 850-887.

\bibitem{liu}L.
Liu,   J. Xiao,  D. Yang,  W. Yuan,  Restriction of heat equation
with Newton-Sobolev data on metric measure space. Calc. Var. Partial
Differential Equations {\bf 58} (2019),  165.


\bibitem{LB}L.
Lorenzi,  M. Bertoldi,   Analytical methods for Markov semigroups.
Chapman \& Hall/CRC, (2006).

\bibitem{M23}
A. Maalaoui, A. Pinamonti, G. Speight, Function spaces via
fractional Poisson kernel on Carnot groups and applications. J.
Anal. Math. {\bf 149} (2023),  485-527.

\bibitem{M1985}V.
Maz'ya,   Sobolev Spaces, Springer, Berlin, New York, 1985.


\bibitem{Ma}
V. Maz'ya, {On capacitary strong type estimates for fractional
norms}. Zap. Nauchn. Sem. Leningrad Otdel. Math. Inst. Steklov.
(LOMI)  {\bf 70} (1977),  161-168. (in Russian)

\bibitem{miao}C. Miao,  B. Yuan, B.  Zhang,
Well-posedness of the Cauchy problem for the fractional power
dissipative equations.  Nonlinear Anal.  {\bf 68}  (2008), 461-484.




\bibitem{S79}
K. Saka, Besov spaces and Sobolev spaces on a nilpotent Lie group. Tohoku Math. J. {\bf 31} (1979), 383-437.

\bibitem{SW2010}
A. Santiago, M. Warma, A class of quasi-linear parabolic and
elliptic equations with nonlocal Robin boundary conditions. J. Math.
Anal. Appl. {\bf372} (2010), 120-139.


\bibitem{Sh1}  N. Shanmugalingam,  Newtonian spaces: an extension of Sobolev spaces to metric measure spaces, Ph.D.
thesis (1999).

\bibitem{Sh2} N. Shanmugalingam, Newtonian spaces: an extension of Sobolev spaces to metric measure spaces.
Rev. Mat. Iberoamericana {\bf 16} (2000), 243-279.


\bibitem{ST-2010} P. Stinga, J. Torrea, Extension problem and Harnack's inequality for some fractional operators. Comm. Partial Differential Equations
 {\bf 35} (2010), 2092-2122.

\bibitem{SZ} P. Stinga, C. Zhang, Harnack's inequality for fractional nonlocal equations. Discrete Contin. Dyn.
Syst. {\bf 33} (2013), 3153-3170.

\bibitem{VSC}N.
Varopoulos,  L. Saloff-Coste,  T. Coulhon,   Analysis and Geometry
on Groups. Cambridge Tracts in Mathematics, (1992) vol.100.

\bibitem{V90}
S. Vodop'yanov, Potential theory on homogeneous groups. Math. USSR-Sb. {\bf 66} (1990), 59-81.

\bibitem{W1999}Z.
Wu,   Strong type estimate and Carleson measures for Lipschitz
spaces. Proc. Amer. Math. Soc. {\bf127} (1999), 3243-3249.


\bibitem{X1}J.
Xiao,
 {Carleson embeddings for Sobolev spaces via
heat equation}. {J. Differential Equations}  {\bf 224} (2006),
277-295.

\bibitem{X2}J.
Xiao,
 {Homogeneous endpoint Besov space embeddings by Hausdorff capacity and heat equation}. {Adv. Math.}  {\bf 207} (2006),  828-846.

\bibitem{Y80}
K. Yosida, Functional Analysis. Springer, Berlin-Heidelberg, 1980.

\bibitem{zh-2} Z.
Zhai,
 {Carleson measure problems for parabolic Bergman spaces and homogeneous Sobolev spaces}. {Nonlinear Anal.}  {\bf 73} (2010),  2611-2630.

\bibitem{Z23}
T. Zhang, J. Zhu, Fractional differential operators, fractional Sobolev spaces and fractional variation on homogeneous Carnot groups. Fract. Calc. Appl. Anal. {\bf 26} (2023), 1786-1841.


%\bibitem{GT2}
%Garofalo, N., Tralli, G.: Hardy-Littlewood-Sobolev inequalities for a class of non-symmetric and non-doubling hypoelliptic semigroups. Math. Ann. (2020), https://doi.org/10.1007/s00208-020-02090-6.


\end{thebibliography}

\hspace{ -0.5cm}{\bf Address: }

          \flushleft Zhiyong Wang\\
          School of Mathematics and Physics\\
          University of Science and Technology Beijing\\
          Beijing 100083, China\\
          E-mail: zywang199703@163.com

          \flushleft Pengtao Li\\
          School of   Mathematics and Statistics \\
           Qingdao University \\
          Qingdao 266071, China\\
          E-mail address: ptli@qdu.edu.cn

          \flushleft Yu Liu\\
          School of Mathematics and Physics\\
          University of Science and Technology Beijing\\
          Beijing 100083, China\\
          E-mail: liuyu75@pku.org.cn

\end{document}